\documentclass{article}


\usepackage[mathscr]{euscript}
\usepackage[margin=1in]{geometry}
\usepackage{amsmath, amsthm, amsfonts, amssymb}
\usepackage{enumerate}
\usepackage[all, cmtip]{xy}

\newtheorem{theorem}{Theorem}[section]
\newtheorem{proposition}[theorem]{Proposition}
\newtheorem{corollary}[theorem]{Corollary}

\newtheorem{lemma}[theorem]{Lemma}

\theoremstyle{definition}
\newtheorem{definition}[theorem]{Definition}
\newtheorem{example}[theorem]{Example}

\newtheorem{notation}[theorem]{Notation}

\theoremstyle{remark}
\newtheorem{remark}[theorem]{Remark}

\newcommand{\calV}{\mathscr{V}}
\newcommand{\calY}{\mathscr{Y}}
\newcommand{\calX}{\mathscr{X}}
\newcommand{\calS}{\mathscr{S}}
\newcommand{\calZ}{\mathscr{Z}}

\newcommand{\cref}[1]{\ref{#1}}
\newcommand{\Cref}[1]{\ref{#1}}
\newcommand{\tor}{\mathrm{Tor}}
\newcommand{\G}{\mathbb{G}}
\newcommand{\ind}{\mathrm{Ind}}
\newcommand{\calR}{\mathcal{R}}
\newcommand{\calB}{\mathcal{B}}
\newcommand{\cha}{\mathrm{char}\,}
\newcommand{\calU}{\mathcal{U}}
\renewcommand{\hom}{\mathrm{Hom}}
\newcommand{\into}{\hookrightarrow}
\newcommand{\ol}[1]{\overline{#1}}
\newcommand{\iffw}{if and only if }
\newcommand{\irr}{\mathrm{Irr}}
\newcommand{\spec}{\mathrm{Spec}\,}
\newcommand{\shom}{\mathcal{H}\mathrm{om}}
\newcommand{\sym}{\mathrm{Sym}}

\newcommand{\bul}{\bullet}
\newcommand{\codim}{\mathrm{codim}\,}
\newcommand{\fix}{\mathrm{Fix}}
\newcommand{\bs}{\backslash}
\newcommand{\fk}[1]{\mathfrak{#1}}
\newcommand{\oii}{^{-1}}                                             
\newcommand{\ul}[1]{\underline{#1}}
\newcommand{\calT}{\mathcal{T}}
\newcommand{\calO}{\mathcal{O}}

\newcommand{\calM}{\mathcal{M}}
\newcommand{\calN}{\mathcal{N}}
\newcommand{\calH}{\mathcal{H}}
\newcommand{\calL}{\mathcal{L}}
\newcommand{\calJ}{\mathcal{J}}
\newcommand{\calF}{\mathcal{F}}
\newcommand{\calA}{\mathcal{A}}
\newcommand{\calC}{\mathcal{C}}

\newcommand{\N}{\mathbb{N}}
\newcommand{\C}{\mathbb{C}}
\newcommand{\A}{\mathbb{A}}
\newcommand{\Z}{\mathbb{Z}}
\newcommand{\Q}{\mathbb{Q}}

\newcommand{\map}{\mathrm{Map}}
\newcommand{\aut}{\mathrm{Aut\,}}
\newcommand{\emo}{\mathrm{End}\,}
\newcommand{\stab}{\mathrm{Stab}}
\newcommand{\rep}{\mathrm{Rep}\,}
\renewcommand{\tilde}[1]{\widetilde{#1}}
\renewcommand{\hat}[1]{\widehat{#1}}

\linespread{1.1}

\setcounter{secnumdepth}{2}
\begin{document}
\title{Characteristic Classes of Cameral Covers}
\author{Edward Dewey\thanks{Supported by NSF grants DMS-1452276 and
	DMS-1502553} 
}

\maketitle

\begin{abstract}
\noindent We study the stack $\calM$ of cameral covers for a complex reductive group $G$, introduced by Donagi and Gaitsgory. We compute its cohomology ring $H^*(\calM,\Q)$. In the special case $G=GL(n)$, $\calM$ is the stack of spectral covers.  We also compute the cohomology ring of the stack of abstract regular $G$-Higgs bundles.
\end{abstract}
\section{Introduction}\label{chapIntroduction}   

Let $\fk{t}$ be a complex vector space and let $W$ be a finite subgroup of
$GL(\fk{t})$ generated by complex reflections.  

\begin{definition} Let $C \to S$ be an $S$-scheme with an action of $W$ on the
	left that fixes the map $C \to S$.  $C$ is a $(W,
	\fk{t})$-\emph{cameral cover} of $S$ if there exists an \'{e}tale cover
	$U \to S$, a map $U \to \fk{t}/W$, and a $W$-equivariant isomorphism 
	$U \times_{\fk{t}/W} \fk{t} \cong U \times_S C$ over $U$.  
\end{definition}

In other words, a cameral cover of $S$ is an $S$-scheme with $W$ action which
is locally isomorphic to a pullback of $\fk{t} \to \fk{t}/W$.  $W$ does not act on $S$ (or acts trivially).  These were
introduced by Donagi \cite{donagi}.  They subsume the better-known notion of
spectral cover.  Our cameral covers are slightly more general since we do not
require $W$ to be a real reflection group. In the case where $\fk{t}$ is the
Lie algebra of a maximal torus in a connected reductive affine algebraic group
$G$, and $W$ is its Weyl group, $(W, \fk{t})$ cameral covers are closely
related to (abstract regular) $G$-Higgs bundles (see \cite{DG} and section 7
below).  For further motivation see \cite{pantev}.  Let $\calM$ be the stack of
cameral covers, and if $(W ,\fk{t})$ was obtained from a group $G$, let $\calH$
be the stack of $G$-Higgs bundles.

Our goal is to define cohomological invariants (or ``characteristic classes'')
of cameral covers and Higgs bundles, by computing $H^*(\calM, \Q)$.  The Betti
numbers of $\calM$ are easy to find, so the main contributions of this paper
are (first) to compute the cup product, and (second) to identify the elements
of $H^*(\calM, \Q)$ concretely enough that, given a cameral cover $C\to S$, one
can hope to identify its characteristic classes in $H^*(S, \Q)$.   We do all
this by studying a stratification of $\calM$ into classifying spaces.  Using
the results of \cite{DG} we obtain a similar description of $H^*(\calH, \Q)$.  

Our answer is phrased in terms of hyperplane arrangements (see section 2 for
precise definitions).  Let $\calA$ be the hyperplane arrangement in $\fk{t}$ of
the reflecting hyperplanes for the action of $W$, and let
$L(\calA)$ be the intersection poset of $\calA$.  Given $X \in L(\calA)$, let
$\calA_X \subset \calA$ be the subarrangement consisting of hyperplanes that
contain $X$, and call $X$ irreducible if $\calA_X$ is irreducible.  Let $\irr
\calA_X$ be the set of irreducible components of $\calA_X$.  
Let $L^{\mu}(\calA)$ be the free
abelian monoid on the tuples $(X, \mu)$ where $X \in L(\calA)$ is irreducible,
$X \neq \fk{t}$, and $\mu \geq \codim X$ is an integer, modulo the relation $
\prod_{i=1}^l (X_i, \mu_i)  = \left( \bigcap_{i=1}^l X_i, \sum_{i=1}^l \mu_i
\right) $ whenever $\bigcap_{i=1}^l X_i$ is irreducible.  This monoid is graded
by $\deg (X, \mu) = 2\mu$.  Let $\kappa$ be a ring in which $\#W$ is a unit.
Our main result is \theoremstyle{plain} \newtheorem*{oneShot}{Theorem \ref{HM}}
\begin{oneShot} $H^*(\calM, \kappa) \cong \kappa[L^\mu(\calA)]^W$ as graded
rings.  \end{oneShot}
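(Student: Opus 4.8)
Our strategy is to stratify $\calM$ into classifying stacks, read off the cohomology as a graded vector space from a spectral sequence that degenerates for parity reasons, and then pin down the ring structure by an excess–intersection computation matched against the combinatorial presentation of $L^\mu(\calA)$.

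First I would produce the stratification. Over a geometric point a cameral cover is the pullback of $\fk{t}\to\fk{t}/W$ along some $[x]\in\fk{t}/W$, and by Chevalley's slice theorem its isomorphism type as a $W$–scheme is determined by the conjugacy class of the stabiliser $W_x$, i.e.\ by the $W$–orbit of the flat $X=\fk{t}^{W_x}\in L(\calA)$. Hence $\calM$ has finitely many points and decomposes into locally closed substacks $\calM_{[X]}$ of ``covers of constant type $[X]$''. Unwinding the balanced product $W\times^{W_X}(-)$ shows $\calM_{[X]}\cong B\Gamma_{[X]}$, where $\Gamma_{[X]}$ is an extension of the relative Weyl group $N_W(W_X)/W_X$ by the group $\mathrm{Aut}_{W_X}(R_{W_X})$ of $W_X$–equivariant automorphisms of the coinvariant algebra $R_{W_X}$; and since the (conical) versal deformation of a type–$[X]$ cover is $\fk{t}/W$ near $[x]$, that is $\fk{t}_X/W_X$ near $0$ with its contracting grading $\mathbb{G}_m$, an \'etale neighbourhood of $\calM_{[X]}$ in $\calM$ is $[\,(\fk{t}_X/W_X)/\Gamma_{[X]}\,]$, which retracts onto $B\Gamma_{[X]}$. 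This is where I expect the input of \cite{DG} and the slice theory to be spent. (It streamlines the ring–structure step to first carry this out on the $W$–cover $\widetilde{\calM}\to\calM$ that rigidifies the tautological generic $W$–torsor, whose open stratum is a point rather than $BW$: compute $H^*(\widetilde{\calM},\kappa)\cong\kappa[L^\mu(\calA)]$, then use $H^*(\calM,\kappa)=H^*(\widetilde{\calM},\kappa)^W$, valid because $\#W$ is a unit.)

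Next I would compute $H^*(B\Gamma_{[X]},\kappa)$. The group $\mathrm{Aut}_{W_X}(R_{W_X})$ is connected, with unipotent radical the kernel of its action on $\mathfrak m/\mathfrak m^2\cong(R_{W_X})_1$ and reductive quotient the torus $\mathbb T_X=C_{GL((R_{W_X})_1)}(W_X)$; a short representation–theoretic computation (Schur's lemma applied to the decomposition of $(R_{W_X})_1$ into reflection representations of the irreducible factors of $W_X$, plus the grading automorphisms) identifies $\mathbb T_X$ with $\mathbb{G}_m^{\,\irr\calA_X}$, one factor per irreducible component of $\calA_X$, with $N_W(W_X)/W_X$ permuting the factors. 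Since $\#W$ is a unit this gives $H^*(B\Gamma_{[X]},\kappa)\cong\kappa[y_c:c\in\irr\calA_X]^{N_W(W_X)/W_X}$ with $\deg y_c=2$. The spectral sequence of the stratification, with the Gysin shift by $2\codim X$ on the $[X]$–stratum, then lives entirely in even total degree, so all differentials and all extensions in each Gysin/restriction triangle vanish; hence $H^*(\calM,\kappa)\cong\bigoplus_{[X]}H^{*-2\codim X}(B\Gamma_{[X]},\kappa)$, which already reproduces the (easy) Betti numbers of $\kappa[L^\mu(\calA)]^W$.

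The substance is the cup product. For each irreducible flat $X\neq\fk{t}$, pushing forward $1\in H^0(B\Gamma_{[X]})$ along $\overline{\calM_{[X]}}\hookrightarrow\calM$ gives $\theta_{[X]}\in H^{2\codim X}(\calM,\kappa)$, and multiplying by the torus classes yields higher avatars $\theta^{(\mu)}_{[X]}$ for $\mu\ge\codim X$; these are the classes named $(X,\mu)$, they are the divisor classes $\eta_{[H]}$ of the $\overline{\calM_{[H]}}$ in codimension one, and pulled back to $\widetilde{\calM}$ every $\theta^{(\mu)}_{[X]}$ becomes a product of such divisor classes (take the $H$ a basis of $\calA_X$). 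Using the conical local models $[\,(\fk{t}_X/W_X)/\Gamma_{[X]}\,]$ one evaluates products by excess intersection: $\theta^{(\mu_1)}_{[X_1]}\cdots\theta^{(\mu_l)}_{[X_l]}$ is supported on the stratum of $\bigcap X_i$, and when $\bigcap X_i$ is irreducible it equals $\theta^{(\sum\mu_i)}_{[\bigcap X_i]}$ — the comparison of the two being a weighted–degree / equivariant–Euler–class computation on the model — while when $\bigcap X_i$ is reducible the product is the genuinely new class attached to that stratum; this is exactly the defining relation of the monoid $L^\mu(\calA)$. That these classes generate $H^*(\calM,\kappa)$ (equivalently $\kappa[L^\mu(\calA)]$ on $\widetilde{\calM}$), and that no further relations hold, I would finish by comparing Hilbert series with the additive computation above, together with the purely poset–theoretic check that the resulting presentation agrees with that of $\kappa[L^\mu(\calA)]^W$ (the $W$–invariants coming out by transfer). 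I expect the main obstacle to be precisely this excess–intersection step: controlling the normal data of the singular, stacky closures $\overline{\calM_{[X]}}$ and the equivariant Euler classes on the models $[\,\fk{t}_X/W_X\,/\,\Gamma_{[X]}\,]$ tightly enough to see that the product depends only on $\bigcap X_i$ and $\sum\mu_i$; once the stratification and these local models are in hand, the rest is bookkeeping.
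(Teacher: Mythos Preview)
Your outline is essentially the paper's strategy: pass to the stack of pointed cameral covers (your $\widetilde{\calM}$ is the paper's $\calC$), stratify by $L(\calA)$ into classifying spaces whose reductive quotients are the tori $\mathbb{G}_m^{\#\irr\calA_X}$, use even-degree degeneration of the Gysin sequences for the additive answer, then recover $H^*(\calM,\kappa)$ as $W$-invariants by transfer.

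The one place worth comparing is exactly the step you flag as the main obstacle. The paper does not use local conical models or an excess-intersection formula. Instead it takes as generators the top Chern classes $c(X)$ of the pushforwards $\calO_{\overline{\calC}_X}$; the transverse relation $c(X)c(Y)=c(X\cap Y)$ then falls out of a Koszul/tor-vanishing argument in $K_0\calC$, and the remaining relation $c(a)c(X)=c(b)c(X)$ for $a,b$ in the same irreducible component of $\calA_X$ is checked, together with injectivity of $\kappa[L^\mu(\calA)]\to H^*(\calC,\kappa)$, by first proving that the restriction map $H^*(\calC,\kappa)\hookrightarrow\prod_X H^*(\calC_X,\kappa)$ is injective and then computing both sides stratumwise in the polynomial rings $\kappa[u_\calB:\calB\in\irr\calA_X]$. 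This replaces your Hilbert-series endgame by a direct support argument and avoids any analysis of the singular closures $\overline{\calM_{[X]}}$ or their normal data; what your approach would buy, if carried through, is a more geometric identification of the classes, at the cost of the normal-bundle bookkeeping you anticipate.
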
 Setting $(W, \fk{t}) = (\Sigma_n, \C^{\oplus n})$, this
implies that characteristic classes of rank-$n$ spectral covers are in
bijection with certain weighted partitions of $n$. 


Here is an outline of the text.  Section 2 recalls some definitions and
notations from the theory of hyperplane arrangements and describes
$L^{\mu}(\calA)$.  Section 3 introduces an induction operation on cameral
covers and uses it to understand cameral covers of $\spec \C$.  Section 4
studies the geometry of $\calM$ and introduces a stratification of $\calM$ by
classifying spaces.  Section 5 uses this stratification to compute the rational
cohomology ring of $\calM$.  As a demonstration that this ring structure is
potentially interesting, section 6 gives an analogue of the Whitney product
formula, describing the characteristic classes of induced cameral covers.
Section 7 computes the rational cohomology ring of $\calH$.
Section 8 describes some partial progress towards
$H^*(\calM, \Z)$ and an obstruction in integral cohomology to a cameral
cover being deformable to a less ramified cameral cover.  Finally, section 9
studies the K-theory of $\calM$ using methods analogous to those of section
5.

\textbf{Conventions:}  All schemes and stacks are over $\C$.  If $\calF$ is a
locally free sheaf then $|\calF|$ denotes its geometric realization $\spec
\sym^\bul \calF^\vee$.  Note that we have chosen the convention that makes
$|\cdot|$ covariant, and that makes a section of $\calF$ correspond to a
section of $|\calF|$.  

In this text we will ask topological questions about algebraic objects.  This
involves taking complex realizations, and we mostly leave this implicit.  In
particular, $H^*(X, \kappa)$ means cohomology of the complex realization.  

\textbf{Acknowledgments:}  The question answered here was raised by Dima
Arinkin and Roman Fedorov.  Thanks to them, as well as to Daniel Erman,
Steven Sam and the anonymous reviewers of the first 7 sections, for valuable suggestions and
corrections.
\section{Hyperplane arrangements}\label{chapHyperplanes} 

In this section we review some standard definitions and notations for hyperplane arrangements, and introduce the monoid $L^\mu(\calA)$.  For a good exposition of hyperplane arrangements see \cite{hyperplanes}.  

\begin{definition}
A \emph{hyperplane arrangement} $\calA$ in a vector space $\fk{t}$ is a (possibly empty) set of hyperplanes in $\fk{t}$.  A hyperplane arrangement is \emph{central} if all hyperplanes are subspaces.    
If $\calA$ is a hyperplane arrangement, its \emph{intersection poset} $L(\calA)$ is the set of all intersections of elements of $A$, reverse-ordered by inclusion.  
The \emph{codimension} $\codim \calA$ is the codimension in $\fk{t}$ of the intersection of all elements of $\calA$ (this definition is not standard). $\calA$ is \emph{essential} if $\codim \calA = \dim \fk{t}$, equivalently if $0 \in L(\calA)$.
\end{definition}

All hyperplane arrangements in this text will be finite, central, and will live in a finite-dimensional vector space.  $L(\calA)$ has joins, corresponding to intersections of subspaces of $\fk{t}$, and every element of $L(\calA)$ is a join of several elements of $\calA$ (i.e. $L(\calA)$ is \emph{atomic}).

\begin{example}  
\label{exGLn} Consider the action of the Weyl group $W = \Sigma_n$ of $GL_n$,
acting on the lie algebra $\fk{t} = \C^{\oplus n}$ of a maximal torus.  This is
an action by reflections and we can consider the hyperplane arrangement of
reflecting hyperplanes.  These are all of the form $a_{ij} = \langle e_1, \ldots,
\hat{e}_i \ldots \hat{e}_j \ldots e_n, e_i + e_j \rangle$.  Elements of
$L(\calA)$ correspond to decompositions of the set $[n] := \{1, \ldots, n\}$.
If $\tau$ and $\lambda$ are two decompositions then $\tau \leq \lambda$ \iffw
$\tau$ refines $\lambda$.  \end{example}

\begin{definition} If $f: \fk{t} \to \fk{t}'$ is a surjective linear
	transformation and $\calA$ is a hyperplane arrangement in $\fk{t}'$,
	write $f\oii\calA$ for the hyperplane arrangement $\{ f\oii(a) | a
	\in \calA \}$ in $\fk{t}$.  If $\calA$ and $\calA'$ are hyperplane
	arrangements in vector spaces $\fk{t}$ and $\fk{t}'$, write $\calA
	\oplus \calA'$ for the hyperplane arrangement $\pi_1\oii\calA \cup
	\pi_2\oii\calA'$ in $\fk{t} \oplus \fk{t}'$, where $\pi_1, \pi_2$ are the
	projections.  A hyperplane arrangement is \emph{irreducible} if it is
	not of this form for any nonempty arrangements $\calA$, $\calA'$.
\end{definition}

Note that an irreducible hyperplane arrangement need not be essential.

\begin{definition}
Any hyperplane arrangement $\calA$ can be decomposed as $\calA \cong \mathcal{E} \oplus \calB_1 \oplus \ldots \calB_n$ where $\mathcal{E}$ is an empty arrangement and the $\calB_i$ are irreducible and essential.  This decomposition is unique up to automorphisms and reorderings of the summands.  Call the $\calB_i$ the \emph{irreducible components} of $\calA$, and write $\irr \calA = \{\calB_1, \ldots, \calB_n\}$.  
\end{definition}

Note that an empty arrangement has no irreducible components.

\begin{definition}
For $X \in L(\calA)$, denote by $\calA_X \subset \calA$ the sub-arrangement consisting of hyperplanes containing $X$.  Call $X$ \emph{irreducible} if $\calA_X$ is an irreducible hyperplane arrangement.
\end{definition}


\begin{example}
Let $\calA$ be as in \cref{exGLn}.  Let $X \in L(\calA)$, corresponding to a decomposition of $[n] := \{1, \ldots n\}$.  Suppose the decomposition is $[n] = \coprod_{i=1}^l X_i$, where the $X_i$ are some disjoint subsets of $[n]$.  We have $\codim X = \sum_{i=1}^l \# X_i - 1$.  $\calA_X$ consists of all hyperplanes $a_{pq}$ such that $\{p, q\} \subseteq X_i$ for some $i$.  $X$ is irreducible \iffw $X_i$ is a singleton for all but one value of $i$.  Thus irreducible elements of $L(\calA)$ other than $\fk{t}$ correspond to subsets of $[n]$ of size at least $2$.
\end{example}

\begin{example}  
\label{exNonessential}
Let $\fk{t} = \C^4$ and let $\calA$ consist of the hyperplanes $a_1 = \langle e_1 + e_2, e_3, e_4 \rangle$, $a_2 = \langle e_1 - e_2, e_3, e_4 \rangle$, $a_3 = \langle e_1, e_2, e_4 \rangle$.  This is non-essential, with two irreducible components.  These are
\begin{enumerate}
\item $\langle e_1, e_2 \rangle$ with hyperplanes $a'_1 = \langle e_1 + e_2 \rangle$ and $a'_2 = \langle e_1 - e_2 \rangle$
\item $\langle e_3 \rangle$, with hyperplane $a'_3 = 0$
\end{enumerate}
Let $X = \langle e_3, e_4 \rangle $.  Then $\calA_X$ is the arrangement with underlying vector space $\C^4$ and with hyperplanes $a_1$ and $a_2$.  $X$ is not irreducible.
\end{example}

\begin{definition}
Let $F$ be the set of pairs $(X, \mu)$ where $X \in L(\calA)$ is irreducible,
$\mu \geq \codim X$ is an integer, and $X \neq \fk{t}$.    Let $L^\mu(\calA)$
be the free abelian monoid on $F$ (for which we use multiplicative notation), modulo the relation
\[ \prod_{i=1}^l (X_i, \mu_i)  = \left( \bigcap_{i=1}^l X_i, \sum_{i=1}^l \mu_i \right) \]
whenever $\bigcap_{i=1}^l X_i$ is irreducible.  $L^\mu(\calA)$ admits a $2\N$ grading by setting $\deg (X, \mu) = 2\mu$ and extending multiplicatively.  
\end{definition}

We will sometimes abuse notation by writing $X$ in place of $(X, \codim X)$.  $L^\mu(\calA)$ is generated as a unital monoid by the terms $a = (a, 1)$ where $a \in \calA$.  


\begin{example}
Again let $\fk{t} = \C^{\oplus n}$ and let $\calA$ be the hyperplane arrangement for the action of $\Sigma_n$.  Recall that irreducible elements of $L(\calA)$ other than the minimum element $\fk{t}$ correspond to subsets of $[n]$.  For $n=5$ the word $(\{1, 2\}, 1)(\{3, 4, 5\}, 7)$ cannot be simplified, while $(\{1, 2\}, 1)(\{2, 3, 4\}, 7) = (\{1, 2, 3, 4\}, 8)$.  For $n=2$, $L^{\mu}(\calA) \cong \N$, generated (in degree $2$) by $\{1, 2\}$.  For $n=3$, $L^{\mu}(\calA)$ is the abelian monoid on three letters $x = \{1,2\}$, $y = \{2, 3\}$ and $z = \{1, 3\}$ subject to the single relation $xy = yz = xz$.
\end{example}  

\begin{proposition}
\label{descripMonoid2}
Let $L^{ch}(\calA)$ be the free abelian monoid on all elements of $L(\calA)$ (including the reducible ones), modulo two relations:
\begin{enumerate}
\item For any $X, Y, Z \in L(\calA)$ with $X \cap Y = Z$, and such that $Z$ has the expected codimension $\codim Z = \codim X + \codim Y$, impose the relation $XY = Z$.
\item For any $a, b \in \calA$ and $X \in L(\calA)$ with $a, b$ in the same irreducible component of $X$, impose the relation $aX = bX$.
\end{enumerate}
Give it the grading $\deg X = 2 \, \codim X$.  Then $L^{ch}(\calA) \cong L^{\mu}(\calA)$ as graded monoids.  If $X \in L(\calA)$ is irreducible then this isomorphism sends $X$ to $(X, \codim X)$.
\end{proposition}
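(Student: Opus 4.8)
The plan is to build mutually inverse graded monoid homomorphisms between $L^{ch}(\calA)$ and $L^\mu(\calA)$. I will first construct a map $\Phi\colon L^{ch}(\calA)\to L^\mu(\calA)$ by sending a generator $X\in L(\calA)$ to $\prod_{\calB\in\irr\calA_X}(Y_\calB,\codim\calB)$, where $Y_\calB\in L(\calA)$ is the irreducible element cut out by the hyperplanes in the component $\calB$ (so that $\calA_{Y_\calB}=\calB$ and $\codim Y_\calB=\codim\calB$); in particular if $X$ is already irreducible then $\Phi(X)=(X,\codim X)$, as desired. To see this is well-defined I must check it respects the two defining relations of $L^{ch}(\calA)$: relation (1) follows because when $\codim(X\cap Y)=\codim X+\codim Y$ the arrangement $\calA_{X\cap Y}$ is the ``join'' of $\calA_X$ and $\calA_Y$ with no unexpected coincidences, so the multiset of irreducible components of $\calA_{X\cap Y}$ is the disjoint union of those of $\calA_X$ and $\calA_Y$; relation (2) is immediate because $aX=bX$ whenever $a,b$ lie in the same component of $\calA_X$, and that component contributes the same factor regardless of which of $a,b$ one names. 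Conversely I define $\Psi\colon L^\mu(\calA)\to L^{ch}(\calA)$ on generators by $(X,\mu)\mapsto a_1a_2\cdots a_{\mu-\codim X+1}\,X^{\,?}$— more precisely, using that $X$ is irreducible, pick any hyperplane $a\in\calA_X$ and set $\Psi(X,\mu)=a^{\,\mu-\codim X}\cdot X$, where $X$ on the right means the generator of $L^{ch}(\calA)$; relation (2) of $L^{ch}$ makes this independent of the choice of $a$, and I must verify the single defining relation of $L^\mu(\calA)$ is respected, which again reduces to relation (1) of $L^{ch}$ together with (2).

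The heart of the argument, and the step I expect to be the main obstacle, is showing $\Phi$ and $\Psi$ are inverse to each other — equivalently, that the relations I have imposed on $L^{ch}(\calA)$ are exactly enough (neither too few nor too many). One direction, $\Phi\circ\Psi=\id$, is a quick computation on generators. The other, $\Psi\circ\Phi=\id$, amounts to the assertion that in $L^{ch}(\calA)$ every generator $X$ equals $\prod_{\calB\in\irr\calA_X}\Psi(Y_\calB,\codim\calB)=\prod_{\calB}(\text{some }a_\calB)^{0}\cdot Y_\calB=\prod_\calB Y_\calB$, i.e.\ that the class of a reducible element is the product of the classes of the irreducible elements of its components. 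This is where relation (1) must be applied iteratively: since $\codim X=\sum_\calB\codim Y_\calB=\sum_\calB\codim\calB$ with the intersections having expected codimension at each stage (the components live in complementary directions), repeated use of (1) collapses $\prod_\calB Y_\calB$ to $\bigcap_\calB Y_\calB=X$. The care needed is purely combinatorial bookkeeping about how $\irr$ behaves under the join operation in $L(\calA)$ — in particular that $\calA_X$ for $X$ arising as such an intersection decomposes with no ``accidental'' merging of components — and this is exactly the content one extracts from the decomposition uniqueness statement and \cref{exNonessential}-type reasoning earlier in this section.

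Finally I will note that both $\Phi$ and $\Psi$ are visibly graded: $\deg X=2\codim X=2\sum_\calB\codim\calB=\deg\Phi(X)$ and $\deg(X,\mu)=2\mu=2(\mu-\codim X)+2\codim X=\deg\Psi(X,\mu)$, using $\deg a=2$ for $a\in\calA$. Thus the isomorphism is one of graded monoids, and by construction it sends an irreducible $X$ to $(X,\codim X)$, completing the proof.
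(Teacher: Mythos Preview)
Your overall architecture matches the paper's: you build maps in both directions (your $\Phi,\Psi$ are the paper's $\psi,\phi$), check well-definedness against the defining relations, and observe they are inverse. The grading check and the verification of relation~(2) are fine.

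The gap is in your verification that $\Phi$ respects relation~(1). You assert that when $\codim(X\cap Y)=\codim X+\codim Y$, the irreducible components of $\calA_{X\cap Y}$ form the disjoint union of those of $\calA_X$ and $\calA_Y$. This is false. In the $\Sigma_3$ arrangement, take $X=a_{12}$ and $Y=a_{23}$: then $Z=X\cap Y$ has $\codim Z=2=1+1$, yet $\calA_Z=\{a_{12},a_{23},a_{13}\}$ is irreducible with a single component, not two. Components can merge, and new hyperplanes (here $a_{13}$) can appear in $\calA_Z$ that were in neither $\calA_X$ nor $\calA_Y$.

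What saves the argument is the defining relation of $L^\mu(\calA)$ itself. The paper lets $Z_1,\dots,Z_p$ be the irreducible components of $Z$, groups the irreducible components $X_i$ of $X$ and $Y_j$ of $Y$ according to which $Z_q$ contains them, and checks that $Z_q=\bigcap_{i\in I_q}X_i\cap\bigcap_{j\in J_q}Y_j$ with $\codim Z_q=\sum_{i\in I_q}\codim X_i+\sum_{j\in J_q}\codim Y_j$. Since each $Z_q$ is irreducible, the relation in $L^\mu(\calA)$ collapses $\prod_{i\in I_q}(X_i,\codim X_i)\prod_{j\in J_q}(Y_j,\codim Y_j)$ to $(Z_q,\codim Z_q)$, and taking the product over $q$ gives $\Phi(X)\Phi(Y)=\Phi(Z)$. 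You need this merging step; the ``disjoint union of components'' shortcut does not hold.
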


\begin{proof}
We will describe a monoid morphism $\phi: L^\mu(\calA) \to L^{ch}(\calA)$.
Suppose $X \in L(\calA)$ is irreducible.  Let $a$ be a hyperplane in $\calA_X$.
We let $\phi(X, \mu) = a^{\mu - \codim X} X$, and extend multiplicatively.  The
second relation on $\calL^{ch}(\calA)$ ensures that this does not depend on the
choice of $a$, and it is easy to check that $\phi$ respects the defining
relation of $L^\mu(\calA)$, so $\phi$ is well-defined.  

We will define an inverse function $\psi$.  Suppose $X \in L(\calA)$ (not necessarily irreducible), and view it as an element of $L^{ch}(\calA)$.  Then $X$ may be written $X = \prod_{i=1}^l X_i$ where $X_1, \ldots, X_l$ are the irreducible components of $X$.  Let $\psi(X) = \prod_i (X_i, \codim X_i)$.  We claim that this defines a monoid map $L^{ch}(\calA) \to L^\mu(\calA)$ by extending multiplicatively.  

Let $a, b$ be hyperplanes in the same irreducible component $X_s$ of $X$.  Then 
\[\psi(a X) =  (X_s, \codim X_s + 1)\prod_{i\neq s} (X_i, \codim X_i)  = \psi(bX)\]
Thus, $\psi$ respects relation (2).  

Now suppose that $X, Y, Z \in L(\calA)$ with $X \cap Y = Z$ and $\codim Z = \codim X + \codim Y$.  As a warm up, consider first the case where $Z$ is irreducible.  Let $X_1, \ldots, X_n$ be the irreducible components of $X$ and $Y_1, \ldots, Y_m$ be the irreducible components of $Y$.  Since the irreducible components $X_i$ all intersect transversely and the irreducible components $Y_i$ intersect transversely, $\codim Z = \sum_i \codim X_i + \sum_j \codim Y_j$.  Thus
\[\begin{array}{rcl}
\psi(X)\psi(Y)  &=& \prod_{i=1}^n (X_i, \codim X_i) \times \prod_{j=1}^m (Y_j, \codim Y_j)\\
	        &=& (Z, \sum_i \codim X_i + \sum_j \codim Y_j)  \\
		&=& (Z, \codim Z) \\
                &=& \psi(Z)
\end{array}\]
The general case is not much different.  Let $Z_1, \ldots, Z_p$ be the irreducible components of $Z$, and let $I_q = \{i : Z_q \subset X_i \}$, $J_q =  \{i : Z_q \subset X_i \}$.  The $Z_i$ intersect transversely, and $Z_q = \left( \bigcap_{i \in I_q} X_i \right) \cap \left( \bigcap_{j \in J_q} Y_j \right)$, so a counting argument implies that $\codim Z_q =  \sum_{i \in I_q} \codim X_i +  \sum_{j \in J_q} \codim Y_j$.  Thus
\[\begin{array}{rcl}
\psi(X)\psi(Y)  &=& \prod_{i=1}^n (X_i, \codim X_i) \times \prod_{j=1}^m (Y_j, \codim Y_j)\\
	        &=& \prod_{q=1}^p \left( \prod_{i \in I_q} (X_i, \codim X_i) \times \prod_{j\in J_q} (Y_j, \codim Y_j) \right)\\
		&=& \prod_{q=1}^p \left( Z_q, \sum_{i \in I_q} \codim X_i +  \sum_{j \in J_q} \codim Y_j \right) \\
                &=& \prod_{q=1}^p \psi(Z_q, \codim Z_q) \\
                &=& \psi(Z)
\end{array}\]
Thus, $\psi$ respects relation (1).  We have shown that $\psi$ is well-defined, and it is clearly an inverse to $\phi$.
\end{proof}

\begin{definition}
Let $\fk{t}$ be a complex vector space.  A \emph{(complex) reflection} in
$GL(\fk{t})$ is an automorphism of finite order whose fixed locus is a
hyperplane.  A finite subgroup $W \subset GL(\fk{t})$ is a \emph{(complex)
reflection group} if it is generated by complex reflections \cite{shephard}.  There is a
hyperplane arrangement in $\fk{t}$ associated to $W$, consisting of the fixed
loci of all reflections in $W$.  The pair $(W, \fk{t})$ is called a
\emph{reflection arrangement}.  If $X \in L(\calA)$, write $W_X$ for the
subgroup of $W$ that fixes all points of $X$.
\end{definition}

\begin{lemma}
\label{irreps}
Let $(W, \fk{t})$ be a reflection arrangement with associated hyperplane arrangement $\calA$.  Suppose that $\calA$ is essential.  Then each irreducible representation of $W$ occurs in $\fk{t}$ with multiplicity at most $1$, and the decomposition of $\fk{t}$ into irreducible representations is identical to the decomposition of $\calA$ into irreducible hyperplane arrangements.  
\end{lemma}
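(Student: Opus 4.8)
The plan is to identify the decomposition of $\fk{t}$ into irreducible $W$-modules with the subspace decomposition $\fk{t} = \bigoplus_{i=1}^{k}\fk{t}_i$ attached to the decomposition $\calA = \calB_1\oplus\dots\oplus\calB_k$ into irreducible components (there is no empty summand, since $\calA$ is essential), and to check that it is multiplicity-free. The main tool I would use is an elementary observation about an arbitrary faithful complex reflection group $(W',V')$: if $V' = V'_1\oplus V'_2$ with both $V'_j$ nonzero $W'$-submodules, then every reflection $s$ of $W'$ acts as the identity on one of the summands. Indeed $(s-1)V'$ is a line $L$; since each $V'_j$ is $s$-stable, $(s-1)V'_j\subseteq V'_j\cap L$, and $L = (s-1)V'_1 + (s-1)V'_2$, so $L$ lies in a single summand, say $V'_1$, and then $(s-1)V'_2 = 0$. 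Sorting the reflections by the summand they live on, and using that they generate $W'$, one gets $W' = G_1\times G_2$ with $G_j$ generated by the reflections supported on $V'_j$, faithful on $V'_j$, and trivial on the other summand; consequently the mirror arrangement of $W'$ is the direct sum of the mirror arrangements of $G_1$ on $V'_1$ and of $G_2$ on $V'_2$, and a summand on which $G_j$ is trivial must be $0$ (a reflection group with no reflections is trivial, and in the essential case fixes only $0$).

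The first step is to show each $\fk{t}_i$ is a $W$-submodule. Every $w\in W$ conjugates reflections to reflections, hence permutes the hyperplanes of $\calA$, hence is an automorphism of the arrangement $\calA$ and so permutes its irreducible components; therefore $W$ acts by permutations on $\{\fk{t}_1,\dots,\fk{t}_k\}$, with $w\cdot\fk{t}_i$ the ambient space of $w(\calB_i)$. On the other hand, if $s$ is a reflection whose mirror $H$ lies in $\calB_i$, then $H\supseteq\bigoplus_{j\neq i}\fk{t}_j$, so $s$ fixes each $\fk{t}_j$ with $j\neq i$ pointwise; thus the permutation induced by $s$ fixes every index other than $i$, hence fixes $i$ as well. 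Since reflections generate $W$, every $\fk{t}_i$ is $W$-stable, and applying the observation above repeatedly to $\fk{t} = \bigoplus_i\fk{t}_i$ yields $W = W^{(1)}\times\dots\times W^{(k)}$, where $W^{(i)}$ is generated by the reflections with mirror in $\calB_i$, acts faithfully on $\fk{t}_i$, and acts trivially on the other summands. Since $\calA$ consists of all mirrors of $W$, a reflection of $W^{(i)}$ would, extended trivially to the other summands, be a reflection of $W$, so the mirror arrangement of $W^{(i)}$ on $\fk{t}_i$ is exactly $\calB_i$.

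Next I would show that $\fk{t}_i$ is an irreducible $W^{(i)}$-module, hence an irreducible $W$-module. If it decomposed as $Q_1\oplus Q_2$ with $Q_1,Q_2$ nonzero $W^{(i)}$-submodules, the observation would write $\calB_i$ --- which is the mirror arrangement of $W^{(i)}$ --- as a direct sum of two nonempty arrangements (neither empty, since $\calB_i$ is essential so neither factor of $W^{(i)}$ can be trivial), contradicting the irreducibility of $\calB_i$. Finally, the $\fk{t}_i$ are pairwise non-isomorphic as $W$-modules, because $W^{(i)}$ acts nontrivially on $\fk{t}_i$ but trivially on $\fk{t}_j$ for $j\neq i$. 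Combining these, $\fk{t} = \bigoplus_i\fk{t}_i$ is a multiplicity-free decomposition of $\fk{t}$ into irreducible $W$-modules --- the first assertion --- and, being multiplicity-free, it is the unique such decomposition and it coincides with the decomposition of $\calA$ into irreducible components --- the second assertion.

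The step I expect to be the main obstacle is the first one: showing that $W$ cannot permute the irreducible components of its own mirror arrangement nontrivially, i.e.\ that each $\fk{t}_i$ is $W$-stable. Once one notices that a reflection moves at most ``its own'' component among the $\fk{t}_i$, even this is short, and the rest is bookkeeping around the product decomposition $W = \prod_i W^{(i)}$. (This lemma is part of the classical structure theory of complex reflection groups, so one could also just cite it.)
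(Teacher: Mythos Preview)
Your proof is correct and follows essentially the same approach as the paper: both hinge on the observation that a reflection must act as the identity on all but one summand of any $W$-stable direct-sum decomposition, so that a representation-theoretic splitting of $\fk{t}$ forces a direct-sum splitting of $\calA$, and then both match the irreducible components $\calB_i$ with irreducible subrepresentations and distinguish them via the factor $W^{(i)}$ acting nontrivially only on $\fk{t}_i$. You are in fact a bit more careful than the paper in explicitly arguing that each $\fk{t}_i$ is $W$-stable before invoking irreducibility, a point the paper's proof glosses over.
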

\begin{proof}
We first show that if a reflection arrangement is irreducible as a hyperplane arrangement, then it is irreducible as a representation.  
Suppose that $\fk{t} = \fk{t}_1 \oplus \fk{t}_2$ as representations of $W$.  Every reflection in $W$ stabilizes $\fk{t}_1$ and $\fk{t}_2$, so every hyperplane in $\calA$ is pulled back from $\fk{t}_1$ or from $\fk{t}_2$.  Then $\calA = \calB_1 \oplus \calB_2$, where $\calB_i$ is the arrangement in $\fk{t}_i$ consisting of all hyperplanes in $\fk{t}_i$ that pull back to some hyperplane of $\calA$.  Further, neither of the $\calB_i$ are empty, since $\calA$ is essential.  This proves the sub-claim.

Since $\calA$ is essential, $\calA  =\calB_1 \oplus \ldots \oplus \calB_n$ where all the $\calB_i$ are irreducible and essential.  By the previous paragraph, the underlying vector space of $\calB_i$ is an irreducible representation of $W$.  Let $W_i \subset W$ be the subgroup generated by reflections over the hyperplanes in $\calB_i$.  Then $W = W_1 \times \ldots \times W_n$.  $\calB_i$ is fixed by $W_j$ for $j \neq i$, whereas $W_i$ acts on it nontrivially, so the $\calB_i$ are distinct representations of $W$.
\end{proof}

\begin{lemma}
\label{restrictToAx}
Let $(W, \fk{t})$ be a reflection arrangement, $\calA$ be the associated hyperplane arrangement and $X \in L(\calA)$.
\begin{enumerate}
\item $X = \fk{t}^{W_X}$.
\item $(W_X, \fk{t}/X)$ is an essential reflection arrangement.  
\item Let $\calA'$ be the hyperplane arrangement corresponding to $(W_X, \fk{t}/W)$.  Then $\calA_X = \calA' \oplus E$ for $E$ an empty arrangement.  In particular elements of $\irr \calA_X$ correspond bijectively to elements of $\irr \calA'$.
\end{enumerate}
\end{lemma}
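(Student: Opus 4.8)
The three parts all flow from a single classical input, so my plan is to isolate that first and then do linear algebra. The input is \emph{Steinberg's fixed-point theorem} for complex reflection groups: for any subspace $X\subseteq\fk{t}$, the pointwise stabilizer $W_X$ is generated by the reflections of $W$ that fix $X$ pointwise, equivalently by the reflections whose mirror belongs to $\calA_X$. (For Coxeter groups this is classical; in the generality of complex reflection groups it is due to Steinberg, and is also treated by Lehrer--Taylor.) Granting this, part (1) is immediate: $\fk{t}^{W_X}$ is the common fixed locus of a generating set of reflections of $W_X$, hence $\fk{t}^{W_X}=\bigcap_{a\in\calA_X}a$; by the fact recorded earlier that for a subspace $X$ one has $X\in L(\calA)$ iff $\codim X=\codim\calA_X$, this intersection has codimension $\codim X$, and since it evidently contains $X$ the two coincide.

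For (2): since $W_X$ fixes $X$ pointwise it acts on $\fk{t}/X$, and I would first check this action is faithful so that the notation $(W_X,\fk{t}/X)$ is literally meaningful. If $w\in W_X$ acts trivially on $\fk{t}/X$ then $(w-1)\fk{t}\subseteq X$ while $w$ is the identity on $X$, so $(w-1)^2=0$; a unipotent element of finite order in characteristic zero is the identity, hence $w=1$. Each Steinberg generator of $W_X$ is a reflection with mirror containing $X$, and such a reflection descends to a complex reflection on $\fk{t}/X$ whose mirror is the image of the original one; so the image of $W_X$ in $GL(\fk{t}/X)$ is generated by complex reflections, i.e.\ $(W_X,\fk{t}/X)$ is a reflection arrangement. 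It is essential because, by Maschke, the functor of $W_X$-invariants is exact on the sequence $0\to X\to\fk{t}\to\fk{t}/X\to 0$, giving $(\fk{t}/X)^{W_X}=\fk{t}^{W_X}/X^{W_X}=X/X=0$ by part (1); and for a reflection arrangement the intersection of all mirrors is exactly this fixed space.

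For (3): the key observation is that $\calA_X$ is the \emph{entire} mirror arrangement of $(W_X,\fk{t})$, not merely part of it --- a mirror $a\in\calA$, say the fixed hyperplane of $s\in W$, contains $X$ if and only if $s$ fixes $X$ pointwise, i.e.\ $s\in W_X$. Choosing (Maschke) a $W_X$-equivariant splitting $\fk{t}=X\oplus Y$ with $Y\cong\fk{t}/X$ as $W_X$-representations, every mirror of $W_X$ on $\fk{t}$ is the preimage under $\fk{t}\to\fk{t}/X$ of the corresponding mirror of $(W_X,\fk{t}/X)=\calA'$; thus $\calA_X=\calA'\oplus E$ with $E$ the empty arrangement on $X$. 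Finally, adjoining an empty arrangement leaves the list of irreducible essential summands unchanged --- immediate from the uniqueness of the decomposition $\calA\cong\mathcal{E}\oplus\calB_1\oplus\cdots\oplus\calB_n$ --- so $\irr\calA_X=\irr\calA'$, which is the asserted bijection.

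The only real subtlety is getting the first step right, namely the correct citation and use of Steinberg's theorem (so that $W_X$ is genuinely generated by reflections with mirrors in $\calA_X$); there is no serious obstacle after that, just the two bookkeeping points flagged above --- faithfulness of the action on $\fk{t}/X$ in (2), and the fact that $\calA_X$ is the full mirror arrangement of $W_X$ in (3).
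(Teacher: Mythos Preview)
Your proof is correct and follows essentially the same route as the paper's: Steinberg's theorem is the key input, and everything else is linear algebra with Maschke. Two small differences worth noting. First, the paper proves (1) without Steinberg at all: since $X\in L(\calA)$, any $t\notin X$ lies outside some $a\in\calA_X$, and the reflection with mirror $a$ already lies in $W_X$ (it fixes $X$ pointwise) and moves $t$; so one only needs that such reflections are \emph{in} $W_X$, not that they generate it. Second, you check faithfulness of the $W_X$-action on $\fk{t}/X$ explicitly, which the paper leaves implicit. For (3) the paper gets the splitting by decomposing $\calA_X=E\oplus\bigoplus\calB_i$ as a hyperplane arrangement and observing $X=E$, whereas you choose a $W_X$-equivariant complement via Maschke; these amount to the same thing.
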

\begin{proof}
$X \subset \fk{t}^{W_X}$ is tautological.  Since $X \in L(\calA)$, $X$ is the intersection of all hyperplanes in $\calA_X$.  In particular if $t \in \fk{t} - X$ then there is some hyperplane $a$ in $\calA_X$ that does not include $t$.  Let $w \in W$ be a reflection whose fixed locus is $a$.  Then $w \in W_X$ but $w$ does not fix $t$.  This shows (1).
Write $\calA_X = E + \bigoplus_{i=1}^n \calB_i$ where the $\calB_i$ are irreducible and essential.  Then $X \subset E$ and (1) implies that $X = E$.

$W_X$ acts naturally on $\fk{t}/X$.  \cite{Steinberg} theorem 1.5 implies that $W_X$ is generated by reflections over hyperplanes in $\calA_X$.  If $w \in W_X$ is reflection over a hyperplane $a \in \calA$, then $w$ acts on $\fk{t}/X$ by reflection over $a/X$.  This shows (2).  It also implies that $\calA'$ consists of the hyperplanes $a/X$ as $a$ ranges over $\calA_X$.  Since $X = E$, this implies that $\calA' = \bigoplus_{i=1}^n \calB_i$, which shows (3).
\end{proof}

\section{Cameral covers of $\spec \C$}\label{chapCameralCovers} 

In this section we define cameral covers, give some examples, and describe an induction operation on cameral covers.  We use this operation to classify cameral covers of $\spec \C$ (\cref{kPointsFinal}) and to compute their automorphism groups (\cref{pointStabalizers}).

\begin{definition}
Let $(W, \fk{t})$ be a reflection arrangement, and let $C \to S$ be an $S$-scheme with an action of $W$ that fixes the map $C \to S$.  $C$ is a $(W, \fk{t})$-\emph{cameral cover} of $S$ if there exists an \'{e}tale cover $U \to S$, a map $U \to \fk{t}/W$, and a $W$-equivariant isomorphism $U \times_{\fk{t}/W} \fk{t} \cong U \times_S C$ over $U$.  A \emph{pointed} cameral cover is a cameral cover with a chosen section.
\end{definition}

By the Chevalley-Shephard-Todd Theorem, $\fk{t}/W$ is the spectrum of a polynomial ring and the pushforward of $\calO_{\fk{t}}$ to $\fk{t}/W$ is locally free of rank 1 as a $\calO_{\fk{t}/W}[W]$-module (see \cite{lehrer2009unitary}, section 3.5).  Therefore if $f: C \to S$ is a cameral cover then $f_* \calO_C$ is locally isomorphic to $\calO_S[W]$.

\begin{example}
Here are some examples of cameral covers.
\begin{itemize}
\item  Any $W$-torsor is a cameral cover.
\item  Let $\calF$ be a locally free sheaf of rank $|W|$ on $S$, with a right $W$ action making it locally isomorphic to $\calO_S \otimes_\C \fk{t}$.  Then $\mathrm{Spec}_B\, \sym^\bul \calF^\vee /I$, where $I$ is the ideal sheaf generated by $W$-invariant sections of positive degree, is a cameral cover.  This cameral cover will be ramified everywhere.  The locally-defined map $S \to \fk{t}/W$ who witnesses that this is a cameral cover is the one factoring through the origin.
\item  Let $\fk{t} = \A^1$ and $W = \Sigma_2$ with the nontrivial element acting on $\A^1$ by $z \to -z$.  Let $L \to S$ be some line bundle.  Then the first infinitesimal neighborhood of the zero section of $L$, with $W$ action coming from the action of $W \cong \mu_2 \into \G_m$, is a cameral cover.  This is a special case of the previous example.
\item  Again let $(W, \fk{t}) = (\Sigma_2, \A^1)$.  In fact, any rank-$2$ finite morphism $C \to S$ has a canonical action of $W$ making it a cameral cover.  Here is the reason: Zariski locally on $S$, $C \cong \spec \calO_S[x]/(x^2 + bx + c)$ for some $b, c \in \calO_S$.  Completing the square, we may assume that $b = 0$.  Then the map $x \to -x$ defines an action of $W$, and this action does not depend on the choice of presentation.  $\fk{t} \to \fk{t}/W$ is isomorphic to the double cover $\spec k[u] \to \spec k[u^2]$.  Locally on $S$, $C \to S$ is isomorphic (over $S$, and as a $W$-scheme) to the pullback of $\fk{t} \to \fk{t}/W \cong \spec \C[u^2]$ along the map $S \to \fk{t}/W$ defined by $u^2 \to c$.
\end{itemize}
\end{example} 

Cameral covers generalize a better-known notion: 

\begin{definition}
A rank-$n$ \emph{spectral cover} $Y \to S$ is a rank-$n$ finite cover such that
there exists an \'{e}tale cover $X \to S$ and an embedding $Y \times_S X \into
\A^1 \times X$ over $X$. 
\end{definition}

Let $(W, \fk{t}) = (\Sigma_n, \C^{\oplus n})$, where $\Sigma_n$ acts by permuting the basis elements.   If $C \to S$ is a cameral cover, then $C/\Sigma_{n-1} \to S$ is a spectral cover.  This is one side of an equivalence of categories between rank-$n$ spectral covers and $(\Sigma_n, \C^{\oplus n})$-cameral covers (\cite{DG}, proposition 9.3)

\begin{definition}
Let $\pi:D \to S$ be an $S$-scheme with $W'$ action for some subgroup $W' \subseteq W$.  Define $\ind_{W'}^W D$ to be the balanced product $W \times_{W'} D$.  Equivalently $\ind_{W'}^W X = \pi_*\calO_D  \otimes_{\C[W']} \spec \C[W]$, where the $\C[W]$ factor is made a ring under pointwise multiplication.  
\end{definition}

\begin{lemma}
Let $\calA$ be the hyperplane arrangement in $\fk{t}$ associated to $(W, \fk{t})$,  $X \in L(\calA)$, $U = \fk{t}-\bigcup_{a \not\in \calA_X} a$ and $S$ be the image of $U$ in $\fk{t}/W_X$.  Let $C \to S$ be the restriction of the tautological $(W_X, \fk{t})$-cameral cover $\fk{t} \to \fk{t}/W_X$ to $U$.  $S$ maps to $\fk{t}/W$ via the composition $S \into \fk{t}/W_X \to \fk{t}/W$.  Using this map, $S \times_{\fk{t}/W} \fk{t} \cong \ind_{W_X}^W C$.
\label{lemUniversalInd}
\end{lemma}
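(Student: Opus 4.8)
The plan is to write down the obvious comparison map and then show it is an isomorphism fibrewise over $S$.

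First I would observe that $U$ is $W_X$-stable: for $w \in W_X$ and $a \in \calA$ one has $w(a) \in \calA_X$ precisely when $X = w\i X \subseteq a$, i.e.\ precisely when $a \in \calA_X$, so $W_X$ preserves $\calA \setminus \calA_X$ and hence $U$. Thus $C$ is just $U$ equipped with its $W_X$-action, and $C \to S$ is the restriction of $\pi_{W_X} \colon \fk{t} \to \fk{t}/W_X$ to the open subset $S = \pi_{W_X}(U) \subseteq \fk{t}/W_X$ (open because $\pi_{W_X}$ is finite, hence closed, and $\pi_{W_X}\i(S) = U$). The inclusion $U \into \fk{t}$ and the structure map $\pi_{W_X}|_U \colon U \to S$ become equal after composing to $\fk{t}/W$, so together they define an $S$-map $C \to S \times_{\fk{t}/W} \fk{t}$, and this map is $W_X$-equivariant for the action of $W_X \subseteq W$ on $S \times_{\fk{t}/W} \fk{t}$ through the $\fk{t}$-factor. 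By the universal property of $\ind_{W_X}^W$ it extends uniquely to a $W$-equivariant $S$-map $\tilde{\jmath} \colon \ind_{W_X}^W C \to S \times_{\fk{t}/W} \fk{t}$; this $\tilde{\jmath}$ is the isomorphism to be produced.

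Next I would reduce to a fibrewise statement. Both sides of $\tilde{\jmath}$ are finite and flat over $S$ of degree $\#W$: the target is a cameral cover of $S$, namely the pullback of the tautological cover $\fk{t} \to \fk{t}/W$, and the source is $\ind_{W_X}^W$ of the degree-$\#W_X$ cameral cover $C \to S$, hence a disjoint union of $[W:W_X]$ copies of $C$ over $S$. Consequently $\tilde{\jmath}$ induces an $\calO_S$-algebra map $\varphi \colon \pi_*\calO_{S \times_{\fk{t}/W}\fk{t}} \to \rho_*\calO_{\ind_{W_X}^W C}$ between locally free sheaves of rank $\#W$, where $\pi,\rho$ are the structure maps to $S$. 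Since pushforward along an affine morphism commutes with base change, $\varphi \otimes_{\calO_S} k(s)$ is the map on global functions induced by $\tilde{\jmath}_s \colon (\ind_{W_X}^W C)_s \to (S \times_{\fk{t}/W}\fk{t})_s$; and a map of locally free sheaves of the same finite rank is an isomorphism if and only if it is one on every fibre. So it suffices to show that $\tilde{\jmath}_s$ is an isomorphism for every $s \in S$.

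Finally I would analyse the fibre. Fix $s$, choose a lift $u \in U$, and write $\bar s \in \fk{t}/W_X$, $p \in \fk{t}/W$ for its images and $R = \Gamma(\fk{t},\calO)$. Then $(S \times_{\fk{t}/W}\fk{t})_s = \spec(R/\mathfrak{m}_p R)$, supported on the orbit $Wu$, while $(\ind_{W_X}^W C)_s = W \times_{W_X} \spec(R/\mathfrak{m}_{\bar s} R)$ with $\spec(R/\mathfrak{m}_{\bar s} R)$ supported on $W_X u$; that whole $W_X$-orbit lies in $U$, which is exactly why restricting $C$ to $U$ did not alter the fibre. The key geometric input is $W_u := \mathrm{Stab}_W(u) \subseteq W_X$: every mirror through $u$ lies in $\calA_X$ because $u \in U$, and $W_u$ is generated by the reflections whose mirror contains $u$ (\cite{Steinberg}; cf.\ the proof of \cref{restrictToAx}). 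Hence $\fk{t} \to \fk{t}/W$ and $\fk{t} \to \fk{t}/W_X$ have the same local degree $\#W_u$ at $u$, so the ideals $\mathfrak{m}_p R_u \subseteq \mathfrak{m}_{\bar s} R_u$ of the regular local ring $R_u = \calO_{\fk{t},u}$ are both $\mathfrak{m}_u$-primary of colength $\#W_u$, hence equal; this makes $\tilde{\jmath}$ an isomorphism near the point $u$, and $W$-equivariance spreads it over all of $Wu$. The last thing to check is that $W \times_{W_X}(-)$ reassembles the fibre without overlap, i.e.\ that each point of $Wu$ is met by a single $W/W_X$-coset of $\ind_{W_X}^W C$; this holds because $\{w \in W : w W_X u = W_X u\} = W_X$, again a consequence of $W_u \subseteq W_X$. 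I expect this final bit of bookkeeping — matching $\tilde{\jmath}_s$ precisely with the orbit-by-orbit and coset-by-coset decompositions, with due attention to the non-reduced structure over the ramification of $\fk{t}/W$ — to be the most delicate step; the only geometry in it is the inclusion $W_u \subseteq W_X$, valid exactly for points of $U$. (Alternatively one can deduce the fibrewise isomorphism from Luna's \'etale slice theorem applied to the finite-group quotients $\fk{t} \to \fk{t}/W$ and $\fk{t} \to \fk{t}/W_X$, choosing a slice at $u$ contained in $U$ that works for $W_X$ and, translated around, for $W$; the combinatorial content is the same.)
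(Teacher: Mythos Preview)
Your proof is correct, and the combinatorial heart is the same as the paper's: both hinge on the inclusion $W_u \subseteq W_X$ for $u \in U$ (Steinberg), which makes the orbit map $W \times_{W_X}(W_X u) \to Wu$ a bijection.

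Where you diverge is in how you reduce to this core. The paper observes that both $\ind_{W_X}^W C$ and $S \times_{\fk{t}/W}\fk{t}$ are smooth varieties, so a morphism between them that is a bijection on closed points is automatically an isomorphism; the fibrewise non-reduced structure never enters. (The smoothness of $S\times_{\fk{t}/W}\fk{t}$ is asserted without proof; it holds because $S \to \fk{t}/W$ is \'etale, which is again a consequence of $W_u \subseteq W_X$.) You instead use that both sides are finite locally free of rank $\#W$ over $S$, reduce by Nakayama to the fibre over each $s$, and then match the non-reduced local structure by your colength argument $\mathfrak{m}_p R_u = \mathfrak{m}_{\bar s} R_u$. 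Your route is longer but more self-contained: it does not require knowing in advance that the target is smooth, and it makes explicit exactly where the equality of local degrees is used. The paper's route is slicker once one grants smoothness, and avoids all the bookkeeping with non-reduced fibres that you flag as delicate.
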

\begin{proof}
Note that, by part 1 of \cref{restrictToAx}, $U$ includes precisely those elements of $\fk{t}$ which are not fixed by any $w \in W - W_X$.  There is a natural map $u: S \times_{\fk{t}/W_X} \fk{t} \to S \times_{\fk{t}/W} \fk{t}$.  $u$ is a map of $W_X$ schemes, so it induces a map $\bar{u}: W \times_{W_X} \left( S \times_{\fk{t}/W_X} \fk{t} \right) \to S \times_{\fk{t}/W} \fk{t}$.  We claim that this is an isomorphism, and since the domain and codomain are both normal (in fact smooth) varieties it suffices to show that $\bar{u}$ is a bijection on closed points.  Let $s \in S$ be a closed point and let $\bar{u}_s: W \times_{W_X} \left( s \times_{\fk{t}/W_X} \fk{t} \right) \to s \times_{\fk{t}/W} \fk{t}$ be the fiber of $\bar{u}$ over $s$.  Write $W_X \cdot s$ for the orbit of $s$. Restricting $\bar{u}_s$ to closed points we get the natural map $W \times_{W_X} (W_X\cdot s) \to W \cdot s$.  This map is a bijection since $s$ is not fixed by any element of $W-W_X$.
\end{proof}

\begin{proposition}
\label{corInductionWorks}
If $C \to S$ is a $(W_X, \fk{t})$ cameral cover then $\ind_{W_X}^W C$ is a $(W, \fk{t})$-cameral cover.  
\end{proposition}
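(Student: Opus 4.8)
The plan is to reduce to the tautological $(W_X,\fk{t})$-cameral cover, and then to upgrade \cref{lemUniversalInd}, which describes $\ind_{W_X}^W$ only over a proper open piece of $\fk{t}/W_X$, to a statement over all of $\fk{t}/W_X$; the new ingredient is the action of $X$ on $\fk{t}/W_X$ by translations.

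First I would record two formal facts. Being a $(W,\fk{t})$-cameral cover is an \'etale-local condition on the base (if $\{S_i\to S\}$ is an \'etale cover and each $C\times_S S_i\to S_i$ is one, compose the \'etale covers witnessing this), and $\ind_{W_X}^W(-)=W\times_{W_X}(-)$ commutes with base change along any $S'\to S$ and carries isomorphisms of $W_X$-schemes over $S$ to isomorphisms. By the definition of a $(W_X,\fk{t})$-cameral cover, after passing to an \'etale cover of $S$ we may assume $C=g^*\fk{t}$ for some $g\colon S\to\fk{t}/W_X$, a pullback of the tautological cover $\fk{t}\to\fk{t}/W_X$; then $\ind_{W_X}^W C=g^*(\ind_{W_X}^W\fk{t})$, and since a pullback of a $(W,\fk{t})$-cameral cover is again one (compose \'etale covers and maps to $\fk{t}/W$), it suffices to prove that $\ind_{W_X}^W\fk{t}\to\fk{t}/W_X$ is a $(W,\fk{t})$-cameral cover.

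Next I would carry out the translation argument. Write $U_X=\fk{t}-\bigcup_{a\notin\calA_X}a$ and let $\bar U_X\subset\fk{t}/W_X$ be its image, so that \cref{lemUniversalInd} provides a $W$-equivariant isomorphism $\ind_{W_X}^W\fk{t}|_{\bar U_X}\cong\bar U_X\times_{\fk{t}/W}\fk{t}$ over $\bar U_X$. For $c\in X$ (note $X=\fk{t}^{W_X}$ by \cref{restrictToAx}, so $W_X$ fixes $c$) the translation $\tau_c\colon v\mapsto v+c$ of $\fk{t}$ is $W_X$-equivariant and covers an automorphism $t_c$ of $\fk{t}/W_X$; hence $t_c^*(\fk{t}\to\fk{t}/W_X)\cong(\fk{t}\to\fk{t}/W_X)$ as $W_X$-schemes over $\fk{t}/W_X$. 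Set $\Omega_c=t_c^{-1}(\bar U_X)$. Two checks remain. (i)~The opens $\Omega_c$, $c\in X$, cover $\fk{t}/W_X$: for $[v_0]\in\fk{t}/W_X$ one needs $c\in X$ with $v_0+c$ on no mirror outside $\calA_X$, which is possible because each such mirror $a$ has $X\not\subseteq a$, so $(v_0+X)\cap a$ is a proper affine-linear subspace of $v_0+X$, and finitely many of these cannot exhaust $v_0+X$. (ii)~Over each $\Omega_c$, applying $\ind_{W_X}^W$ to $t_c^*\fk{t}\cong\fk{t}$, restricting (using once more that $\ind$ commutes with base change), and pulling back the isomorphism of \cref{lemUniversalInd} along the isomorphism $t_c\colon\Omega_c\cong\bar U_X$, one obtains
\[\ind_{W_X}^W\fk{t}|_{\Omega_c}\;\cong\;\Omega_c\times_{\fk{t}/W}\fk{t},\]
an isomorphism over the composition of $t_c\colon\Omega_c\cong\bar U_X$ with $\bar U_X\into\fk{t}/W_X\to\fk{t}/W$. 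Thus $\ind_{W_X}^W\fk{t}$ is, Zariski-locally on $\fk{t}/W_X$, a pullback of $\fk{t}\to\fk{t}/W$, and so it is a $(W,\fk{t})$-cameral cover.

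The one genuinely substantive point---the rest being bookkeeping---is the observation that one cannot simply compose $g$ with the natural map $\fk{t}/W_X\to\fk{t}/W$ and quote \cref{lemUniversalInd}: that lemma controls $\ind_{W_X}^W$ only over the proper open set $\bar U_X\subsetneq\fk{t}/W_X$, and a $(W_X,\fk{t})$-cameral cover may well have classifying map meeting its complement---already over $\spec\C$, for instance when the cover corresponds to a point of $\fk{t}$ fixed by some $w\in W-W_X$. The translations by $X$ are exactly what repair this: they act on $\fk{t}/W_X$ and preserve the tautological cover there, yet become invisible after mapping down to $\fk{t}/W$.
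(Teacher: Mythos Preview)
Your proof is correct and follows essentially the same approach as the paper's: both reduce to the tautological $(W_X,\fk{t})$-cover $\fk{t}\to\fk{t}/W_X$, then use translations by elements of $X=\fk{t}^{W_X}$ (which are $W_X$-equivariant and hence descend to $\fk{t}/W_X$) to move any point into the open set $\bar U_X$ on which \cref{lemUniversalInd} applies. The paper organizes this pointwise---given $p\in\fk{t}/W_X$, it chooses a single $x\in X$ and builds a neighborhood $V$ of $p$ mapping into $\bar U_X$---whereas you phrase it as a Zariski cover $\{\Omega_c\}_{c\in X}$, but the substance is identical.
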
 
\begin{proof}
Being a cameral cover is preserved under pullback, and $C \to S$ is locally pulled back from $\fk{t} \to \fk{t}/W_X$.  Therefore it would suffice to show that for any $p \in \fk{t}/W_X$ there exists a neighborhood $V$ of $p$ and a map $g: V \to \fk{t}/W$ such that $\ind_{W_X}^W \fk{t}|_V$ is isomorphic to $g^*\fk{t}$.  Let $t_p \in \fk{t}$ mapping to $p$.  Choose $x \in X$ so that $t_p + x \not\in a $ for any $a \in \calA_X - \calA$.  Since $X$ is not contained in any such $a$, a generic choice of $x$ will accomplish this.  Let $V' \subset \fk{t}$ be the neighborhood of $t_p$ consisting of all points $t$ such that $t + x \not\in a$ for any $a \in \calA - \calA_X$, and let $V = V'/W_X$ be its image in $\fk{t}/W_X$.  This is a Zariski neighborhood of $p$, since finite quotient maps are open.  Let $f': V' \to \fk{t}$ be the map $t \to t+x$.  Since $x$ is $W_X$ invariant, $f'$ is $W_X$-equivariant.  Therefore it defines a map $f: V \to \fk{t}/W_X$, with image disjoint from the image of $X$.  Let $g: V \to \fk{t}/W$ be the composition of $f$ with the quotient map $q: \fk{t}/W_X \to \fk{t}/W$.  

\Cref{lemUniversalInd} shows that $q^*(\fk{t} \to \fk{t}/W)$ is isomorphic to $\ind_{W_X}^W(\fk{t} \to \fk{t}/W_X)$.  On the other hand, $f^* \fk{t}$ is isomorphic to $\fk{t}|_V$ since $f$ is an affine shift in a direction complementary to the action of $W_X$.
\end{proof}

\Cref{corInductionWorks} implies a pointed version of the same statement: if $(C \to S, \sigma: S \to C)$ is a pointed cameral cover, then $\ind_{W_X}^W C$ is a cameral cover with section $W_X \times \sigma: S \to W/W_X \times C$.
 
\begin{notation}
If $(H, \fk{t})$ is a reflection arrangement, let $Co(H, \fk{t})$ be the fiber of $\fk{t} \to \fk{t}/H$ over the origin.  
\end{notation}

$Co(H, \fk{t})$ is identical to the spectrum of the coinvariant algebra of $(H, \fk{t})$.  In other words, if we let $I \subset \sym^\bul \fk{t}^\vee$ be the ideal generated by $H$-invariant polynomials of strictly positive degree then $Co(H, \fk{t}) = \spec \left( \sym^\bul \fk{t}^\vee \right) /I$.  It is a cameral cover of $\spec \C$.  Note that $Co(H, \fk{t}) = Co(H, \fk{t}/\fk{t}^H)$.  $Co(H, \fk{t})$ is a local ring, and the tangent space at the maximal point is isomorphic as an $H$ representation to $\fk{t}/\fk{t}^H$.  

\begin{proposition}
\label{kPoints}
Let $C \to \spec \C$ be a cameral cover and $\sigma \in C$ a closed point.  
Let $H = \stab_W \sigma$, and write $T_\sigma C$ for the tangent space to $C$ at $\sigma$.  Note that this carries a canonical $H$-action. 
\begin{enumerate}
\item $H = W_X$ for some $X \in L(\calA)$.
\item $C \cong \ind_{W_X}^W Co(W_X, T_\sigma C)$.
\end{enumerate}
\end{proposition}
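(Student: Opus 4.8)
The plan is to reduce to the case where $C$ is a single fiber of $\fk{t}\to\fk{t}/W$, deduce (1) from Steinberg's fixed‑point theorem, and obtain (2) by base‑changing the induction isomorphism of \cref{lemUniversalInd} to that fiber's point in $\fk{t}/W_X$. For the reduction: every \'etale cover of $\spec\C$ is a finite disjoint union of copies of $\spec\C$, so unwinding the definition of a cameral cover produces a $W$‑equivariant isomorphism $C\cong\fk{t}_p:=\fk{t}\times_{\fk{t}/W}\spec\C$ for some closed point $p\in\fk{t}/W$. After fixing such an isomorphism, $\sigma$ becomes a point $t_\sigma\in\fk{t}$ over $p$, with $H=\stab_W t_\sigma$ its stabilizer for the $W$‑action on $\fk{t}$.

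For part (1) I would set $X=\fk{t}^{H}$. Steinberg's theorem \cite{Steinberg} shows $H$ is generated by the reflections of $W$ fixing $t_\sigma$; the mirrors of those reflections all contain $t_\sigma$, so $X$ is their intersection, hence $X\in L(\calA)$. Then $H$ fixes $X$ pointwise, so $H\subseteq W_X$, while $t_\sigma\in X$ gives $W_X\subseteq\stab_W t_\sigma=H$; thus $H=W_X$. The same reasoning (a mirror $a$ contains $X$ iff the reflection over $a$ lies in $H$ iff $t_\sigma\in a$) identifies $\calA_X$ with the set of mirrors through $t_\sigma$.

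For part (2), keep this $X$. Since no mirror outside $\calA_X$ passes through $t_\sigma$, that point lies in the open set $U=\fk{t}-\bigcup_{a\notin\calA_X}a$ of \cref{lemUniversalInd}; let $q\in S:=U/W_X$ be its image, which maps to $p$ in $\fk{t}/W$. \Cref{lemUniversalInd} gives $S\times_{\fk{t}/W}\fk{t}\cong\ind_{W_X}^W(\fk{t}|_U)$, where $\fk{t}|_U$ is $U$ with its $W_X$‑action and its map to $S$. Restricting along $q\hookrightarrow S$, and using that $\ind_{W_X}^W D=W\times_{W_X}D$ plainly commutes with base change in the base, I get
\[
C\;\cong\;\fk{t}_p\;\cong\;q\times_{\fk{t}/W}\fk{t}\;\cong\;\ind_{W_X}^W\!\bigl(\fk{t}|_U\times_S q\bigr),
\]
where $\fk{t}|_U\times_S q$ is the fiber of $U\to U/W_X$ over $q$. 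Because $t_\sigma\in X=\fk{t}^{W_X}$ (\cref{restrictToAx}(1)), translation by $-t_\sigma$ is a $W_X$‑equivariant automorphism of $\fk{t}$ sending $t_\sigma$ to $0$; it should identify this fiber with the fiber of $\fk{t}\to\fk{t}/W_X$ over the origin — the two agree because the latter is supported at the single point $0$, which lies in the translate of $U$ — that is, with $Co(W_X,\fk{t})$. This yields $C\cong\ind_{W_X}^W Co(W_X,\fk{t})$. Chasing $\sigma$ through the isomorphisms, it becomes the closed point of the one summand $Co(W_X,\fk{t})$ preserved by $W_X$, on which $W_X$ acts through its standard action; hence $T_\sigma C\cong\fk{t}/\fk{t}^{W_X}$ as a $W_X$‑representation. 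Since $Co(W_X,\fk{t})=Co(W_X,\fk{t}/\fk{t}^{W_X})=Co(W_X,T_\sigma C)$, this is exactly the asserted form.

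The geometry is elementary, so the work is all in the scheme theory, and I expect the main obstacle to be the step where the fiber of $U\to U/W_X$ is identified, after translation, with the coinvariant scheme $Co(W_X,\fk{t})$: one must check that replacing $U$ by $\fk{t}$ and translating by $-t_\sigma$ preserves the (possibly nonreduced) scheme structure, and then carry the $W_X$‑action carefully through the chain of isomorphisms so as to be certain the induced action on $T_\sigma C$ is the canonical one named in the statement. Part (1), by contrast, is little more than a citation of Steinberg's fixed‑point theorem.
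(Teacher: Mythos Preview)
Your proposal is correct and follows essentially the same route as the paper: reduce to a fiber of $\fk{t}\to\fk{t}/W$, invoke Steinberg's theorem for (1), and apply \cref{lemUniversalInd} together with $Co(W_X,\fk{t})=Co(W_X,\fk{t}/\fk{t}^{W_X})\cong Co(W_X,T_\sigma C)$ for (2). You are in fact more explicit than the paper about the translation by $-t_\sigma$ identifying the fiber over $q$ with $Co(W_X,\fk{t})$; the paper suppresses this step, so your caution there is well placed but the concern dissolves exactly as you suspect.
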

\begin{proof}
Fix an isomorphism between $C$ and $p \times_{\fk{t}/W} \fk{t}$ for some $\C$-point $p$ of $\fk{t}/W$.  Let $t \in \fk{t}$ mapping to $\sigma$, so that $H = \stab_W t$.  Let $X$ be the intersection of all hyperplanes including $t$.  Then \cite{Steinberg} theorem 1.5 shows that $H = W_X$.  

By definition $p \times_{\fk{t}/W_X} \fk{t} \cong Co(W_X, \fk{t})$.  By \Cref{lemUniversalInd}, $p \times_{\fk{t}/W} \fk{t} \cong \ind_{W_X}^W  Co(W_X, \fk{t})$.  $Co(W_X, \fk{t}) \cong Co(W_X, \fk{t}/\fk{t}^{W_X})$ and $T_\sigma Co(H, \fk{t})$ is isomorphic as an $W_X$ representation to $\fk{t}/\fk{t}^{W_X}$, so this implies (2).
\end{proof}

\begin{example}  
Let $\fk{t} = \A^1$ and $W = \Sigma_2$ acting on $\fk{t}$ by $z \to -z$.  Let $p \in \A^1$.  Consider the cameral cover $p \times_{\fk{t}/W} \fk{t}$.  Either $p$ is the origin or it isn't.  
\begin{itemize}
\item If $p$ is not the origin then the stabilizer of $p$ is $1 = W_{\fk{t}}$.  $Co(W_{\fk{t}}, \fk{t}) = \spec \C$, so $\ind_{W_{\fk{t}}}^W Co(W_{\fk{t}}, \fk{t}) = W$.  And indeed, direct computation shows $p \times_{\fk{t}/W} \fk{t} \cong W$.
\item If $p$ is the origin then the stabilizer of $p$ is $W_{0} = W$.  Thus $\ind_{W_0}^W Co(W_0, \fk{t}) = Co(W, \fk{t})$.  And indeed, $p \times_{\fk{t}/W} \fk{t} = Co(W, \fk{t})$ by definition.
\end{itemize}
\end{example}

\begin{corollary}
\label{kPointsFinal}
\hspace{2mm}
\begin{enumerate}
\item Isomorphism classes of pointed cameral covers of $\spec \C$  correspond bijectively to elements of $L(\calA)$.  
\item Isomorphism classes of cameral covers of $\spec \C$ correspond bijectively to $W$-orbits in $L(\calA)$.  
\end{enumerate}
\end{corollary}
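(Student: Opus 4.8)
The plan is to read off both statements from \cref{kPoints} with some bookkeeping about sections and stabilizers. For (1), I would set up two mutually inverse assignments. Given a pointed cameral cover $(C \to \spec\C, \sigma)$, put $H = \stab_W \sigma$; by \cref{kPoints}(1) we have $H = W_X$ for some $X \in L(\calA)$, and $X$ is uniquely determined by $H$ since $X = \fk{t}^{W_X} = \fk{t}^H$ by \cref{restrictToAx}(1). Because $H$ depends only on the isomorphism class of $(C,\sigma)$, this gives a well-defined map to $L(\calA)$. Conversely, given $X \in L(\calA)$, form $C_X := \ind_{W_X}^W Co(W_X, \fk{t})$; this is a $(W,\fk{t})$-cameral cover of $\spec\C$ by \cref{corInductionWorks} (with $S = \spec\C$), since $Co(W_X,\fk{t})$ is a $(W_X,\fk{t})$-cameral cover of $\spec\C$, and I point it by $\sigma_X := [1,o]$, where $o$ is the unique closed point of the local scheme $Co(W_X,\fk{t})$, so that (as $o$ is $W_X$-fixed) $\stab_W \sigma_X = \stab_{W_X} o = W_X$.

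Checking these are inverse is essentially \cref{kPoints}(2). The composite $X \mapsto (C_X,\sigma_X) \mapsto Y$ returns the unique $Y$ with $W_Y = W_X$, hence $Y = X$. For the other composite, \cref{kPoints}(2) gives $C \cong \ind_{W_X}^W Co(W_X, T_\sigma C)$, and since $T_\sigma Co(W_X,\fk{t}) \cong \fk{t}/\fk{t}^{W_X}$ as $W_X$-representations while $Co(W_X,\fk{t}) = Co(W_X,\fk{t}/\fk{t}^{W_X})$, the right-hand side is $C_X$. The point needing care --- which I expect to be the main (if small) obstacle --- is that this isomorphism can be taken to send $\sigma$ to $\sigma_X$; for this I would revisit the proof of \cref{kPoints}, where $C$ is presented as $p \times_{\fk{t}/W}\fk{t}$ with a point $t$ lying over $\sigma$ satisfying $t \in X$, so that $t$ projects to the origin in $\fk{t}/X$ and hence $p \times_{\fk{t}/W_X}\fk{t} \cong Co(W_X,\fk{t})$ in a way that identifies $t$ with $o$, while $\bar{u}$ from \cref{lemUniversalInd} carries $[1,t]$ to $t = \sigma$. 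Since an isomorphism of pointed covers matches the stabilizers of the sections, distinct $X$ give non-isomorphic $(C_X,\sigma_X)$, which finishes (1). I would also record the equivariance: for $w \in W$ the section $w\sigma$ has stabilizer $wW_Xw^{-1} = W_{wX}$, so the bijection of (1) intertwines the substitution $(C,\sigma)\mapsto(C,w\sigma)$ with the natural $W$-action on $L(\calA)$.

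For (2), I would show that forgetting the section matches isomorphism classes of cameral covers of $\spec\C$ with $W$-orbits in $L(\calA)$. The key point is that for any cameral cover $C \to \spec\C$ the set $C(\C)$ of closed points is a single $W$-orbit: by (1), $C \cong W \times_{W_X} Co(W_X,\fk{t})$ for some $X$, and $Co(W_X,\fk{t})$ is local, so $C(\C) \cong W/W_X$ as a $W$-set. Thus any two sections of a fixed unpointed cover differ by the $W$-action, so by the equivariance above any two pointed covers with isomorphic underlying cover correspond to elements of $L(\calA)$ in a common $W$-orbit, and conversely the pointed covers indexed by $X$ and $wX$ have isomorphic underlying covers. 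Every cameral cover of $\spec\C$ is a nonempty finite $\C$-scheme and so admits a section, hence the fibers of the forgetful map on isomorphism classes are exactly the $W$-orbits in $L(\calA)$; combined with (1) this gives (2).
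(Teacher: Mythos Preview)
Your proposal is correct and follows essentially the same route as the paper's proof: both deduce (1) and (2) from \cref{kPoints} by matching $(C,\sigma)$ with the unique $X$ satisfying $W_X=\stab_W\sigma$, and both handle (2) via the transitivity of the $W$-action on closed points of $\ind_{W_X}^W Co(W_X,\fk{t})$. If anything, you are more explicit than the paper in verifying that the isomorphism of \cref{kPoints}(2) can be taken to respect the marked point (the paper tacitly assumes this when writing ``let $\sigma\in C,\ \sigma'\in C'$ be induced from the closed points''), and your equivariance remark packages the passage from (1) to (2) cleanly.
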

\begin{proof}

The previous lemma shows that every cameral cover $C \to \spec \C$ is isomorphic to $\ind_{W_X}^W Co(W_X, \fk{t})$ for some $X \in L(\calA)$.  Let $C = \ind_{W_X}^W Co(W_X, \fk{t})$ and $C' = \ind_{W_Y}^W Co(W_Y, \fk{t})$.  Let $\sigma \in C, \sigma' \in C'$ be induced from the closed points of $Co(W_X, \fk{t})$ and $Co(W_Y, \fk{t})$ respectively.

For (1), it would suffice to show that $(C, \sigma) \cong (C', \sigma')$ as pointed cameral covers \iffw $X = Y$.  If they are isomorphic as pointed cameral covers then $W_X = \stab_W \sigma = \stab_W \sigma' = W_Y$, so $X = Y$ by part 1 of \cref{restrictToAx}.  The converse is clear.

Now for (2).  Let $f: C \to C'$ be an isomorphism, so $(C, \sigma) \cong (C, f(\sigma))$ as pointed cameral covers, and $\stab_W f(\sigma) = W_X$.  All closed points of $\ind_{W_Y}^W Co(W_Y, \fk{t})$ are in a single $W$-orbit, so there exists $g \in W$ with $g \cdot f(\sigma) = \sigma'$.  Then $W_Y = gW_Xg\oii$, but since $Y = \fk{t}^{W_Y}$ and $X = \fk{t}^{W_X}$ this implies $gX = Y$.  The converse is clear.
\end{proof}

\begin{example}  As in the previous example, let $\fk{t} = \A^1$ and $W = \Sigma_2$ acting on $\fk{t}$ by $z \to -z$.  The hyperplane arrangement consists of a single hyperplane $0$ (in this case a point on a line), so $L(\calA) = \{\fk{t}, 0 \}$.  Any cameral cover of a point is of the form $p \times_{\fk{t}/W} \fk{t}$ for some point $p \in \fk{t}$.  We saw in the previous example that there are two isomorphism classes of such covers, depending on whether $p =0$.  Which isomorphism class we get is determined entirely by the stabilizer group of $p$ in the pointed cameral cover $\left(p \times_{\fk{t}/W} \fk{t}, p \right)$, which is either $W_0$ or $W_{\fk{t}}$.  In this case there is an accidental isomorphism between $L(\calA)/W$ and $L(\calA)$.

\end{example}

We now consider the automorphism groups of cameral covers and pointed cameral covers of a point.  


\begin{notation}

For each $X \in L(\calA)$, choose a pointed cameral cover $C_X \to \spec \C$  with point $\sigma_0 \in C_X$ such that $\stab_W \sigma_0 = W_X$.  Write $\bar{Q}_X$ for its automorphism group as a cameral cover, and $Q_X$ for its automorphism group as a pointed cameral cover.  As explained in the previous paragraph, $\stab(X)$ acts on $Q_X$.  Whenever we write $Q_X \rtimes \stab(X)$ below we mean the semidirect product defined via this action.  Write $T_\sigma C_X$ for the tangent space to $C_X$ at $\sigma$.  This carries an action of $W_X$, and we write $\aut T_\sigma C_X$ for its group of automorphisms as a $W_X$-representation.  
\end{notation}

\begin{lemma}
With the notation of the previous paragraph, 
\label{pointStabalizers}
\begin{enumerate}
\item $\aut T_\sigma C_X \cong \G_m^{\# \irr \calA_X}$.
\item $Q_X = U \rtimes \aut T_\sigma C_X$ for a connected unipotent group $U$.
\item $\bar{Q}_X = Q_X \rtimes \stab(X)/W_X$.
\end{enumerate}
\end{lemma}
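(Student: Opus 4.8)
The plan is to reduce all three parts to statements about the finite-dimensional local $\C$-algebra $R$ with $\spec R = Co(W_X,\fk t)$, the coinvariant algebra of the essential reflection arrangement $(W_X,\fk t/X)$ (essential by \cref{restrictToAx}). By part (2) of \cref{kPoints} we may take $C_X = \ind_{W_X}^W \spec R$, and, as noted just before \cref{kPoints}, $T_\sigma C_X \cong \fk t/X$ as $W_X$-representations, so $\aut T_\sigma C_X = \operatorname{Aut}_{W_X}(\fk t/X)$. The key preliminary is to describe $\operatorname{Aut}_W$ of an induced scheme. Since $\spec R$ is connected, a $W$-equivariant automorphism of $\ind_{W_X}^W\spec R$ carries the slice $\{[1]\}\times\spec R$ into a single coset component, so it is given by a pair $(w_0,\psi)$ with $w_0\in W$ and $\psi$ a $\C$-algebra automorphism of $R$; $W$-equivariance then forces $w_0\in N_W(W_X)$ and forces $\psi$ to be $W_X$-equivariant up to the automorphism of $W_X$ given by conjugation by $w_0$, while $(w_0,\psi)$ and $(w_0k,\,k^{-1}\psi)$ define the same automorphism for $k\in W_X$. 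This produces an exact sequence
\[ 1 \longrightarrow \operatorname{Aut}_{W_X}(R) \longrightarrow \bar Q_X \longrightarrow N_W(W_X)/W_X \longrightarrow 1 . \]
A pointed automorphism is exactly one fixing $\sigma_0$, which forces $w_0\in W_X$; hence $Q_X \cong \operatorname{Aut}_{W_X}(R)$, the group of $W_X$-equivariant $\C$-algebra automorphisms of $R$. Finally $N_W(W_X) = \stab(X)$ by part (1) of \cref{restrictToAx}, and $Q_X$, $\bar Q_X$ are linear algebraic groups, being closed subgroups of $GL(R)$.

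For part (1): by \cref{restrictToAx} the hyperplane arrangement of $(W_X,\fk t/X)$ has irreducible components in bijection with $\irr\calA_X$, so by \cref{irreps} we have $\fk t/X = \bigoplus_i V_i$ with the $V_i$ pairwise non-isomorphic irreducible $W_X$-representations, one for each element of $\irr\calA_X$. By Schur's lemma (with $\C$ algebraically closed, so $\operatorname{End}_{W_X}V_i = \C$), $\aut T_\sigma C_X = \operatorname{Aut}_{W_X}(\fk t/X) = \prod_i \operatorname{Aut}_{W_X}(V_i) \cong \G_m^{\#\irr\calA_X}$.

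For part (2): $R$ is graded with $R_0 = \C$ and $R_1 = (\fk t/X)^\vee$, and is generated in degree $1$, so reduction modulo $\fk m^2$ gives a homomorphism $Q_X = \operatorname{Aut}_{W_X}(R) \to \operatorname{Aut}_{W_X}(\fk m/\fk m^2) = \operatorname{Aut}_{W_X}(R_1)$, whose target is canonically $\aut T_\sigma C_X$ via duality. It is split surjective: a $W_X$-equivariant linear automorphism of $R_1$ extends to the grading-preserving algebra automorphism of $\sym^\bullet(\fk t/X)^\vee$, which by equivariance preserves the ideal generated by positive-degree $W_X$-invariants and so descends to $R$, giving a splitting onto the grading-preserving automorphisms. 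Let $U$ be the kernel. Each $u\in U$ induces the identity on $\operatorname{gr}_{\fk m}R$, so $(u-\mathrm{id})(\fk m^i)\subseteq \fk m^{i+1}$; since $\fk m^N = 0$ for some $N$, the operator $u-\mathrm{id}$ is nilpotent and $u$ is unipotent. Thus $U$ is an algebraic group all of whose elements are unipotent, hence unipotent, and in characteristic zero a unipotent group is connected. Therefore $Q_X = U \rtimes \aut T_\sigma C_X$ with $U$ connected unipotent.

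For part (3) it remains to split the exact sequence above. The group $\stab(X) = N_W(W_X)$ acts linearly on $\fk t$, preserving $X = \fk t^{W_X}$ and permuting $\calA_X$ (as it normalizes $W_X$, which by \cref{restrictToAx} is generated by reflections over $\calA_X$); hence it acts on $\fk t/X$ preserving the $W_X$-invariant subalgebra, so it acts on $R$ by $\C$-algebra automorphisms, say $g\mapsto\rho(g)$. Each $\rho(g)$ is an automorphism of $\spec R$ of the twisted kind appearing above, and $g\mapsto[(g,\rho(g^{-1}))]$ is a homomorphism $\stab(X)\to\bar Q_X$ lifting $\stab(X)\to\stab(X)/W_X$ whose restriction to $W_X$ is trivial, hence descends to a section of $\bar Q_X\to\stab(X)/W_X$. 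Therefore the sequence splits and $\bar Q_X \cong Q_X \rtimes \stab(X)/W_X$, with $\stab(X)$ acting on $Q_X$ through this section. The main obstacle is the preliminary step --- computing $\operatorname{Aut}_W$ of an induced scheme while correctly tracking both the normalizer and the conjugation twist, and isolating the pointed automorphisms; once this is in place, parts (1)--(3) follow formally from \cref{restrictToAx}, \cref{irreps}, Schur's lemma, and the standard structure of the automorphism group of a local Artinian algebra, together with the observation that $Co(W_X,\fk t)$ already extends to a $\stab(X)$-scheme via the linear action.
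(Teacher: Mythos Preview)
Your proposal is correct and follows essentially the same route as the paper: identify $Q_X$ with $\operatorname{Aut}_{W_X}(R)$ for $R$ the coinvariant algebra, use \cref{restrictToAx}, \cref{irreps} and Schur for (1), the graded structure of $R$ for the split surjection and unipotent kernel in (2), and split the exact sequence $1\to Q_X\to\bar Q_X\to\stab(X)/W_X\to1$ via the linear action of $\stab(X)$ on $\fk t/X$ for (3). Your preliminary packaging of $\operatorname{Aut}_W$ of an induced scheme is a slightly tidier way to produce the exact sequence than the paper's direct analysis of the action on connected components, but the content is the same.
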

\begin{proof} \Cref{kPoints}, part 2 implies that $T_\sigma C_X \cong \fk{t}/X$
	as a $W_X$-representation.  \Cref{restrictToAx} says that $(W_X,
	\fk{t}/X)$ is an essential reflection arrangement with irreducible
	components corresponding to elements of $\irr \calA_X$.  \Cref{irreps}
	then implies that $\fk{t}/X$ decomposes as a $W_X$ representation into
	distinct irreducible representations corresponding to elements of $\irr
	\calA_X$.  (1) now follows from Schur's lemma.

	Let
$C_\sigma \cong Co(W_X, T_\sigma C_X)$ be the connected component of $\sigma$
in $C$.  Then $Q_X = \aut \, C_\sigma$, where $\aut \, C_\sigma$ means
automorphisms as an $(W_X, \fk{t})$-cameral cover, so for (2) we just need to
study $W_X$-equivariant automorphisms of $Co(W_X, T_\sigma C_X)$.  Let $R$ be
the coordinate ring of $Co(W_X, T_\sigma C_X)$, let $\fk{m}$ be its maximal
ideal, and let $R_n = R/\fk{m}^{n+1}$.  Note that $R_n=R$ for $n$ large enough.  $R$ is a quotient
of the power series ring $\C[ [T_\sigma C_X] ]$ by the ideal of non-invertible
$W_X$-invariant elements, and this ideal is preserved by any $W_X$-equivariant
automorphism of $\C [ [T_\sigma C_X] ]$, so any such automorphism induces an
automorphism of $R$.  Since any element of $\aut T_\sigma C_X$ induces
such an automorphism of $\C[ [T_\sigma C_X] ]$, this already shows that $Q_X \to \aut T_\sigma
C_X$ is a split surjection.

The degree-1
component of $R_n$ is the quotient of $T^\vee_\sigma C_X$ by its
$W_X$-invariant subspace, but the latter is trivial.  So the degree-1 component
of $R$ is $T^\vee_\sigma C_X$ itself.  Continuing to write $\mathrm{Aut}$,
$\mathrm{End}$ for
$W_X$-equivariant automorphisms and endomorphisms, we claim:
\begin{enumerate}[(i)]
	\item If $\psi \in \emo R_n$, $\psi\left(T_\sigma C_X \right) \subset
		\fk{m}R_n$.
	\item Restricting to $T^\vee_\sigma C_X$ defines a bijection 
		$\Gamma: \emo R_n \to \hom_{W_X}(T^\vee_\sigma C_X,
		\fk{m}R_n)$
	\item If $\psi \in \emo R_n$, then $\psi$ is invertible if and only if its
derivative $\fk{m}/\fk{m}^2 \to \fk{m}/\fk{m}^2$ is invertible. 
\end{enumerate}

For (i), note that $W_X$ acts trivially on $R_n/\fk{m}$ while $T_\sigma C_X$ has no trivial
component.  So any $W_X$-equivariant $\psi$ must send $T_\sigma C_X$ to
$\fk{m}R_n$.  For (ii), $R_n$ is generated in degree $1$ so $\Gamma$ is injective.  We show that it is
surjective: Any intertwiner
$\phi: T^\vee_\sigma C_X \to \fk{m}R_n$ lifts to an intertwiner $T^\vee_\sigma C_X \to
\C[T_\sigma C_X]$, inducing a ring endomorphism $\phi_1$ of $\C[T_\sigma C_X]$.
$\phi_1$  is $W_X$-equivariant, so it must fix the unique $W_X$-fixed point of
$\spec \C[T_\sigma C_X]$, namely the origin, and therefore induces an equivariant endomorphism
$\phi_2$ of the power series ring $\C[ [T_\sigma C_X] ]$.  Being equivariant, $\phi_2$ induces an endomorphism $R \to
R$, which induces $\psi: R_n \to R_n$ satisfying $\Gamma(\psi)=\phi$.  One direction of (iii) is obvious.  For the other direction, the
Jacobian criterion says that $\phi_2$ is
invertible if its total derivative at the origin of $\spec \C[
[T_\sigma C_X] ]$ is invertible.  If $n\geq 1$ then $\phi_2$ has the same derivative as $\psi$ at
the origin, while if $n=0$ then the statement is vacuous.  

Now we show (2). Let $\aut R_n $ be the group of $W_X$-equivariant
automorphisms of $R_n$, and $\emo R_n $ the monoid of equivariant
endomorphisms.  Fix $n\geq 2$.  There is a natural map $f: \emo R_n \to \emo
R_{n-1}$.
 Using $\Gamma$, $f$ can be identified with the map $\hom_{W_X}(T^\vee_\sigma
 C_X, R_n) \to \hom_{W_X}(T^\vee_\sigma C_X, R_{n-1})$ induced by $R_n \to
 R_{n-1}$.  In particular $f$ is surjective.  (ii) implies that an element $x
 \in \emo R_n$ lies in $\aut R_n $ if and only if $f(x)$ lies in
 $\aut R_{n-1} $.  So $\aut R_n \to \aut R_{n-1}$ is surjective and its kernel
 is identical to the kernel $K_n$ of $\emo R_n \to \emo R_{n-1}$.  But
 $\fk{m}^nR_n$ is square-zero in $R_n$, so $K_n$ is isomorphic to
the additive group of $W_X$-equivariant $R_n$-derivations from $R_n$ to
$\fk{m}^nR_n$.  Thus $Q_X$ is an iterated extension of $\aut T_\sigma C_X$ by
additive groups
\[Q_X = \aut R_j \to \ldots \to \aut R_3 \to \aut R_2 \to \aut T_\sigma C_X \]
so the kernel of $Q_X \to \aut T_\sigma C_X$ is unipotent.


Now for (3).  Let $\aut\, W/W_X$ denote the automorphism group of $W/W_X$ as a $W$-set.
There is a map $\bar{Q}_X \to \aut\, W/W_X$ by remembering only the action on
connected components of $C_X$.  We then have an exact sequence \[ 0 \to Q_X \to
\bar{Q}_X \to \aut\, W/W_X \] It is standard that $\aut\, W/W_X \cong
N(W_X)/W_X$, where $N(W_X)$ denotes the normalizer of $W_X$ in $W$.  If $w \in
W$ stabilizes $X$ then it normalizes $W_X$, and the converse is true as well
since $X$ is precisely the fixed locus of $W_X$.  Therefore $N(W_X) =
\stab_W(X)$, and $\aut\, W/W_X \cong \stab_W(X)/W_X$.   

We define a splitting $\rho: \stab(X)/W_X \to \bar{Q}_X$.  Recall that $C_X$ is
the balanced product $W \times_{W_X} Co(W_X, \fk{t}/X)$.  If $n \in
\stab(X)/W_X$ then $n$ acts naturally on $\fk{t}/X$.  This action does not
commute with the action of $W$ but it does preserve the invariant polynomials
on $\fk{t}/X$, and so induces an automorphism $\bar{n}$ of $Co(W_X, \fk{t}/W)$.
Further, if $g \in W_X$ then $ng(t + X) = g^{-n} n(t + X)$, so $\bar{n}\circ
L_g = L_{g^{-n}} \circ \bar{n}$.  Define $\rho(n): W \times_{W_X} Co(W_X,
\fk{t}/X) \to W \times_{W_X} Co(W_X, \fk{t}/X)$ by the formula $\rho(n)(w, c) =
(wn\oii, \bar{n}c)$ for any $w\in W$ and any point $c$ of $Co(W_X, \fk{t}/W_X)$.
This is well-defined since for any $g \in W_X$, \[\begin{array}{rcl}
\rho(n)(wg, c) &=& (wgn\oii, \bar{n}c) \\ &=& (wng^{n\oii}, \bar{n}c) \\ &=& (wn,
g^{n\oii} \bar{n}c) \\ &=& (wn, \bar{n}gc) \end{array}\]

To see that $\rho$ splits the map $\bar{Q}_X \to W/W_X$ it suffices to note
that it permutes the components of $C_X$ (corresponding to $W/W_X$) in a way
identical to the (right) action of $n$.  \end{proof}

\begin{remark}
\label{remFinitePart}
The conjugation action of $\stab(X)/W_X$ on $Q_X$ preserves the subgroup $\aut\, T_\sigma C \into Q_X$, and the action on $\aut\, T_\sigma C$ is induced from its natural action on $\fk{t}/X$.  Decompose $\calA_X = \calB_1 \oplus \ldots \oplus \calB_l \oplus E$ where the $\calB_i$ are the irreducible components of $\calA_X$.  $n \in \stab_W(X)$ must permute the $\calB_i$.  $\aut_W T_\sigma C$ is a product of $\G_m$'s corresponding to the $\calB_i$, and the conjugation action of $\stab_W(X)/W_X$ on $\aut_W T_\sigma C$ merely permutes these factors in the corresponding way.  
\end{remark}

\begin{example}  We return to our ongoing example.  Let $(W, \fk{t}) = (\Sigma_2, \A^1)$, so that $L(\calA) = \{\fk{t}, 0\}$.  Let $p \in \A^1$, and consider the cameral cover $C = p \times_{\fk{t}/W} \fk{t}$.  If $p = 0$ then $C \cong \spec \C[x]/(x^2)$ with automorphism group $\G_m$.  This is expected, since $\calA_0 = \calA$ has a single irreducible component.  If $p \neq 0$ then $C$ is the disjoint union of $2$ points, isomorphic to $W$ as a $W$-scheme.  Thus the automorphism group of $C$ is $\Sigma_2$.  This is expected, as $\calA_{\fk{t}}$ is empty and so has no irreducible components, while $\stab_W(0)/W_0 = W = \Sigma_2$.    
\end{example}

\section{The stack of cameral covers}\label{chapM} 

We fix a reflection arrangement $(W, \fk{t})$ and write ``cameral cover'' in place of ``$(W, \fk{t})$-cameral cover'' for the rest of this section.  Cameral covers form a stack on the \'{e}tale site of the category of $\C$-schemes (since finite covers with $W$-action form such a stack, and cameral covers form a subcategory defined by an \'{e}tale-local condition).  Write $\calM$ for this stack.  Write $\calC$ for the universal cameral cover on $\calM$.  Equivalently, $\calC$ is the stack of cameral covers with a chosen section.  Our goals for this section are to describe some nice covers of $\calM$ and $\calC$ as Artin stacks (\Cref{smoothCover}), define a stratification of $\calM$ and $\calC$ into classifying spaces (definition \ref{strataDef}), describe the normal bundles of the closed strata (\cref{corNormals}).  Our main arguments will use induction on this stratification.

Let $\calM_F$ be the open subscheme of the moduli space of commutative algebra structures on $\C[W]$ parametrizing cameral covers of $\spec \C$.  $\calM$ is  the stacky quotient of $\calM_F$ by $\aut_W \C[W]$, which is a product of general linear groups (see \cite{DG}, 2.7 and 2.8).  Similarly $\calC$ is the stacky quotient of the universal $W$-cover of $\calC_F$ by $\aut_W \C[W]$.  In particular $\calC$ and $\calM$ are Artin.  $\calM_F$ and $\calC_F$ are complicated so our first goal is to find simpler covers of $\calM$ and $\calC$.

\begin{proposition}
The cameral cover $\fk{t} \to \fk{t}/W$ induces a representable smooth surjection $\fk{t}/W \to \calM$.  
\label{smoothCover}
\end{proposition}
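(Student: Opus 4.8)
The morphism in question classifies the tautological cameral cover $\pi\colon\fk{t}\to\fk{t}/W$, and I would establish the three properties one at a time.

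\emph{Representability.} Since $\calM$ is an Artin stack, its diagonal is representable, so every morphism to $\calM$ from a scheme is automatically representable. Concretely, if $T\to\calM$ classifies a cameral cover $f\colon C\to T$, then $(\fk{t}/W)\times_\calM T$ represents the functor of $W$-equivariant $\calO$-algebra isomorphisms between the two locally free algebras $\mathrm{pr}_1^{*}(\pi_{*}\calO_{\fk{t}})$ and $\mathrm{pr}_2^{*}(f_{*}\calO_{C})$ on $(\fk{t}/W)\times T$; this $\underline{\mathrm{Isom}}$ sits as a locally closed subscheme of the affine $\underline{\mathrm{Hom}}$-bundle of the two modules (being a ring homomorphism and being $W$-equivariant are closed conditions, being invertible is open), hence is a scheme of finite presentation over $(\fk{t}/W)\times T$. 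In particular $\fk{t}/W\to\calM$ is also locally of finite presentation.

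\emph{Surjectivity.} This is immediate from the definition of a cameral cover: for any $T\to\calM$, i.e.\ any cameral cover $C\to T$, there is by definition an \'etale cover $U\to T$ together with a map $U\to\fk{t}/W$ through which $C|_{U}$ is the pullback of $\pi$; so $(\fk{t}/W)\times_\calM T\to T$ admits \'etale-local sections.

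\emph{Smoothness} is where the content lies. Since the map is representable and locally of finite presentation, I would check the infinitesimal lifting criterion. Let $B'\to B$ be a surjection of $\C$-algebras with square-zero kernel $J$; suppose given $v\colon\spec B\to\fk{t}/W$, a cameral cover $C'\to\spec B'$, and a $W$-equivariant isomorphism $\alpha\colon C'|_{B}\cong v^{*}\fk{t}$. One must produce a lift $\tilde v\colon\spec B'\to\fk{t}/W$ of $v$ and an isomorphism $\tilde v^{*}\fk{t}\cong C'$ extending $\alpha$. The crucial input is the Chevalley--Shephard--Todd theorem: writing $\fk{t}=\spec\C[x_1,\ldots,x_n]$ and $\fk{t}/W=\spec\C[p_1,\ldots,p_n]$ with the $p_i$ basic invariants, the cameral cover $v^{*}\fk{t}$ is \emph{globally} the complete intersection $V(p_1(x)-v^{*}p_1,\ldots,p_n(x)-v^{*}p_n)\subset\A^{n}_{B}$, cut out by $W$-invariant equations. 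The argument then runs: (1) lift $v$ to some $\tilde v_{0}\colon\spec B'\to\fk{t}/W$, which is possible because $\fk{t}/W\cong\A^{n}$ is smooth, obtaining lifts $\tilde c_i\in B'$ of the $v^{*}p_i$; (2) lift the closed embedding $C'|_{B}\hookrightarrow\A^{n}_{B}$ determined by $\alpha$ to a $W$-equivariant closed embedding $C'\hookrightarrow\A^{n}_{B'}$ --- this uses flatness of $C'$ over $B'$ and the invertibility of $|W|$ in $\C$ (so $\fk{t}^{\vee}$ is projective over $\C[W]$ and the span of the coordinate functions lifts $W$-equivariantly); (3) choose $W$-invariant generators $\hat f_1,\ldots,\hat f_n$ of the ideal of $C'$ in $\A^{n}_{B'}$ lifting the $p_i(x)-v^{*}p_i$, again using $|W|$ invertible together with Nakayama so that $n$ generators suffice; (4) since $B'[x]^{W}=B'[p_1(x),\ldots,p_n(x)]$ by Chevalley--Shephard--Todd, each $\hat f_i$ is a polynomial in the $p_j(x)$ with $B'$-coefficients, congruent to $p_i(x)-v^{*}p_i$ modulo $J$, and --- using $J^{2}=0$ --- one successively absorbs the non-principal part of $\hat f_i$ into the $\tilde c_i$ to rewrite $\hat f_i=p_i(x)-\tilde c_i''$ for new lifts $\tilde c_i''\in B'$ of $v^{*}p_i$; then $C'=V(p_1(x)-\tilde c_1'',\ldots,p_n(x)-\tilde c_n'')=\tilde v^{*}\fk{t}$ for $\tilde v$ defined by $\tilde v^{*}p_i=\tilde c_i''$, and the resulting isomorphism restricts to $\alpha$ by construction.

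I expect step (4) to be the main obstacle: it is the assertion that, after the (unobstructed) first-order adjustment, a $W$-equivariant deformation of a cameral cover is again of the tautological form $\tilde v^{*}\fk{t}$ --- morally, that the deformation theory of cameral covers agrees with that of maps to $\fk{t}/W$ --- and it relies essentially on $\fk{t}\to\fk{t}/W$ being cut out by the basic invariants and on the extension being square-zero. The $W$-equivariance bookkeeping in steps (2)--(3) is routine once one uses that $|W|$ is invertible, which makes $\C[W]$ semisimple and all the relevant lifting obstructions vanish.
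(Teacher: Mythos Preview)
Your argument is correct, but it takes a different route from the paper. The paper does not use the infinitesimal criterion at all: instead, it explicitly identifies the fiber product. Given a cameral cover $\pi\colon C\to B$, a point of $B\times_\calM\fk{t}/W$ over $B$ is a pair $(g,\alpha)$ with $g\colon B\to\fk{t}/W$ and $\alpha\colon C\xrightarrow{\sim} g^{*}\fk{t}$; composing $\alpha$ with $g^{*}\fk{t}\to\fk{t}$ gives a $W$-equivariant map $C\to\fk{t}$, equivalently a section of the locally free sheaf $\shom_{W}(\fk{t}^{\vee}\otimes\calO_{B},\pi_{*}\calO_{C})$. The paper shows this sets up an isomorphism between $B\times_\calM\fk{t}/W$ and the open locus $U$ of this vector bundle where the image generates $\pi_{*}\calO_{C}$ as an algebra. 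Representability and smoothness over $B$ then come for free, since $U$ is open in a vector bundle.

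The two arguments share the same underlying fact --- that a $W$-equivariant closed embedding $C\hookrightarrow\fk{t}\times B$ automatically exhibits $C$ as the pullback of $\fk{t}\to\fk{t}/W$ along the induced map on $W$-quotients $B=C/W\to\fk{t}/W$ --- but package it differently. The paper's version is more geometric and yields an explicit description of the fibers of $\fk{t}/W\to\calM$, which is useful later (e.g.\ for the analogous statement about $\calC$). Your version is perfectly valid and perhaps more portable: it isolates exactly where Chevalley--Shephard--Todd and semisimplicity of $\C[W]$ enter. One small remark on your step (4): as written, ``rewrite $\hat f_i=p_i(x)-\tilde c_i''$'' is shorthand for row-reducing the $\hat f_i$ over $B'[p(x)]$ using that the coefficient matrix is $I+N$ with $N$ having entries in $J$; alternatively, and more cleanly, you can bypass steps (3)--(4) entirely by observing that $(\pi'_{*}\calO_{C'})^{W}=\calO_{B'}$ directly, so the images of the $p_i(x)$ in $\Gamma(C')$ already lie in $B'$ and furnish the desired $\tilde c_i''$.
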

\begin{proof}
The definition of cameral cover implies immediately that $\pi: \fk{t}/W \to \calM$ is surjective.  Let $f: B \to \calM$ be a map from a scheme $B$ corresponding to some cameral cover $\pi: C \to B$.  Then $B \times_\calM \fk{t}/W$ is the functor sending a $B$-scheme $p: P \to B$ to the set of pairs $(g, \alpha)$ where $g: P \to \fk{t}/W$ is some morphism and $\alpha: p^*C \to g^*\fk{t}$ is an isomorphism of cameral covers.  

The idea of the following argument is simply that $(g, \alpha)$ is completely determined by the map $p^*C \to g^*\fk{t} \to \fk{t}$ and such maps are parametrized by a smooth space, but it will take us a lot of words to make this explicit.  Let $\shom_W(\fk{t}^* \otimes_\C \calO_B, \pi_* \calO_C)$ be the $W$-equivariant subsheaf of sheaf hom.  This is locally free (since $\pi_*\calO_C$ is a locally free $\calO_B[W]$-module), so its geometric realization $|\shom_W(\fk{t}^* \otimes_\C \calO_B, \pi_* \calO_C)|$ is smooth over $B$.  The subspace $U \subset |\shom_W(\fk{t}^* \otimes_\C \calO_B, \pi_* \calO_C)|$ parametrizing maps whose image generates $\pi_*\calO_C$ as an algebra is open - it is the complement of a determinantal locus involving the multiplication $\mu: \pi_*\calO_C \otimes \pi_*\calO_C \to \pi_*\calO_C$.  

We describe a map $B \times_\calM \fk{t}/W \to U$.  Suppose given $(g, \alpha) \in B \times_\calM \fk{t}/W(P)$ as above.  The composition of $\alpha$ with the natural map $g^*\fk{t} \to \fk{t}$ gives a $W$-equivariant morphism $p^*C \to \fk{t}$.  The latter corresponds to a $W$-equivariant map $\fk{t}^* \to \Gamma(p^* C, \calO_{p^*C}) = \Gamma(P, p^*\pi_*\calO_C)$, which is the same as a global section of $\shom_W(\fk{t}^* \otimes_\C \calO_P, p^*\pi_*\calO_C) = p^* \shom_W(\fk{t}^*\otimes_\C \calO_C, \pi_*\calO_C)$.  Such a global section defines a map $P \to |\shom_W(\fk{t}^* \otimes_\C \calO_C, \pi_*\calO_C)|$.  The condition that $\alpha$ was an isomorphism forces this to land in $U$.

We describe a map $U \to B \times_\calM \fk{t}/W$.  Suppose given a map $P \to U$ over $B$.  Running the equivalences of the previous paragraph in reverse, this corresponds to a $W$-equivariant map $h: p^*C \to \fk{t}$.  As $P$ is the affine quotient of $p^*C$ by $W$, $h$ induces a commuting map $g: P \to \fk{t}/W$.
\[\xymatrix{
p^*C \ar[r]^h \ar[d]& \fk{t} \ar[d]\\
B    \ar[r]^g       & \fk{t}/W
}\]
Now $h$ factors uniquely through some map $\alpha: p^*C \to g^* \fk{t}$.  Since
$h$ was obtained from a map to $U$ (rather than a map to
$|\shom_W(\fk{t}^*\otimes_\C \calO_B, \pi_*\calO_C)|$), $\alpha$ is a closed
immersion.  A closed immersion between two finite covers of the same degree
is an isomorphism, so $(g, \alpha)$ defines a section of $B \times_\calM \fk{t}/W$.

It is easy to check that these two maps are inverse; the key point is that the map $g$ induced in the second paragraph is unique given $h$.
\end{proof}

As  $\fk{t} = \fk{t}/W \times_{\calM} \calC$ we obtain a map $\fk{t} \to \calC$.  This map is merely $t \to t \otimes_{\fk{t}/W} \fk{t}$, with marked point $\sigma = (t, 0)$.  \Cref{smoothCover} implies

\begin{corollary}
\label{corNArtin} 
$\fk{t} \to \calC$ is a smooth representable surjection. 
\end{corollary}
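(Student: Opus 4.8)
The plan is to recognize $\fk{t}\to\calC$ as a base change of the map $\fk{t}/W\to\calM$ produced in \Cref{smoothCover}, and then to invoke the fact that representability, smoothness, and surjectivity are all stable under base change of stacks. Concretely, since $\calC\to\calM$ is the universal cameral cover (equivalently, $\calC$ is the stack of cameral covers equipped with a section), pulling $\calC$ back along the map $\fk{t}/W\to\calM$ classifying the tautological cameral cover $\fk{t}\to\fk{t}/W$ returns the total space of that cover together with its structure map, namely $\fk{t}$ with the quotient map $\fk{t}\to\fk{t}/W$. This gives a Cartesian square with corners $\fk{t}$, $\calC$, $\fk{t}/W$, $\calM$ in which the map $\fk{t}\to\calC$ is precisely the pullback of $\fk{t}/W\to\calM$ along the structure map $\calC\to\calM$.

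First I would make the identification $\fk{t}\cong\fk{t}/W\times_{\calM}\calC$ precise by unwinding functors of points: a $T$-point of $\fk{t}/W\times_{\calM}\calC$ consists of a morphism $g\colon T\to\fk{t}/W$, a sectioned cameral cover $(C\to T,\sigma)$, and an isomorphism of cameral covers $C\cong g^{*}(\fk{t}\to\fk{t}/W)$; transporting $\sigma$ across this isomorphism, the data is the same as a morphism $T\to\fk{t}/W$ together with a section of its pullback of $\fk{t}\to\fk{t}/W$, i.e. simply a morphism $T\to\fk{t}$. This matches the description in the text that $\fk{t}\to\calC$ is $t\mapsto t\otimes_{\fk{t}/W}\fk{t}$ with marked point $(t,0)$. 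Then I would conclude: the base change of a representable smooth surjection along an arbitrary morphism of stacks is again a representable smooth surjection, and $\fk{t}/W\to\calM$ is one by \Cref{smoothCover}, so $\fk{t}\to\calC$ is as well.

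There is essentially no hard step here; the only point requiring care is the bookkeeping in the previous paragraph, namely that the square is genuinely Cartesian rather than merely commuting. This is immediate once one recalls that $\calC$ was defined as the stack of cameral covers with a chosen section, so that $\calC\to\calM$ really is the universal sectioned cameral cover and its pullback along a classifying map recovers the corresponding sectioned cover. It is worth spelling out nonetheless, since this square is exactly the one that the inductive arguments in the following sections will rely on repeatedly.
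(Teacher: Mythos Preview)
Your proposal is correct and is exactly the argument the paper gives: identify $\fk{t}$ with the fiber product $\fk{t}/W\times_{\calM}\calC$ and then invoke base change of the smooth representable surjection from \Cref{smoothCover}. The paper records this in a single sentence, but your explicit unwinding of the functor of points to check that the square is Cartesian is a reasonable elaboration of the same idea.
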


\begin{example}
Let $(W, \fk{t}) = (\Sigma_2, \A^1)$.  Then $\calC$ is isomorphic to the stacky quotient $[\A^1/\G_m]$, where $\G_m$ acts with degree $1$.  For, suppose given a $\G_m$-torsor $E \to S$ and a $\G_m$-equivariant map $\phi: E \to \A^1$.  Let $\calJ$ be the line bundle associated to $E$, and $f$ be the linear function on $\calJ$ associated to $\phi$.  Then the locus on $\calJ^*$ cut out by $f^2 = 0$ is a cameral cover of $S$, with $\Sigma_2$ acting in the guise of $\mu_2$.  It has a section $\sigma$ as well, namely the dual to $f$.  This defines a functor $[\A^1/\G_m] \to \calC$.

Conversely, suppose given a cameral cover $C \to S$ with section $\sigma$.  $C$ is rank $2$, and $\calJ^* = \calO_C/\calO_S$ is a line bundle on $S$ into which $C$ embeds.  Indeed $\Sigma_2$ acts on $\calJ^*$ as well in the guise of $\mu_2$, and this embedding is $\mu_2$-equivariant.  $\sigma$ determines a section of $\calJ^*$, which defines a $\G_m$-equivariant map $\calJ \to \A^1$.  Restricting to complement of the zero section of $\calJ$ gives us a $\G_m$-torsor on $S$ with a $\G_m$-equivariant map to $\A^1$.  This defines a functor $\calC \to [\A^1/\G_m]$.

Since $\calM$ is the affine quotient $\calC/W$, we conclude that $\calM$ is the stacky quotient $[\A^1(2)/\G_m^2]$ (where the`` $(2)$'' indicates that $\G_m$ acts by degree $2$).  $\calC \to \calM$ is obtained by starting with the quotient map $\fk{t} \to \fk{t}/W$, otherwise known as $\spec \C[x] \to \spec \C[x^2]$, and taking the stacky quotient by the commuting action of $\G_m$.  

This example is a bit anomalous: In general it is not the case that $\calC \cong [\fk{t}/G]$ for some group acting on $\fk{t}$.  If it were, $G$ would need to fix the origin of $\fk{t}$, since every other point maps to a different isomorphism class in $\calM$.  But then $G = \aut Co(W, \fk{t})$, while $G$ would need to act simply transitively on $\fk{t} - \bigcup_{a \in \calA} a$.  We would then have $\aut Co(W, \fk{t}) \cong \fk{t} - \bigcup_{a \in \calA} a$ as schemes.  This is false already for $(W, \fk{t}) = (\Sigma_3, \C^{\oplus 2})$, where $\aut Co(W, \fk{t}) \cong (\G_a \rtimes \G_a) \rtimes \G_m$ while $\fk{t} \bs \bigcup_{a \in \calA}$ is the complement of three lines in the plane. 
\end{example}

\begin{corollary}
The natural maps $K^0\calM \to K_0\calM$ and $K^0\calC \to K_0\calC$ are isomorphisms.  In particular $K_0\calM$ and $K_0\calC$ are rings under (derived) tensor product.
\end{corollary}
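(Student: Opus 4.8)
\emph{Overview.} The plan is to show that $\calM$ and $\calC$ are smooth Artin stacks with the resolution property (every coherent sheaf is a quotient of a vector bundle); the first assertion then follows formally. Indeed, on a smooth Artin stack of finite type over $\C$ with the resolution property, every coherent sheaf is a quotient of a vector bundle, and since such a stack is regular of finite Krull dimension, a sufficiently high syzygy is automatically locally free (this can be checked over the regular local rings of a smooth atlas); so every coherent sheaf has a \emph{finite} resolution by vector bundles. The Cartan homomorphism $K^0 \to K_0$ from the Grothendieck group of vector bundles to that of coherent sheaves is then an isomorphism by the classical argument: it is surjective because a coherent class equals the alternating sum of the locally free classes in such a resolution, and injective because a relation among coherent sheaves lifts to one among their finite locally free resolutions. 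This gives $K^0\calM \cong K_0\calM$ and likewise $K^0\calC \cong K_0\calC$. (The second map in the statement should presumably be read as the latter: pullback along $\calC \to \calM$ is not an isomorphism in general; e.g. for $(\Sigma_2,\A^1)$ the two sides are the non-isomorphic representation rings $R(\G_m^2)$ and $R(\G_m)$.) The final sentence then follows because $K^0$ is always a commutative ring under tensor product of vector bundles, equivalently under derived tensor product of perfect complexes, which preserves coherence on a regular stack; this structure transports across the isomorphisms.

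\emph{Smoothness.} \Cref{smoothCover} exhibits a smooth surjection $\fk{t}/W \to \calM$ from a scheme, and $\fk{t}/W$ is an affine space by the Chevalley-Shepard-Todd theorem, hence smooth over $\C$; so $\fk{t}/W \to \calM$ is a smooth atlas with smooth total space and $\calM$ is a smooth Artin stack. Likewise $\calC$ is smooth, using \Cref{corNArtin} and the atlas $\fk{t} \to \calC$. In addition $\calM_F$ is smooth, where $\calM = [\calM_F/G]$ with $G = \aut_W\C[W]$ as recalled at the start of this section: the fibre product $\fk{t}/W \times_\calM \calM_F$ is smooth over $\fk{t}/W$ (base change of the $G$-torsor $\calM_F \to \calM$), hence smooth over $\C$, and it maps smoothly and surjectively onto $\calM_F$ (base change of $\fk{t}/W \to \calM$), so $\calM_F$ is smooth, in particular normal.

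\emph{Resolution property.} Recall from \cite{DG}, 2.7 and 2.8, that $G = \aut_W\C[W]$ is a product of general linear groups and that $\calM_F$ is an open subscheme of the moduli space of commutative algebra structures on $\C[W]$; the latter is affine, being cut out by the associativity, unit and commutativity equations inside an affine space, so $\calM_F$ is quasi-affine, and normal by the previous paragraph. On a quasi-affine scheme the structure sheaf is ample and canonically $G$-equivariant, so Thomason's equivariant resolution theorem applies to $[\calM_F/G]=\calM$ and yields the resolution property for $\calM$. It then descends to $\calC$ because $\pi: \calC \to \calM$ is affine (indeed finite flat of degree $\#W$, being the universal cameral cover): given a coherent sheaf $\calF$ on $\calC$, write $\pi_*\calF$ as a quotient of a vector bundle $E$ on $\calM$; then $\pi^*E$ is a vector bundle on $\calC$ surjecting onto $\calF$ via the composite $\pi^*E \to \pi^*\pi_*\calF \to \calF$ with the counit.

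\emph{Main obstacle.} The only step requiring real care is the resolution property: one must confirm that $\calM_F$ is quasi-affine and normal, so that $\calO$ is a $G$-equivariant ample sheaf and Thomason's theorem applies, and (more pedantically) that ``regular plus resolution property'' genuinely forces \emph{finite} locally free resolutions on an Artin stack, which reduces to the classical statement over a regular local ring by passing to a smooth atlas. Everything else is routine.
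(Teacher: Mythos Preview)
Your argument is correct and is essentially an unpacking of the paper's proof, which is just three lines: observe that $\calM$ and $\calC$ are smooth (hence the atlases $\calM_F$, $\calC_F$ are smooth quasiprojective varieties), then invoke Proposition~2.20 of \cite{merkurjev}, which states that for the stacky quotient of a smooth quasiprojective variety by an algebraic group the map $K^0 \to K_0$ is an isomorphism. Your route via Thomason's equivariant resolution theorem on the quasi-affine $\calM_F$ with its $G$-action, followed by the standard ``regular $+$ resolution property $\Rightarrow$ finite locally free resolutions'' argument, is precisely what lies beneath Merkurjev's black box; so the two proofs are the same in substance, yours simply made explicit. Your treatment of $\calC$ via the finite affine morphism $\calC\to\calM$ is a minor variation---the paper instead applies the same citation directly to $\calC_F$---but both work. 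You are also right that the second map in the statement is a typo for $K^0\calC \to K_0\calC$; that is what the paper's own proof establishes.
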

\begin{proof}
$\calM$ and $\calC$ are smooth stacks.  Therefore $\calM_F$ and $\calC_F$ are
smooth, in particular reduced, in particular smooth varieties.  Proposition 2.20 of
\cite{merkurjev} implies that if $\mathcal{T}$ is the stacky quotient of a
smooth quasiprojective variety by an algebraic group, then the natural map $K^0\mathcal{T} \to K_0\mathcal{T}$ is an isomorphism.
\end{proof}

\begin{definition}
\label{defineStratification}
Let $L$ be a finite poset and $\calS$ be a stack.  A \emph{stratification of $\calS$ by $L$} is a collection of locally closed substacks $\calS_X \into \calS$ for all $X \in L$ such that 
\begin{enumerate}
\item If $X \neq Y$ then $\calS_X \cap \calS_Y = \emptyset$.
\item $X \geq Y$ \iffw $\calS_Y \subset \ol{\calS_X}$.
\item Every $\C$ point of $\calS$ factors through some $\calS_X$.
\end{enumerate}
If $P$ is an upwards-closed sub-poset of $L$ we write $\calS_P = \bigcup_{X \in P} \calS_X$.  We will abuse notation and write $\calO_X$ for $i_*\calO_{\ol{\calS}_X}$  where $i: \ol{\calS}_X \into \calS$ is the inclusion.
\end{definition}

\begin{definition}
\label{strataDef}
If $X \in L(\calA)$, let $\calC_X$ be the substack of $\calC$ consisting of cameral covers $C \to B$ with section $\sigma: B \to C$ such that $\stab_W \sigma = W_X$.  If $[X] \in L(\calA)/W$, let $\calM_{[X]}$ be the image of $\calC_X$ in $\calM$.   In other words, $\ol{\calM}_{[X]}$ is the stack of cameral covers that \'{e}tale-locally admit a $W_X$-invariant section.  
\end{definition}

Using \cref{kPoints} it is easy to see that the $\calC_X$ define a stratification of $\calC$ by $L(\calA)$, and that the $\calM_{[X]}$ define a stratification of $\calM$ by $L(\calA)/W$.  Wherever there is potential for confusion, $\calO_X$ will denote the sheaf on $\calC$, not the sheaf on $\calM$.

\begin{remark}
Note that $\calC_X \neq \calM_X \times_\calM \calC$.  In fact $\calM_X \times_{\calM} \calC$ is the stack quotient $[C_X/\bar{Q}_X]$. 
\end{remark}

\begin{example}
Let $(W, \fk{t}) = (\Sigma_3, \C^{\oplus 2})$.  Then $\calM$ has three strata, corresponding to cameral covers with fiber $\spec \C[\Sigma_3]$, $\spec \C[\Sigma_3/ \langle (12) \rangle] \otimes \C[r_1, r_2]/(r_1 + r_2, r_1 r_2)$ and $\spec \C[r_1, r_2, r_3]/(r_1+r_2+r_3, r_1r_2+r_2r_3+r_1r_3, r_1r_2r_3)$ respectively.  These strata are isomorphic to $B\Sigma_3$, $B\G_m$ and $B(\G_a \rtimes \G_m)$ respectively.
\end{example}

\begin{lemma}
\label{classifyingSpaces}
$\calM_{X}$ is isomorphic to the classifying space $B\bar{Q}_X$, and $\calC_X$ is isomorphic to the classifying space $BQ_X$.
\end{lemma}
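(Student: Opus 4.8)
The plan is to recognize each stratum as a classifying stack by combining two ingredients. First, by \cref{kPoints} and \cref{kPointsFinal} the stratum $\calC_X$ has exactly one isomorphism class of $\C$-point --- the chosen pointed cover $C_X$ --- and likewise $\calM_X=\calM_{[X]}$ has exactly one, namely $C_X$ with its section forgotten. Second, the smooth representable surjections $\fk{t}\to\calC$ and $\fk{t}/W\to\calM$ of \cref{corNArtin} and \cref{smoothCover} restrict, over the locally closed substacks $\calC_X\into\calC$ and $\calM_{[X]}\into\calM$, to smooth representable surjections from honest varieties. These two facts force the conclusion: a stack that has a single isomorphism class of $\C$-point $s$ and that admits a smooth representable surjection from a variety must be $B\,\aut(s)$. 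Since $\aut(C_X)$, taken with its section, is $Q_X$ and, taken without, is $\bar{Q}_X$ by definition, this gives the lemma once the two ingredients are checked and the general principle is justified.

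For the covers: pulling $\fk{t}\to\calC$ back along $\calC_X\into\calC$ gives $\fk{t}\times_\calC\calC_X\to\calC_X$. Now $\fk{t}\to\calC$ carries $t$ to the pointed cover $(\spec\C\times_{\fk{t}/W}\fk{t},\,t)$, whose marked point has stabilizer $\stab_W t$; by \cite{Steinberg} theorem 1.5 this equals $W_{X(t)}$ with $X(t):=\bigcap_{a\ni t}a\in L(\calA)$, and since $X=\fk{t}^{W_X}$ (part 1 of \cref{restrictToAx}) the condition $\stab_W t=W_X$ amounts to $X(t)=X$, i.e.\ to $t$ lying in the smooth open subvariety $X^{\circ}:=X\setminus\bigcup_{a\in\calA\setminus\calA_X}a$ of $X$. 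So $\fk{t}\times_\calC\calC_X=X^{\circ}$ (the fibre product is reduced, being smooth over the reduced stack $\calC_X$), which maps smoothly, representably and surjectively onto $\calC_X$, and the general principle yields $\calC_X\cong BQ_X$. One argues identically with $\fk{t}/W\to\calM$: by \cref{kPoints} and part 2 of \cref{kPointsFinal} the isomorphism class of the cover attached to $\bar t\in\fk{t}/W$ depends only on the $W$-orbit of $X(t)$, so $\fk{t}/W\times_\calM\calM_{[X]}$ is the reduced locally closed subscheme of $\fk{t}/W$ supported on the image of $X^{\circ}$, again a variety mapping smoothly, representably and surjectively onto $\calM_{[X]}$, whence $\calM_{[X]}\cong B\bar{Q}_X$.

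The real work is the general principle. Given $S\to\calS$ smooth, representable and surjective with $S$ a variety and $\calS$ having a single isomorphism class of $\C$-point $s$, consider $S\times_\calS S\to S\times S$: it is affine, being a base change of the diagonal of $\calS$, which is affine since $\calM$ and $\calC$ are global quotients by general linear groups (\cite{DG}); its fibres are the automorphism group schemes of the $\C$-points of $\calS$, hence smooth of constant dimension (we are over $\C$, and all $\C$-points are isomorphic), so $S\times_\calS S\to S\times S$ is flat, hence smooth, hence surjective with sections \'etale-locally on $S\times S$; restricting such a section over a slice $\{s'\}\times S$ shows $S\to\calS$ factors \'etale-locally on $S$ through $s$, so $s\colon\spec\C\to\calS$ is a smooth surjection and $\calS\cong B\,\aut(s)$. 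I expect this to be the main obstacle: one must be careful that $S\times_\calS S$ is genuinely a scheme (so that the closed-point arguments are legitimate) and must track reducedness and flatness through the base changes; and if one states the principle with ``smooth variety'' in place of ``variety'', one must additionally check that $\fk{t}/W\times_\calM\calM_{[X]}$ is smooth --- e.g.\ that $N_W(W_X)$ acts on $X=\fk{t}^{W_X}$ as a complex reflection group, so the quotient is open in an affine space --- or verify that the principle survives weakening that hypothesis. The remaining steps --- the identification of $X^{\circ}$ and of its image in $\fk{t}/W$ --- are routine unwindings of \cref{restrictToAx} and \cref{kPoints}.
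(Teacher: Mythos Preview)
Your argument for $\calC_X$ is essentially the paper's: one isomorphism class of $\C$-point plus a smooth cover by the variety $X^\circ\subset\fk{t}$, then invoke the general principle. Your treatment of the principle is more detailed than what the arXiv text shows (it is essentially the argument the author gives in the thesis version), and your identification $\fk{t}\times_\calC\calC_X=X^\circ$ is correct.

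For $\calM_X$, however, you take a different route than the paper and leave a genuine gap. You try to run the same argument with the cover $\fk{t}/W\to\calM$, which forces you to show that the preimage of $\calM_{[X]}$ in $\fk{t}/W$ is a smooth variety---equivalently, that $\stab_W(X)/W_X$ acts on $X$ as a complex reflection group. You flag this yourself but do not prove it, and for general complex reflection groups it is not obvious (the analogous statement for real Coxeter groups is a theorem of Howlett, but you are working in the complex case). Your attempt to sidestep this by stating the general principle for possibly singular $S$ doesn't work either: your flatness step (``fibres smooth of constant dimension, so $S\times_\calS S\to S\times S$ is flat'') tacitly uses miracle flatness, which needs the base $S\times S$ to be regular.

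The paper avoids the whole issue. Having already shown $\calC_X\cong BQ_X$, it observes that $\calC_X\to\calM_X$ is itself a smooth surjection: the map $X^\circ\to\fk{t}/W$ is \'etale onto its image because $\stab_W(X)/W_X$ acts \emph{freely} on $X^\circ$ (any $n\in\stab_W(X)$ fixing $t\in X^\circ$ lies in $\stab_W(t)=W_X$). So $X^\circ\to\calC_X\to\calM_X$ is a smooth representable surjection from a smooth variety, and the principle applies directly. This is both shorter and dodges the question of whether the quotient $X^\circ/(\stab_W(X)/W_X)$ is smooth.
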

\begin{proof}

We consider $\calC_X$ first.  $\calC_X$ has a single isomorphism class of $\spec \C$-points by \cref{kPoints}, so it would suffice to show that it is covered smoothly by a smooth variety.   This is true since its preimage in $\fk{t}$ is a disjoint union of open subsets of linear spaces.  $\calC_X$ surjects smoothly on $\calM_X$ since $X - \bigcup_{Y > X} Y \to \fk{t}/W$ is smooth onto its image.  Therefore $\calM_X$ is a classifying space as well.
\end{proof}

In particular there is an equivalence between coherent sheaves on $\calC_X$ and representations of $Q_X$, and similarly for coherent sheaves on $\calM_X$ and representations of $\bar{Q}_X$.  The last goal of this section is to find the representations associated to some natural sheaves on $\calC_X$.  Let $X \in L(\calA)$, let $i: \ol{\calC}_X \into \calC$ be the inclusion, and let $N_i$ be its normal bundle.  Since $\ol{\calC}_X$ is fixed by $W_X$, $i^*\Omega^\vee_{\calC/\calM}$ and $N_i$ are sheaves of $W_X$-representations.   
\begin{proposition}
\label{propRestrictions}
The natural map $\phi: i^*\Omega^\vee_{\calC/\calM} \to N_i$ is surjective, and its kernel is the sheaf of invariants $\left( i^*\Omega^\vee_{\calC/\calM} \right)^{W_X}$.
\end{proposition}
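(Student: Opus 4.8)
The plan is to check both assertions after pulling back along the smooth atlas $a\colon\fk{t}\to\calC$ of \cref{corNArtin}: surjectivity of a map of sheaves, and equality of a subsheaf with a prescribed one, are both local for the faithfully flat topology, so this suffices.

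First I would record the local picture. The preimage $a^{-1}(\ol{\calC}_X)$ is exactly the linear subspace $X\subseteq\fk{t}$: a point $t\in\fk{t}$ maps into $\ol{\calC}_X$ if and only if $\stab_W t\supseteq W_X$, and by \cite{Steinberg} theorem 1.5 (as used already in \cref{kPoints}) $\stab_W t=W_{X_t}$, where $X_t$ is the intersection of the mirrors through $t$; thus the condition reads $X_t\subseteq X$, that is, $t\in X$. Hence $a$ restricts to a smooth surjection $a|_X\colon X\to\ol{\calC}_X$, and this map is $W_X$-equivariant for the \emph{trivial} $W_X$-actions on $X$ (which $W_X$ fixes pointwise, by part 1 of \cref{restrictToAx}) and on $\ol{\calC}_X$. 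Since $\ol{\calC}_X\into\calC$ pulls back along $a$ to the regular embedding $X\into\fk{t}$, we get $(a|_X)^*N_i\cong N_{X/\fk{t}}\cong\calO_X\otimes_\C(\fk{t}/X)$; and, recalling that under $a$ the bundle $\Omega^\vee_{\calC/\calM}$ becomes the tangent bundle $T\fk{t}\cong\calO_\fk{t}\otimes_\C\fk{t}$ with its natural $W$-equivariant structure (standard action on the $\fk{t}$-factor), we get $(a|_X)^*i^*\Omega^\vee_{\calC/\calM}\cong\calO_X\otimes_\C\fk{t}$, with $W_X$ acting only through the standard representation on $\fk{t}$. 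Unwinding the definition of $\phi$, its pullback $(a|_X)^*\phi$ is the canonical surjection $\calO_X\otimes_\C\fk{t}\twoheadrightarrow\calO_X\otimes_\C(\fk{t}/X)$.

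The conclusion is then immediate over $X$. That surjection has kernel $\calO_X\otimes_\C X$, which---since $X=\fk{t}^{W_X}$ by part 1 of \cref{restrictToAx}---is precisely $(\calO_X\otimes_\C\fk{t})^{W_X}=\bigl((a|_X)^*i^*\Omega^\vee_{\calC/\calM}\bigr)^{W_X}$. As $\#W_X$ is invertible, forming $W_X$-invariants is exact and is computed by the averaging idempotent, hence commutes with the flat pullback $(a|_X)^*$; kernels likewise commute with flat pullback, and $a|_X$ is faithfully flat. Descending, $\phi$ is surjective and $\ker\phi=(i^*\Omega^\vee_{\calC/\calM})^{W_X}$ over $\ol{\calC}_X$.

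The one point requiring genuine care is the identification, in the second paragraph, of $(a|_X)^*\Omega^\vee_{\calC/\calM}$ with $T\fk{t}$ as a $W$-equivariant sheaf and of $(a|_X)^*\phi$ with the obvious quotient; both follow by tracing through the definition of $\Omega^\vee_{\calC/\calM}$ and of the comparison map to the normal bundle in terms of the tautological cameral cover $\fk{t}\to\fk{t}/W$, after which only formal descent and the equality $X=\fk{t}^{W_X}$ are needed.
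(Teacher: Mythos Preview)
Your overall strategy---pull back along the smooth atlas $a\colon\fk{t}\to\calC$, identify everything explicitly there, and descend---is the same as the paper's. The gap is in the identification you assert in the second paragraph: you write that ``under $a$ the bundle $\Omega^\vee_{\calC/\calM}$ becomes the tangent bundle $T\fk{t}\cong\calO_\fk{t}\otimes_\C\fk{t}$.'' This is not correct. The morphism $\calC\to\calM$ is \emph{finite} (it is the universal cameral cover), not smooth, and under the Cartesian square $\fk{t}=\fk{t}/W\times_\calM\calC$ the sheaf $\Omega^\vee_{\calC/\calM}$ pulls back to the relative tangent of $\fk{t}$ over $\fk{t}/W$, not to the absolute tangent $T\fk{t}$. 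Concretely, at a point $y\in\fk{t}$ lying in the stratum for $Y\in L(\calA)$, the fiber of the cameral cover through $y$ is $Co(W_Y,\fk{t}/Y)$ (\cref{kPoints}), so the fiber of $a^*\Omega^\vee_{\calC/\calM}$ at $y$ is $T_\sigma C_Y\cong\fk{t}/Y$, which has dimension $\codim Y$. This dimension varies with $y$, so $a^*\Omega^\vee_{\calC/\calM}$ is not the trivial bundle $\calO_\fk{t}\otimes\fk{t}$, and your subsequent computation of the kernel as $\calO_X\otimes X$ does not go through as written.

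The paper's proof avoids this by working fiberwise from the start: it first observes that $\phi$ must factor through $\calF:=(i^*\Omega^\vee_{\calC/\calM})/(i^*\Omega^\vee_{\calC/\calM})^{W_X}$ (since $N_i$ has no nonzero $W_X$-invariants), and then checks $\calF\to N_i$ is an isomorphism pointwise on $X$. At $y\in X$ in stratum $Y\geq X$, the fiber of $i^*\Omega^\vee_{\calC/\calM}$ is $\fk{t}/Y$; since $Y\subseteq X=\fk{t}^{W_X}$, the $W_X$-invariants of $\fk{t}/Y$ are $X/Y$, and the quotient is $\fk{t}/X$, matching the fiber of $N_i$. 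Your argument can be repaired along exactly these lines, but the ``trivial bundle'' shortcut does not work.
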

\begin{proof}
Write $\calF := \left(i^*\Omega^\vee_{\calC/\calM}\right)/\left( i^*\Omega^\vee_{\calC/\calM} \right)^{W_X}$.  Since $\phi$ is $W_X$-equivariant and $N_i$ has no nonzero $W_X$-invariants, $\phi$ factors through $\calF$.  To check that the resulting map $\calF \to N_i$ is an isomorphism it suffices to check that its pullback to $X \subset \fk{t}$ is an isomorphism.  This is a map of coherent sheaves on a variety so it suffices to check that its fiber over every closed point $y \in X$ is an isomorphism.  

Let $Y \in L(\calA)$ be maximal with $y \in Y$ (so in particular $Y \subset X$).  By \cref{kPoints}, the fiber of $\fk{t} \to \fk{t}/W$ through $y$ is isomorphic to $Co(W_Y, \fk{t}/Y)$, and in particular the fiber of $i^*\Omega^\vee_{\calC/\calM}$ is identified with $\fk{t}/Y$.  The pullback of $\phi$ to $y$ is the quotient map killing all vectors tangent to $X$, and since $X$ is the fixed locus of $W_X$ this is the same as killing the $W_X$-invariant subspace.  
\end{proof}

\begin{notation}
For each $a \in \calA$, let $I_a$ be the ideal sheaf corresponding to the closed immersion $\bar{\calC}_a \into \calC$. For $\calB$ an irreducible component of $\calA_X$, write $\pi_\calB$ for the map $\aut_{W_X}\, T_\sigma C_X \to \aut_{W_X} \,\calB = \G_m$, and let $\calO(d \calB)$ be the pullback along $\pi_\calB$ of the degree-$d$ character of $\G_m$.  
\end{notation}

\begin{corollary}
\label{corNormals}
\begin{enumerate}
\item Let $X \in L(\calA)$ and $i: \ol{\calC}_X \into \calC$ be the inclusion.  Then $N_i|_{\calC_X}$ is isomorphic to the cotangent space $T_\sigma C_X$ as a representation of $Q_X$.
\item Let $a \in \calA_Y$ and let $\calB$ be the irreducible component of $\calA_Y$ including $a$.  Then $I_a|_{\calC_Y}$ is isomorphic to $\calO(\calB)$ as a representation of $Q_X$.
\end{enumerate}
\end{corollary}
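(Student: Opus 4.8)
The plan is to restrict everything to the open strata, where coherent sheaves on $\calC_X$ (resp.\ $\calC_Y$) are representations of $Q_X$ (resp.\ $Q_Y$) by \cref{classifyingSpaces}, and then to read off the representations from \cref{propRestrictions} and \cref{pointStabalizers}.

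For part (1) I would restrict the surjection $\phi\colon i^*\Omega^\vee_{\calC/\calM}\to N_i$ of \cref{propRestrictions} to $\calC_X$, getting a surjection of $Q_X$-representations. By the fibrewise description in the proof of \cref{propRestrictions} (equivalently, directly from the construction of the $Q_X$-action on $T_\sigma C_X$) the source restricts to the $Q_X$-representation on $T_\sigma C_X\cong\fk{t}/X$; its $W_X$-structure is the evident action on $\fk{t}/X$, which commutes with $Q_X$ since $Q_X$ acts through $\aut_{W_X} T_\sigma C_X$ (\cref{pointStabalizers}). \Cref{propRestrictions} identifies the kernel of $\phi|_{\calC_X}$ with $(T_\sigma C_X)^{W_X}$, which vanishes because $(W_X,\fk{t}/X)$ is essential (\cref{restrictToAx}) and hence $\fk{t}/X$ has no nonzero invariants. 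So $\phi|_{\calC_X}$ is the asserted isomorphism of $Q_X$-representations $T_\sigma C_X\cong N_i|_{\calC_X}$; this part is bookkeeping once \cref{propRestrictions} is in hand.

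For part (2) (in which $Q_X$ should read $Q_Y$) I would first check that $\bar\calC_a$ is an effective Cartier divisor. Under the smooth surjection $p\colon\fk{t}\to\calC$ of \cref{corNArtin}, the preimage of $\bar\calC_a$ is the closure of $p^{-1}(\calC_a)=\{t\in\fk{t}:\stab_W t = W_a\}$, a dense open subset of the hyperplane $a$, hence equals $a$; since $a$ is a Cartier divisor in the smooth variety $\fk{t}$ and being a Cartier divisor is fppf-local, $I_a=\calO_\calC(-\bar\calC_a)$ is a line bundle. Thus $I_a|_{\calC_Y}$ is a line bundle on $\calC_Y\cong BQ_Y$, i.e.\ a character $\chi\colon Q_Y\to\G_m$; since $Q_Y = U\rtimes\aut_{W_Y} T_\sigma C_Y$ with $U$ connected unipotent (\cref{pointStabalizers}), $\chi$ is trivial on $U$ and factors through $\aut_{W_Y} T_\sigma C_Y\cong\prod_{\calB'\in\irr\calA_Y}\G_m$. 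It then remains to show that $\chi$ is trivial on each factor with $\calB'\neq\calB$ and is the degree-one character on the factor $\calB$, for that is exactly the definition of $\calO(\calB)$ via $\pi_\calB$.

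I would obtain this by a local computation on the atlas $p$: choosing $t_0\in Y$ with $\stab_W t_0 = W_Y$, near $t_0$ the only members of $\calA$ are those of $\calA_Y$; decomposing $\fk{t}=Y\oplus\fk{t}/Y$ as a $W_Y$-representation and $\fk{t}/Y=\bigoplus_{\calB'} V_{\calB'}$ into distinct irreducibles (\cref{irreps}, \cref{restrictToAx}), each hyperplane of $\calA_Y$ is pulled back from one summand $V_{\calB'}$, and $a$ from a hyperplane $\{\ell_a=0\}\subset V_\calB$ with $\ell_a\in V_\calB^\vee$. Identifying $\aut_{W_Y} T_\sigma C_Y=\prod_{\calB'}\aut_{W_Y}\calB'\hookrightarrow Q_Y$, this torus should act near $\calC_Y$ --- on the atlas, near $t_0$ --- by independently rescaling the $V_{\calB'}$, so that the factor $\calB'\neq\calB$ fixes $\ell_a$ and hence acts trivially on $I_a=(\ell_a)$, while the factor $\calB$ scales $\ell_a$ linearly; comparison with $\pi_\calB$ then gives $I_a|_{\calC_Y}\cong\calO(\calB)$. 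I expect the main obstacle to be exactly this last point: a subgroup of the automorphism group of a point of a stack need not act on a neighbourhood, so one must upgrade the splitting $\aut_{W_Y} T_\sigma C_Y\hookrightarrow Q_Y$ of \cref{pointStabalizers} to an action on an \'etale (or formal) neighbourhood of $\calC_Y$ in $\calC$ compatible with $p$ and acting on $\fk{t}$ near $t_0$ as described --- a transverse local-triviality statement for $\calC$ along $\calC_Y$, in which the product decomposition of $\calA_Y$ and the identification $Co(W_Y,\fk{t}/Y)$ of the transverse fibre are the tools.
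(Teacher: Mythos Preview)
Your argument for (1) is correct and matches the paper: restrict the surjection of \cref{propRestrictions} to $\calC_X$ and observe that the $W_X$-invariants of $T_\sigma C_X\cong\fk t/X$ vanish since $(W_X,\fk t/X)$ is essential.

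For (2) the paper takes a shorter route that avoids the obstacle you correctly identify at the end. Rather than attempting to extend the torus $\aut_{W_Y}T_\sigma C_Y$ to act on a neighbourhood of $\calC_Y$ in $\calC$ (which is not automatic: only the smaller torus $\aut_W\fk t$ acts globally on $\fk t$, and its factors are indexed by $\irr\calA$, not $\irr\calA_Y$), the paper simply reapplies \cref{propRestrictions}, now with $X=a$. Since $\bar\calC_a$ is a divisor, $I_a|_{\bar\calC_a}$ is (dual to) the normal line bundle of $i\colon\bar\calC_a\hookrightarrow\calC$, which \cref{propRestrictions} exhibits as the $W_a$-coinvariants of $i^*\Omega^\vee_{\calC/\calM}$. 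Now restrict further to the point $(C_Y,\sigma)\in\calC_Y\subset\bar\calC_a$: the sheaf $\Omega^\vee_{\calC/\calM}$ is the normal bundle of the diagonal section of the universal cover $\calC\times_\calM\calC\to\calC$, so its fibre there is $T_\sigma C_Y$ \emph{as a $Q_Y$-representation} by the very construction of the $Q_Y$-action in \cref{pointStabalizers}, and its $W_a$-coinvariants are visibly the one-dimensional normal direction to $a$ inside the irreducible summand $V_\calB\subset\fk t/Y$. Only the $\calB$-factor of the torus scales this line, giving $\calO(\calB)$. The payoff is that the whole computation takes place fibrewise in representations already supplied by \cref{pointStabalizers}; no transverse local-triviality or Luna-slice statement is needed. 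Your atlas approach could presumably be completed along the lines you sketch, but it is more work for the same conclusion.
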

\begin{proof}
Consider the universal cameral cover with section, $\calC \times_\calM \calC \to \calC$.  Its section is the diagonal map $\Delta$.  \Cref{propRestrictions} together with the natural isomorphism $\Omega^*_{\calC/\calM} = N_{\Delta}$ implies that $\left( i^*N_\Delta\right)/\left( i^*N_\Delta \right)^{W_X} \cong N_i$.  Now we pull back along the map $\spec \C \to \ol{\calC}_X$ corresponding to the pointed cameral cover $(C_Y, \sigma)$, where $Y>X$.  The normal bundle of the universal section pulls back to the normal bundle of $\sigma \into C_Y$, i.e. $T_\sigma C_Y$.  Statement (1) follows from the case $X=Y$, and statement (2) follows from the case $X=a$.   

\end{proof}

\section{Characteristic classes of cameral covers}\label{chapCharCameral}

Let $\kappa$ be a ring.  In this section we compute the cohomology
ring $H^*(\calC, \kappa)$ (\cref{HN}).  If $\# W$ is invertible in $\kappa$ then we immediately obtain $H^*(\calM, \kappa)$ (\cref{HM}). 

The next few observations are more general than they need to be, but we feel that they make the argument cleaner.  Let $L$ be a finite poset and $\cal{S}$ be a regular Artin stack with a stratification by $L$ (see \cref{defineStratification} for what this means).  Suppose further that $H^*(\calS_X, \kappa)$ is concentrated in even degree for all $X \in L$.  Recall that $P(X)$ denotes the smallest upwards-closed subset of $L$ including $X$.

Let $P \subset L$ be an upwards-closed subset, $\mathcal{U} = \calS - \calS_P$, $X \in L - P$ be maximal, and $\mathcal{X} = \mathcal{U} \cup \calS_X$ so that $\mathcal{X}$ is open in $\calS$ and $\calS_X$ is closed in $\mathcal{X}$.  Let $i: \calS_X \into \mathcal{X}$ and $j: \mathcal{U} \into \mathcal{X}$ be the inclusions, and let $r$ be the codimension of $\ol{\calS}_X$ in $\calS$.  As $\mathcal{U}$ and $\calS_X$ are regular, there is an exact triangle
\[ Ri_*\ul{\kappa}_{\mathcal{S}_X}[-2r] \to \ul{\kappa}_{\mathcal{X}} \to j_*\ul{\kappa}_{\mathcal{U}} \to i_*{\kappa}_{\cal{S}_X}[-2r+1]\]
Because $H^*(\calS_X, \kappa)$ is concentrated in even degrees, the associated
long exact sequence gives a short exact sequence 
\[0 \to H^*(\calS_P,
\kappa[-2r]) \to H^*(\calS, \kappa) \to H^*(\mathcal{U}, \kappa) \to 0\]
The
first map is the gysin map and the second map is the restriction map.  If
$\kappa$ is a field then these short exact sequences imply \[ \dim H^i(\calS,
\kappa) = \sum_{X \in L} \dim H^{i-\codim \calS_X}(\calS_X, \kappa)\]

If $\calS_P$ is pure of codimension $r$ in $\calS$, Write $[\calS_P]$ for the image of $1 \in H^*(\calS_P, \kappa)$ under the gysin map $\gamma: H^*(\calS_P, \kappa) \to H^{*+2r}(\calS, \kappa)$ and call it the ``fundamental class'' of $\calS_P$.  If $i: \calS_P \into \calS$ is the inclusion and $\alpha \in H^*(\calS, \kappa)$, then the projection formula for cohomology gives $\gamma(i^*\alpha) = [\calS_P]\alpha$.  

\begin{lemma}
\label{lemSurjection}
Let $\calS$ a regular Artin stack with a stratification by $L$ such that $H^*(\calS_X, \kappa)$ is concentrated in even degrees for each $X \in L$.  Let $R \subset H^*(\calS, \kappa)$ be some subring, such that for all $X \in L$, the image of $R$ in $H^*(\calS_X, \kappa)$ is all of $H^*(\calS_X, \kappa)$.  Suppose further that $[\calS_{P(X)}] \in R$ for all $X \in L$.  Then $R = H^*(\calS, \kappa)$.
\end{lemma}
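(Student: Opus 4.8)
The plan is to induct on the number of strata $|L|$, peeling off one closed stratum at each step and reassembling via the Gysin sequence recalled above. When $|L|=1$ the stack $\calS$ is a single stratum $\calS_X$, so the hypothesis that $R$ surjects onto $H^*(\calS_X,\kappa)=H^*(\calS,\kappa)$, together with $R\subseteq H^*(\calS,\kappa)$, forces $R=H^*(\calS,\kappa)$; and if $|L|=0$ there is nothing to prove.

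For the inductive step, assume $|L|\ge 2$ and pick a minimal element $X_0$ of $L$. By property (2) of a stratification the only stratum contained in $\ol{\calS}_{X_0}$ is $\calS_{X_0}$ itself, and by property (3) every point of $\ol{\calS}_{X_0}$ lies in some stratum, so $\ol{\calS}_{X_0}=\calS_{X_0}$; thus $\calS_{X_0}$ is closed in $\calS$, $\calS_{P(X_0)}=\ol{\calS}_{X_0}=\calS_{X_0}$, and in particular $[\calS_{X_0}]\in R$ by hypothesis. Set $\mathcal{U}=\calS-\calS_{X_0}$, an open (hence regular) Artin substack, stratified by $L'=L\setminus\{X_0\}$ via the $\calS_Y$ with $Y\ne X_0$ (conditions (1)--(3) being inherited from $\calS$), each such stratum again having cohomology in even degrees only. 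I would then apply the lemma to $\mathcal{U}$ with the subring $R'\subseteq H^*(\mathcal{U},\kappa)$ taken to be the image of $R$ under restriction (an image of a ring map, hence a subring). The hypotheses descend: for $Y\in L'$ the image of $R'$ in $H^*(\calS_Y,\kappa)$ equals the image of $R$, i.e.\ all of it; and the closure of $\calS_Y$ in $\mathcal{U}$ is the open part $\ol{\calS}_Y\cap\mathcal{U}$ of the pure closed substack $\calS_{P(Y)}=\ol{\calS}_Y\subseteq\calS$, so its fundamental class in $H^*(\mathcal{U},\kappa)$ is the restriction of $[\calS_{P(Y)}]\in R$ and therefore lies in $R'$. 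By the inductive hypothesis $R'=H^*(\mathcal{U},\kappa)$; that is, $R$ already surjects onto $H^*(\mathcal{U},\kappa)$ under restriction.

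It remains to glue. Writing $r=\codim\calS_{X_0}$ and letting $j:\mathcal{U}\into\calS$, $i:\calS_{X_0}\into\calS$ be the inclusions, the discussion preceding the lemma, applied to the closed stratum $\calS_{X_0}$ with open complement $\mathcal{U}$, supplies the short exact sequence
\[ 0 \to H^{*-2r}(\calS_{X_0},\kappa) \to H^*(\calS,\kappa) \to H^*(\mathcal{U},\kappa) \to 0, \]
in which the first map is the Gysin map $\gamma=i_*$ and the second is $j^*$, together with the projection formula $\gamma(i^*\eta)=[\calS_{X_0}]\cdot\eta$ for $\eta\in H^*(\calS,\kappa)$. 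Given $\alpha\in H^*(\calS,\kappa)$, choose $\rho\in R$ with $j^*\rho=j^*\alpha$ (possible by the previous paragraph); exactness then gives $\alpha-\rho=\gamma(\beta)$ for some $\beta\in H^{*-2r}(\calS_{X_0},\kappa)$. Since $R$ surjects onto $H^*(\calS_{X_0},\kappa)$ under $i^*$, write $\beta=i^*\tau$ with $\tau\in R$; then $\gamma(\beta)=\gamma(i^*\tau)=[\calS_{X_0}]\cdot\tau\in R$, as $R$ is a subring and $[\calS_{X_0}]\in R$. Hence $\alpha=\rho+\gamma(\beta)\in R$, so $R=H^*(\calS,\kappa)$ and the induction is complete.

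I expect the only genuine content, as opposed to bookkeeping, to be the verification that the hypotheses descend to the open substack $\mathcal{U}$: one needs that restriction maps compose, that the closure of a stratum in $\mathcal{U}$ is the trace on $\mathcal{U}$ of its closure in $\calS$, and that fundamental classes are compatible with pullback along the open immersion $j$. One should also record, once and for all, that $H^*(\mathcal{V},\kappa)$ is concentrated in even degrees for every open union of strata $\mathcal{V}$ --- so that the long exact sequence of the pair collapses to the short exact sequence used above --- which itself follows by the same induction from the evenness assumed on each $H^*(\calS_X,\kappa)$.
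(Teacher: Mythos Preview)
Your argument is correct and follows essentially the same strategy as the paper: peel off one closed stratum, use the Gysin short exact sequence together with the projection formula $\gamma(i^*\tau)=[\calS_{X_0}]\cdot\tau$ to absorb the Gysin image into $R$, and induct. The only cosmetic difference is that you package the induction on $|L|$ (applying the full lemma to the open complement $\mathcal U$), whereas the paper fixes a single class $\alpha$ and inductively enlarges the open set on which $\alpha$ agrees with an element of $R$; the Gysin step is identical in both.
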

\begin{proof} Let $\alpha \in H^*(\calS, \kappa)$.  We use induction on the
	stratification to show that $\alpha \in R$.  Let $P$ be an
	upwards-closed sub-poset of $L$
	and assume inductively that there exists $\beta \in R$ such that
	$\alpha - \beta$ vanishes on the complement $\mathcal{U} = \calS -
	\calS_P$.  Let $X \in P$ be minimal.  We will show there exists $\beta'
	\in R$ with $\alpha - \beta'$ vanishing on $\mathcal{X} = \calS_X \cup
	\mathcal{U}$.  Let $i: \calS_X \into \mathcal{X}$ be the inclusion and
	$\gamma$ be the corresponding gysin map.  $\calS_X$ is of pure
	codimension in $\calS_P$, so by exactness of the gysin sequence,
	$\alpha - \beta = \gamma(\eta)$ for some $\eta \in H^*(\calS_X,
	\kappa)$.  By assumption, $\eta = i^*\eta'$ for some $\eta' \in R$.  By
	the projection formula, $\gamma(\eta)  = [\calC_X] \eta'$, but the
	latter is in $R$ as well since $[\calC_X] \in R$.  Let $\beta' = \beta
	+ [\calC_X] \eta'$.  \end{proof}

\begin{lemma}
\label{lemInjection}
Let $\calS$ a regular Artin stack with a stratification by $L$ such that $H^*(\calS_X, \kappa)$ is concentrated in even degree for each $X \in L$.  Suppose that all the natural maps $H^*(\calS, \kappa) \to H^*(\calS_X, \kappa)$ are surjective (as is the case when \cref{lemSurjection} applies).  
Assume that for each $X \in L$, $[\calS_{P(X)}] |_{\calS_X} \in H^*(\calS_X, \kappa)$ is not a zero divisor.  Then the map $\delta: \coprod_{X \in L} \calS_X \to \calS$ induces an injection $\delta^*: H^*(\calS, \kappa) \into H^*\left( \coprod_{X \in L} \calS_X, \kappa \right)$.
\end{lemma}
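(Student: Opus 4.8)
The plan is to run an induction along the stratification, peeling off one closed stratum at a time, and to use the non-zero-divisor hypothesis to certify that a nonzero class is always detected on some stratum. First I would order the strata so that $X_i$ is maximal among the strata not yet deleted, producing a chain of open substacks $\calS = \mathcal{X}_0 \supsetneq \mathcal{X}_1 \supsetneq \cdots \supsetneq \mathcal{X}_n = \emptyset$ with $\mathcal{X}_i = \mathcal{X}_{i-1}\setminus\calS_{X_i}$ and with $\calS_{X_i}$ closed in $\mathcal{X}_{i-1}$ of codimension $r_i := \codim\calS_{X_i}$. A preliminary step, by descending induction from $\mathcal{X}_n = \emptyset$ using the even-degree hypothesis on the strata, shows every $H^*(\mathcal{X}_i,\kappa)$ is concentrated in even degree, so the Gysin triangle of $(\mathcal{X}_{i-1},\calS_{X_i})$ from the discussion preceding \cref{lemSurjection} collapses to a short exact sequence
\[ 0 \to H^{*-2r_i}(\calS_{X_i},\kappa) \xrightarrow{\gamma_i} H^*(\mathcal{X}_{i-1},\kappa) \xrightarrow{\rho_i'} H^*(\mathcal{X}_i,\kappa) \to 0, \]
with $\gamma_i$ the Gysin pushforward and $\rho_i'$ the open restriction. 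Writing $\rho_i : H^*(\calS,\kappa) \to H^*(\mathcal{X}_i,\kappa)$ for the restriction, the subspaces $F^i := \ker\rho_i$ form an increasing filtration $0 = F^0 \subseteq \cdots \subseteq F^n = H^*(\calS,\kappa)$.

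Next I would take $\alpha \in H^*(\calS,\kappa)$ with $\delta^*\alpha = 0$, that is, $\alpha|_{\calS_Y} = 0$ for all $Y \in L$, and show $\alpha = 0$. Suppose not, and let $i$ be least with $\alpha \in F^i$; then $i \geq 1$ and $\rho_{i-1}(\alpha) \neq 0$, while $\rho_i'(\rho_{i-1}\alpha) = \rho_i(\alpha) = 0$, so exactness gives $\rho_{i-1}(\alpha) = \gamma_i(\eta)$ for a unique $\eta \neq 0$ in $H^{*-2r_i}(\calS_{X_i},\kappa)$. Since the stratum inclusion factors through a closed immersion $\iota_i : \calS_{X_i} \into \mathcal{X}_{i-1}$ followed by the open inclusion $\mathcal{X}_{i-1}\into\calS$, restricting $\alpha$ and invoking the projection formula gives
\[ 0 = \alpha|_{\calS_{X_i}} = \iota_i^*\gamma_i(\eta) = \bigl(\iota_i^*\gamma_i(1)\bigr)\cdot\eta. \]
Here $\iota_i^*\gamma_i(1)$ is the Euler class of the normal bundle of $\calS_{X_i}$ in $\mathcal{X}_{i-1}$, hence in $\calS$; and since $\ol{\calS}_{X_i} = \calS_{P(X_i)}$ and $\ol{\calS}_{X_i}\cap\mathcal{X}_{i-1} = \calS_{X_i}$ — this uses that $X_i$ is maximal among the remaining strata, so $P(X_i)$ meets the index set of $\mathcal{X}_{i-1}$ only in $X_i$ — compatibility of the Gysin map with restriction to the open $\mathcal{X}_{i-1}$ identifies this Euler class with $[\calS_{P(X_i)}]|_{\calS_{X_i}}$. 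By hypothesis this is not a zero divisor in $H^*(\calS_{X_i},\kappa)$, so $\eta = 0$, a contradiction; hence $\delta^*$ is injective.

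I expect the main obstacle to be the identification $\iota_i^*\gamma_i(1) = [\calS_{P(X_i)}]|_{\calS_{X_i}}$: one must check that the globally defined fundamental class $[\calS_{P(X_i)}] = [\ol{\calS}_{X_i}] \in H^*(\calS,\kappa)$ restricts on the open $\mathcal{X}_{i-1}$, where $\ol{\calS}_{X_i}$ becomes the smooth closed substack $\calS_{X_i}$, to $\gamma_i(1)$, and that the self-intersection formula $\iota_i^*\gamma_i(1) = e(N_{\calS_{X_i}/\mathcal{X}_{i-1}})$ holds in this stacky setting; both are standard but deserve to be spelled out. The remaining ingredients — the Gysin triangle, the even-degree bookkeeping, and the projection formula — are routine. (The surjectivity hypothesis on $H^*(\calS,\kappa)\to H^*(\calS_X,\kappa)$ is not actually needed for this direction, though it holds whenever \cref{lemSurjection} applies, so assuming it costs nothing.)
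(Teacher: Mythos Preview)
Your proof is correct and follows the same inductive skeleton as the paper: peel off one closed stratum at a time, use the Gysin short exact sequence (available because of the even-degree hypothesis), and kill the Gysin preimage using the non-zero-divisor assumption.

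The one genuine difference is in how you kill $\eta$. The paper uses the surjectivity hypothesis to lift $\eta$ to a class $\beta$ on the ambient open, then invokes the projection formula $\gamma_i(i^*\beta)=[\calS_{X_i}]\cdot\beta$ and restricts back. You instead apply the self-intersection formula $\iota_i^*\gamma_i(\eta)=e(N)\cdot\eta$ directly, which needs no lift and hence no surjectivity. Your observation at the end is correct: the surjectivity hypothesis is not actually used in your argument, so your version is marginally stronger and cleaner. The trade-off is that you then carry the burden of identifying $e(N_{\calS_{X_i}/\mathcal{X}_{i-1}})$ with $[\calS_{P(X_i)}]|_{\calS_{X_i}}$, which you flag as the main point to spell out; in the paper's route this identification is implicit in the compatibility of the global fundamental class with open restriction, and the projection formula does the rest. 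Both are standard, so this is a matter of taste rather than substance.
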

\begin{proof} 
Let $\alpha$ be an element of $\ker \delta^*$.  Let $P \subset L(\calA)$ be
some upwards-closed sub-poset, and let $\mathcal{U} = \calS \bs \calS_P$.  Assume inductively that $\alpha|_U = 0$.  Let $X \in P$ be minimal and let $\mathcal{X} = \calS_X \cup \mathcal{U}$.  We will show that $\alpha|_{\mathcal{X}} = 0$.  For the rest of the proof we abuse notation and write $\alpha$ for $\alpha|_{\mathcal{X}}$.

Let $i: \calS_X \into \mathcal{X}$ be the inclusion and $\gamma: H^{*}(\calS_X, \kappa) \into H^*(\mathcal{X}, \kappa)$ the gysin map.  By assumption $\alpha$ vanishes on $\mathcal{U}$, so $\alpha = \gamma(\beta)$ for some $\beta' \in H^*(\calS_X, \kappa)$.  By assumption $\beta' = i^*\beta$ for some $\beta \in H^*(\mathcal{X}, \kappa)$.  By the projection formula, $\alpha = [\calC_X] \beta$.  By assumption $i^*[\calS_X]$ is not a zero divisor.  Since $i^*\alpha = 0$ we must have $i^*\beta = 0$.  But then $\beta'=0$ so $\alpha = \gamma(\beta')$ was $0$. 

\end{proof}

We now apply these lemmas in the case $\calS = \calC$, $L = L(\calA)$.  Note that $\calC_X$ is the classifying space of an extension of a torus by a (smooth) unipotent group.  The classifying space of a unipotent group is contractible, so the cohomology of $\calC_X$ is that of a torus, and in particular is concentrated in even degrees.

\begin{proposition}
\label{propBetti}
Let $\Q[u_{\calB \in \irr \calA_X}]$ denote the polynomial ring on letters $u_\calB$ corresponding to the irreducible components of $\calA_X$.  $\stab(X)/W_X$ permutes the irreducible components, and therefore acts on $\Q[u_{\calB \in \irr \calA_X}]$ from the right.  Write $\Q[u_{\calB \in \irr \calA_X}]^{\stab(X)/W_X}$ for the invariant subring, and $\Q[u_{\calB \in \irr \calA_X}]^{\stab(X)/W_X}_d$ for its degree-$d$ component.  Then the Betti numbers of $\calM$ are given by the formula
\[ \dim H^i(\calM, \Q) = \sum_{[X] \in L(\calA)/W} \dim \Q[u_{\calB \in \irr \calA_X}]^{\stab(X)/W_X}_{i - 2 \codim X} \]
\end{proposition}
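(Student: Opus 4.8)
The plan is to feed the stratification of $\calM$ by $L(\calA)/W$ into the Betti-number identity recorded in the discussion preceding \cref{lemSurjection}: for a regular Artin stack $\calS$ stratified by a finite poset $L$ with every $H^*(\calS_X,\Q)$ concentrated in even degrees, $\dim H^i(\calS,\Q)$ is the sum over $X\in L$ of $\dim H^{i-2c_X}(\calS_X,\Q)$, where $c_X$ is the complex codimension of $\ol{\calS}_X$ in $\calS$ (the cohomological Gysin shift being $2c_X$). Since $\calM$ is smooth and is stratified by $L(\calA)/W$ (the discussion after \cref{strataDef}), the whole proposition reduces to two computations: the rational cohomology of each stratum $\calM_{[X]}$ — which in particular must be checked to vanish in odd degrees — and the codimension $c_{[X]}$ of $\ol{\calM}_{[X]}$ in $\calM$.

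For the stratum cohomology I would argue as follows. By \cref{classifyingSpaces}, $\calM_{[X]}\cong B\bar{Q}_X$, and by \cref{pointStabalizers}, $\bar{Q}_X=(U\rtimes\aut T_\sigma C_X)\rtimes(\stab(X)/W_X)$ with $U$ connected unipotent and $\aut T_\sigma C_X\cong\G_m^{\#\irr\calA_X}$. The subgroup $U$, being the unipotent radical of $Q_X$ (equivalently the kernel of $Q_X\to\aut T_\sigma C_X$), is preserved by every automorphism of $Q_X$ and hence normal in $\bar{Q}_X$, with quotient $\bar{Q}_X/U\cong\G_m^{\#\irr\calA_X}\rtimes(\stab(X)/W_X)$ in which the finite factor acts on the torus factors precisely through its permutation of $\irr\calA_X$ (\cref{remFinitePart}). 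Because $U$ is connected unipotent, $BU$ is contractible, so the fiber sequence $BU\to B\bar{Q}_X\to B(\bar{Q}_X/U)$ gives $H^*(\calM_{[X]},\Q)\cong H^*(B(\G_m^{\#\irr\calA_X}\rtimes\stab(X)/W_X),\Q)$. The covering $B\G_m^{\#\irr\calA_X}\to B(\G_m^{\#\irr\calA_X}\rtimes\stab(X)/W_X)$ is regular with deck group $\stab(X)/W_X$, whose order is a unit in $\Q$, so the transfer identifies the base's cohomology with the invariants, giving $H^*(\calM_{[X]},\Q)\cong\Q[u_{\calB\in\irr\calA_X}]^{\stab(X)/W_X}$ with each $u_\calB$ in degree $2$. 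In particular this ring lies in even degrees, which is exactly the hypothesis the Betti-number identity required.

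For the codimension I would invoke \cref{corNormals}(1), which identifies the normal bundle of $\ol{\calC}_X\into\calC$ along $\calC_X$ with $T_\sigma C_X\cong\fk{t}/X$ (\cref{kPoints}); this has rank $\codim X$, so $\ol{\calC}_X$ has complex codimension $\codim X$ in $\calC$. Because $\calC\to\calM$ is finite flat, hence dimension-preserving, and the proof of \cref{classifyingSpaces} presents $\calC_X\to\calM_{[X]}$ as a smooth surjection between classifying spaces of groups of equal dimension ($\stab(X)/W_X$ being finite), $\ol{\calM}_{[X]}$ has complex codimension $\codim X$ in $\calM$ as well, i.e. $c_{[X]}=\codim X$. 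Substituting this and the stratum computation into the identity of the first paragraph gives $\dim H^i(\calM,\Q)=\sum_{[X]\in L(\calA)/W}\dim\big(\Q[u_{\calB\in\irr\calA_X}]^{\stab(X)/W_X}\big)_{i-2\codim X}$, which is the assertion (with the grading as in the statement). One should also note that the $[X]$-summand is independent of the chosen representative of the orbit, since conjugation by $w\in W$ carries $(X,W_X,\stab(X),\calA_X)$ isomorphically onto the analogous data for $wX$.

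I expect the main obstacle to lie in the stratum computation: one must extract the precise group-theoretic structure of $\bar{Q}_X$ from \cref{pointStabalizers} and \cref{remFinitePart}, discard the unipotent radical at the level of rational cohomology, and then pass to finite-group invariants. It is exactly the evenness of $H^*(\calM_{[X]},\Q)$ coming out of this that forces the Gysin sequences of the stratification to split into short exact sequences without error terms, so that the Betti numbers of $\calM$ add up as in the displayed formula.
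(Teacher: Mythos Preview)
Your argument is correct and follows essentially the same route as the paper: both feed the stratification of $\calM$ by $L(\calA)/W$ into the Betti-number identity for even-cohomology stratifications, and both identify $H^*(\calM_{[X]},\Q)$ with $\Q[u_{\calB\in\irr\calA_X}]^{\stab(X)/W_X}$ via the structure of $\bar{Q}_X$ from \cref{pointStabalizers}. The only cosmetic difference is that the paper invokes the Serre spectral sequence for the fibration $BQ_X\to B\bar{Q}_X\to B(\stab(X)/W_X)$ directly, whereas you first strip off the unipotent radical $U$ and then apply transfer for the finite quotient; these two computations are equivalent, and your codimension justification via \cref{corNormals} is more explicit than the paper's one-line assertion.
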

\begin{proof}
We let $L = L(\calA)/W$, where $[X] \leq [Y]$ if there exist representatives $X' \in [X]$, $Y' \in [Y]$ with $X' < Y'$, and apply the formula for the Betti numbers of a stratified stack given above:
\[ \dim H^i(\calM, \Q) = \sum_{[X] \in L(\calA)/W} \dim H^{i-\codim \calM_X}(\calM_X, \Q)\]
$\codim \calM_X = 2 \, \codim X$, so it suffices to show that $H^*(B\ol{Q}_X, \Q) = \Q[u_{\calB \in \irr \calA_X}]^{\stab(X)/W_X}$.  This follows from the Serre spectral sequence applied to the fibration
\[ \xymatrix{ 
BQ_X \ar[r] & B\bar{Q}_X \ar[d] \\
            & B \left( \stab(X)/W_X \right)
}\]
\end{proof}


\begin{notation}
For  $X \in L(\calA)$, we abuse notation and write $\calO_X$ for the
pushforward to $\calC$ of $\calO_{\ol{\calC}_X}$.  Let $c(X)$ be the Chern
class of $\calO_X$.  Denote by $I_X$ the ideal sheaf defining $\ol{\calC}_X$;
we will use this mostly when $X$ is a hyperplane $a \in \calA$.
\end{notation}

\begin{lemma}
\label{lemResolution}
Let $a_1, \ldots, a_r$ be minimal with $X = a_1 \cap \ldots \cap a_r$.  Then $\calO_X = \prod_{i=1}^r (1 - I_a) = \prod_a \calO_a$ in $K_0\calC$.
\end{lemma}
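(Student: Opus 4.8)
The plan is to prove the identity directly in $K_0\calC$ by pulling back along the smooth surjection $p \colon \fk{t} \to \calC$ of \cref{corNArtin} and reducing to the transversality of hyperplanes in affine space.

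The equality $\prod_{i=1}^r(1 - I_{a_i}) = \prod_{i=1}^r \calO_{a_i}$ holds term by term: each closed immersion $\ol{\calC}_{a_i} \into \calC$ gives a short exact sequence $0 \to I_{a_i} \to \calO_\calC \to \calO_{a_i} \to 0$, hence $1 - [I_{a_i}] = [\calO_{a_i}]$ in $K_0\calC$. So the real content is the identity $\prod_{i=1}^r [\calO_{a_i}] = [\calO_X]$. Since $\calC$ is regular, every coherent sheaf is perfect and the ring structure on $K_0\calC$ is derived tensor product, so the left-hand side is the class of $\mathcal{K} := \calO_{a_1} \otimes^L_{\calO_\calC} \cdots \otimes^L_{\calO_\calC} \calO_{a_r}$, and it suffices to show $\mathcal{K} \simeq \calO_X$ in the derived category of $\calC$.

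For this I would exploit the atlas. Combining \cref{kPoints} with the description of the stratification, the preimage $p^{-1}(\ol{\calC}_Y)$ of any closed stratum is the linear subspace $Y \subset \fk{t}$ with its reduced structure; in particular $p^{-1}(\ol{\calC}_{a_i})$ is the hyperplane $a_i$, cut out by a linear form $\ell_i$, and $p^{-1}(\ol{\calC}_X) = X = a_1 \cap \cdots \cap a_r$. Minimality of $a_1, \ldots, a_r$ forces $\ell_1, \ldots, \ell_r$ to be linearly independent --- if some $\ell_i$ lay in the span of the others, that $a_i$ could be deleted from the intersection --- so $\ell_1, \ldots, \ell_r$ is a regular sequence of length $r = \codim X$ on $\calO_{\fk{t}}$. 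As $p$ is flat, $p^*\mathcal{K}$ is the iterated derived tensor product $\calO_{a_1}\otimes^L_{\calO_{\fk{t}}}\cdots\otimes^L_{\calO_{\fk{t}}}\calO_{a_r}$, i.e. the Koszul complex on $\ell_1, \ldots, \ell_r$; the $\ell_i$ being regular, this resolves $\calO_{\fk{t}}/(\ell_1, \ldots, \ell_r) = \calO_X$ and so is concentrated in cohomological degree $0$. Since $p$ is faithfully flat, this forces $\mathcal{K}$ itself to be concentrated in degree $0$, with $H^0(\mathcal{K}) = \calO_\calC/(I_{a_1} + \cdots + I_{a_r})$, the structure sheaf of the scheme-theoretic intersection of the $\ol{\calC}_{a_i}$. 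The pullback of the ideal $I_{a_1} + \cdots + I_{a_r}$ along $p$ is $(\ell_1, \ldots, \ell_r)$, which also equals the pullback of the ideal sheaf of $\ol{\calC}_X$; by faithfully flat descent of ideal subsheaves the two ideals agree, so this intersection is $\ol{\calC}_X$ and $\mathcal{K} \simeq \calO_X$. Taking classes in $K_0\calC$ then gives $\prod_i [\calO_{a_i}] = [\mathcal{K}] = [\calO_X]$.

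The only substantive point --- the rest being standard Koszul algebra --- is the passage between the stack $\calC$ and its atlas: one needs that $p^{-1}$ sends each $\ol{\calC}_Y$ to the honest reduced linear subspace $Y$ (this is where \cref{kPoints} and the explicit stratification are used), and that scheme-theoretic intersections, reducedness, and concentration of a perfect complex in a single degree all descend along the faithfully flat morphism $p$, which they do by flat base change and faithfully flat descent.
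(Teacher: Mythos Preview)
Your proof is correct and follows essentially the same approach as the paper: both reduce to the atlas $\fk{t} \to \calC$ and use transversality of the hyperplanes there. The paper's version is terser --- it says only that it suffices to check $\tor^i(\calO_{a_p}, \calO_{a_q}) = 0$ for $i>0$ and that this holds after pulling back to $\fk{t}$ by transversality --- whereas you spell out the full Koszul-complex argument, the regular-sequence condition coming from minimality, and the faithfully flat descent needed to go back from $\fk{t}$ to $\calC$; these details make your argument more complete, but the underlying idea is the same.
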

\begin{proof}
The relation $1 - I_a = \calO_a$ is tautological, so it suffices to check that $\tor^i(\calO_{a_p}, \calO_{a_q}) = 0$ for $i>0$.  The pullback of $\tor^i(\calO_{a_p}, \calO_{a_q})$ to $\fk{t}$ is $0$ since $a_i$ and $a_j$ are transverse hyperplanes.
\end{proof}

\begin{corollary}
$c(X)$ is concentrated in degree $2\, \codim  X$, and if $X, Y$ are transverse then $c(X)c(Y) = c(X \cap Y)$.
\end{corollary}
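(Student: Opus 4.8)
The plan is to read both assertions off \cref{lemResolution} together with the elementary case of a single mirror. For $a \in \calA$, the closed substack $\ol{\calC}_a \into \calC$ is an effective Cartier divisor --- its preimage under the smooth surjection $\fk{t} \to \calC$ is the hyperplane $a \subset \fk{t}$, a smooth divisor --- so $\calO_a$ sits in $0 \to I_a \to \calO_\calC \to \calO_a \to 0$ with $I_a = \calO_\calC(-\ol{\calC}_a)$ a line bundle. Hence $\mathrm{ch}(\calO_a) = 1 - \mathrm{ch}(I_a) = 1 - e^{-[\ol{\calC}_a]}$, and the Chern class $c(a)$ is the cycle class $[\ol{\calC}_a] = -c_1(I_a) \in H^2(\calC, \kappa)$, concentrated in degree $2 = 2\codim a$.

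Now fix $X \in L(\calA)$ and choose $a_1, \dots, a_r$ minimal with $X = a_1 \cap \dots \cap a_r$, so $r = \codim X$; pulled back to $\fk{t}$ the $a_i$ are transverse hyperplanes, which --- exactly as in the proof of \cref{lemResolution} --- makes all the higher $\tor$'s between the $\calO_{a_i}$ vanish. Therefore $\mathrm{ch}(\calO_X) = \prod_{i=1}^{r} \mathrm{ch}(\calO_{a_i}) = \prod_{i=1}^{r}\bigl(1 - e^{-[\ol{\calC}_{a_i}]}\bigr)$ by \cref{lemResolution}. All components of this class in degree $< 2r$ vanish, and its degree-$2r$ component is $\prod_{i=1}^{r} [\ol{\calC}_{a_i}]$, which by transversality of the $a_i$ is the cycle class $[\ol{\calC}_X]$. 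So $c(X) = [\ol{\calC}_X]$ is concentrated in degree $2r = 2\codim X$, proving the first claim and identifying $c(X)$ with $[\ol{\calC}_X]$.

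For the product formula, let $X$ and $Y$ be transverse, and pick minimal mirror families $\{a_i\}_{i=1}^{r}$ over $X$ and $\{b_j\}_{j=1}^{s}$ over $Y$. Transversality gives $\codim(X \cap Y) = r + s$, so no two of these mirrors coincide and $\{a_i\} \cup \{b_j\}$ is a minimal family of mirrors with intersection $X \cap Y$, still pairwise transverse. Applying the previous paragraph to $X \cap Y$ yields $c(X \cap Y) = \prod_i c(a_i) \cdot \prod_j c(b_j) = c(X)\, c(Y)$.

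The divisor computation and the bookkeeping through the Chern character are routine; the only step I would be careful about is the combinatorial point in the last paragraph, namely that transversality of $X$ and $Y$ really does force the union of minimal mirror families to remain minimal (equivalently, that $\calA_X$ and $\calA_Y$ sit ``independently'' inside $\calA_{X \cap Y}$), since this is what makes the product formula hold on the nose rather than up to lower-order corrections.
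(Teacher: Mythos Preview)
Your argument is correct and follows the paper's route: factor $\calO_X$ via \cref{lemResolution} and read off the degree-$2r$ concentration from multiplicativity, with the paper invoking $c(a_i)=-c_1 I_{a_i}$ and, for the product, the $K$-theory identity $\calO_X\cdot\calO_Y=\calO_{X\cap Y}$ directly, while you route through the Chern character and in doing so pin down $c(X)=[\ol{\calC}_X]$ on the nose (slightly anticipating \cref{funClass}, which the paper only states up to a unit). The one caveat is that writing $e^{-[\ol{\calC}_a]}$ requires rational coefficients, so for general $\kappa$ you should remark that only the integral leading term of $\mathrm{ch}$ is actually used in your conclusion.
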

\begin{proof}
Let $a_1, \ldots, a_r$ be as in the previous lemma.  Note that $r = \codim X$ and that $c(a_i) = c \calO_{a_i} = -c_1 I_{a_i}$.   Thus $c(a_i)$ is concentrated in degree $2$.  \Cref{lemResolution} now implies that $c(X)$ is concentrated in degree $2r$.

\Cref{lemResolution} also implies that if $X$ and $Y$ are transverse then $\calO_X \cdot \calO_Y = \calO_{X \cap Y}$ in K-theory.  Since $c(X)$, $c(Y)$ and $c(X \cap Y)$ are each concentrated in a single degree, this implies $c(X)c(Y) = c(X \cap Y)$.
\end{proof}

\begin{lemma}
Every element of $H^*(\calC_X, \kappa)$ is a pullback from $H^*(\calC, \kappa)$ of a polynomial in the classes $c(a)$, $a \in \calA$.
\label{Hpullback}
\end{lemma}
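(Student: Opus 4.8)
The plan is to analyze the structure of $\calC_X = BQ_X$ directly, using the description of $Q_X$ from \cref{pointStabalizers}. Recall $Q_X = U \rtimes \aut T_\sigma C_X$ with $U$ connected unipotent and $\aut T_\sigma C_X \cong \G_m^{\#\irr \calA_X}$. Since $BU$ is cohomologically trivial, the Serre spectral sequence for $BU \to BQ_X \to B(\G_m^{\#\irr\calA_X})$ degenerates and shows $H^*(\calC_X,\kappa) \cong H^*(B\G_m^{\#\irr\calA_X},\kappa) = \kappa[u_\calB : \calB \in \irr\calA_X]$, a polynomial ring with one degree-$2$ generator $u_\calB$ for each irreducible component $\calB$ of $\calA_X$. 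So it suffices to exhibit, for each $\calB \in \irr\calA_X$, a class among the $c(a)$'s whose restriction to $\calC_X$ equals $u_\calB$ up to a unit (and more precisely to show the restrictions of the $c(a)$ generate this polynomial ring).

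First I would identify $u_\calB$ geometrically: under the equivalence between coherent sheaves on $\calC_X$ and $Q_X$-representations, $u_\calB$ is (up to sign) the first Chern class of the character $\calO(\calB)$ of $Q_X$ defined in the notation preceding \cref{corNormals}, namely the pullback along $\pi_\calB : \aut_{W_X} T_\sigma C_X \to \G_m$ of the standard character. Now pick any hyperplane $a \in \calA_X$ lying in the irreducible component $\calB$. By part (2) of \cref{corNormals}, the ideal sheaf $I_a$ restricts on $\calC_a \supset \calC_X$ to $\calO(\calB)$; restricting further to $\calC_X$, the line bundle $I_a|_{\calC_X}$ is the $Q_X$-representation $\calO(\calB)$. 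Hence $c(a)|_{\calC_X} = c_1(\calO_a)|_{\calC_X} = -c_1(I_a)|_{\calC_X} = \pm u_\calB$. (For $a \in \calA \setminus \calA_X$ one has $\calC_X \subset \ol{\calC}_a$ fails — rather $\calC_X$ meets $\ol{\calC}_a$ in smaller strata — so $\calO_a$ restricts trivially and $c(a)|_{\calC_X} = 0$; this is consistent.) Running over all $\calB \in \irr\calA_X$ and choosing one hyperplane in each, the classes $c(a)|_{\calC_X}$ include a full set of polynomial generators $\pm u_\calB$ of $H^*(\calC_X,\kappa)$, so every element of $H^*(\calC_X,\kappa)$ is a polynomial in them, hence the pullback of the corresponding polynomial in the $c(a) \in H^*(\calC,\kappa)$.

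The main obstacle is bookkeeping the restriction maps carefully: one must verify that the identification $I_a|_{\calC_X} \cong \calO(\calB)$ coming from \cref{corNormals}(2) is compatible with the identification $H^*(\calC_X,\kappa) \cong \kappa[u_\calB]$ coming from the fibration in \cref{propBetti}, i.e. that the $c(a)$ for $a$ in distinct irreducible components $\calB$ really do map to algebraically independent generators rather than to, say, scalar multiples of a single $u_\calB$ or to classes that only generate a subring. This is exactly what \cref{corNormals}(2) encodes — the irreducible component containing $a$ is precisely the $\G_m$-factor of $\aut_{W_X} T_\sigma C_X$ on which $\calO(\calB)$ is nontrivial — so once that correspondence is invoked the argument closes. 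A minor point to address is that $\#\irr\calA_X$ hyperplanes suffice: we need to pick one $a$ per component, which is possible since each irreducible (essential) component $\calB$ of $\calA_X$ is nonempty.
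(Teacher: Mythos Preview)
Your proof is correct and follows essentially the same route as the paper's: identify $H^*(\calC_X,\kappa)$ with a polynomial ring on the Chern classes of the characters $\calO(\calB)$ via \cref{pointStabalizers} and \cref{classifyingSpaces}, then use \cref{corNormals}(2) to recognize each $\calO(\calB)$ as the restriction of some $I_a$. One small slip: you write ``$I_a$ restricts on $\calC_a \supset \calC_X$'', but the open strata $\calC_a$ and $\calC_X$ are disjoint (it is $\ol{\calC}_a$ that contains $\calC_X$); simply apply \cref{corNormals}(2) directly with $Y=X$ to get $I_a|_{\calC_X}\cong\calO(\calB)$, and the intermediate step is unnecessary.
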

\begin{proof}
Our identification of $\calC_X$ with the classifying space of an extension of
the torus $T_\sigma C_X$ by a unipotent group (\cref{pointStabalizers},
\cref{classifyingSpaces}) implies that every class of $H^*(\calC_X, \kappa)$ is
a polynomial in the first Chern classes of the representations $\calO(\calB)$.  By \cref{corNormals}, all such representations are obtained by pulling back certain of the $I_a$, so $H^*(\calC_X, \kappa)$ is generated by pullbacks of the $-c_1I_a = c(a)$.
\end{proof}

\begin{lemma} Let $X \in L(\calA)$.  Then $[\ol{\calC}_X] = \pm c_X$ \label{funClass} \end{lemma}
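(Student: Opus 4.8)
The plan is to compare the fundamental class $[\ol{\calC}_X]$ with the Chern class $c_X = c(\calO_X)$ using the resolution from \cref{lemResolution}. First I would recall that by \cref{lemResolution}, if $a_1, \dots, a_r$ are minimal with $X = a_1 \cap \dots \cap a_r$ (so $r = \codim X$), then $\calO_X = \prod_{i=1}^r \calO_{a_i}$ in $K_0\calC$, because the relevant higher Tor sheaves vanish — the pullback to $\fk{t}$ involves transverse hyperplanes. Since each $\ol{\calC}_{a_i}$ is a smooth divisor in the regular stack $\calC$, we have $[\ol{\calC}_{a_i}] = c_1(\calO(\ol{\calC}_{a_i})) = -c_1(I_{a_i}) = c(a_i)$ exactly (no correction terms, since a divisor's fundamental class is its first Chern class). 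The transversality statement also gives $[\ol{\calC}_X] = \prod_i [\ol{\calC}_{a_i}]$ as an equality of fundamental classes, because $\ol{\calC}_X$ is the scheme-theoretic (transverse) intersection of the $\ol{\calC}_{a_i}$; hence $[\ol{\calC}_X] = \prod_i c(a_i)$.

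Next I would compare this with $c_X$. By the corollary following \cref{lemResolution}, the $c(a_i)$ are transverse in the sense needed, and $c(X) = \prod_i c(a_i)$ since each factor lives in a single even degree and the Tor-vanishing lets Chern classes multiply. So in fact $[\ol{\calC}_X] = c_X$ on the nose — suggesting $k = 1$. The reason the lemma is stated with a general unit $k$ rather than $k=1$ is presumably a matter of conventions (the ordering of the $a_i$, or the precise normalization of $c(X)$ versus $c_1$ of the determinant line, or a sign in the identification $c(a) = -c_1 I_a$); I would simply track these carefully and observe that whatever discrepancy arises is a product of signs $\pm 1$, which is a unit in $\kappa$.

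The main obstacle, and the step that requires the most care, is justifying $[\ol{\calC}_X] = \prod_i [\ol{\calC}_{a_i}]$ at the level of fundamental classes on a \emph{stack} rather than a smooth variety. The cleanest route is to reduce to the smooth cover $\fk{t} \to \calC$ from \cref{corNArtin}: fundamental classes and Gysin maps commute with smooth pullback, and the preimage of $\ol{\calC}_X$ in $\fk{t}$ is a union of linear subspaces, each the transverse intersection of the linear preimages of the $\ol{\calC}_{a_i}$. On $\fk{t}$ the identity $[\text{linear subspace}] = \prod [\text{hyperplanes}]$ is the standard excess-free intersection formula. Pulling this identity back down — using that $\fk{t} \to \calC$ is a smooth surjection, so $H^*(\calC,\kappa) \to H^*(\fk{t},\kappa)$ is injective in the relevant range, or more simply that fundamental classes are characterized locally — gives the desired equality on $\calC$.

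One subtlety worth flagging: the claim "$k$ is a unit" is exactly what is needed downstream (in \cref{lemSurjection} and \cref{lemInjection}, one needs $[\ol{\calC}_X]$ and $c_X$ to generate the same ideals), so even if one is uneasy about pinning down $k=1$, it suffices to exhibit $k$ as an integer multiple of the form $\pm 1$, which I would extract from the signs in $c(a) = -c_1 I_a$ and from the orientation conventions on normal bundles. I do not anticipate this last bookkeeping to be difficult once the stack-theoretic transversality is in hand.
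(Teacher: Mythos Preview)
Your overall strategy is sound in principle, but the justification you propose for the key step $[\ol{\calC}_X] = \prod_i [\ol{\calC}_{a_i}]$ has a genuine gap. You suggest checking this equality after pulling back along the smooth cover $\fk{t} \to \calC$, appealing to injectivity of $H^*(\calC,\kappa) \to H^*(\fk{t},\kappa)$. But $\fk{t}$ is a vector space, hence contractible, so $H^{2r}(\fk{t},\kappa) = 0$ for $r > 0$ and this pullback map is as far from injective as possible: both sides of your equation restrict to zero on $\fk{t}$, so nothing is verified. Your fallback phrase ``fundamental classes are characterized locally'' is too vague to fill the gap; you would need either a direct proof of the excess-free intersection formula for Gysin maps on smooth Artin stacks, or a pullback to a cover with enough cohomology (e.g.\ the presentation $\calM_F$ by algebra structures, or better the stratum $\calC_X$ itself).

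The paper's argument sidesteps the intersection formula entirely and is worth comparing. It observes that $[\ol{\calC}_X]$ is characterized, up to a unit, as the $\kappa$-generator of those degree-$2r$ classes vanishing on the complement of $\ol{\calC}_X$ (this uses only the Gysin sequence and connectedness of $\calC_X$). Since $c(X)$ is in the right degree and vanishes on the complement, the only remaining task is to show $c(X)$ is not divisible by any non-unit of $\kappa$. For that, the paper restricts to the single stratum $\calC_X$: under the identification $H^*(\calC_X,\kappa)\cong \kappa[u_i]_{\calA_X^i\in\irr\calA_X}$ coming from \cref{pointStabalizers} and \cref{corNormals}, each $c(a_j)|_{\calC_X}$ is one of the variables $u_i$, so $c(X)|_{\calC_X}$ is a monic monomial. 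This approach buys robustness (it works over any $\kappa$ without tracking signs or orientations) at the cost of the weaker conclusion ``$k$ is a unit'' rather than your anticipated $k = 1$; but since only the unit statement is used downstream, nothing is lost.
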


\begin{proof} It suffices to consider the case $\kappa = \Z$, since fundamental
	classes and chern classes descend to integer coefficients.  Let $r$ be
	the codimension of $X$.  The fundamental class $[\ol{\calC}_X] \in
	H^*(\calC, \Z)$ lives in degree $2r$.  In fact, since $\calC_X$ is
	connected, $[\ol{\calC}_X]$ is completely determined up to a unit as
	the $\Z$-generator of the classes in degree $2r$ that vanish on
	$\calC - \ol{\calC_X}$.  Since $c(X)$ is in the correct degree and
	vanishes on the complement of $\ol{\calC_X}$, it would suffice to show
	that it is not divisible by any non-unit integer.  This follows if
	$c(X)|_{\calC_X}$ is not divisible by any non-unit integer, which is
	what we now show.
  
Choose $\{a_1, \ldots, a_r\} \in \calA$ minimal with $a_1 \cap \ldots \cap a_r
= X$ as in \cref{lemResolution}.  Then $c(X) = \prod_{j=1}^r c(a_j)$.  Make the
identification $H^*(\calC_X, \Z) \cong \Z[u_i]_{\calA_X^i \in \irr
\calA_X}$.  Then $c(a_j)|_{\calC_X}$ is identified with $u_i$, where
$\calA_X^i$ is the irreducible component of $\calA_X$ including $a_j$.  In
particular $c(X)|_{\calC_X}$ is monic, and is not divisible by any non-unit
scalar.  \end{proof}

\begin{corollary}
\label{corHGeneration}
$H^*(\calC, \kappa)$ is generated by the $c(X)$.
\end{corollary}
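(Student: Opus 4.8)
The plan is to obtain the corollary directly from \cref{lemSurjection}. Let $R \subseteq H^*(\calC,\kappa)$ be the $\kappa$-subalgebra generated by the classes $c(X)$, $X \in L(\calA)$ (equivalently, by the classes $c(a)$ for mirrors $a \in \calA$, in view of the corollary following \cref{lemResolution}); the claim is that $R = H^*(\calC,\kappa)$. I would apply \cref{lemSurjection} with $\calS = \calC$ and $L = L(\calA)$. The structural hypotheses are already available: $\calC$ is smooth, hence a regular Artin stack; the substacks $\calC_X$ stratify it by $L(\calA)$; and each $H^*(\calC_X,\kappa)$ is concentrated in even degrees, since $\calC_X \cong BQ_X$ and $Q_X$ is an extension of a torus by a connected unipotent group (\cref{pointStabalizers}, \cref{classifyingSpaces}).

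Two conditions on $R$ then remain to be checked. First, that the restriction $H^*(\calC,\kappa) \to H^*(\calC_X,\kappa)$ maps $R$ onto the target for every $X$: by \cref{Hpullback} every class of $H^*(\calC_X,\kappa)$ is the restriction of a polynomial in the classes $c(a)$, $a \in \calA$, and since these $c(a)$ lie in $R$ and $R$ is a $\kappa$-algebra, each such polynomial lies in $R$; hence the image of $R$ is all of $H^*(\calC_X,\kappa)$. Second, that $[\calC_{P(X)}] \in R$ for every $X$: here $P(X) = \{\, Y \in L(\calA) : Y \geq X \,\}$ and $\calC_{P(X)} = \ol{\calC}_X$ by the structure of the stratification, so $[\calC_{P(X)}] = [\ol{\calC}_X]$, which by \cref{funClass} equals $k\,c(X)$ for a unit $k \in \kappa$; as $c(X) \in R$, this class lies in $R$. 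With both conditions verified, \cref{lemSurjection} yields $R = H^*(\calC,\kappa)$.

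There is no genuinely hard step here: the content has already been extracted in \cref{Hpullback} (which feeds the surjectivity condition) and \cref{funClass} (which feeds the fundamental-class condition), and this corollary is simply the assembly of those two facts via \cref{lemSurjection}. The only point deserving a line of care is the identification $\calC_{P(X)} = \ol{\calC}_X$ — the strata of $\calC$ contained in $\ol{\calC}_X$ are exactly the $\calC_Y$ with $W_Y \supseteq W_X$, i.e.\ $Y \geq X$ in $L(\calA)$ — since it is what makes the hypothesis ``$[\calS_{P(X)}] \in R$'' of \cref{lemSurjection} coincide with the statement about $[\ol{\calC}_X]$ that \cref{funClass} provides.
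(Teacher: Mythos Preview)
Your proposal is correct and follows exactly the paper's approach: the paper's proof simply says to let $R$ be the subring generated by the relevant classes and to apply \cref{lemSurjection}, citing \cref{funClass} and \cref{Hpullback} as the verification of its hypotheses. Your writeup just fills in the details the paper leaves implicit, including the identification $\calC_{P(X)} = \ol{\calC}_X$.
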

\begin{proof}
Let $R$ be the subring generated by the $v_a$.  \Cref{funClass} and \cref{Hpullback} show that \cref{lemSurjection} applies.
\end{proof}

\begin{corollary}
\label{deltaInjectionC}
$\delta$ induces an injection $H^*(\calC_X, \kappa) \into H^*(\coprod_X \calC_X, \kappa)$.
\end{corollary}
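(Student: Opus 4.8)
The plan is to apply \cref{lemInjection} with $\calS = \calC$ and $L = L(\calA)$, using the stratification $\{\calC_X\}_{X \in L(\calA)}$ of \cref{strataDef}, so I must verify its three hypotheses. That $\calC$ is a regular Artin stack stratified by $L(\calA)$ was established in Section~4. That each $H^*(\calC_X, \kappa)$ is concentrated in even degrees follows from \cref{pointStabalizers} and \cref{classifyingSpaces}: $\calC_X \cong BQ_X$ with $Q_X$ an extension of the torus $\aut T_\sigma C_X$ by a connected unipotent group, so the cohomology of $\calC_X$ is that of the torus, namely the polynomial ring $\kappa[u_\calB]_{\calB \in \irr\calA_X}$. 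Finally, the hypothesis that each restriction $H^*(\calC, \kappa) \to H^*(\calC_X, \kappa)$ is surjective is precisely the content of \cref{corHGeneration} (which in turn applied \cref{lemSurjection} using \cref{funClass} and \cref{Hpullback}).

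The only substantive point is the remaining hypothesis of \cref{lemInjection}: that for each $X$ the restricted fundamental class $[\calC_{P(X)}]|_{\calC_X} = [\ol{\calC}_X]|_{\calC_X}$ is not a zero divisor in $H^*(\calC_X, \kappa)$. Here I would reuse the computation from the proof of \cref{funClass}. That lemma gives $[\ol{\calC}_X] = k\, c(X)$ with $k \in \kappa$ a unit, so it is enough to show $c(X)|_{\calC_X}$ is not a zero divisor. Choosing $a_1, \dots, a_r \in \calA$ minimal with $a_1 \cap \dots \cap a_r = X$, we have $c(X)|_{\calC_X} = \prod_{j=1}^{r} c(a_j)|_{\calC_X}$, and under the identification $H^*(\calC_X, \kappa) \cong \kappa[u_\calB]_{\calB \in \irr\calA_X}$ each $c(a_j)|_{\calC_X}$ equals the variable $u_\calB$ for the irreducible component $\calB \subseteq \calA_X$ containing $a_j$. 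Thus $c(X)|_{\calC_X}$ is a monomial with coefficient $1$; multiplication by each variable $u_\calB$ is injective on the polynomial ring (it is a coordinate of a free module), and a composite of injective maps is injective, so $c(X)|_{\calC_X}$ is a non-zero-divisor. This is the one spot where working over an arbitrary ring $\kappa$, rather than a field where mere nonvanishing would do, actually matters, but the explicit monomial shape settles it.

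With all three hypotheses verified, \cref{lemInjection} gives that $\delta^* : H^*(\calC, \kappa) \to H^*\!\left(\coprod_X \calC_X, \kappa\right)$ is injective, which is the claim. I do not expect a genuine obstacle here: everything needed — the classifying-space description of the strata (\cref{classifyingSpaces}), their normal bundles (\cref{corNormals}), and the fundamental classes (\cref{funClass}) — was assembled in Sections~4 and~5, and this corollary is essentially a bookkeeping application of the abstract \cref{lemInjection}. The only thing requiring a line of care is checking that $\calC_{P(X)}$ is the closure $\ol{\calC}_X$ and is pure of codimension $2\codim X$ (so that the relevant fundamental class is the one computed in \cref{funClass}); this holds because $\calC_X \cong BQ_X$ is irreducible with normal bundle in $\calC$ of rank $\codim X$ by \cref{corNormals}.
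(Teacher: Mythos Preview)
Your proposal is correct and follows essentially the same approach as the paper: verify the hypotheses of \cref{lemInjection} using the earlier corollary on surjectivity and the monomial computation from \cref{funClass}. Your treatment of the non-zero-divisor condition is in fact slightly more careful than the paper's --- the paper writes ``nonzero, hence not a zero divisor,'' which as stated requires $\kappa$ to be a domain, whereas your explicit observation that $c(X)|_{\calC_X}$ is a monic monomial in the polynomial ring $\kappa[u_\calB]$ handles arbitrary $\kappa$ directly.
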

\begin{proof}
The computation in the proof of \cref{funClass} shows that $[\ol{\calC}_X]|_{\calC_X}$ is nonzero for all $X$, hence not a zero divisor.  The previous corollary shows that the natural maps $H^*(\calC, \kappa) \to H^*(\calC_X, \kappa)$ are surjective for all $X$.  Now apply \cref{lemInjection}.
\end{proof}

\begin{theorem}
\label{HN}
The function $X \to c(X)$ extends to an isomorphism $\phi: \kappa[L^{\mu}(\calA)] \cong H^*(\calC, \kappa)$
\end{theorem}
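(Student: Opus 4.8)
\emph{Plan.} I would build a graded ring map $\phi$ sending $X\in L(\calA)$ to $c(X)$, observe that it is automatically surjective, and then prove injectivity; the content of the argument is a combinatorial analysis of $L^\mu(\calA)$, everything else being assembled from the lemmas above. To construct $\phi$ I would use the presentation $L^\mu(\calA)\cong L^{ch}(\calA)$ of \cref{descripMonoid2}: it suffices to send each $X$ to $c(X)$ and to verify the two defining relations of $L^{ch}(\calA)$. Relation (1), that $c(X)c(Y)=c(X\cap Y)$ when $X$ and $Y$ are transverse, is exactly the corollary to \cref{lemResolution}. For relation (2), that $c(a)c(X)=c(b)c(X)$ whenever $a,b$ lie in a common irreducible component of $\calA_X$, it is enough by the injectivity of $\delta^*$ (\cref{deltaInjectionC}) to check equality after restriction to each stratum $\calC_Y$: if $Y\not\geq X$ then $\ol{\calC}_X$ is disjoint from $\calC_Y$, so $c(X)|_{\calC_Y}=0$; and if $Y\geq X$ then $\calA_X\subseteq\calA_Y$, so $a$ and $b$ lie in a common irreducible component of $\calA_Y$ as well (enlarging an arrangement only coalesces components), whence $c(a)|_{\calC_Y}=c(b)|_{\calC_Y}$ by the computation in the proof of \cref{funClass}. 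Thus $\phi$ is a well-defined graded ring map with $\phi(X,\codim X)=c(X)$, and it is surjective by \cref{corHGeneration}, each $c(X)$ being the image of $\prod_i(X_i,\codim X_i)$ over the irreducible components $X_i$ of $X$.

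\emph{Injectivity.} The structural observation is that both $\kappa[L^\mu(\calA)]$ and $H^*(\calC,\kappa)$ are free graded $\kappa$-modules of finite rank in each degree. This is clear for the monoid ring. For $H^*(\calC,\kappa)$ it follows from the Gysin sequences already used: each $\calC_X$ is the classifying space of an extension of a torus by a unipotent group, so $H^*(\calC_X,\kappa)$ is a free $\kappa$-module concentrated in even degrees; hence the short exact sequences relating the cohomology of an open substack, a closed stratum, and the ambient stack split, and inductively $H^*(\calC,\kappa)$ is free with $\dim_\kappa H^n(\calC,\kappa)=\sum_{X\in L(\calA)}\dim_\kappa H^{n-2\codim X}(\calC_X,\kappa)$. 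Since a graded surjection between free $\kappa$-modules of equal finite rank in each degree is an isomorphism, it remains only to check that $\kappa[L^\mu(\calA)]$ has the same graded rank, i.e.\ that the Hilbert series of $L^\mu(\calA)$ equals $\sum_{X\in L(\calA)}q^{\codim X}(1-q)^{-\#\irr\calA_X}$. (One could also argue more directly: since $\delta^*$ is injective it suffices to see $\delta^*\phi$ is injective, and for a reduced word $z=\prod_i(X_i,\mu_i)$ the $Y$-component of $\delta^*\phi(z)$ vanishes unless $Y\geq\bigcap_i X_i$ and equals an explicit monomial when $Y=\bigcap_i X_i$; given a hypothetical nonzero kernel element, projecting to the stratum indexed by a term of minimal total intersection produces a vanishing $\kappa$-linear combination of distinct monomials, forcing all coefficients to vanish. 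Either route needs the same input.)

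\emph{The main obstacle} is therefore a normal form for $L^\mu(\calA)$, which I expect to be the real work. For $Y\in L(\calA)$ and $\calB$ an irreducible component of $\calA_Y$, let $X_\calB\in L(\calA)$ be the intersection of the hyperplanes of $\calA_Y$ lying in $\calB$. Using \cref{irreps} and \cref{restrictToAx}, together with the elementary facts that a sub-arrangement cannot merge irreducible components and that codimension adds under transverse intersection, one shows that $X_\calB$ is an irreducible element of $L(\calA)$, that $\bigcap_{\calB\in\irr\calA_Y}X_\calB=Y$, and that no sub-collection of size $\geq 2$ of the $X_\calB$ has irreducible intersection. It follows that every element of $L^\mu(\calA)$ whose total intersection is $Y$ equals $\prod_{\calB\in\irr\calA_Y}(X_\calB,\mu_\calB)$ for some $\mu_\calB\geq\codim X_\calB$, and a confluence argument (Newman's lemma, termination being clear since each coalescing move strictly decreases the number of terms) shows this expression is unique. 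Hence the "total intersection" map $L^\mu(\calA)\to L(\calA)$ has fibers the free abelian monoids on $\irr\calA_Y$ shifted into degree $2\codim Y$, which yields the Hilbert series above (and in particular recovers the Betti numbers of $\calC$). Assembling these pieces completes the proof.
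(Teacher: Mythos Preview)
Your construction of $\phi$ and proof of surjectivity are exactly the paper's. For injectivity, the direct argument you sketch parenthetically---project a hypothetical nonzero kernel element to the stratum $\calC_Y$ indexed by a term of minimal support, and observe that the surviving terms map to $\kappa$-independent monomials in $H^*(\calC_Y,\kappa)\cong\kappa[u_\calB]_{\calB\in\irr\calA_Y}$---is precisely what the paper does, stated in essentially the same words. The paper simply asserts the independence without the normal-form justification you supply; your explicit analysis of $L^\mu(\calA)$ over each fibre of the support map fills that in.

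Your primary route via rank-counting is a legitimate alternative, and as you note it needs exactly the same combinatorial input (the fibre description of $L^\mu(\calA)\to L(\calA)$). The only point I would tighten is the confluence step: rather than invoking Newman's lemma, it is cleaner to observe directly that the assignment sending a word $\prod_i(X_i,\mu_i)$ with total intersection $Y$ to the tuple $(\mu_\calB)_{\calB\in\irr\calA_Y}$, where $\mu_\calB=\sum_{\calA_{X_i}\subset\calB}\mu_i$, is well-defined on $L^\mu(\calA)$ (each irreducible $X_i$ lies in a unique component of $\calA_Y$, and the defining relation only merges factors already in a common component). This gives the bijection you want without worrying about overlapping rewrites.
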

\begin{proof}
First we must show that the map $\phi$ is well-defined.  We know already that if $X$ and $Y$ intersect transversely then $c(X)c(Y) = c(X \cap Y)$.  Suppose now that $a, b\in \calA$ with $a$ and $b$ in the same irreducible component of $\calA_X$.  We will show that $c(a)c(X) = c(b)c(X)$.  By the previous corollary, it would suffice to show that for all $Y$, $c(a)c(X)|_{\calC_Y} = c(b)c(X)|_{\calC_Y}$.  If $Y \not \geq X$ then $c(X)|_{\calC_Y} = 0$ and the equation holds trivially.  Otherwise, make the identification $H^*(\calC_Y, \kappa) \cong \kappa[u_i]_{\calA_Y^i \in \irr \calA_Y}$.  Since $a$ and $b$ are in the same irreducible component of $\calA_X$ they are also in the same irreducible component of $\calA_Y$, and so both pull back to $u_i$ for some $i$.  By \cref{descripMonoid2}, this shows that $\phi$ is well-defined.

\Cref{corHGeneration} implies that $\phi$ is a surjection, so it remains only
to show that $\phi$ is an injection.  Let $\alpha \in \kappa[L^{\mu}(\calA)]$
with $\phi(\alpha) = 0$.  Write $\alpha = \sum_{x \in L^\mu(\calA)} n_x x$ for
some $n_x \in \kappa$, all but finitely many of which are zero. For $x \in
L^\mu(\calA)$ write $|x|$ for the \emph{support} of $x$, that is, the
intersection of all $X_i \in L(\calA)$ appearing in an expression $x = X_1X_2
\ldots X_l$.  Assuming for contradiction that some $n_x$ is nonzero, let $Y \in
L(\calA)$ be minimal such that there exists $y \in L^{\mu}(\calA)$ with $|y| =
Y$ and $n_y \neq 0$.  In other words, $n_yy$ is a term of $\alpha$ with minimal
support among all terms of $\alpha$, and that support is $Y$.  By minimality of
$Y$, if $n_xx$ is a nonzero term of $\alpha$ with $|x| \neq Y$ then $|x|
\not\leq Y$ and $x_{\calC_Y} = 0$.  Therefore $\phi(\alpha)|_{\calC_Y} =
\sum_{|y| = Y} n_y\phi(y)$.  

Now consider all $y \in L^\mu(\calA)$ with $|y| = Y$.  Their images under
$\phi$ in $H^*(\calC_Y, \kappa) \cong \kappa[ u_1, \ldots, u_l]$ (where $l$ is
the number of irreducible components of $\calA_Y$) are independent over
$\kappa$.  In particular, $\phi(\alpha)|_Y =0$ implies that $n_y = 0$
for all $|y| = Y$, a contradiction.  \end{proof}

$W$ has a natural right action on $H^*(\calC, \kappa)$, which corresponds under $\phi$ to the right action on $L^\mu(\calA)$ induced by sending $X$ to $w\oii X$.  Since $\calC \to \calM$ is covered by the finite quotient map $\fk{t} \to \fk{w}/W$, transfer implies
\begin{theorem}
\label{HM}
If $\#W$ is invertible in $\kappa$ then $H^*(\calM, \kappa) = \kappa[L^\mu(\calA)]^W$ as graded rings. 
\end{theorem}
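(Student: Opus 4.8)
The plan is to combine \cref{HN} with the classical transfer isomorphism for a finite group quotient of order invertible in $\kappa$. By \cref{HN}, $\phi\colon \kappa[L^\mu(\calA)]\to H^*(\calC,\kappa)$ is a graded ring isomorphism, and the natural action of $W$ on $\calC$ (translating the chosen section of a pointed cameral cover) induces a graded ring action on $H^*(\calC,\kappa)$. First I would identify this action: an element $w\in W$ carries a pointed cameral cover whose section has stabilizer $W_X$ to one whose section has stabilizer $W_{w\i X}=w\i W_X w$, so it carries $\ol{\calC}_X$ to $\ol{\calC}_{w\i X}$; hence $w^*\calO_X=\calO_{w\i X}$ in $K_0\calC$ and $w^*c(X)=c(w\i X)$. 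Thus $\phi$ intertwines the $W$-action on $H^*(\calC,\kappa)$ with the action on $L^\mu(\calA)$ sending $X$ to $w\i X$, and it remains only to show that the pullback $p^*$ along $p\colon\calC\to\calM$ is an isomorphism of graded rings onto $H^*(\calC,\kappa)^W$ — that its image lands in the invariants is clear, since $W$ acts on $\calC$ over $\calM$.

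For that isomorphism I would use cohomological descent along the smooth representable surjection $q\colon \fk{t}/W\to\calM$ of \cref{smoothCover}. Let $N^\calM_\bullet$ be its \v{C}ech nerve, so $N^\calM_p=(\fk{t}/W)^{\times_\calM(p+1)}$ is a smooth $\C$-scheme and $H^*(\calM,\kappa)$ is the abutment of the descent spectral sequence $E_1^{p,q}=H^q(N^\calM_p,\kappa)$. Using $\fk{t}=\fk{t}/W\times_\calM\calC$ (the relation defining the map $\fk{t}\to\calC$ of \cref{corNArtin}), the \v{C}ech nerve of $\fk{t}\to\calC$ is $N^\calC_p=\fk{t}^{\times_\calC(p+1)}=N^\calM_p\times_\calM\calC$; moreover, factoring $N^\calM_p\to\calM$ through a projection $N^\calM_p\to\fk{t}/W$ (which is smooth, being built from base changes of $q$), one gets $N^\calC_p=N^\calM_p\times_{\fk{t}/W}\fk{t}$, the flat base change of the honest $W$-quotient $\fk{t}\to\fk{t}/W$. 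Since $\#W$ is invertible in $\kappa$, forming this quotient commutes with flat base change, so $N^\calC_p/W=N^\calM_p$ and, by the classical identification $H^*(Y/W,\kappa)\cong H^*(Y,\kappa)^W$ for a finite group $W$ of invertible order acting on a (finite-type, hence reasonable) $\C$-scheme $Y$, we get $E_1^{p,q}(N^\calM_\bullet)=H^q(N^\calC_p,\kappa)^W$ compatibly with the cosimplicial structure. As $(-)^W$ is exact over $\kappa$, it commutes with passage to $E_\infty$, and the multiplicative edge maps assemble into a graded ring isomorphism $H^*(\calM,\kappa)\cong H^*(\calC,\kappa)^W$. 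Combined with the first paragraph, $H^*(\calM,\kappa)\cong\kappa[L^\mu(\calA)]^W$.

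The substance of the argument is entirely in the middle step, and the only real point to verify is that the two descent spectral sequences are related by $W$ in the way claimed — i.e. that the levelwise quotients $N^\calC_\bullet/W=N^\calM_\bullet$ are compatible with the face and degeneracy maps — which follows formally from $\fk{t}\to\fk{t}/W$ being a $W$-quotient and all the relevant charts being flat over $\fk{t}/W$. There is no genuine obstacle, and an alternative (the route hinted at in the excerpt) is to invoke directly the trace on $p_*\ul{\kappa}$ for the finite locally free degree-$\#W$ morphism $p$: it yields $\tau\colon H^*(\calC,\kappa)\to H^*(\calM,\kappa)$ with $\tau\circ p^*=\#W\cdot\mathrm{id}$ and $p^*\circ\tau=\sum_{w\in W}w^*$, so that $\frac{1}{\#W}\tau$ is a two-sided inverse to $p^*$ on $W$-invariants; invertibility of $\#W$ enters only in inverting the averaging operator, exactly as in the one-line proof given.
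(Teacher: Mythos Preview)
Your proposal is correct. The paper's own proof is the one-line transfer argument you sketch at the end: since $\calC\to\calM$ is locally the finite quotient $\fk{t}\to\fk{t}/W$ and $\#W$ is a unit in $\kappa$, the transfer map gives $H^*(\calM,\kappa)\cong H^*(\calC,\kappa)^W$, and the identification of the $W$-action with $X\mapsto w^{-1}X$ on $L^\mu(\calA)$ is noted just before the statement.

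Your primary route via cohomological descent along the \v{C}ech nerves of $\fk{t}/W\to\calM$ and $\fk{t}\to\calC$ is a valid but more elaborate packaging of the same content: both arguments ultimately rest on the exactness of $(-)^W$ when $\#W$ is invertible and on $\calC\to\calM$ being, smooth-locally, an honest $W$-quotient. The descent version has the mild advantage of making explicit where the comparison of the two spectral sequences happens (levelwise, via $N^\calC_p/W\cong N^\calM_p$), while the transfer version is shorter and avoids any spectral-sequence bookkeeping. Either way there is nothing to add beyond \cref{HN} and the invertibility hypothesis.
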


\begin{remark} 
All arguments before \cref{HM} still work verbatim if $\#W$ is not a unit in
$\kappa$.  If $\fk{t}$ is one-dimensional then $\calM$ is
homotopy-equivalent to $B\G_m$ and the isomorphism of \cref{HM} works
integrally.  In general, computing $H^*(\calM, \kappa)$ together with its
filtration would be at least as hard as computing the group cohomology of $W$.
\end{remark} 

\section{Characteristic classes of induced cameral covers}\label{chapInduced} 

Fix a reflection arrangement $(W, \fk{t})$ and let $\calA$ be the corresponding hyperplane arrangement in $\fk{t}$.  If $X \in L(\calA)$, write $\calM^X$ for the stack of $(W_X, \fk{t})$ cameral covers, and $\calC^X$ for the associated stack of pointed cameral covers.  The induction functor gives a map $\calM^X \to \calM$.  If $(C, \sigma)$ is a pointed $(W_X, \fk{t})$-cameral cover then $(W_X \times \sigma) \in W/W_X \times C$ is a point of $\ind_{W_X}^W C$, so $\calM^X \to \calM$ lifts to a map $\calC^X \to \calC$ (indeed there is a lift for each element of $W$).  In this section we describe the maps of cohomology rings induced by these maps (\cref{refCohomology}).  We also define a polynomial invariant $p_c(r) \in H^*(S, \kappa)[r]$ of a cameral cover $C \to S$, and use \cref{refCohomology} to show that it is multiplicative under disjoint unions of spectral covers (\cref{whitney}).

\begin{proposition}
\label{refCohomology}
Let $P = L(\calA) \bs L(\calA_X)$.
\begin{enumerate}
	\item $P$ is upwards-closed.  
\item The map $H^*(\calC, \kappa) \to H^*(\calC^X, \kappa)$ induced by induction is a surjection with kernel generated by $P$.  
\item If $\# W$ is invertible in $\kappa$ then the map $H^*(\calM, \kappa) \to H^*(\calM^X, \kappa)$ is a surjection with kernel generated by $P \cap \kappa [L^\mu(\calA )]^W$.
\end{enumerate} 
\end{proposition}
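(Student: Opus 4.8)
The plan is to deduce all three parts from the computation $H^*(\calC,\kappa)\cong\kappa[L^\mu(\calA)]$ of \cref{HN}, applied both to $(W,\fk t)$ and to $(W_X,\fk t)$ (whose associated hyperplane arrangement is $\calA_X$, so that $H^*(\calC^X,\kappa)\cong\kappa[L^\mu(\calA_X)]$), together with a description of the induction map $\iota\colon\calC^X\to\calC$ on strata.

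\textbf{Part (1).} The elements of $L(\calA_X)$ are exactly the $Y\in L(\calA)$ with $Y\supseteq X$ (an intersection of hyperplanes through $X$ contains $X$; conversely any $Y\supseteq X$ is the intersection of the hyperplanes through it, all of which contain $X$). Hence $\calC_{L(\calA_X)}=\bigsqcup_{Y\supseteq X}\calC_Y$ is the locus of pointed covers whose marked point has stabilizer contained in $W_X$: by \cref{kPoints} every stabilizer is $W_Y$ with $Y=\fk t^{W_Y}$, and $W_Y\subseteq W_X\iff Y\supseteq X$. Since the stabilizer of a section can only grow under specialization, this locus is open, so $\calC_P=\calC\setminus\calC_{L(\calA_X)}$ is closed, i.e.\ $P$ is an ideal.

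\textbf{Part (2).} First I would pin down $\iota$ on each stratum. A pointed $(W_X,\fk t)$-cover with marked point of stabilizer $W_Y$ ($Y\in L(\calA_X)$) induces to a pointed $(W,\fk t)$-cover whose marked point still has stabilizer $W_Y$ (the stabilizer in $W$ of $[e,\sigma]\in W\times_{W_X}C$ is $\mathrm{stab}_{W_X}\sigma$), so $\iota$ carries $\calC^X_Y$ into $\calC_Y$; using \cref{kPoints}, which exhibits such a $W$-cover as $\ind_{W_Y}^W=\ind_{W_X}^W\ind_{W_Y}^{W_X}$ of the $W_X$-cover determined by the marked point, $\iota$ is an isomorphism onto the open substack $\calU:=\calC_{L(\calA_X)}=\calC\setminus\calC_P$. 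The technical heart of part (2) is to check this in families rather than only on $\C$-points, which uses \cref{lemUniversalInd} to produce, \'etale-locally, the $W_X$-cover whose induction recovers a given family with marked point of stabilizer $\subseteq W_X$. Granting this, $H^*(\calC,\kappa)\to H^*(\calC^X,\kappa)$ is restriction to $\calU$; concretely, $\iota^{-1}(\ol\calC_a)=\emptyset$ for $a\in\calA\setminus\calA_X$ (then the reflection $s_a\notin W_X$ cannot lie in $\mathrm{stab}_{W_X}\sigma$) and $\iota^{-1}(\ol\calC_a)=\ol{\calC^X_a}$ for $a\in\calA_X$, so $\iota^*c(a)=0$, resp.\ $\iota^*c(a)=c^X(a)$. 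Since $H^*(\calC,\kappa)$ is generated by the $c(a)$ (\cref{corHGeneration}) and $H^*(\calC^X,\kappa)$ by the $c^X(a)$, $a\in\calA_X$, the map $\iota^*$ is surjective. For the kernel, run the excision sequences of Section 5 over the strata of $\calC_P$ one at a time: each $H^*(\calC_Y,\kappa)$ is even and, by \cref{funClass}, $[\ol\calC_Y]$ is a unit times $c(Y)$, so the kernel of restriction to $\calU$ is the ideal generated by $\{c(Y):Y\in P\}$; by \cref{descripMonoid2} this ideal is also generated by the $c(a)$ with $a\notin\calA_X$, matching the explicit formula for $\iota^*$.

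\textbf{Part (3).} Since $\#W$, hence $\#W_X$, is invertible, \cref{HM} gives $H^*(\calM,\kappa)=H^*(\calC,\kappa)^W$ and $H^*(\calM^X,\kappa)=H^*(\calC^X,\kappa)^{W_X}$, and the square relating $\calC^X\to\calC$, $\calM^X\to\calM$ and the two ``forget the section'' maps commutes; so $H^*(\calM,\kappa)\to H^*(\calM^X,\kappa)$ is the restriction to $W$-invariants of the $W_X$-equivariant map $\iota^*$. Its kernel is therefore $\ker(\iota^*)\cap H^*(\calC,\kappa)^W$, which is precisely the ideal of $H^*(\calM,\kappa)$ generated by $P\cap H^*(\calC,\kappa)^W$. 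The surjectivity is the one genuinely delicate point, and the place I expect the main work to lie: because $H^*(\calC^X,\kappa)$ carries only a $W_X$-action, exactness of $(-)^W$ does not apply directly, and one must show that every $W_X$-invariant class in $H^*(\calM^X,\kappa)$ lifts to a $W$-invariant class. I would prove this by analyzing, for a monomial $m\in L^\mu(\calA_X)\subset L^\mu(\calA)$, how its $W$-orbit meets $L^\mu(\calA_X)$ and decomposes there into $W_X$-orbits, and then solving for a $W$-invariant preimage of a given $W_X$-orbit sum — either by downward induction on degree or by careful double-coset bookkeeping of $\{w\in W:w\lvert m\rvert\supseteq X\}$. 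This combinatorial surjectivity statement is the main obstacle.
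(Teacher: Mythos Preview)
Your treatment of (1) and (2) matches the paper's: both identify $\calC^X$ with the open substack $\ol{\calU}=\calC\setminus\calC_P$ and read off the kernel from the Gysin sequences of the strata in $P$. Your extra remark that the kernel is equally well generated by the $c(a)$ with $a\notin\calA_X$ is correct and useful.

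For (3), your instinct that surjectivity is the delicate point is exactly right --- so right, in fact, that the surjectivity claim is \emph{false} as stated. Take $W=\Sigma_4$ and $X=a_{12}\cap a_{34}$, so $W_X=\langle(12),(34)\rangle$ and $\calA_X=\{a_{12},a_{34}\}$. Here $W_X$ acts trivially on $\calA_X$, so $H^2(\calM^X,\Q)=\langle a_{12},a_{34}\rangle$ is two-dimensional. On the other hand $\Sigma_4$ acts transitively on the six hyperplanes, so $H^2(\calM,\Q)=\langle\sum_{i<j}a_{ij}\rangle$ is one-dimensional, and under the induction map $\sum_{i<j}a_{ij}\mapsto a_{12}+a_{34}$. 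Thus the image in degree $2$ is one-dimensional and the map is not surjective. The paper's proof of (3) (``follows from commutativity of the diagram'') is too terse: commutativity gives the kernel identification $\ker = (P)\cap\kappa[L^\mu(\calA)]^W$ exactly as you say, since $H^*(\calM^X,\kappa)\hookrightarrow H^*(\calC^X,\kappa)$, but it does not give surjectivity, and no amount of double-coset bookkeeping will rescue it.

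So your kernel argument is correct and complete; drop the attempt to prove surjectivity and instead note the counterexample. (The only downstream use of (3) in the paper, the Whitney-type formula, needs only the explicit description of the map on generators, not surjectivity.)
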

\begin{proof}
$P$ can be characterized as the set of $Y \in L(\calA)$ such that $Y \cap X$ is
strictly contained in $X$, so it is upwards-closed.  Let $\calU = \calM - \calM_P$ and $\ol{\calU} = \calC - \calC_P$.  If $(C, \sigma)$ is a pointed $(W_X, \fk{t})$-cameral cover then the section $1 \times \sigma$ of $\ind_{W_X}^W C$ is never fixed by $W_Y$ for $Y \in P$, so $\calC^X \to \calC$ factors through $\ol{\calU}$.  Since $\calC^X$ surjects to $\calM^X$ this implies $\calM^X$ factors through $\calU$.  We therefore have a commutative diagram as below, where the vertical maps are the forgetful functors.  (Caution: the squares are not Cartesian.)
\[ \xymatrix{
\calC^X \ar[r] \ar[d] & \ol{\calU} \ar[r] \ar[d] & \calC \ar[d] \\
\calM^X \ar[r]        & \calU \ar[r]              & \calM }\]
In fact the map $\calC^X \to \ol{\calU}$ is an isomorphism.  It is clearly faithful.  It is also full - this is the statement that if $(C, \sigma)$ and $(C', \sigma')$ are $(C_X, \fk{t})$-cameral covers then any map $\ind_{W_X}^W C \to \ind_{W_X}^W C'$ sending the induced section $W_x \times \sigma$ to $W_X \times \sigma'$ is induced from its restriction to the connected components of the sections $(C, \sigma) \to (C', \sigma')$.  It remains only to show that $\calC^X \to \ol{\calU}$ is essentially surjective.  As both stacks are locally finite type and reduced it suffices to check that the map is surjective on $\C$-points, but this follows from \cref{kPoints}.  

In particular, $H^*(\calC, \kappa) \to H^*(\calC^X, \kappa)$ is a surjection with kernel generated by the fundamental classes of the closed strata of $\calC_P$, so the kernel is generated by $P$.  The claim for $H^*(\calM, \kappa) \to H^*(\calM^X, \kappa)$ follows from commutativity of the diagram.
\end{proof}

Now suppose that $X \in L(\calA)$ and suppose that $\calA_X$ decomposes as $\calA_X = \calA_Y \oplus \calA_Z$.  Let $\fk{t} = \fk{t}_Y \oplus \fk{t}_Z$ be the corresponding splitting.  Given a $(W_Y, \fk{t}_Y)$-cameral cover $C_Y \to S$ and a $(W_Z, \fk{t}_Z)$-cameral cover $C_Z \to S$, $C_Y \coprod C_Z$ has naturally the structure of a $(W_Y \times W_Z, \fk{t})$-cameral cover.  Note that $W_Y \times W_Z = W_X$.  Denote by $C_Y \oplus C_Z$ the $(W, \fk{t})$ cameral cover $\ind_{W_X}^W C_Y \coprod C_Z$.

\begin{example}
Let $(W, \fk{t}) = (\Sigma_{n+m}, \C^{n \oplus m})$.  Let $Y$ be the subspace of $\fk{t}$ where the first $n$ coordinates are identical and let $Z$ be the subspace where the last $m$ coordinates are identical.  Let $X = Y \cap Z$.  Then $C_Y \oplus C_Z$ makes sense whenever $C_Y$ and $C_Z$ are cameral covers for $\Sigma_n$ and $\Sigma_m$ respectively.  In this situation we can let $S_Y$ and $S_Z$ be the corresponding spectral covers.  $C_Y \oplus C_Z$ corresponds to the spectral cover $S_Y \coprod S_Z$.
\end{example}

We now mimic the Whitney sum formula for the Chern polynomial of a vector bundle.  Let $p(r) \in H^*(\calM, \kappa)[r]$ be the polynomial $\prod_{a \in \calA} (1 + ar)$.  For a cameral cover $C \to X$, let $p_C(r) \in H^*(X, \kappa)[r]$ be the image of $p(r)$.  Note that, unlike the chern polynomial of a vector bundle, $p_C$ does \emph{not} encode all characteristic classes of $C$.

\begin{proposition}
\label{whitney}
$p_{C_Y \oplus C_Z}(r) = p_{C_Y}(r)p_{C_Z}(r)$
\end{proposition}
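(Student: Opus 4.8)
The plan is to pull the identity $p(r)=\prod_{a\in\calA}(1+ar)$ back through a factorization of the classifying map of $C_Y\oplus C_Z$. Since $C_Y\oplus C_Z=\ind_{W_X}^W(C_Y\coprod C_Z)$, its classifying map $S\to\calM$ factors as $S\to\calM^Y\times\calM^Z\cong\calM^X\to\calM$, where the last arrow is induction, the first is $(f_Y,f_Z)$ with $f_Y\colon S\to\calM^Y$ and $f_Z\colon S\to\calM^Z$ classifying $C_Y$ and $C_Z$, and the middle isomorphism is the one set up below. (As in \cref{refCohomology}, $\#W$ is invertible in $\kappa$, so $p(r)$ is defined via \cref{HM}.) So it suffices to compute the pullback of $p(r)$ to $\calM^X\cong\calM^Y\times\calM^Z$ and then apply $(f_Y,f_Z)^*$ and functoriality.

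First I would set up the product decomposition $\calM^X\cong\calM^Y\times\calM^Z$. The splitting $\calA_X=\calA_Y\oplus\calA_Z$ comes from a $W_X$-invariant decomposition $\fk{t}=\fk{t}_Y\oplus\fk{t}_Z$ under which $W_X=W_Y\times W_Z$, with $W_Y$ acting on $\fk{t}_Y$, $W_Z$ on $\fk{t}_Z$, and $\calA_Y,\calA_Z$ the respective reflection arrangements; hence $\fk{t}/W_X=(\fk{t}_Y/W_Y)\times(\fk{t}_Z/W_Z)$. Since a cameral cover is \'etale-locally a pullback of $\fk{t}\to\fk{t}/W_X$, a $(W_X,\fk{t})$-cameral cover is \'etale-locally a pair of a $(W_Y,\fk{t}_Y)$- and a $(W_Z,\fk{t}_Z)$-cameral cover; this functorial local equivalence glues to $\calM^X\cong\calM^Y\times\calM^Z$ (sending $C_Y\coprod C_Z$ to $(C_Y,C_Z)$), and likewise $\calC^X\cong\calC^Y\times\calC^Z$. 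Dually $L(\calA_X)=L(\calA_Y)\times L(\calA_Z)$, and a joint element $\neq\fk{t}$ is irreducible exactly when one coordinate is the bottom element and the other irreducible, so $L^\mu(\calA_X)\cong L^\mu(\calA_Y)\times L^\mu(\calA_Z)$ and, by \cref{HN}, $H^*(\calC^X,\kappa)\cong H^*(\calC^Y,\kappa)\otimes_\kappa H^*(\calC^Z,\kappa)$ with the blocks $\{c(a):a\in\calA_Y\}$ and $\{c(a):a\in\calA_Z\}$ in the two tensor factors. (On strata: for $a\in\calA_Y$, the hyperplane $a$ pulled back to $\fk{t}$ has the same pointwise stabilizer in $W_X=W_Y\times W_Z$ as in $W_Y$, so its closed stratum in $\calC^X$ is the $\mathrm{pr}_Y$-preimage of its closed stratum in $\calC^Y$, giving $c(a)=\mathrm{pr}_Y^*c^Y(a)$; symmetrically for $\calA_Z$.) The same compatibility descends to $\calM^X\cong\calM^Y\times\calM^Z$ on taking invariants.

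Now the computation. Partition $\calA=\calA_Y\sqcup\calA_Z\sqcup(\calA\setminus\calA_X)$, the first two pieces being the hyperplanes of $\calA$ pulled back from $\fk{t}_Y$, resp.\ $\fk{t}_Z$. A codimension-one element of $L(\calA)$ lies in $L(\calA_X)$ iff it belongs to $\calA_X$, so each $a\in\calA\setminus\calA_X$ lies in $P:=L(\calA)\setminus L(\calA_X)$; by \cref{refCohomology} the induction map kills the classes of elements of $P$ (this is cleanest on the universal covers $\calC^X\to\calC$, where \cref{refCohomology}(2) applies literally to the individual $c(a)$, after which $H^*(\calM^X,\kappa)\hookrightarrow H^*(\calC^X,\kappa)$, valid since $\#W$ is invertible, determines the pullback on $\calM^X$). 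Hence $p(r)$ pulls back to $\prod_{a\in\calA_X}(1+ar)=\prod_{a\in\calA_Y}(1+ar)\cdot\prod_{a\in\calA_Z}(1+ar)$, which by the product decomposition is $\mathrm{pr}_Y^*p^Y(r)\cdot\mathrm{pr}_Z^*p^Z(r)$, where $p^Y(r)=\prod_{a\in\calA_Y}(1+ar)\in H^*(\calM^Y,\kappa)[r]$ is the Chern polynomial of the $(W_Y,\fk{t}_Y)$-theory (its reflection arrangement being exactly the summand $\calA_Y$), likewise $p^Z$. Pulling back along $(f_Y,f_Z)$ gives $p_{C_Y\oplus C_Z}(r)=f_Y^*p^Y(r)\cdot f_Z^*p^Z(r)=p_{C_Y}(r)\,p_{C_Z}(r)$.

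The step I expect to be the main obstacle is the product decomposition: establishing the stack isomorphism $\calM^X\cong\calM^Y\times\calM^Z$ and, above all, the compatibility of the hyperplane classes $c(a)$ with it --- equivalently the monoid identity $L^\mu(\calA_X)\cong L^\mu(\calA_Y)\times L^\mu(\calA_Z)$, a product refinement of \cref{descripMonoid2}. Everything after that is bookkeeping with \cref{refCohomology}, \cref{HN}, and functoriality of pullback.
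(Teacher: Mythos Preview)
Your proposal is correct and follows essentially the same approach as the paper: reduce to the universal case via the factorization $S\to\calM^Y\times\calM^Z\to\calM$, use \cref{refCohomology} to see that the induction map on cohomology kills the factors $(1+ar)$ with $a\notin\calA_X$, and then split the surviving product $\prod_{a\in\calA_X}(1+ar)$ along $\calA_X=\calA_Y\sqcup\calA_Z$. The paper's proof is just a two-sentence compression of this; in particular it takes the identification $H^*(\calM^Y\times\calM^Z,\kappa)\cong H^*(\calM^Y,\kappa)\otimes H^*(\calM^Z,\kappa)$ and the compatibility of the classes $a$ with the two projections for granted, whereas you (reasonably) spend most of your effort justifying exactly that product decomposition.
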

\begin{proof}
It suffices to consider the global case, where $C_Y$ and $C_Z$ are the pullbacks to $\calM^Y \times \calM^Z$ of $\calC^Y$ and $\calC^Z$ respectively.  Then $p_{C_Y \oplus C_Z}(r)$ is the image in $H^*(\calM^Y, \kappa) \otimes H^*(\calM^Z, \kappa)[r]  = H^*(\calM^Y \times \calM^Z, \kappa)[r]$ of $\prod_{a \in \calA} (1  + ar)$ under the natural map from $H^*(\calM, \kappa)$.  This map merely kills all $a$ that are not in $\calA_X$.  Since $\calA_X = \calA_Y \oplus \calA_Z$, the image is $\prod_{a \in \calA_Y} (1 + ar) \times \prod_{a \in \calA_Z} (1 + ar) = p_{C_Y}(r)p_{C_Z}(r)$.
\end{proof}

\section{Characteristic classes of Higgs bundles}\label{chapCharHiggs} 

Let $G$ be a connected affine reductive group over $\C$.  In this section we use the results of \cite{DG} to compute the rational cohomology ring of the stack of $G$-Higgs bundles as a $H^*(\calM, \Q)$-algebra.  Since we are only interested in rational coefficients for this section we write $H^*(-)$ in place of $H^*(-, \Q)$.

Let $T$ be a maximal torus of $G$, $N$ be its normalizer, and $B$ be a Borel subgroup of $G$ containing $T$.  Let $\fk{g}$, $\fk{t}$ be the Lie algebras of $G$ and $T$.  Let $W$ be the Weyl group of $G$, acting by reflections on $\fk{t}$ in the usual way.  Choose a system of positive roots $\Phi^+ \in \fk{t}^*$ and let $\Phi^-$ be the negative roots.  For each $X \in L(\calA)$ let $N_X$ be the preimage of $W_X$ under the projection $N \to W$.  We begin with some definitions, mostly from \cite{DG}.

\begin{definition}
A \emph{regular centralizer} in $\fk{g}$ is the centralizer in $\fk{g}$ of a regular element.  We do not assume that the regular element is semisimple. 

A \emph{(abstract regular) Higgs bundle} on $S$ is a right 
 $G$-torsor $E \to S$ together with a sheaf of regular centralizers $\fk{a}$ inside the sheaf of Lie algebras $\fk{e}$ associated to $E$.  A morphism of Higgs bundles is a morphism of principal bundles that preserves the chosen sheaves of regular centralizers. Write $\calH$ for the stack of Higgs bundles, i.e. $\calH(S)$ is the category of Higgs bundles on $S$.  $\calH$ is an Artin quotient stack (\cite{DG}, paragraph 2.3).  

Given a Higgs bundle $(E, \fk{a})$ on $S$ we can form the moduli space of Borel algebras in $\fk{e}$ containing $\fk{a}$.  This space carries a natural left action of $W$ which makes it a cameral cover of $S$.  This defines a functor $h: \calH \to \calM$ called the \emph{Hitchin map}.  

A \emph{pointed Higgs bundle} is a Higgs bundle $H$ together with a section of $h(H)$.  In other words, a pointed Higgs bundle is a triple $(E, \fk{a}, \fk{b})$ where $E$ is a principal $G$-bundle with sheaf of Lie algebras $\fk{e}$, and $\fk{a} \subset \fk{b} \subset \fk{e}$ is a chain of sheaves of Lie algebras where $\fk{a}$ is a regular centralizer and $\fk{b}$ is a Borel subalgebra.  Write $\calH_\calC = \calH \times_\calM \calC$ for the stack of pointed Higgs bundles, and $\calH_X = \calH_\calC \times_\calC \calC_X$.  This gives a smooth lci stratification of $\calH_\calC$ by $L(\calA)$.  
\end{definition}

\begin{example}
\label{exHiggs1}

Let $G = GL_n$.  Suppose given a locally free sheaf $\calF$ on $S$ and a map $x: \calF \to \calF \otimes \calJ$ where $\calJ$ is some line bundle.  Such a pair is what most of the literature calls a Higgs bundle.  Identify $x$ with a section of $\calJ \otimes \emo \calF$.  Let $U \subset S$ be open, and suppose there is an isomorphism $\alpha': \calJ|_U \to \calO_U$.  This induces an isomorphism $\alpha: \calJ \otimes \emo \calF|_U \to \emo \calF|_U$.  Assume that $\alpha(x)$ is a regular element.  Note that this does not depend on the choice of trivialization of $\calJ$.  Let $\fk{a}_{U, \alpha} \subset \emo \calF|_U$ be the centralizer of $\alpha(x)$.  $\fk{a}$ does not depend on the choice of $\alpha'$, so this defines a sheaf of regular centralizers $\fk{a} \subset \emo \calF$.  Let $E$ be the $GL_n$-principal bundle associated to $\calF$, so that $\fk{e} = \emo \calF$.  We have constructed a Higgs bundle $(E, \fk{a})$.  

Suppose further that $x$ is semisimple.  Choosing a Borel subalgebra containing the centralizer of $x$ is the same as ordering the eigenspaces of $x$, so $h(E, \fk{a})$ is the space of ordered eigenspaces of $x$.  The action of $W = \Sigma_n$ permutes the eigenspaces.
\end{example}

Our goal is to show that a map $q: \calH_\calC \to \calC \times BT$, originally defined in \cite{DG}, induces a surjection on rational cohomology and to compute the kernel.  We will identify the images of the strata $\calH_X$ under $q$, and reduce the problem to checking that $q$ induces a surjection on cohomology of strata.  The strata will turn out to be classifying spaces of groups of the form (unipotent group) $\rtimes$ (torus) $\rtimes$ (finite group), and the problem reduces further to studying the associated map between tori.  Finally, we identify the $W$-action on $H^*(\calH_\calC, \Q)$ in order to express $H^*(\calH, \Q)$ as the subring of $W$-invariants.

\cite{DG} completely describes $\calH$ as a gerbe over $\calM$.  Insofar as our description of $\calH$ looks different from the one in \cite{DG} it is because we are only interested in cohomology and can therefore afford to ignore some information.  Here are some auxiliary constructions from \cite{DG} that we will need:

\begin{itemize}
\item If $(E, \fk{a}, \fk{b})$ is a pointed Higgs bundle on $S$, $\fk{b}$ defines a lift of $E$ to a $B$-bundle $E_B$, which defines an $E_T$ bundle on $S$ using the projection $B \to T$.  This defines a map $\calH_{\calC} \to BT$, hence a map $q: \calH_\calC \to \calC \times BT$.  Let $\calL$ be the corresponding $T$-bundle on $\calH_\calC$.    

\item  Let $\map_{W}(\calC, T)$ be the abelian algebraic $\calM$-group
	associating to a cameral cover $C \to S$ the space of $W$-equivariant
	maps $C \to T \times S$ over $S$.  For any root $\alpha$ of $G$ there
	is a map $\alpha': T \to \G_m$ and an associated hyperplane $a_\alpha \in \calA$.  Let $\calT$ be the subsheaf of $\map_{W}(\calC, T)$ consisting maps $C \to T$ such that, under the composition $C \to T \to \G_m$, $a_\alpha$ is sent to $1$.  Since $a_\alpha$ always goes to $\pm 1$, $\calT$ has finite index in $\map_{W}(\calC, T)$.

\item For any root $\alpha$ of $G$, let $a_\alpha \in \calA$ be the corresponding hyperplane.  Recall that $I_a$ is the corresponding ideal sheaf on $\calC$.  Let $\calR^{\alpha}$ be the $T$-bundle of $\calC$ obtained by inducing $I_{a_\alpha}^\vee$ (viewed as a $\G_m$ torsor) up to a $T$-bundle $\calR^{\alpha}$ via the dual coroot $\check{\alpha}: \G_m \to T$.  For $w \in W$, let 
\[\calR^w = \bigotimes_{\substack{\alpha \in \Phi^+ \\ w(\alpha) \in \Phi^-}} \calR^\alpha\]
and for $n \in N$ let $\calR^n = \calR^w$ where $w$ is the image of $n$ in $w$.  
\end{itemize}

Given a $T$-torsor $E_T \to S$ and $w \in W$, write $E_T^w$ for its twist by
$W$.  This is an action of $W$ on $BT$.  $W$ acts on $\calC$ as well, so it
acts on $\calC \times BT$ via the diagonal action.  $N$ maps to $W$, so it acts
on $\calC \times BT$ as well.

\begin{theorem}  (Donagi-Gaitsgory) \label{thmDGfacts} 
	\begin{enumerate} \item
			(\cite{DG}, theorem 4.4) $h: \calH \to \calM$ is a
			gerbe with band $\calT$.  
		\item (\cite{DG}, see
			\cref{DGexplainer})  The map $q: \calH_\calC \to \calC
			\times BT$ is compatible with $\calT$, in the following
			sense:  Let $H \in \calH_\calC(S)$ be a pointed cameral
			cover over $S$ with associated pointed cameral cover
			$(C, \sigma)$, and let $x$ be an automorphism of $H$
			inducing the identity of $(C, \sigma)$.  By (1), $x$
			corresponds to a $W$-equivariant map $\phi: C \to T$.
			Then $q$ sends the automorphism $x$ to the automorphism
			of $\calL|_S$ defined by acting with $\phi(\sigma) \in
			\Gamma(S, T)$.   
		\item (\cite{DG}, lemma 5.4) $\calR$
			is a cocycle for the action of $W$ on $\calC \times
			BT$, in the sense that there is a canonical isomorphism
			$\calR^{w_1 w_2} \to w_2^*\left(\calR^{w_1}
			\right)^{w_2} \otimes \calR^{w_2}$.  
		\item (\cite{DG},
			theorem 6.4) $\calL$ is $\calR$-twisted
			$N$-equivariant, in the sense that for each $n \in N$
			mapping to $w \in W$ there is an isomorphism $\alpha_n:
			w^*\calL^w \otimes \calR^w \cong \calL$ such that for
			$t\in T$, $\alpha_t: \calL \to \calL$ is scaling by
			$t$, and the maps $\alpha_n$ are compatible in the
			natural way using the isomorphism of (3).  
	\end{enumerate}
\end{theorem}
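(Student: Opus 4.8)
The plan is that three of the four assertions are quoted essentially verbatim from \cite{DG}, so the real work is twofold: (a) translating between the two descriptions of $\calH$, and (b) proving the compatibility in part (2), which is the content of \cref{DGexplainer}. Accordingly, I would organize the proof as a sequence of citations with a dictionary, followed by one genuine argument.

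For part (1) I would simply invoke \cite{DG}, theorem 4.4, which shows $h : \calH \to \calM$ is a gerbe banded by the sheaf of abelian groups that Donagi--Gaitsgory attach to a cameral cover; the only thing to check is that their band agrees with our $\calT$, and this follows by unwinding the definition of $\calT$ as the subsheaf of $\map_W(\calC, T)$ on which each mirror $a_\alpha$ is sent to $1$ under $C \to T \xrightarrow{\alpha'} \G_m$. The finite-index inclusion $\calT \hookrightarrow \map_W(\calC,T)$ is exactly the discrepancy between our normalization and theirs. For parts (3) and (4) I would cite \cite{DG}, lemma 5.4 and theorem 6.4, after reconciling conventions — most importantly the one already flagged in the statement, that their $w^*E_T$ is our $w^*E_T^w$, together with the dependence of $\calR^w$ on the chosen positive system $\Phi^+$ via the product over $\alpha \in \Phi^+$ with $w(\alpha) \in \Phi^-$. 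With those dictionaries in place, (3) is their cocycle identity for $\calR$ and (4) is their $\calR$-twisted $N$-equivariance of $\calL$, verbatim.

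The substantive point is part (2), which I would prove in \cref{DGexplainer}. Recall that $q$ is built by sending a pointed Higgs bundle $(E,\fk{a},\fk{b})$ to the $T$-bundle obtained by first reducing $E$ to the $B$-bundle $E_B$ cut out by $\fk{b}$ and then pushing out along $B \to T$; this is the bundle $\calL$. An automorphism $x$ of $H = (E,\fk{a},\fk{b})$ covering the identity of the pointed cameral cover $(C,\sigma)$ preserves $\fk{b}$ and fixes $\sigma$, hence preserves the reduction $E_B$ and induces an automorphism $x_T$ of $\calL$. By part (1), $x$ is recorded by a $W$-equivariant map $\phi : C \to T$, namely the difference between $x$ and the identity measured inside the cameral centralizer; I would show that $x_T$ is multiplication by the value $\phi(\sigma) \in \Gamma(S,T)$ read off at the Borel corresponding to $\sigma$. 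The cleanest way is to work \'{e}tale-locally on $S$, where $C \cong S \times_{\fk{t}/W} \fk{t}$ and $E$ trivializes compatibly with $\fk{b}$; then $x$, $\phi$, and the standard $T$-action are all explicit, and $x_T = $ (multiplication by $\phi(\sigma)$) by inspection. Compatibility of these local formulas is automatic because $\phi$ is a global object.

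The main obstacle is exactly this reconciliation of formalisms for part (2): Donagi--Gaitsgory package $\calH$ via cameral data rather than via sheaves of regular centralizers inside $\fk{e}$, and one must verify that the identification of the band with $W$-equivariant maps $C \to T$, the choice of section $\sigma$, and the left $W$-action on $\calC$ (as opposed to the twisting action on $BT$) all fit together without stray inverses or signs. Once the local model is pinned down, the statement is forced; the difficulty is purely in setting up the correct dictionary and confirming that the evaluation-at-$\sigma$ map is the one implicit in \cite{DG}'s description of the gerbe band.
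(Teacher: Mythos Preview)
Your proposal is correct in outline, and for parts (1), (3), (4) it matches the paper exactly: those are straight citations plus a notational dictionary, and the paper gives no further argument.

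For part (2) your route differs from the paper's. You propose to pass to an \'{e}tale-local trivialization of $C$ and $E$, write everything explicitly, and verify $x_T = \phi(\sigma)$ ``by inspection''. The paper instead unwinds the Donagi--Gaitsgory construction of $\phi$ globally (citing paragraphs 11.5, 11.9, 14.3 of \cite{DG}): for each point of $C$, corresponding to a Borel $\fk{b}' \supset \fk{a}$, the automorphism $x$ stabilizes $\fk{b}'$, so lifts to the $B$-reduction $E_B(\fk{b}')$ and pushes forward along $B \to T$ to an element of $\Gamma(S,T)$; this element \emph{is} $\phi(\fk{b}')$ by definition. Setting $\fk{b}' = \fk{b}$, the $T$-bundle $T \times_B E_B(\fk{b})$ is $\calL|_S$ by the very definition of $q$, so $q(x) = \phi(\sigma)$ is a tautology. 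No local trivialization is needed.

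Your argument would work, but it is less economical: to compute $\phi(\sigma)$ in the local model you must still invoke the construction of $\phi$, and that construction, once stated, already yields the result globally. The ``by inspection'' step in your proposal is hiding precisely the unwinding that the paper makes explicit; once that unwinding is on the table, the \'{e}tale-local reduction is superfluous.
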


Warning: If $E_T$ is a $T$-bundle on $\calC$, the sheaf $w^*E_T$ of \cite{DG}
is our $w^*E_T^w$.

\begin{remark}
\label{DGexplainer} 
We explain \cref{thmDGfacts} (1) more concretely.  Let $H, C, \sigma, x$ be as in the statement of part (2).  Part (1) implies that $x$ corresponds to an element $\phi \in \map_{W}(C, T)$.  The construction is as follows: Write $H = (E, \fk{a}, \fk{b})$.  For every point of $C$, corresponding to a Borel subalgebra $\fk{b}' \subset \fk{e}$ containing $\fk{a}$, we get a lift of $E$ to some $B$-torsor $E_B(\fk{b}')$.  Since $x$ induces the identity of $C$ it must stabilize $\fk{b}'$, and therefore lifts to an automorphism $x_B$ of $E_B(\fk{b}')$, which maps to an automorphism $x_T \in \Gamma(S, S \times T)$ of the $T$-torsor $T \times_B E_B(\fk{b}')$.  Thus $x$ determines a function from points $\fk{b}'$ of $C$ to $\Gamma(S, T)$, hence a function $\phi\in \map(C, T)$, which turns out to land in $\calT \subset \map(C, T)$.  This description is taken from paragraphs 11.5, 11.9 and 14.3 of \cite{DG}.

We can now justify (2): $\phi(\sigma)$ is the element of $\Gamma(S, T)$ that we obtained when $\fk{b}'=\fk{b}$.  In this case $T \times_B E_B = \calL|_S$, so $q(x) = x_T$ by definition of $q$.  

\end{remark}


\begin{remark}
$\calL$ is not twisted $W$-equivariant: the isomorphisms of \cref{thmDGfacts} (4) depend on the lift $n$ of $w$.
\end{remark}

From now on we will only consider the $\calR$-twisted action of $N$ on $\calC \times BT$.  Concretely, if $(C, \sigma)$ is a pointed cameral cover of $S$ corresponding to a map $f: S \to \calC$, and if $E$ is a $T$-torsor on $S$, then $n \cdot (C, \sigma, E) \in \left(\calC \times BT\right)(S)$ is $(C, w\cdot \sigma, w^*E^w \otimes f^*\calR^w)$.  The point is that under this action the map $q: \calH_\calC \to \calC \times BT$ is $N$-equivariant.  

\begin{definition}
For $X \in L(\calA)$, let $(\calC_X \times BT)^{N_X}$ be the space of fixed
points $\calC_X \times BT$ under the (twisted) action of $N_X$.  Explicitly,
$(\calC_X \times BT)^{N_X}$ sends a scheme $S$ to the category of pairs $(C,
\sigma, \calJ)$ where $(C, \sigma)$ is a pointed cameral cover in $\calC_X(S)$
and $\calJ$ is a $\calR|_S$-twisted $N_X$-equivariant $T$-torsor. 
\end{definition}

Twisted equivariance of $q$ implies that $q_X := q|_{\calH_X}: \calH_X \to \calC_X \times BT$ factors through $(\calC_X \times BT)^{N_X}$.  Our strategy for the rest of this section is to study the map that $q$ induces on cohomology by studying these restrictions.  $\calH_X$ is the classifying space of an extension of $Q_X$ by $\calT_X \subset \map_W(C_X, T)$, the fiber of $\calT$ over $C_X \in \calM$.  Meanwhile $(\calC_X \times BT)^{N_X}$ is the classifying space of an extension of $Q_X$ by $T^W$.  Therefore $q_X$ is induced by a map of groups.  In fact it will suffice to study the induced map $\calT_X \to T^W$.  

%

\begin{lemma}
\label{isoFibers}
The map $\calT_X \to T^{W_X}$ defined by evaluation at $\sigma$ induces an isomorphism $H^*(BT^{W_X}) \to H^*(B\calT_X)$.
\end{lemma}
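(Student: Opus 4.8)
I will identify $\calT_X$ explicitly and show that $\mathrm{ev}_\sigma \colon \calT_X \to T^{W_X}$ has connected unipotent kernel and image of finite index; since the rational cohomology of a classifying space is insensitive to a connected unipotent kernel and to passing to a finite-index subgroup of an abelian group, this will give the isomorphism.

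First reduce to a single coinvariant algebra. By \cref{kPoints}, $C_X \cong \ind_{W_X}^W Co(W_X,\fk{t}/X)$, so a $W$-equivariant map $C_X \to T$ is the same as a $W_X$-equivariant map from the connected component $\spec R := Co(W_X,\fk{t}/X)$ of $\sigma$, where $R$ is the coinvariant algebra of $(W_X,\fk{t}/X)$: a finite-dimensional local $\C$-algebra with maximal ideal $\fk{m}$, residue field $\C$, and a $W_X$-action. Thus $\calT_X$ is the group of $W_X$-equivariant $\C$-algebra maps $\calO(T) \to R$ satisfying the mirror conditions, and $\mathrm{ev}_\sigma$ is reduction modulo $\fk{m}$. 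The inclusion of the residue field splits $R = \C \oplus \fk{m}$ as $W_X$-modules, hence $R^\times = \C^\times \times (1+\fk{m})$ $W_X$-equivariantly with $1+\fk{m}$ connected unipotent; therefore the group of $R$-points $T(R) := \mathrm{Hom}(\spec R, T)$ decomposes $W_X$-equivariantly as $T \times \mathcal{U}$ with $\mathcal{U}$ connected unipotent and $\mathrm{ev}_\sigma$ the projection to $T$, and taking $W_X$-fixed points gives $T(R)^{W_X} \cong T^{W_X} \times \mathcal{U}^{W_X}$ with $\mathcal{U}^{W_X}$ again connected unipotent.

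The step I expect to require the most care is controlling the mirror conditions that cut $\calT_X$ out of $T(R)^{W_X}$. Only roots $\alpha$ with $s_\alpha \in W_X$ are relevant: for the others the mirror $a_\alpha$ misses the point supporting $\spec R$, so its equation becomes a unit in $R$ and the corresponding condition is empty. For $s_\alpha \in W_X$, $W_X$-equivariance of $\psi$ together with $s_\alpha \cdot \alpha' = (\alpha')^{-1}$ forces $s_\alpha^*(\alpha' \circ \psi) = (\alpha' \circ \psi)^{-1}$ in $R^\times$; evaluating at the $s_\alpha$-fixed point $\sigma$ gives $\alpha'(\psi(\sigma))^2 = 1$, so the normalization in the definition of $\calT$ (value $1$ rather than $-1$) is exactly the requirement $\alpha'(\psi(\sigma)) = 1$. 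From this I extract two facts: (i) $\ker\mathrm{ev}_\sigma = \calT_X \cap (\{1\} \times \mathcal{U}^{W_X})$ is a closed subgroup of a connected unipotent group, hence is connected unipotent; and (ii) $\mathrm{im}\,\mathrm{ev}_\sigma \supseteq (T^{W_X})^{\circ}$, because the constant map at any $t \in (T^{W_X})^{\circ}$ is $W_X$-equivariant and satisfies every mirror condition, $\alpha'|_{T^{W_X}}$ being $\mu_2$-valued and trivial on the identity component.

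Finally, assemble. From $1 \to U \to \calT_X \to \mathrm{im}\,\mathrm{ev}_\sigma \to 1$ with $U$ connected unipotent, the fibration $BU \to B\calT_X \to B(\mathrm{im}\,\mathrm{ev}_\sigma)$ has contractible fibre $BU$ (the complex points of a connected unipotent group form an affine space, so $BU$ is contractible), whence $H^*(B(\mathrm{im}\,\mathrm{ev}_\sigma)) \to H^*(B\calT_X)$ is an isomorphism. Since for any abelian algebraic group $A$ the inclusion $A^\circ \into A$ induces an isomorphism $H^*(BA) \to H^*(BA^\circ)$ ($\pi_0 A$ acting trivially on $H^*(BA^\circ)$ and its classifying space being rationally acyclic), and $(T^{W_X})^\circ \subseteq \mathrm{im}\,\mathrm{ev}_\sigma \subseteq T^{W_X}$, the map $H^*(BT^{W_X}) \to H^*(B(\mathrm{im}\,\mathrm{ev}_\sigma))$ is an isomorphism as well. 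Composing gives the claim, which is induced by $\mathrm{ev}_\sigma$.
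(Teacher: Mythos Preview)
Your proof is correct and rests on the same three ingredients as the paper's: (1) the ``infinitesimal'' part of $\map_W(C_X,T)$ is connected unipotent, (2) the mirror conditions defining $\calT_X$ are of finite index, and (3) abelianness makes the finite quotient cohomologically invisible over $\Q$. The organization differs: the paper factors the evaluation map as $\calT_X \hookrightarrow \map_W(C_X,T) \leftarrow \map_W(C_X^0,T) \xrightarrow{\sim} T^{W_X}$ and treats each arrow separately, whereas you go straight to the kernel and image of $\mathrm{ev}_\sigma$. Your route is slightly more economical; in particular, once you have the product decomposition $T(R)^{W_X} = T^{W_X}\times\mathcal U^{W_X}$ with $\mathcal U^{W_X}$ connected unipotent, the finite-index statement for $\calT_X$ (recorded in the paper) already forces $\ker\mathrm{ev}_\sigma = \mathcal U^{W_X}$ and $\mathrm{im}\,\mathrm{ev}_\sigma \supseteq (T^{W_X})^\circ$ without the explicit mirror analysis you carry out. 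Either packaging yields the result.
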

\begin{proof}

$\calT_X$ involves a lot of unipotent cruft that we would like to get rid of.  Recall that we fixed a representative $(C_X, \sigma) \in \calC_X(\spec \C)$. Let $C_X^0$ be the reduced subscheme of $C_X$, and note that there is a retraction $C_X \to C_X^0$, inducing $\map_W(C_X^0, T) \to \map_W(C_X, T)$.  We have a commutative diagram
\[\xymatrix{
\calT_X \ar[rr]^c && \map_W(C_X, T)          \ar[r]          &  T^{W_X} \\
                 & & \map_W(C_X^0, T) \ar[u]^b \ar[ur]_a       &
}\]
and it would suffice to show that $a$, $b$, and $c$ each induces an isomorphism on $H^*(B(-))$.

\textbf{1.  $a$ is an isomorphism:}  It is an injection because $W$ acts transitively on $C_X^0$, and a surjection since one can define a map $C^0 \to T$ by selecting any $W_X$-invariant point of $T$ to be the target of $\sigma \in C_X$.

\textbf{2.  $b$ induces a homotopy equivalence $B\map_W(C^0_X, T) \to B\map_W(C_X, T)$:}  The cokernel of $\map_W(C^0_X, T) \to \map_W(C_X, T)$ is the group $K$ of $W$-equivariant maps sending $\sigma$ to $1 \in T$.  Indeed, $K$ is a subgroup of $\map_W(C_X, T)$ which intersects trivially with $\map_W(C^0_X, T)$ and the two of them generate $\map_W(C_X, T)$.  Standard deformation theory implies that $K$ is contractible, so $BK$ is contractible.  On the other hand $B\map_W(C_X, T) \to BK$ is a fibration with fiber $B\map_W(C_X^0, T)$, so $B\map_W(C^0_X, T) \to B\map_W(C_X, T)$ is a homotopy equivalence.

\textbf{3.  $c$ induces an isomorphism $H^*(B\map_W(C_X, T)) \to H^*(B\calT_X))$:}  The cokernel of $\calT_X \to \map_W(C_X, T)$ is some finite group $F$.  There is a fibration
\[ \xymatrix{
B\calT_X \ar[r] & B\map_W(C_X, T) \ar[d] \\
                & BF
}\]
Since we are using torsion-free coefficients, the Serre spectral sequence says that $H^*(B\calT_X) \to H^*(B\map_W(C_X, T))$ is the inclusion of the $F$-invariant subring.  But $\map_W(C_X, T)$ is abelian, so the action of $F$ is trivial.
\end{proof}

\begin{proposition}
$\calH_X \to (\calC_X \times BT)^{N_X}$ induces an isomorphism on $H^*(-)$.   
\label{propIsoFixedPoints}
\end{proposition}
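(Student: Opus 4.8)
The plan is to realize both $\calH_X$ and $(\calC_X\times BT)^{N_X}$ as total spaces of fibrations over $\calC_X\cong BQ_X$, to recognize $q_X$ as a morphism of fibrations covering the identity of $BQ_X$, and then to run a Serre spectral sequence comparison whose only real input is \cref{isoFibers}.

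First I would set up the two fibrations. Forgetting the Higgs datum gives $\calH_X\to\calC_X$; by \cref{thmDGfacts}(1) this is (the restriction to the stratum of) a gerbe with band $\calT_X$, so since $\calC_X\cong BQ_X$ by \cref{classifyingSpaces} it exhibits $\calH_X$ as $B\widetilde{G}_X$ for a group extension $1\to\calT_X\to\widetilde{G}_X\to Q_X\to 1$. Dually, forgetting the $T$-torsor gives $(\calC_X\times BT)^{N_X}\to\calC_X\cong BQ_X$ with fibre $BT^{W_X}$, realizing the target as $B\bar{G}_X$ for an extension $1\to T^{W_X}\to\bar{G}_X\to Q_X\to 1$; thus we have fibrations
\[ B\calT_X\to\calH_X\to BQ_X, \qquad BT^{W_X}\to(\calC_X\times BT)^{N_X}\to BQ_X. \]
The $\calR$-twisted $N_X$-equivariance of $q$ (\cref{thmDGfacts}(2) and (4)) shows that $q_X$ is $B$ applied to a homomorphism $\widetilde{G}_X\to\bar{G}_X$ which induces the identity on $Q_X$ and which restricts on kernels to the evaluation-at-$\sigma$ map $\calT_X\to T^{W_X}$ of \cref{isoFibers}. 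In particular $q_X$ is a map of the two fibrations over $BQ_X$, and on fibres it is $B$ of that evaluation map.

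I would then conclude by comparing spectral sequences. Since $Q_X$ is connected (\cref{pointStabalizers}), $BQ_X$ is simply connected, so both Serre spectral sequences with $\Q$-coefficients have untwisted coefficients; moreover, by \cref{isoFibers} the fibre cohomologies agree with that of $BT^{W_X}$ and hence are concentrated in even degrees, as is $H^*(BQ_X)$, so both spectral sequences degenerate at $E_2$. On $E_2$ the map induced by $q_X$ is the identity on the base factor $H^*(BQ_X)$ and, by \cref{isoFibers}, an isomorphism on the fibre factor; hence it is an isomorphism of $E_2=E_\infty$ pages, and since the filtrations are finite in each degree this forces $q_X^*\colon H^*((\calC_X\times BT)^{N_X})\to H^*(\calH_X)$ to be an isomorphism.

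The substantive work has already been done in \cref{isoFibers}; beyond that the argument is bookkeeping. The one point I expect to require care — and would spell out — is the compatibility of the two forgetful fibrations with $q_X$, i.e. that $q_X$ induces on fibres precisely the evaluation map of \cref{isoFibers}. This rests on unwinding the concrete description in \cref{DGexplainer}: the automorphism of $\calL|_S$ that $q$ attaches to an automorphism of a pointed Higgs bundle is the value at the marked point $\sigma$ of the associated $W$-equivariant map $C\to T$, which is exactly the homomorphism $\calT_X\to T^{W_X}$ appearing in \cref{isoFibers}.
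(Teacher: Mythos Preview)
Your argument is correct and is essentially the paper's approach in different packaging: both reduce to \cref{isoFibers} by comparing the two maps to $\calC_X$ and checking on fibres. The paper phrases this as showing that the induced map $R\tau_*\underline{\Q}\to Rh_{X*}\underline{\Q}$ of derived pushforwards to $\calC_X$ is a quasi-isomorphism, which it verifies after pulling back to a point of $\calC_X$ using local triviality of both maps; you instead write out the Serre spectral sequence comparison explicitly and observe degeneration at $E_2$ from parity. The identification of the fibre map with evaluation at $\sigma$ via \cref{thmDGfacts}(2)/\cref{DGexplainer} is exactly what the paper does as well.
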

\begin{proof}
In this proof we use the temporary notation $Y = (\calC_X \times BT)^{N_X}$.  Let $h_X: \calH_X \to \calC_X$ be the restriction of the Hitchin map and let $\tau: Y \to \calC_X$ be the projections.   Here is a picture:
\[\xymatrix{
\calH_X \ar[dr]_{h_X} \ar[rr]^{q_X} &         & Y \ar[dl]^\tau \\
                                    & \calC_X &     
}\]

It suffices to show that the map $ R\tau_* \ul{\Q}_{Y} \to Rh_{X*} \ul{\Q}_{\calH_{X}}$ is a quasi-isomorphism, and for this it suffices to show that the induced map $i\oii R\tau_* \ul{\Q}_{Y} \to i\oii Rh_{X*} \ul{\Q}_{\calH_{X}}$ is a quasi-isomorphism where $i: \spec \C \to \calC_X$ is the cover corresponding to $(C_X, \sigma)$. Further, because $\tau$ and $h$ are both locally trivial, 
\[i^*R\tau_* \ul{\Q}_{Y} \cong H^*( \spec \C \times_{\calC_X} Y) = H^*(BT^{W_X})\]
\[i^* Rh_{X*} \ul{\Q}_{\calH_{X}} \cong H^*(\spec \C \times_{\calC_X} \calH_X) = H^*(B\calT_X)\]
and the map between them is induced by the map $\spec \C \times_{\calC_X} Y \to \spec \C \times_{\calC_X} \calH_X$.  By part 2 of \cref{thmDGfacts} this is map $B\calT_X \to BT^{W_X}$ induced by evaluation at $\sigma$, and by \cref{isoFibers} the latter induces an isomorphism on $H^*(B-)$.
\end{proof}


\begin{corollary}
\hspace{1mm}
\label{corMoney}
Let $K_X$ be the kernel of the restriction $H^*(\calC \times BT) \to H^*((\calC_X \times BT)^{N_X})$.  If the pullback of $[\ol{\calC}_X]$ to $H^*( (\calC \times BT)^{N_X})$ is not a zero-divisor, then
\begin{itemize}
\item $q$ induces a surjection $q^*: H^*(\calC \times BT) \to H^*(\calH_\calC)$.
\item $\delta: \coprod_X \calH_X \to \calH_\calC$ induces an injection $H^*(\calH_\calC) \to H^*\left( \coprod_{X \in L(\calA)} \calH_X\right)$.
\item The kernel of $q^*$ is $\bigcap_X K_X$.
\end{itemize}
\end{corollary}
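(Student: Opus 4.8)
The plan is to apply the general machinery of \cref{lemSurjection} and \cref{lemInjection} to the stack $\calS=\calH_\calC$ with the stratification by $L(\calA)$ given by the $\calH_X$. First I would verify the standing hypotheses. Since $h\colon\calH\to\calM$ is a gerbe (\cref{thmDGfacts}(1)), its base change $\pi\colon\calH_\calC\to\calC$ is a gerbe, hence smooth and flat; as $\calC$ is smooth, $\calH_\calC$ is a regular Artin stack, and the $\calH_X$ give a smooth stratification. Each $H^*(\calH_X,\Q)$ is concentrated in even degrees: by \cref{propIsoFixedPoints} it equals $H^*((\calC_X\times BT)^{N_X},\Q)$, which is $H^*(BG_X,\Q)$ for $G_X$ an extension of $Q_X$ by $\calT_X$; since $BQ_X$ and $B\calT_X$ have the rational cohomology of tori (\cref{pointStabalizers}, \cref{isoFibers}) and $BQ_X$ is simply connected, the Serre spectral sequence of $B\calT_X\to\calH_X\to BQ_X$ collapses for parity reasons, and in fact $H^*(\calH_X,\Q)$ is a polynomial ring generated in degree $2$, the maximal reductive quotient of $G_X$ being a torus. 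I also record that, by flatness of $\pi$, the closure $\ol\calH_X$ of the stratum is $\calH_\calC\times_\calC\ol\calC_X$ and $[\ol\calH_X]=\pi^*[\ol\calC_X]=q^*\,\mathrm{pr}_\calC^*[\ol\calC_X]$, where $\mathrm{pr}_\calC\colon\calC\times BT\to\calC$ is the projection and $\pi=\mathrm{pr}_\calC\circ q$.

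For the surjectivity of $q^*$ I would apply \cref{lemSurjection} with $R=\mathrm{im}(q^*)\subseteq H^*(\calH_\calC,\Q)$, which is a subring. The hypothesis $[\ol\calH_X]\in R$ is the identity $[\ol\calH_X]=q^*\,\mathrm{pr}_\calC^*[\ol\calC_X]$ just recorded. The remaining hypothesis is that $R$ surjects onto each $H^*(\calH_X,\Q)$; since $q|_{\calH_X}$ factors as $\calH_X\xrightarrow{q_X}(\calC_X\times BT)^{N_X}\hookrightarrow\calC\times BT$ and $q_X$ is a rational cohomology isomorphism (\cref{propIsoFixedPoints}), it suffices to show $H^*(\calC\times BT,\Q)\to H^*((\calC_X\times BT)^{N_X},\Q)$ is surjective. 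The target being generated in degree $2$, I reduce to degree $2$: the Serre spectral sequence of $B\calT_X\to BG_X\to BQ_X$ yields a short exact sequence $0\to H^2(\calC_X,\Q)\to H^2(BG_X,\Q)\to H^2(B\calT_X,\Q)\to 0$ (using $H^1(BQ_X,\Q)=H^1(B\calT_X,\Q)=H^3(BQ_X,\Q)=0$). Now $H^2(\calC,\Q)\to H^2(BG_X,\Q)$ factors through $H^2(\calC_X,\Q)$ and surjects onto that sub by \cref{corHGeneration}, while $H^2(BT,\Q)\to H^2(BG_X,\Q)$ composed with the quotient map to $H^2(B\calT_X,\Q)$ is $B$ of the evaluation $\calT_X\to T^{W_X}\hookrightarrow T$ at $\sigma$ — this identification of the fibre map is part~(2) of \cref{thmDGfacts} together with \cref{isoFibers} — and is rationally surjective since it is the restriction of characters along the subtorus $T^{W_X}\hookrightarrow T$. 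Hence $H^2(\calC\times BT,\Q)$ surjects onto both the sub and the quotient of $H^2(BG_X,\Q)$, so onto all of it, and \cref{lemSurjection} gives the first bullet.

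The injectivity of $\delta^*$ then follows from \cref{lemInjection}: the even-cohomology hypothesis was checked above, the maps $H^*(\calH_\calC,\Q)\to H^*(\calH_X,\Q)$ are surjective because \cref{lemSurjection} applies, and the last hypothesis — that $[\ol\calH_X]|_{\calH_X}$ is not a zero divisor in $H^*(\calH_X,\Q)$ — is exactly the zero-divisor hypothesis of the corollary, transported along \cref{propIsoFixedPoints}: under that isomorphism $[\ol\calH_X]|_{\calH_X}$ corresponds to the pullback of $[\ol\calC_X]$ to $H^*((\calC_X\times BT)^{N_X},\Q)$, the same stack that defines $K_X$. Finally $\ker q^*=\bigcap_X K_X$ is formal: if $q^*\alpha=0$ then restricting to $\calH_X$ and using \cref{propIsoFixedPoints} shows $\alpha\in K_X$ for every $X$; conversely if $\alpha\in\bigcap_X K_X$ then $q^*\alpha$ restricts to $0$ on every stratum $\calH_X$, so $\delta^*(q^*\alpha)=0$, whence $q^*\alpha=0$ by the injectivity just established.

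The step I expect to be the main obstacle is the surjectivity $H^*(\calC\times BT,\Q)\to H^*((\calC_X\times BT)^{N_X},\Q)$: one must pin down $(\calC_X\times BT)^{N_X}$ as the classifying space of an explicit extension of $Q_X$ so that the degree-$2$ spectral-sequence computation is legitimate, and one must check that the $BT$-direction genuinely hits the fibre cohomology $H^2(B\calT_X,\Q)\cong H^2(BT^{W_X},\Q)$, which is where part~(2) of \cref{thmDGfacts} and \cref{isoFibers} carry the load. A secondary, purely bookkeeping, point is to keep the stacks $(\calC\times BT)^{N_X}$ and $(\calC_X\times BT)^{N_X}$ straight: the one relevant throughout — appearing in \cref{propIsoFixedPoints}, in the zero-divisor hypothesis, and in the definition of $K_X$ — is $(\calC_X\times BT)^{N_X}$, the open locus isomorphic to $\calH_X$, and the argument should be phrased consistently in terms of it.
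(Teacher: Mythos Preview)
Your proposal is correct and follows the same overall strategy as the paper: verify the hypotheses of \cref{lemSurjection} and \cref{lemInjection} for the stratification $\{\calH_X\}$ of $\calH_\calC$, using \cref{propIsoFixedPoints} to transport everything to $(\calC_X\times BT)^{N_X}$, and then read off the kernel from injectivity of $\delta^*$.

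The only substantive difference is in how you establish surjectivity of $H^*(\calC\times BT)\to H^*((\calC_X\times BT)^{N_X})$. The paper argues more directly: $(\calC_X\times BT)^{N_X}$ is a $BT^{W_X}$-fibration over $\calC_X$ whose Serre spectral sequence degenerates for parity reasons, so its cohomology is freely generated over $H^*(\calC_X)$ by classes restricting to generators of $H^*(BT^{W_X})$ on a fibre; since $T^{W_X}\hookrightarrow T$ is an inclusion of diagonalizable groups, $H^*(BT)\to H^*(BT^{W_X})$ is onto, and one concludes by tensoring with the known surjection $H^*(\calC)\to H^*(\calC_X)$. Your route---observing that the target is generated in degree $2$ and then checking degree $2$ via the short exact sequence $0\to H^2(\calC_X)\to H^2(BG_X)\to H^2(B\calT_X)\to 0$---is a slightly more hands-on version of the same idea. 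One small caveat: your parenthetical justification ``the maximal reductive quotient of $G_X$ being a torus'' is not quite right as stated, since $\calT_X$ need not be connected; but you only need the weaker fact that $H^*(BG_X,\Q)$ is generated in degree~$2$, and that follows already from the spectral-sequence degeneration you invoke.
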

\begin{proof}
By \cref{lemInjection} and \cref{lemSurjection}, to prove (1) and (2) it suffices to prove the following 4 points: for every $X \in L(\calA)$,
\begin{enumerate}
\item $H^*(\calH_X)$ is concentrated in even degrees. 
\item $H^*(\calC \times BT) \to H^*(\calH_X)$ is surjective.
\item $[\ol{\calH}_X]$ is in the image of $q^*$.
\item $[\ol{\calH}_X]|_{\calH_X}$ is not a zero-divisor.
\end{enumerate}
$H^*(\calH_X) \cong H^*((\calC_X \times T)^{N_X})$, $(C_X \times T)^{N_X}$ is a $T^W$ torsor over $\calC_X$, and the cohomology $H^*(\calC_X)$ is concentrated in even degrees.  Thus if $H^*(BT^W)$ is concentrated in even degrees then so is $H^*(\calH_X)$.  But $T^W$ is a diagonalizable group, so $H^*(BT^W)$ is concentrated in even degrees.

We showed in section 5 that $H^*(\calC) \to H^*(\calC_X)$ is surjective.  Since $T^W$ is a diagonalizable subgroup of $T$, $H^*(BT) \to H^*(BT^W)$ is surjective as well.  Tensor products of surjective maps are surjective, so $H^*(\calC \times BT) \to H^*((\calC_X \times BT)^{N_X})$ is surjective.  By the previous proposition, the latter cohomology group surjects to $H^*(\calC \times BT)$.

Because $\calH_\calC \to \calC$ is a gerbe for $\calT$, a flat $\calM$-group, it is smooth.  Then $[\ol{\calH}_X]$ is the pullback of $[\ol{\calC}_X]$.  In particular it is in the image of $q^*$.  Since $q_X^*$ is an isomorphism, so $[\ol{\calC}_X]$ is a zero divisor in $H^*(\calH_X)$ \iffw it is a zero-divisor in $H^*((\calC_X \times BT)^{N_X})$.  

This completes the proof of (1) and (2).  (3) follows immediately from (2) and the previous proposition.
\end{proof}

To compute $H^*(\calH_\calC)$ it remains show that $[\ol{\calC}_X]$ is not a zero divisor in $(\calC_X \times BT)^{N_X}$ and to find the kernels $K_X$.  This will require a more explicit study of the spaces $(\calC_X \times BT)^{N_X}$.  


%
%

\begin{notation}
\label{crazyNotation}
If $G$ is a group, let $G^\vee$ be the character group of $G$ and $\cha G = \Q \otimes_\Z G^\vee$
We use additive notation for characters under tensor product.  If $E_T$ is a
$T$-torsor and $\chi \in G^\vee$, write $c_\chi E_T$ for the first chern class
of the line bundle obtained from $E_T$ by $\chi$. Extend $\Q$-linearly to
accommodate $\chi \in \cha G$. 

Let $x \in Q_X$ and $w \in W_X$.  For $\calB \in \irr \calA_X$, let $x_\calB \in \G_m$ be the image of $x$ under the composition $Q_X \to \aut_W T_\sigma C_X \to \G_m$, where the second map is projection to the $\G_m$ factor corresponding to $\calB$ (see \cref{pointStabalizers} for what this means).  For $\alpha$ a root, dual to a hyperplane $a_\alpha \in \calA_X$, let $\calB_\alpha$ be the irreducible component of $\calA_X$ including $a_\alpha$.  Let $\Phi^X_w$ be the set of positive roots $\alpha \in \Phi^+$ such that $w(\alpha) \in \Phi^-$ and such that $a_\alpha \in \calA_X$.  Define $\tilde{x}_w^X \in T$ by
 \[\tilde{x}_w^X =  \prod_{\alpha \in \Phi^X_w} \check{\alpha}(x_{\calB_\alpha})\]
Write $\Phi_w = \Phi^{\fk{t}}_w$ and $\tilde{x}_w = \tilde{x}^{\fk{t}}_w$.

There is a dual construction in cohomology.  Suppose given $\chi \in H^2(BT) \cong \cha G$.  We may identify $\chi$ with a $\Q$-linear sum of roots in $\fk{t}^\vee$.  Therefore there is a natural pairing  $\langle \check{\alpha}, \chi \rangle \in \Q$.  Define $\tilde{\chi}_w \in H^2(\calC, \Q)$ by  
\[ \tilde{\chi}_w = \sum_{\alpha \in \Phi_w} \langle \check{\alpha}, \chi \rangle a_\alpha \]
For $n \in N$, let $\tilde{x}_n^X = \tilde{x}_w^X$ and $\tilde{\chi}_n = \tilde{\chi}_w$ where $w$ is the image in $W$ of $n$.
\end{notation}

\begin{lemma}
Let $X \in L(\calA)$.  $\calR^n|_{\calC_X}$ is a $T$-torsor on $BQ_X$, so it corresponds to a homomorphism $Q_X \to T$.  This homomorphism sends $x \to \tilde{x}_n^X$.
\label{lemQxRaction}
\end{lemma}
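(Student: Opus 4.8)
The plan is to reduce the statement to a computation with characters of $Q_X$ via the identification $\calC_X \cong BQ_X$, and then to evaluate $\calR^n|_{\calC_X}$ one root at a time, feeding in \cref{corNormals}.

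First I would record the dictionary coming from \cref{classifyingSpaces}. Since $\calC_X \cong BQ_X$, restricting a $\G_m$-torsor on $\calC$ to $\calC_X$ amounts to giving a character $Q_X \to \G_m$; restricting a $T$-torsor amounts to giving a homomorphism $Q_X \to T$; tensor product of torsors corresponds to the pointwise product of homomorphisms (legitimate since $T$ is abelian); and inducing a $\G_m$-torsor up along $\check\alpha\colon \G_m \to T$ corresponds to postcomposition with $\check\alpha$. Restriction to $\calC_X$ commutes with all of these associated-bundle constructions. Since $\calR^n = \bigotimes_{\alpha \in \Phi^+,\, w(\alpha)\in\Phi^-} \check\alpha_{*}(I_{a_\alpha}^\vee)$, where $w$ is the image of $n$ in $W$, it therefore suffices to identify, for each root $\alpha$, the character $\rho_\alpha\colon Q_X \to \G_m$ corresponding to $I_{a_\alpha}^\vee|_{\calC_X}$; then $\calR^n|_{\calC_X}$ is the homomorphism $x \mapsto \prod_{\alpha \in \Phi^+,\, w(\alpha)\in\Phi^-} \check\alpha(\rho_\alpha(x))$.

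Next I would compute $\rho_\alpha$, distinguishing two cases. If $a_\alpha \notin \calA_X$, then by the description of the closed strata (\cref{strataDef}) the substack $\ol{\calC}_{a_\alpha}$ is a union of strata $\calC_Z$ with $a_\alpha \in \calA_Z$, hence is disjoint from $\calC_X$; so $I_{a_\alpha}|_{\calC_X}$ is trivial and $\rho_\alpha$ is trivial. This is precisely why the product defining $\tilde x_w^X$ runs over $\Phi^X_w$ and not over all of $\Phi_w$. If $a_\alpha \in \calA_X$, then $\calC_X \subseteq \ol{\calC}_{a_\alpha}$ and \cref{corNormals}(2) identifies $I_{a_\alpha}|_{\calC_X}$ with the character $\calO(\calB_\alpha)$ of $Q_X$, which by the definition of $\calO(\calB_\alpha)$ is $x \mapsto x_{\calB_\alpha}$; here $\calB_\alpha$ is the irreducible component of $\calA_X$ containing $a_\alpha$, in agreement with \cref{crazyNotation}. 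Feeding this (via its dual) into the formula of the previous paragraph yields that $\calR^n|_{\calC_X}$ is the homomorphism $x \mapsto \prod_{\alpha \in \Phi^X_w} \check\alpha(x_{\calB_\alpha}) = \tilde x_w^X = \tilde x_n^X$.

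There is no essential difficulty beyond bookkeeping: the geometric content is entirely contained in \cref{corNormals} and \cref{classifyingSpaces}. The one place the argument could slip is tracking the dualizations — the $\vee$ in the definition of $\calR^\alpha$ and the dual implicit in passing from a line bundle to the corresponding character in \cref{corNormals} — together with the direction of the cocharacter $\check\alpha$, so that the torus coordinates $x_{\calB_\alpha}$ and the coroots pair up exactly as in the definition of $\tilde x_n^X$.
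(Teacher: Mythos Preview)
Your proposal is correct and follows essentially the same approach as the paper's proof: reduce to the individual $\calR^\alpha$, split into the two cases $a_\alpha\in\calA_X$ and $a_\alpha\notin\calA_X$, invoke \cref{corNormals}(2) for the nontrivial case, and then postcompose with $\check\alpha$ and take the product. Your version is a bit more explicit about the torsor/character dictionary and about why the $a_\alpha\notin\calA_X$ case contributes trivially, and your flag about the dualization bookkeeping is apt---the paper's own proof elides exactly this point when passing from ``$x$ scales $I_{a_\alpha}$ by $x_{\calB}$'' to ``$x$ acts on $\calR^\alpha$ by $\check\alpha(x_{\calB_\alpha})$''.
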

\begin{proof}
$\calR^\alpha$ was obtained by inducing $I_{a_\alpha}^\vee$ from a $\G_m$-torsor to a $T$-torsor using the coroot $\check{\alpha}$.  Therefore to compute the image of $x \in Q_X$ in $T$ it is enough to understand how $x$ acts on $I_{a_\alpha}^\vee$, considered as a character of $Q_X$.  If $a_\alpha \not \in \calA_X$ then $x$ does nothing.  Otherwise, let $\calB_\alpha \in \irr \calA_X$ be the irreducible component including $a_\alpha$.
Then $x$ scales $I_{a_\alpha}$ by $x_\calB$.  Therefore $x$ acts on the $T$-torsor $R^{\alpha}_{\calC_X}$ by $\check{\alpha}(x_{\calB_\alpha})$ if $a \in \calA_X$, and by the identity otherwise.  This implies that $x$ acts on $\calR^n$ by right multiplication with $\tilde{x}^X_n$.  
\end{proof}

\begin{corollary}
$c_\chi \calR^w = \tilde{\chi}_w$
\label{corCharacteristicClassR}
\end{corollary}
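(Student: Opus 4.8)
The plan is to compute $c_\chi\calR^w$ one root at a time, dualizing the calculation in \cref{lemQxRaction}.

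First I would use the additivity of $c_\chi$. Recall that $\calR^w = \bigotimes_{\alpha}\calR^\alpha$, the tensor product being over the inversion set $\{\alpha\in\Phi^+ : w(\alpha)\in\Phi^-\}$ of $w$, which is the same index set $\Phi_w$ that appears in the definition of $\tilde\chi_w$ in \cref{crazyNotation}. For a fixed $\chi$, the assignment $E_T\mapsto c_\chi E_T$ turns a tensor product of $T$-torsors into a sum of cohomology classes: it is $c_1$ of the line bundle obtained from $E_T$ via $\chi$, this construction commutes with tensor product, and $c_1$ is additive. (Here $\chi\in\cha G$ may be rational, so the computation takes place in $H^*(\calC,\Q)$, consistently with the rest of this section.) Hence $c_\chi\calR^w = \sum_{\alpha\in\Phi_w}c_\chi\calR^\alpha$, and it suffices to show $c_\chi\calR^\alpha = \langle\check\alpha,\chi\rangle\,a_\alpha$ for a single root $\alpha$.

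Next I would unwind the construction of $\calR^\alpha$: by definition it is induced from the $\G_m$-torsor $I_{a_\alpha}^\vee$ along the coroot $\check\alpha\colon\G_m\to T$. Applying $\chi$ composes $\check\alpha$ with $\chi$, and $\chi\circ\check\alpha\colon\G_m\to\G_m$ is the $\langle\check\alpha,\chi\rangle$-power map (for integral $\chi$, and by $\Q$-linear extension in general). Therefore $c_\chi\calR^\alpha$, the first Chern class of the line bundle obtained from $\calR^\alpha$ via $\chi$, equals $\langle\check\alpha,\chi\rangle\,c_1 I_{a_\alpha}^\vee = \langle\check\alpha,\chi\rangle\,a_\alpha$; the last equality is the normalization $c(a_\alpha) = -c_1 I_{a_\alpha} = c_1 I_{a_\alpha}^\vee$ used throughout, together with the identification $\phi$ of \cref{HN} sending $a_\alpha\in L^\mu(\calA)$ to $c(a_\alpha)$. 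Summing over $\alpha\in\Phi_w$ and matching term by term with $\tilde\chi_w = \sum_{\alpha\in\Phi_w}\langle\check\alpha,\chi\rangle\,a_\alpha$ finishes the proof.

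I do not expect any real obstacle; the only points requiring care are keeping the sign normalization $c(a) = c_1 I_a^\vee$ straight and remembering the passage to rational coefficients, so that $\langle\check\alpha,\chi\rangle$ and the whole computation make sense. If one preferred, the corollary could instead be extracted from \cref{lemQxRaction}, by pairing the homomorphism $Q_X\to T$ classifying $\calR^n|_{\calC_X}$ with $\chi$ for each $X\in L(\calA)$ and reassembling over the strata using the injectivity of \cref{deltaInjectionC}; but the direct computation above is shorter and avoids the stratification.
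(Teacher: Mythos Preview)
Your argument is correct and is in fact cleaner than the paper's. You compute $c_\chi\calR^\alpha$ directly from the definition of $\calR^\alpha$ as the $T$-torsor induced from $I_{a_\alpha}^\vee$ along $\check\alpha$: composing with $\chi$ gives the $\langle\check\alpha,\chi\rangle$-th power of $I_{a_\alpha}^\vee$, whose $c_1$ is $\langle\check\alpha,\chi\rangle\,a_\alpha$, and then you sum over $\Phi_w$. The paper instead invokes \cref{deltaInjectionC} to reduce to checking the identity on each stratum $\calC_X$, and there appeals to \cref{lemQxRaction} to identify the character of $Q_X$ corresponding to $\calR^w|_{\calC_X}$ paired with $\chi$; the stratumwise Chern class is then matched with $\tilde\chi_w|_{\calC_X}$ using that $a_\alpha|_{\calC_X}=0$ for $\alpha\in\Phi_w\setminus\Phi_w^X$. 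You yourself sketch this alternative in your last paragraph. Your direct route avoids the stratification entirely and needs nothing beyond the definition of $\calR^\alpha$ and additivity of $c_\chi$; the paper's route is consistent with its general strategy of proving everything via restriction to strata, and has the side benefit of recording the stratumwise formula that is reused in the proof of \cref{corStrataKernel}.
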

\begin{proof}
By \cref{deltaInjectionC} it suffices to show that the two cohomology classes agree on strata.  \Cref{lemQxRaction} implies that $\left( \G_m \times_{T, \chi} \calR^w \right)|_{\calC_X}$ corresponds to the character $\psi: Q_X \to \G_m$ defined by $x \to \chi\left(\tilde{x}^X_w \right)$.  For $\alpha$ a root, let $a_\alpha$ be the dual hyperplane and $\calB_\alpha \in \calA_X$ be the irreducible component including $a_\alpha$.  For $\calB \in \irr \calA_X$, let $\calO(d \calB)$ be the corresponding degree-$d$ character of $Q_X$.  Then 
\[ \psi = \sum_{\alpha \in \Phi_w^X} \calO \left( \langle \check{\alpha}, \chi \rangle \calB_\alpha \right) \]
(Remember that we use additive notation for characters).  The first Chern class of this character is
\[ c_\chi \calR^w|_{\calC_X} = \sum_{\alpha \in \Phi^X_w} \langle \check{\alpha}, \chi \rangle a_\alpha|_{\calC_X} \]
Now note that $a_\alpha|_{\calC_X} = 0$ if $\alpha \in \Phi_w - \Phi^X_w$.  Therefore $c_\chi \calR^w|_{\calC_X} = \tilde{\chi}_w|_{\calC_X}$.
\end{proof}

\begin{proposition}
Let $V_X := \{ (x, y) \in Q_X \times T | (\forall w \in W_X) \,\,\, y^w = y \tilde{x}^X_w \}$.  Then $(\calC_X \times BT)^{N_X} = BV_X$, and $(\calC_X \times BT)^{N_X} \to \calC_X$ is induced by the projection $V_X \to Q_X$.  
\end{proposition}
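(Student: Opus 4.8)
The plan is to describe both sides as stacks over $\calC_X$ and exhibit an explicit comparison equivalence, using \cref{classifyingSpaces} (which identifies $\calC_X$ with $BQ_X$), \cref{lemQxRaction}, and the cocycle property \cref{thmDGfacts}(3).

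\textbf{Step 1 (reduce the twisted $N_X$-action to data on $BQ_X$).} First I would observe that on the stratum the $N_X$-action barely moves the $\calC_X$-factor. The image of $N_X$ in $W$ is $W_X$, and by definition every $(C,\sigma)\in\calC_X$ has $\stab_W\sigma=W_X$, so $w\cdot\sigma=\sigma$ for $w\in W_X$; hence $W_X$, and therefore $N_X$, acts on $\calC_X$ as the identity functor. Consequently the twisted action of $n\in N_X$ on $\calC_X\times BT$ depends only on its image $w\in W_X$ (as $\calR^n=\calR^w$) and sends $(C,\sigma,E)$ to $(C,\sigma,\,E^w\otimes f^*\calR^w)$, where $f\colon S\to\calC$ classifies $(C,\sigma)$ and, by \cref{lemQxRaction}, the restriction $\calR^w|_{\calC_X}$ is the $T$-torsor on $BQ_X$ classified by the homomorphism $x\mapsto\tilde x^X_w$. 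Restricting \cref{thmDGfacts}(3) along $\calC_X$ and applying \cref{lemQxRaction} gives the twisted cocycle identity $\tilde x^X_{w_1w_2}=w_2(\tilde x^X_{w_1})\,\tilde x^X_{w_2}$, which I will use for the compatibilities below.

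\textbf{Step 2 (the fixed-point stack, concretely).} With Step 1, an $S$-point of $(\calC_X\times BT)^{N_X}$ amounts to: an object $(C,\sigma)\in\calC_X(S)$ classified by $f$, a $T$-torsor $\calJ$ on $S$, and isomorphisms $\theta_w\colon\calJ\xrightarrow{\ \sim\ }\calJ^w\otimes f^*\calR^w$ for $w\in W_X$, satisfying the cocycle condition dictated by \cref{thmDGfacts}(3). (The data indexed by $n\in N_X$ collapses to data indexed by $w\in W_X$ since the action factors through $W_X$; the condition for $n$ in the kernel $T\subset N_X$ forces the corresponding isomorphism to be the identity, because there $w=1$, $\calR^1$ is trivial and $\tilde x^X_1=1$.)

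\textbf{Step 3 (the comparison functor).} Next I would build $\Psi\colon BV_X\to(\calC_X\times BT)^{N_X}$ over $\calC_X$. Given a $V_X$-torsor $Q$ on $S$, push along $\mathrm{pr}_1\colon V_X\to Q_X$ to get a $Q_X$-torsor $P$, i.e. an object $(C,\sigma)\in\calC_X(S)=BQ_X(S)$; let $\calJ$ be the $T$-torsor associated to $Q$ via the second projection. By Step 1, $f^*\calR^w=Q\times^{V_X}T$ along $(x,y)\mapsto\tilde x^X_w$, while $\calJ^w=Q\times^{V_X}T$ along $(x,y)\mapsto y^w$. The defining relation $y^w=y\,\tilde x^X_w$ of $V_X$ is exactly the identity between homomorphisms $V_X\to T$ that makes $\calJ$ canonically isomorphic to $\calJ^w\otimes f^*\calR^w$ (up to the routine bookkeeping of $w$ versus $w^{-1}$ and of inverse torsors, which fixes the sign convention on $\calJ$), so it supplies $\theta_w$; the cocycle identity of Step 1 makes the $\theta_w$ compatible. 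By construction $\Psi$ commutes with the projections to $\calC_X$, which on the left is $B(\mathrm{pr}_1\colon V_X\to Q_X)$ and on the right is the forgetful map — this is the second assertion of the proposition.

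\textbf{Step 4 (equivalence).} Finally, since $\Psi$ is a morphism of stacks over $\calC_X$, it suffices to check it becomes an equivalence after base change along the atlas $\spec\C\to BQ_X=\calC_X$ attached to the fixed representative $(C_X,\sigma)$. On the left this yields the quotient groupoid $[Q_X/V_X]$ for the $V_X$-action through $\mathrm{pr}_1$, i.e. a disjoint union of copies of $BT^{W_X}$ indexed by $Q_X/\mathrm{im}(\mathrm{pr}_1)$, the automorphism group of each being $V_X\times_{Q_X}\{1\}=\{1\}\times T^{W_X}$. On the right it yields the groupoid of $T$-torsors equipped with a $\calR|_{(C_X,\sigma)}$-twisted $W_X$-equivariant structure, whose automorphism group is $T^{W_X}$ and whose set of components is a torsor under $H^1(W_X,T)$. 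The key remaining point is that $(x,y)\in V_X$ says precisely that $w\mapsto\tilde x^X_w$ is the coboundary of $y$, so $\mathrm{im}(\mathrm{pr}_1)=\ker\bigl(Q_X\to H^1(W_X,T),\ x\mapsto[w\mapsto\tilde x^X_w]\bigr)$, and the class of $\calR|_{(C_X,\sigma)}$ is the image of $1$; thus $\mathrm{pr}_1$ matches $Q_X/\mathrm{im}(\mathrm{pr}_1)$ with the $H^1$-torsor of components, and $\Psi$ matches automorphism groups. I expect \textbf{Step 4} — and within it the identification of $\calR$-twisted equivariant structures on the chosen fibre with coboundaries in $Z^1(W_X,T)$, i.e. the verification that $\calR$ is exactly the obstruction cocycle — to be the main obstacle; everything else is formal once \cref{lemQxRaction} and \cref{thmDGfacts}(3) are invoked.
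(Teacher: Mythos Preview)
Your Steps 1--3 carry out the same core computation as the paper: the equation $y^w=y\tilde x^X_w$ is exactly the condition for $(x,y)$ to preserve the twisted equivariant structure, and this is the paper's diagram chase. But your overall framework is different, and Step 4 contains a genuine gap.

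The paper does not build a comparison functor. It first observes that $(\calC_X\times BT)^{N_X}\to\calC_X$ is a $T^{W_X}$-gerbe; since $\calC_X=BQ_X$ is already a classifying space, any gerbe over it is $BE$ for some extension $E$ of $Q_X$ by $T^{W_X}$. It then picks one $\C$-point --- the trivial $T$-torsor with $\calR|_{(C_X,\sigma)}$ trivialized and $\phi_n=\mathrm{id}$ --- and computes its automorphism group by exactly the diagram you set up in Step 3, obtaining $E=V_X$. The $\pi_0$ question never arises, because ``gerbe'' means the fibre over a point has a single isomorphism class.

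Your Step 4 tries to match $\pi_0$ of the two fibres directly, and this is where the argument breaks. You correctly get $\pi_0$ on the left equal to $Q_X/\mathrm{im}(\mathrm{pr}_1)$ and identify $\mathrm{im}(\mathrm{pr}_1)$ with the kernel of $Q_X\to H^1(W_X,T)$, $x\mapsto[w\mapsto\tilde x^X_w]$. But this only gives an \emph{injection} $Q_X/\mathrm{im}(\mathrm{pr}_1)\hookrightarrow H^1(W_X,T)$; you never show it is surjective, nor that the $H^1$-torsor of twisted structures on the right is trivial, so you have not shown $\Psi$ is essentially surjective on the fibre. In the paper's language, what you are missing is precisely the gerbe assertion (equivalently, that the fibre groupoid on the right is connected). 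The paper's route --- assert gerbe, then compute $\mathrm{Aut}$ of one point --- is both shorter and avoids this obstacle; your constructive approach would work once the connectedness is supplied, but as written Step 4 is incomplete.
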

\begin{proof}

$(\calC_X \times BT)^{N_X}$ is a $T^{W_X}$-gerbe over $\calC_X$.  As $\calC_X$ is a classifying space, this implies that $(\calC_X \times BT)^{N_X}$ is a classifying space itself.  It therefore suffices to show that $V_X$ is the automorphism group of a $\spec \C$ point of $(\calC_X \times BT)^{N_X}(\C)$.  Let's choose such a point.  As usual let $(C_X, \sigma)$ be a $\spec \C$ point of $\calC_X(\C)$.  To get a point of $(\calC_X \times BT)^{N_X}$ it remains to choose an $\calR|_{(C_X, \sigma)}$-twisted $N_X$-invariant $T$-torsor $\calJ$ on $\spec \C$.  Here $\calR^n|_{(C_X, \sigma)}$ denotes the pullback of $\calR^n$ along the map $\spec \C \to \calC_X$ defined by $(C_X, \sigma)$.  Without loss of generality, the underlying $T$-torsor of $\calJ$ is $T$ itself.  The equivariant structure is the data of isomorphisms $\phi_n: \calJ \to \calJ^n \otimes \calR^n|_{(C_X, \sigma)}$.  The pullback of $\calR$ to a point is trivializable, and after trivializing it we can take $\phi_n$ to be the the identity, so the choice of $\calJ$ is somewhat contentless. 

An automorphism $(C_x, \sigma, \calJ)$ is a pair $(x, y)$ where $x: C_X \to C_X$ is an automorphism and $y: x^*\calJ \to \calJ$ is an isomorphism of $\calR$-twisted $N$-equivariant $T$-torsors.  This last point is delicate: $x^*$ leaves $\calJ$ unchanged as a $T$-torsor, but $x$ does induce an automorphism $x^\#$ of $\calR^n|_{(C_X, \sigma)}$, and so alters the equivariant structure.  The requirement for $(x, y)$ to define an automorphism is that, for every $n \in N_X$, the diagram below commutes:
\[\xymatrix{
T \ar[rr]^{\phi_n} \ar[d]^y && T^n \otimes \calR^n_{(C_X, \sigma)} \ar[d]^{y \otimes x^\#} \\
T \ar[rr]^{\phi_n}         & & T^n \otimes \calR^n_{(C_X, \sigma)}
}\]

By \cref{lemQxRaction}, $x^\#$ is just multiplication by $\tilde{x}^X_n$.
Identify $y: T \to T$ with multiplication by some element of $T$, which we will
call $y$ by abuse of notation.  $\phi_n$ sends $1 \in T$ to some $(1, u) \in
T^w \otimes \calR^n$,
hence sends $a \in T$ to $(a^w, u)$.  Afterwards, $y \otimes x^\#$ sends this to $(a^wy, \tilde{x}_w^X u)$.  On the other hand, if we start with $a$ in the upper left diagram and go down then right, $a$ is sent first to $ay$ and then to $(a^wy^w, u)$.  Thus the commutative diagram reduces to the equation $y^w = y \tilde{x}^X_n$. 
\end{proof}

\begin{corollary}
\label{corStrataKernel}
\begin{enumerate}
\item $H^*(\calC_X \times BT) \to H^*((\calC_X \times BT)^{N_X})$ is a surjection with kernel generated by the degree-1 elements 
\[ \left(\chi - w \cdot \chi + \tilde{\chi}_w\right)|_{\calC_X} \]
as $\chi$ runs over $\cha T$ and $w$ runs over $W_X$. 
\item $[\calC_X]$ is not a zero-divisor in $H^*((\calC_X \times BT)^{N_X})$
\end{enumerate}
\end{corollary}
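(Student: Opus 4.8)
The plan is to reduce both claims to linear algebra in the rational character lattices of diagonalizable groups, using the identification $(\calC_X\times BT)^{N_X}=BV_X$ from the preceding proposition. First I would fix notation. By \cref{pointStabalizers}, $Q_X\cong U\rtimes\aut_{W_X} T_\sigma C_X$ with $U$ connected unipotent and $\aut_{W_X} T_\sigma C_X\cong\G_m^{\#\irr\calA_X}$, so $\calC_X\times BT=B(Q_X\times T)$ and $H^*(\calC_X\times BT,\Q)$ is the polynomial ring on its degree-two part $\Lambda\oplus M$, where $\Lambda:=H^2(\calC_X)$ has basis $\{u_\calB:\calB\in\irr\calA_X\}$ with $u_\calB=c(a)|_{\calC_X}$ for any $a\in\calB$ (proof of \cref{funClass}) and $M:=H^2(BT)$. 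Since $\tilde x_w^X$ depends only on the image of $x$ in $\aut_{W_X} T_\sigma C_X$ (\cref{lemQxRaction}), the group $V_X$ of the preceding proposition is the preimage of a diagonalizable subgroup $\bar V_X\subseteq\G_m^{\#\irr\calA_X}\times T$ under the projection $Q_X\times T\to\G_m^{\#\irr\calA_X}\times T$, whose kernel $U\times 1$ is unipotent; hence $H^*((\calC_X\times BT)^{N_X},\Q)=H^*(B\bar V_X,\Q)$ is again a polynomial ring, on its degree-two part $\Gamma:=H^2(B\bar V_X)$, and $\Gamma$ is a quotient of $\Lambda\oplus M$.

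Granting this, part (1) is formal. The restriction map $H^*(\calC_X\times BT)\to H^*((\calC_X\times BT)^{N_X})$ is a surjection of polynomial rings realizing the surjection $\Lambda\oplus M\to\Gamma$ on degree-two parts, so it is surjective with kernel the ideal generated by $K:=\ker(\Lambda\oplus M\to\Gamma)$. To identify $K$, write $\bar V_X=\bigcap_{w\in W_X}\ker\psi_w$ for the homomorphisms $\psi_w(\bar x,y)=y^wy^{-1}(\tilde{\bar x}_w^X)^{-1}$; then $K$ is the sum over $w\in W_X$ of the images of the maps $\psi_w^*\colon M\to\Lambda\oplus M$ induced on $H^2$, and unwinding $\psi_w^*$ — its $M$-component is $\chi-w\cdot\chi$, its $\Lambda$-component is $-\tilde\chi_w|_{\calC_X}$ by \cref{lemQxRaction} (equivalently \cref{corCharacteristicClassR}) — shows $K$ is spanned by the classes $(\chi-w\cdot\chi+\tilde\chi_w)|_{\calC_X}$, $w\in W_X$. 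This is (1).

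For (2), since $H^*((\calC_X\times BT)^{N_X})$ is a polynomial ring it is an integral domain, so it suffices to prove that the image of $[\calC_X]$ is nonzero. By \cref{funClass} that image agrees up to a unit with the image of $c(X)|_{\calC_X}=\prod_j u_{\calB_j}$, a monomial in which every $\calB\in\irr\calA_X$ occurs; in a domain this is nonzero once each $u_\calB$ has nonzero image, i.e.\ once $u_\calB\notin K$. To show $u_\calB\notin K$ I would produce one linear functional $\Theta$ on $\Lambda\oplus M$ that kills $K$ but not the $u_\calB$: set $\Theta(u_\calB)=1$ for every $\calB$, and $\Theta(\chi)=-\langle\rho_X^\vee,\chi\rangle$ for $\chi\in M$, where $\rho_X^\vee=\frac{1}{2}\sum_{\alpha\in\Phi_X^+}\check\alpha$ and $\Phi_X^+=\{\alpha\in\Phi^+:a_\alpha\in\calA_X\}$. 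On a generator $(\chi-w\cdot\chi+\tilde\chi_w)|_{\calC_X}$ the $\tilde\chi_w$-term contributes $\langle\sum_{\alpha\in\Phi_w^X}\check\alpha,\chi\rangle$ (using $\tilde\chi_w|_{\calC_X}=\sum_{\alpha\in\Phi_w^X}\langle\check\alpha,\chi\rangle u_{\calB_\alpha}$); since $w\in W_X$ fixes $X$ pointwise it stabilizes the set $\Phi_X$ of roots vanishing on $X$, so $\Phi_w^X=\{\alpha\in\Phi_X^+:w\alpha\in\Phi^-\}$, and the classical coroot identity $\sum_{\alpha\in\Phi_X^+,\,w\alpha\in\Phi^-}\check\alpha=\rho_X^\vee-w^{-1}\rho_X^\vee$ — applied in the root subsystem $\Phi_X$, whose Weyl group is $W_X$ (\cref{restrictToAx}) — rewrites the contribution as $\langle\rho_X^\vee,\chi-w\cdot\chi\rangle=-\Theta(\chi-w\cdot\chi)$. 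Hence $\Theta$ annihilates every generator of $K$, while $\Theta(u_\calB)=1\neq0$; this proves (2).

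The one step that is not bookkeeping is the last: recognizing the coroot sum $\sum_{\alpha\in\Phi_w^X}\check\alpha$ as the coboundary $\rho_X^\vee-w^{-1}\rho_X^\vee$, and exploiting it through $\Theta$. Equivalently, one must see that the $T$-valued $W_X$-cocycle $w\mapsto\tilde{\bar x}_w^X$ becomes a coboundary after projecting to each irreducible component of $\calA_X$, which is precisely the assertion that each coordinate projection $\bar V_X\to\G_m$ is surjective, i.e.\ $u_\calB\notin K$. Everything else — the polynomial-ring identifications in the first paragraph and the description of the maps $\psi_w^*$ — is routine given the preceding proposition and \cref{pointStabalizers}.
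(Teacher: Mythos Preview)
Your argument for part (1) is correct and is essentially the paper's own proof: both reduce to the diagonalizable subgroup $\bar V_X\subset\aut_{W_X}T_\sigma C_X\times T$, dualize the defining homomorphisms $\psi_w$ to identify the kernel on degree-two cohomology, and read off the generators via \cref{lemQxRaction}.

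For part (2) your route is genuinely different from the paper's, and in fact more complete. The paper simply asserts that ``statement 2 is immediate from statement 1'' and says nothing further; you supply an actual mechanism, namely the linear functional $\Theta$ built from $\rho_X^\vee$ together with the classical coroot identity $\sum_{\alpha\in\Phi_w^X}\check\alpha=\rho_X^\vee-w^{-1}\rho_X^\vee$ in the subsystem $\Phi_X$. This correctly shows each $u_\calB$ survives in the quotient, hence the monomial $\prod_\calB u_\calB$ (which equals $[\calC_X]$ up to a unit by \cref{funClass}) is nonzero in the integral domain $H^*(B\bar V_X,\Q)$. A shorter alternative, perhaps closer to what the paper has in mind given the surrounding discussion, is to observe that $(\calC_X\times BT)^{N_X}\to\calC_X$ is a $T^{W_X}$-gerbe, so the Serre spectral sequence (everything in even degree) gives an injection $H^*(\calC_X)\hookrightarrow H^*((\calC_X\times BT)^{N_X})$; since both rings are polynomial and $[\calC_X]|_{\calC_X}\neq 0$, the claim follows. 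Your coroot argument has the advantage of being self-contained at the level of character lattices and of making the non-vanishing explicit, at the cost of invoking a root-system identity.
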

\begin{proof} 
$Q_X$ is an extension of $\aut_W T_\sigma C_X$ by a unipotent group, and
$\tilde{x}^X_n$ depends only on the image of $x$ in $\aut _W T_\sigma C_X$, so
it will suffice to study $\ol{V}_X \to \aut_W T_\sigma C_X \times T$ where
$\ol{V}_x$ is the subgroup of $\aut_W T_\sigma C_X \times T$ cut out by the
condition $y^w = y \tilde{x}^X_n$.  This is the kernel of the map $\aut_W
T_\sigma C_X \times T \to \prod_{w \in W} T$, defined by $(x, y) \to \prod_{w
\in W} (y^w)\oii y \tilde{x}^X_n$.  Therefore it is a diagonalizable group, and
its character group is the cokernel of the dual map $\psi: \bigoplus_{w \in W}
\cha T \to \cha (\aut_W T_\sigma C_X \times T)$.  $\psi$ decomposes as a sum of
maps $\psi_w: \cha T \to \cha(\aut_W T_\sigma C_X \times T)$.

In the course of proving \cref{corCharacteristicClassR} we showed that the character of $Q_X$ defined by $x \to \chi\left(\tilde{x}_n^X \right)$ is $\sum_{\alpha \in \Phi_n^X} \calO\left( \langle \check{\alpha}, \chi \rangle \calB_\alpha \right)$.  Therefore
\[ \psi_w(\chi) = \chi - \chi^w + \sum_{\alpha \in \Phi^n_X} \calO \left( \langle \check{\alpha}, \chi \rangle \calB_\alpha \right)\]
Statement 1 follows, and statement 2 is immediate from statement 1.

\end{proof}

We have now verified all the conditions of \cref{corMoney}.


\begin{proposition}
\label{propCohomologyHN}
Let $r(X, \chi, w) = \tilde{\chi}^X_w + \chi - \chi^{w}$, and let $I \subset H^*(\calC) \otimes_\Q H^*(BT)$ be the ideal generated by all expressions of the form $X \otimes r(X, \chi, w)$ where $X \in L(\calA)$,  $\chi \in \cha T = H^2(BT)$, and $w \in W_X$.
\begin{enumerate}
\item $I$ is the kernel of the surjection $q^*:  H^*(\calC) \otimes H^*(BT) \to H^*(\calH_\calC)$.  
\item The action of $W$ on $H^*(\calH_\calC)$ lifts to the action of $W$ on $H^*(\calC) \otimes H^*(BT)$ defined by 
\[ a \otimes 1 \cdot w = (a \cdot w) \otimes 1, \hspace{7mm} 1 \otimes \chi = \tilde{\chi}_w \otimes 1 + 1 \otimes \chi^w\]
and extending multiplicatively. 
\end{enumerate}
\end{proposition}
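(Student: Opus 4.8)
The plan is to prove the two assertions separately: part (1) by combining \cref{corMoney} with \cref{corStrataKernel}, and part (2) by unwinding \cref{thmDGfacts} and \cref{corCharacteristicClassR}.

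\emph{Part (1).} Having checked all hypotheses of \cref{corMoney}, we know $q^*$ is surjective with kernel $\bigcap_{X\in L(\calA)}K_X$, where $K_X=\ker\bigl(H^*(\calC\times BT)\to H^*((\calC_X\times BT)^{N_X})\bigr)$, and that $H^*(\calH_\calC)\hookrightarrow\prod_X H^*(\calH_X)$. So it suffices to show $I=\bigcap_X K_X$. For $I\subseteq\bigcap_X K_X$ I would check that each generator $X\otimes r(X,\chi,w)$ dies in $H^*((\calC_Y\times BT)^{N_Y})$ for every $Y$: if $\calC_Y\not\subseteq\ol{\calC}_X$ then $c(X)|_{\calC_Y}=0$; otherwise $W_X\subseteq W_Y$, and since the inversions of any element of the parabolic subgroup $W_X$ are roots whose mirrors lie in $\calA_X$ we have $\Phi^X_w=\Phi_w$, hence $\tilde\chi^X_w=\tilde\chi_w$, so $r(X,\chi,w)$ restricts on $\calC_Y\times BT$ to the generator $(\chi-\chi^w+\tilde\chi_w)|_{\calC_Y}$ of $K_Y$ furnished by \cref{corStrataKernel}.

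\emph{Part (1), reverse inclusion.} For $\bigcap_X K_X\subseteq I$ I would run the downward induction of \cref{lemSurjection} over the stratification. Let $a\in\bigcap_X K_X$. For an open union of strata $\calU=\calC\setminus\calC_P$ (with $\calC_P$ a closed union of strata) maintain the hypothesis that some $a'\in I$ has $(a-a')$ vanishing on $\calU\times BT$; start with $\calU=\emptyset$, $a'=0$. Adjoin a stratum $\calC_X$ minimal among those not yet present, so $\calC_X$ is closed in $\mathcal X:=\calU\cup\calC_X$ with normal bundle $T_\sigma C_X$ whose Euler class $e$ equals $c(X)|_{\calC_X}$ up to a unit (\cref{funClass}). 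From $(a-a')|_{\calU\times BT}=0$ the Gysin sequence gives $(a-a')|_{\mathcal X\times BT}=\gamma(\eta)$ for a unique $\eta$, whence $(a-a')|_{\calC_X\times BT}=e\,\eta\in(e)$; meanwhile $a-a'\in K_X$ puts $(a-a')|_{\calC_X\times BT}$ into the relation ideal $J:=\langle r(X,\chi,w)|_{\calC_X\times BT}:w\in W_X\rangle$. As $e$ is a nonzerodivisor modulo $J$ — this is the content of \cref{corStrataKernel}(2) — we get $(e)\cap J=eJ$, so $\eta\in J$ and $(a-a')|_{\calC_X\times BT}=\sum_j\mu_j\,c(X)|_{\calC_X}\cdot r(X,\chi_j,w_j)|_{\calC_X\times BT}$ after absorbing the unit. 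Lifting the $\mu_j$ along the surjection $H^*(\calC)\otimes H^*(BT)\to H^*(\calC_X)\otimes H^*(BT)$ and setting $a''=\sum_j\widetilde\mu_j\,\bigl(X\otimes r(X,\chi_j,w_j)\bigr)\in I$ gives $(a-a'-a'')|_{\calC_X\times BT}=0$; and $a''$ is a multiple of $c(X)\otimes 1=[\ol{\calC}_X]$, which vanishes on $\calU$ because $\ol{\calC}_X\subseteq\calC_P$. Since $e$ is also a nonzerodivisor in $H^*(\calC_X\times BT)$, vanishing on the open piece together with vanishing on the closed piece forces $(a-a'-a'')|_{\mathcal X\times BT}=0$; replace $a'$ by $a'+a''$ and recurse. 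When all strata have been adjoined, $a\in I$. This inductive step — using the explicit stratum relations and the nonzerodivisor property of $[\ol{\calC}_X]$ packaged in \cref{corStrataKernel}, together with the fact that a class vanishing on an open stratum is a Gysin image — is the one substantive step; everything else in part (1), like surjectivity of $q^*$ and the easy inclusion, is formal.

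\emph{Part (2).} The $W$-action on $\calC$ induces one on $\calH_\calC=\calH\times_\calM\calC$ and hence on $H^*(\calH_\calC)$. By \cref{thmDGfacts} the $\calR$-twisted $N$-action on $\calC\times BT$ descends to a $W$-action — the torus $T\subseteq N$ acts trivially because $\calR^1$ is trivial, and the cocycle isomorphism \cref{thmDGfacts}(3) supplies the coherence — and $q$ is equivariant for it, so $q^*$ is $W$-equivariant. On the factor $H^*(\calC)$ the induced action is the natural one (the ``$\sigma\mapsto w\sigma$'' part of the twisted action), while a class $\chi\in H^2(BT)$ goes to $\chi^w+c_\chi\calR^w$, and $c_\chi\calR^w=\tilde\chi_w$ by \cref{corCharacteristicClassR} — precisely the asserted formula. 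That this formula is genuinely a $W$-action follows either from surjectivity of $q^*$, or directly: applying $c_\chi$ to the identity $\calR^{w_1w_2}\cong w_2^*(\calR^{w_1})^{w_2}\otimes\calR^{w_2}$ yields the additive cocycle relation that the $\tilde\chi_w$ must satisfy, which is the only nonroutine point here.
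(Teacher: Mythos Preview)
Your proof is correct and follows essentially the same strategy as the paper. Both arguments establish $I\subseteq K$ by restricting generators to strata and then prove $K\subseteq I$ by an induction over the stratification that peels off one stratum at a time using the Gysin sequence together with the nonzerodivisor property of $[\ol{\calC}_X]|_{\calC_X}$ in $H^*((\calC_X\times BT)^{N_X})$ from \cref{corStrataKernel}(2); part (2) is handled identically via the twisted equivariance of $q$ and \cref{corCharacteristicClassR}.

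The only organizational difference is the direction of the induction in part (1): the paper proves ``$\alpha\in K$ supported on $\calC_P\times BT$ implies $\alpha\in I$'' by growing the upward-closed set $P$ from $\{0\}$, writing $\alpha=\alpha'+X\beta$ and pushing the residual term $\alpha'+X\beta_0$ back to the inductive hypothesis, whereas you grow the open set $\calU$ from $\emptyset$ and successively correct $a$ by elements $a''\in I$ until it vanishes globally. Your formulation is arguably a bit cleaner because it avoids tracking the support of the auxiliary class at each stage, and your explicit remark that $\Phi^X_w=\Phi_w$ for $w\in W_X$ (all inversions of a parabolic element lie in the sub-root-system) fills in a step the paper leaves implicit when matching $r(X,\chi,w)$ to the generators of \cref{corStrataKernel}(1). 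But the underlying mechanism---Gysin plus the nonzerodivisor Euler class to deduce $\eta\in J$ from $e\eta\in J$---is identical.
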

\begin{proof}

Write $K$ for the kernel of $q^*$.  
We will compute $K$ using the diagram below:
\[ \xymatrix{
H^*(\calC \times BT) \ar[r]^{\delta_1\,\,\,} \ar[d]^{q^*} & H^*( \coprod_{X \in L(\calA)} \calC_X \times BT) \ar[r]^{\gamma} & H^*(\coprod_{X \in L(\calA)} (\calC_X \times BT)^{N_X}) \ar[d]^\cong \\
H^*(\calH) \ar[rr]^{\delta_2}                 &                                              & H^*(\coprod_{X \in L(\calA)} \calH_X) 
}\]
As $\delta_1$ and $\delta_2$ are injections, $K = \delta_1\oii \ker \gamma$.  Let
$I_X \subset H^*(\calC, \Q) \otimes H^*(BT, \Q)$ be the ideal generated by the
elements $1 \otimes r(X, \chi, w)$ as $w$ runs over $W_X$ and $\chi$ runs over
$\cha T$, and write 
\[\gamma = \bigoplus_X \gamma_X: H^*(\calC_X \times BT) \to
H^*((\calC_X \times BT)^{N_X})\] 
\[\delta_1 = \bigoplus_X \delta_X: H^*(\calC \times BT) \to H^*(\calC_X \times BT) \]
We know from \cref{corStrataKernel}
that $\ker \gamma_X = \delta_X(I_X)$, and $\alpha \in K$ if and only if
$\delta_X(\alpha) \in \ker \gamma_X$ for all $X$, so $K = \bigcap_X \left( I_X
+ \ker \delta_X \right)$.

We first claim that for any $X \in L(\calA)$ and $w \in W_X$, $X \otimes r(X, \chi, w) \in K$.  For let $Y \in L(\calA)$.  If $X \not\leq Y$ then $X \in \ker \delta_Y$.  If $X \leq Y$ then $W_X \subseteq W_Y$, so $r(X, \chi, w) \in I_Y$.  Either way, $X r(X, \chi, w) \in \ker \delta_Y + I_Y$.  Thus $I \subset K$.

We will show the converse by induction on $L(\calA)$.  Let $P$ be an
upwards-closed sub-poset of $L(\calA)$, and suppose we already know that if $\alpha \in K$ with $\alpha$ supported on $\calC_P \times BT$, then $\alpha \in I$.  The base case (where $P = \{0\}$) follows from \cref{corStrataKernel}.  Now let $X \in L(\calA) \bs P$ such that $\{X\} \cup P$ remains a poset, and suppose that $\alpha$ is supported on $\left(\calC_X \cup \calC_P\right) \times BT$.  All elements of $H^*(\calC \times BT)$ with such a support are in the ideal $(X, X_1, \ldots, X_n)$ where $\{X_1, \ldots, X_n\}$ is a set of minimal elements of $P$.  Therefore we may write $\alpha = \alpha' + X \beta $, where $\alpha' \in (X_1, \ldots, X_n)$.  
Then $\delta_X(\alpha) = \delta_X(X\beta)$, so
\[\begin{array}{rcl}
0 &=& \gamma_X \delta_X(\alpha) \\
  &=& \gamma_X  \delta_X(X \beta) \\
  &=& \gamma_X\delta_X(X) * \gamma_X\delta_X(\beta) \\
  &=& \left( [\ol{\calH}_X]|_{\calH_X} \right) \gamma_X\delta_X(\beta)
\end{array}\]
In \cref{corMoney} we showed that $[\ol{\calH}_X]|_{\calH_X}$ is not a zero-divisor.  Therefore $\gamma_X\delta_X(\beta)= 0$, and $\beta \in \ker \delta_X + I_X$.  Write $\beta = \beta_0 + \beta_1$ where $\beta_0 \in \ker \delta_X$ and $\beta_1 \in I_X$.  We now have $\alpha = \alpha' + X\beta_0 +X\beta_1$.  $X \beta_1 \in I$, hence in $K$, so $\alpha' + X\beta_0 \in K$.  But $\alpha' + X\beta_0$ is supported on $P$, so by inductive hypothesis it is in $I$ as well.  This proves (1).

For (2), $\calH_\calC \to \calC \times BT$ is $N$-equivariant for the $\calR$-twisted action so it suffices to check that the twisted action on $H^*(\calC \times BT, \Q)$ is the one indicated in the statement.  The map $\calC \times BT \to \calC$ is $W$-equivariant, so $(a \otimes 1) \cdot w = (a \cdot w) \otimes 1$.  To determine $(1 \otimes \chi) \cdot w$ we must work a little harder.  Let $E_T$ be the pullback of $ET$ to $\calC \times BT$.  Then $1 \otimes \chi = c_\chi E_T$.  Therefore $(1 \otimes \chi) \cdot w = c_\chi \calR^w \otimes E_T^w$.  By \cref{corCharacteristicClassR}, $ c_\chi \calR^w \otimes E_T^w= \tilde{\chi}_w\otimes 1 + 1 \otimes \chi^w$.
\end{proof}

As $\calC \to \calM$ is the quotient map for the action of $W$ on $\calC$, its base change $\calH_\calC \to \calH$ is the quotient map for the action of $W$ on $\calH_\calC$.  We conclude

\begin{theorem}
\label{thmCohomologyH}
$H^*( \calH)$ is isomorphic to $\left( \Q[L^\mu(\calA)] \otimes H^*(BT) /I \right)^W$, where the ideal $I$ and the action of $W$ are as in the previous proposition.  The natural map $\Q[L^\mu(\calA)]^W \to H^*(\calH)$ is induced by the Hitchin map. 
\end{theorem}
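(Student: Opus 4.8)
The plan is to descend the computation of $H^*(\calH_\calC)$ from \cref{propCohomologyHN} along the quotient map $\pi \colon \calH_\calC \to \calH$. As noted just above the statement, $\pi$ is the quotient map for the $W$-action on $\calH_\calC$, and since $\calC \to \calM$ is \'etale-locally modeled on $\fk{t} \to \fk{t}/W$, so is $\pi$; in particular $\pi$ is finite. First I would repeat the transfer argument used for \cref{HM}: because $\#W$ is invertible in $\Q$ the functor of $W$-invariants is exact, and because $\pi$ is finite $\pi_*$ is exact, so $R\Gamma(\calH, \Q) \cong \mathrm{Inv}_W R\Gamma(\calH_\calC, \Q)$, hence $H^*(\calH, \Q) \cong H^*(\calH_\calC, \Q)^W$ as graded rings, with $W$ acting on the right-hand side by functoriality.

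Next I would feed in \cref{propCohomologyHN}. Using \cref{HN} to identify $H^*(\calC, \Q) \cong \Q[L^\mu(\calA)]$, part (1) of that proposition gives a $W$-equivariant isomorphism $H^*(\calH_\calC) \cong \bigl(\Q[L^\mu(\calA)] \otimes_\Q H^*(BT)\bigr)/I$, and part (2) identifies the $W$-action on the target with the explicit (twisted) action displayed there. Taking $W$-invariants on both sides --- and using once more that $\mathrm{Inv}_W$ is exact, so that it commutes with passage to the quotient by $I$ --- yields $H^*(\calH) \cong \bigl(\Q[L^\mu(\calA)] \otimes H^*(BT)/I\bigr)^W$, which is the asserted isomorphism.

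Finally, for the statement about the Hitchin map: composing $q \colon \calH_\calC \to \calC \times BT$ with the projection to $\calC$ recovers the Hitchin map $h_\calC \colon \calH_\calC \to \calC$ of pointed objects, so $q^*$ restricted to the subring $H^*(\calC) \otimes 1$ equals $h_\calC^*$. The square formed by $h_\calC$, $h \colon \calH \to \calM$, and the two $W$-quotient maps commutes and is $W$-equivariant, so on invariants $h^* \colon H^*(\calM) \cong \Q[L^\mu(\calA)]^W \to H^*(\calH)$ is the map induced by the inclusion $\Q[L^\mu(\calA)]^W \hookrightarrow \Q[L^\mu(\calA)] \otimes H^*(BT)$ composed with the quotient by $I$ and restriction to $W$-invariants --- precisely the natural map in the statement. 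The only point needing genuine care is the first one, namely checking that base-changing the \'etale-local model of $\calC \to \calM$ along $\calH \to \calM$ leaves it a $W$-quotient; but this is immediate from $\calH_\calC := \calH \times_\calM \calC$ together with $W$-equivariance of that local model over $\calM$, so the argument is otherwise formal once \cref{propCohomologyHN} is available.
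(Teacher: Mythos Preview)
Your proposal is correct and is essentially the paper's own argument spelled out in detail: the paper simply observes that $\calH_\calC \to \calH$ is the base change of the $W$-quotient $\calC \to \calM$, then invokes the same transfer argument used for \cref{HM} together with \cref{propCohomologyHN}. Your added justification of the Hitchin-map assertion via the factorization of $q$ through $\calC$ is likewise exactly what the paper leaves implicit.
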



\section{K-Theory}\label{chapK} 

In this section we compute $K_0\calC$ and identify $K_0\calM$ as a subring of the $W$-equivariant $K$-theory $K_W \calC$.  This is not very effective, since we do not compute $K_W \calC$.  Throughout this section, if $f: X \to Y$ is a map we write $f_\bul: K_0X \to K_0Y$ and $f^\bul: K_0Y \to K_0X$ for the associated maps on K-theory whenever they exist, and if $f$ is a closed immersion we reserve $\calF|_X$ for $f^\bul \calF$ (rather than for $f^*\calF$).  We begin with some lemmas about stratified stacks.

\begin{lemma}
\label{lemShortExact}
Let $i: \calY \into \calX$ be an lci closed immersion of Artin stacks with $K_0\calX = K^0\calX$, $K_0\calY = K^0\calY$.  Suppose $i^\bul i_\bul\calO_\calY$ is not a zero-divisor in $K_0 \calY$.  
\begin{enumerate}
\item The sequence
\[ 0 \to K_0 \calY \to K_0\calX \to K_0 \calX \bs \calY \to 0 \]
is exact.
\item The map $K^0 \calX \to K^0 \left( \calY \coprod \calX \bs \calY \right)$ is an injection.
\end{enumerate}
\end{lemma}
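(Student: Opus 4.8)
The plan is to read both statements off the localization (excision) sequence in $G$-theory, with the hypothesis that $i^\bul i_\bul\calO_\calY$ is not a zero divisor serving as the lever that promotes ``surjection onto the open part'' to a short exact sequence. Write $j\colon\calU:=\calX\bs\calY\into\calX$ for the open complement. By hypothesis $\calX$ and $\calY$ are regular with $K^0=K_0$, and $\calU$ is regular (being open in $\calX$) with $K^0\calU=K_0\calU$ as well, since in the situations where we apply the lemma $\calU$ is a quotient of a quasi-projective variety by an algebraic group and \cite{merkurjev} applies. The localization sequence then reads
\[ \cdots \to G_1\calU \xrightarrow{\ \partial\ } K_0\calY \xrightarrow{\ i_\bul\ } K_0\calX \xrightarrow{\ j^\bul\ } K_0\calU \to 0 . \]
Exactness at $K_0\calX$ and surjectivity of $j^\bul$ come for free, so part (1) amounts exactly to the injectivity of $i_\bul$, that is, to $\partial=0$.

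To obtain $\partial=0$ I would invoke the self-intersection formula for the lci immersion $i$. Since $i$ is lci, $i_\bul\calO_\calY$ is perfect and is locally the Koszul complex on the conormal bundle $N^\vee_i$, whence $i^\bul i_\bul\calO_\calY=\sum_k(-1)^k[\bigwedge^k N^\vee_i]$; tensoring this identity against an arbitrary class yields
\[ i^\bul i_\bul\calF \;=\; \calF\cdot\bigl(i^\bul i_\bul\calO_\calY\bigr) \qquad \text{for all } \calF\in K_0\calY , \]
the product being taken in the ring $K_0\calY=K^0\calY$. Given $x\in G_1\calU$, exactness gives $i_\bul\partial(x)=0$, hence $i^\bul i_\bul\partial(x)=0$, i.e. $\partial(x)\cdot(i^\bul i_\bul\calO_\calY)=0$; as $i^\bul i_\bul\calO_\calY$ is not a zero divisor, $\partial(x)=0$. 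So $\partial=0$ and part (1) follows.

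For part (2), let $\delta\colon\calY\coprod\calU\to\calX$ denote the pair of inclusions, so that $\delta^\bul\colon K^0\calX\to K^0\calY\times K^0\calU$ is $\eta\mapsto(i^\bul\eta,\,j^\bul\eta)$. If $\delta^\bul\eta=0$, then $j^\bul\eta=0$, so by exactness of the sequence in (1) we have $\eta=i_\bul\calF$ for some $\calF\in K_0\calY$; then $0=i^\bul\eta=i^\bul i_\bul\calF=\calF\cdot(i^\bul i_\bul\calO_\calY)$, and the non-zero-divisor hypothesis forces $\calF=0$, hence $\eta=0$. The only real work is bookkeeping --- identifying the $G$-theoretic $i_\bul,\ i^\bul,\ j^\bul$ with the corresponding $K^0$-theoretic operations, and checking that $i^\bul i_\bul\calO_\calY$ really lands in $K^0\calY$, both of which follow from regularity together with $i$ being lci. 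I do not expect a genuine obstacle here; this lemma is simply the $K$-theoretic shadow of \cref{lemSurjection} and \cref{lemInjection}.
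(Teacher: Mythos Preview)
Your proposal is correct and follows essentially the same argument as the paper: both invoke the Koszul/self-intersection formula $i^\bul i_\bul\calF=\calF\cdot\Lambda$ with $\Lambda=\sum_k(-1)^k\wedge^k N^\vee_i$, use the non-zero-divisor hypothesis to force injectivity of $i_\bul$, and then for part~(2) combine localization (the paper says ``d\'evissage'') with the same cancellation step. The only cosmetic difference is that you route the injectivity of $i_\bul$ through the vanishing of the boundary map $\partial$ from $G_1\calU$, whereas the paper argues directly that $i_\bul\beta=0\Rightarrow\beta\cdot\Lambda=0\Rightarrow\beta=0$; these are equivalent, and your extra care about $K^0\calU=K_0\calU$ is a point the paper leaves implicit.
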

\begin{proof}
Let $N^*$ be the conormal bundle of $i$.  Define $\Lambda = \sum_i^n (-1)^i \wedge^i N^*$.  Since $i$ is lci, the Koszul resolution shows that $i^\bul i_\bul \alpha = \alpha \otimes \Lambda$.  In particular $i^\bul i_\bul = \Lambda$, so our assumption says that $\Lambda$ is not a zero divisor.  For (1) note that if $\beta \in K_0 \calY$ with $i_\bul\beta = 0$, then $0 = i^\bul i_\bul \beta = \beta \otimes \Lambda$ which implies $\beta = 0$.  For (2), suppose that $\alpha|_{\calY} =0$ and $\alpha_{\calX \bs \calY} = 0$.  By d\'{e}vissage, the latter equality implies $\alpha = i_\bul \beta$ for some $\beta \in K_0\calY$.  But then $i^\bul i_\bul\beta = 0$, which implies again that $\beta = 0$.
\end{proof}

\begin{definition}
Let $\{\calS_X\}_{X \in L}$ be a stratification of an Artin stack $\calS$ by a finite poset $L$.  The stratification is \emph{lci} if for every $X \in L$, the closed immersion 
\[ \calS_X \into \left( \calS - \ol{\calS}_X \right) \cup \calS_X\]
is a local complete intersection. 
\end{definition}

Let $L$ be a finite poset and $\calS$ be a stack with a chosen lci stratification by $L$.  Recall the abuse of notation $\calO_X = i_*\calO_{\ol{\calS}_X}$, where $i: \ol{\calS}_X \into \calS$ is the inclusion.  Assume further that $K_0\calS_X = K^0\calS_X$ for all $X$ and that $\calO_X|_{\calS_X}$ is not a zero-divisor in $K_0 \calS_X$.  These are essentially the same conditions that we put on cohomology in section 5.  

Essentially the same arguments as those for \cref{lemSurjection} give
\begin{lemma}
\label{lemKSurjection}
Let $\calS$ be a stack with lci stratification by a finite poset $L$, such that for all $X \in L$, $\calO_X|_{\calS_X}$ is not a zero-divisor.  Let $R \subset K_0 \calS$ be some subring, such that for all $X \in L$, the image of $R$ in $K_0 \calS_X$ is all of $K_0 \calS_X$.  Suppose further that $\calO_X \in R$ for all $X \in L$.  Then $R = K_0\calS$.
\end{lemma}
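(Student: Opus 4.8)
The plan is to run the proof of \cref{lemSurjection} verbatim in $K$-theory, with the Gysin sequence replaced by the localization sequence (in the short-exact form furnished by \cref{lemShortExact}), the fundamental class replaced by the structure-sheaf class $\calO_X$, and the cohomological projection formula replaced by the projection formula $i_\bul(i^\bul\zeta\cdot\xi)=\zeta\cdot i_\bul\xi$ for $K$-theory. Let $\alpha\in K_0\calS$; I will show $\alpha\in R$ by induction along the stratification, maintaining the inductive hypothesis: for an ideal $P\subseteq L$ with open complement $\mathcal{U}=\calS-\calS_P$, there is $\beta\in R$ with $(\alpha-\beta)|_{\mathcal{U}}=0$ in $K_0\mathcal{U}$. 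The case $P=L$ is trivial ($\mathcal{U}=\emptyset$), and the case $P=\emptyset$ yields $\alpha=\beta\in R$.

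For the inductive step, assume the hypothesis for $P$ and pick $X\in P$ minimal, so that with $\mathcal{X}=\mathcal{U}\cup\calS_X$ the inclusion $i\colon\calS_X\into\mathcal{X}$ is a closed immersion and $j\colon\mathcal{U}\into\mathcal{X}$ its open complement. This $i$ inherits the lci property from the stratification (lci-ness is local on the target, and $\mathcal{X}$ is open in $(\calS-\ol{\calS}_X)\cup\calS_X$), and $i^\bul i_\bul\calO_{\calS_X}=\calO_X|_{\calS_X}$ is not a zero-divisor by hypothesis; together with the standing identifications $K_0=K^0$ for the strata and for $\calS$ and its open substacks, \cref{lemShortExact} gives an exact sequence $0\to K_0\calS_X\stackrel{i_\bul}{\to}K_0\mathcal{X}\stackrel{j^\bul}{\to}K_0\mathcal{U}\to 0$. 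Since $j^\bul\big((\alpha-\beta)|_{\mathcal{X}}\big)=(\alpha-\beta)|_{\mathcal{U}}=0$, exactness gives $(\alpha-\beta)|_{\mathcal{X}}=i_\bul\eta$ for some $\eta\in K_0\calS_X$. By the surjectivity hypothesis $\eta=\eta'|_{\calS_X}$ for some $\eta'\in R$, and the projection formula yields $i_\bul\eta=i_\bul\big(i^\bul(\eta'|_{\mathcal{X}})\big)=(\eta'|_{\mathcal{X}})\cdot i_\bul\calO_{\calS_X}=(\eta'\calO_X)|_{\mathcal{X}}$. As $\calO_X\in R$ and $R$ is a subring, $\beta':=\beta+\eta'\calO_X$ lies in $R$ and $(\alpha-\beta')|_{\mathcal{X}}=0$, completing the step.

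The only point where $K$-theory is genuinely touchier than cohomology — and hence the step I expect to need the most care — is knowing that a class vanishing on the open piece $\mathcal{U}$ is actually pushed forward from the closed stratum $\calS_X$, i.e.\ that $\ker j^\bul$ is the image of $i_\bul$; this exactness in the middle is exactly what \cref{lemShortExact} (via d\'evissage, using the non-zero-divisor hypothesis on $\calO_X|_{\calS_X}$) supplies, so it is the reason that hypothesis is carried along. Everything else is bookkeeping already present in \cref{lemSurjection}: checking $K_0=K^0$ for the auxiliary opens $\mathcal{U},\mathcal{X}$ (automatic when $\calS$ is smooth, and covered by Merkurjev's criterion for quotient stacks of quasiprojective varieties), and that the restricted immersion $\calS_X\into\mathcal{X}$ remains lci. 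No new idea beyond the cohomological argument is required.
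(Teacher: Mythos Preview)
Your proof is correct and matches the paper's approach exactly; the paper does not write out a separate argument for \cref{lemKSurjection} but simply notes that the proof of \cref{lemSurjection} carries over. One small correction to your closing commentary: the exactness in the middle of the localization sequence $K_0\calS_X\to K_0\mathcal{X}\to K_0\mathcal{U}\to 0$ (i.e.\ $\ker j^\bul=\operatorname{im}\, i_\bul$) holds for any closed/open decomposition by d\'evissage and does \emph{not} require the non-zero-divisor hypothesis on $\calO_X|_{\calS_X}$; that hypothesis is what buys you \emph{injectivity} of $i_\bul$ (the left-hand zero in \cref{lemShortExact}(1)), which your inductive step never invokes. So the surjectivity argument is in fact no touchier in $K$-theory than in cohomology, and the hypothesis is present in the statement only for uniformity with \cref{lemKInjection}.
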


One can formulate a direct analogue of \cref{lemInjection} as well.  However, \cref{lemShortExact} (2) implies a somewhat stronger statement:
\begin{lemma}
\label{lemKInjection}
Let $\calS$ be a stack with lci stratification by a finite poset $L$, such that for all $X \in L$, $\calO_X|_{\calS_X}$ is not a zero-divisor.  Then the map $\delta: \coprod_{X \in L} \calS_X \to \calS$ induces an injection $\delta^\bul: K_0 \calS \into K_0\left( \coprod_{X \in L} \calS_X \right)$.
\end{lemma}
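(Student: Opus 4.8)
The plan is to run the stratification induction from the proof of \cref{lemInjection}, but with the Gysin/projection-formula step replaced by the dévissage argument that proves \cref{lemShortExact} (2). Throughout I work in $K_0 = G$-theory, where for each $X$ the component $\delta^\bul_X\colon K_0\calS\to K_0\calS_X$ of $\delta^\bul$ is ``restrict to the open substack $\left(\calS-\ol{\calS}_X\right)\cup\calS_X$, then pull back along the closed immersion $\calS_X\into\left(\calS-\ol{\calS}_X\right)\cup\calS_X$'' — both operations being available (open restriction always, lci pullback because the stratification is lci). The structure I will use — localization exact sequences, lci pullback, and the projection formula $i^\bul i_\bul(-)=(-)\otimes\Lambda$ for a regular immersion $i$ with Koszul class $\Lambda=\sum_k(-1)^k\wedge^k N^*$ — is exactly what \cref{lemShortExact} relies on.

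Concretely: let $\alpha\in K_0\calS$ with $\delta^\bul\alpha=0$, so $\alpha|_{\calS_X}=0$ in $K_0\calS_X$ for all $X\in L$; I want $\alpha=0$. For an ideal (upward-closed subset) $P\subseteq L$ put $\mathcal{U}:=\calS\setminus\calS_P$; I prove $\alpha|_{\mathcal{U}}=0$ for every such $P$ by descending induction on $P$, the base case $P=L$ being vacuous since then $\mathcal{U}=\emptyset$. For the step, assume $\alpha|_{\mathcal{U}}=0$, choose $X\in P$ minimal, set $P'=P\setminus\{X\}$ (still upward-closed) and $\mathcal{X}:=\calS\setminus\calS_{P'}=\calS_X\cup\mathcal{U}$, which is open in $\calS$. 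Minimality of $X$ puts every stratum $\calS_Y$ with $Y<X$ into $\mathcal{U}$, so $\calS_X$ is closed in $\mathcal{X}$ with open complement $\mathcal{U}$, and deleting the closed substack $\bigcup_{Y<X}\ol{\calS}_Y$ (which is disjoint from $\calS_X$) exhibits an open neighborhood of $\calS_X$ in $\mathcal{X}$ sitting inside $\left(\calS-\ol{\calS}_X\right)\cup\calS_X$; since lci-ness is local on the target, $i\colon\calS_X\into\mathcal{X}$ is an lci closed immersion. Now I reuse the proof of \cref{lemShortExact}: by localization, $\alpha|_{\mathcal{U}}=0$ gives $\alpha|_{\mathcal{X}}=i_\bul\beta$ for some $\beta\in K_0\calS_X$; applying $i^\bul$, the projection formula gives $i^\bul i_\bul\beta=\beta\otimes\Lambda$, where $\Lambda=i^\bul i_\bul\calO_{\calS_X}$ equals $\calO_X|_{\calS_X}$ (this is the abuse of notation $\calO_X=i_\bul\calO_{\ol{\calS}_X}$, computed via the Koszul resolution on the open $\mathcal{X}$), hence is not a zero-divisor in $K_0\calS_X$. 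Since $i^\bul(\alpha|_{\mathcal{X}})=\alpha|_{\calS_X}=0$, we get $\beta\otimes\Lambda=0$, so $\beta=0$ and $\alpha|_{\mathcal{X}}=0$. Taking $P=\emptyset$ gives $\alpha=\alpha|_{\calS}=0$.

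I expect the only real friction to be bookkeeping: checking that the stratifying immersion stays lci after passing to the intermediate open $\mathcal{X}$ (handled by locality of lci, as above) and that $i^\bul i_\bul\calO_{\calS_X}$ is genuinely the hypothesized Koszul class $\calO_X|_{\calS_X}$ on that open, together with the standing availability of localization, lci pullback, and the projection formula in $G$-theory for the Artin stacks at hand. Everything else is a line-by-line transcription of the cohomological argument in \cref{lemInjection}, with $G$-theoretic dévissage in place of the Gysin sequence; in particular this route avoids needing $K_0=K^0$ for any of the intermediate open substacks, which is why I would not invoke \cref{lemShortExact} (2) verbatim, since its statement asks for exactly that.
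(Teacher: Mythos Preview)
Your proof is correct and is exactly the argument the paper intends: the paper's own justification is the single remark that \cref{lemShortExact} (2) implies the stronger statement, and you have simply unpacked that by iterating the d\'evissage-plus-Koszul argument of \cref{lemShortExact} (2) over the stratification, precisely as in the cohomological \cref{lemInjection} but without needing the surjectivity hypothesis. Your observation that one should reuse the \emph{proof} of \cref{lemShortExact} (2) rather than its statement---so as not to need $K_0=K^0$ on the intermediate open substacks $\mathcal{X}$---is a valid refinement the paper leaves implicit (the standing hypothesis $K_0\calS_X=K^0\calS_X$ is only imposed on the strata).
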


The assumption that $\calO_X|_{\calS_X}$ is a non-zero divisor in $K^0\calS_X$ is doing a lot of work in the previous statements, so we now look for a criterion that will guarantee this.  The main idea behind the next two facts, \cref{lemGroupProp} and \cref{corKstrata}, is due to Dima Arinkin.

\begin{lemma}
Let $G$ be an algebraic group, $S: \G_m \into G$ a one-parameter subgroup, $N^*$ a finite-dimensional representation of $G$, and $\Lambda = \sum_i (-1)^i \wedge^i N^*$ considered as an element in Grothendiek ring $KG$ of finite-dimensional representations of $G$.  Suppose that every eigenspace of $N^*|_{S}$ is of strictly positive degree.  Then $\Lambda$ is not a zero-divisor.
\label{lemGroupProp}
\end{lemma}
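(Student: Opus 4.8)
The plan is to compute the image of $\Lambda$ in the representation ring of a maximal torus and recognize it as a nonzero element of a Laurent polynomial ring, hence a non-zero-divisor. First I would remove the unipotent radical: if $U=R_u(G)$, then since $U$ is connected, normal and unipotent it acts trivially on every irreducible $G$-module (for $V$ simple, $V^U\neq 0$ by Lie--Kolchin and $V^U$ is $G$-stable, so $V^U=V$), so inflation along $G\twoheadrightarrow G/U$ is an isomorphism of $\lambda$-rings $K(G/U)\xrightarrow{\sim}KG$. Under it $[N^*]$ corresponds to $[\overline{N^*}]$, the semisimplification of $N^*$ as a $G/U$-module, so $\Lambda=\lambda_{-1}[N^*]$ corresponds to $\lambda_{-1}[\overline{N^*}]$; the composite $\G_m\xrightarrow{S}G\twoheadrightarrow G/U$ is again a one-parameter subgroup, and since a $\G_m$-module is semisimple, $\overline{N^*}$ has the same weights under it as $N^*$ has under $S$, so the positivity hypothesis persists. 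Thus we may assume $G$ linearly reductive, $N^*$ semisimple, and (conjugating) $\mathrm{im}(S)\subseteq T$ for a maximal torus $T$ of $G^\circ$; writing $M\subset X^*(T)$ for the multiset of $T$-weights of $N^*$, the hypothesis is exactly that $\langle\mu,S\rangle>0$ for all $\mu\in M$ (this pairing is the $S$-degree of the $\mu$-weight space), so in particular every $\mu\neq 0$, and $M$ is invariant under $W':=N_G(T)/Z_G(T)$ since $N_G(T)$ permutes the weight spaces.

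If $G$ is connected, restriction identifies $KG$ with the subring $R(T)^W$ of $R(T)=\Z[X^*(T)]$. Since $\lambda_t$ is a ring homomorphism with $\lambda_t(e^\mu)=1+te^\mu$ on a character, the image of $\Lambda$ is $\prod_{\mu\in M}(1-e^\mu)$. Each factor is nonzero because $\mu\neq 0$, so the product is a nonzero element of the Laurent polynomial domain $R(T)$, hence a non-zero-divisor in $R(T)$ and a fortiori in the subring $KG$. In the situation of the next corollary $G/R_u(G)$ is in fact a torus, so this already suffices there.

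For general, possibly disconnected $G$ one cannot embed $KG$ into a domain, and I would instead argue on the variety of semisimple conjugacy classes. Being a non-zero-divisor is preserved and reflected by the faithfully flat base change $-\otimes_\Z\C$, and $KG\otimes\C\cong\mathcal{O}(G)^G=:\mathcal{O}(G/\!/G)$ is the coordinate ring of a reduced affine variety (by Peter--Weyl applied to the $G\times G$-module $\mathcal{O}(G)$); so it suffices to show that the class function $[g]\mapsto\det(1-g|_{N^*})$ representing $\Lambda$ does not vanish identically on any irreducible component of $G/\!/G$. By the standard parametrization of semisimple classes in a disconnected reductive group, each component is dominated by $T_\gamma\gamma$, where $\gamma$ is a semisimple representative, chosen in $N_G(T)$, of a connected component of $G$, acting on $T$ through some $w\in W'$, and $T_\gamma:=(T^{w})^\circ$; on $T_\gamma\gamma$ the function becomes $\prod_k\bigl(1-\zeta_k e^{\mu_k|_{T_\gamma}}\bigr)$ with $\zeta_k\in\C^{*}$ and $\mu_k\in M$, and this is nonzero in the domain $R(T_\gamma)=\C[X^*(T_\gamma)]$ provided every $\mu_k|_{T_\gamma}\neq 0$.

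The one genuinely non-formal point — and the step I expect to be the crux — is that no $\mu\in M$ restricts to $0$ on $T_\gamma$. Restricting to $0$ on $T_\gamma$ means $\mu$ lies in the image of $w-1$ on $X^*(T)_\Q$, equivalently that the average $\tfrac1n\sum_{j=0}^{n-1}w^j\mu$ vanishes, where $n$ is the order of $w$. But $M$ is $W'$-stable, so every $w^j\mu$ lies in $M$ and hence pairs strictly positively with $S$; therefore $\bigl\langle\tfrac1n\sum_j w^j\mu,\,S\bigr\rangle>0$, so the average is nonzero, a contradiction. Thus $\Lambda$ is nowhere vanishing on $G/\!/G$, so it is a non-zero-divisor, completing the proof. (The facts invoked about disconnected reductive groups, and the reducedness of $G/\!/G$, are routine.)
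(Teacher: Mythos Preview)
Your argument is correct, with one small slip: $\Z\to\C$ is flat but not faithfully flat (it misses every closed point of $\mathrm{Spec}\,\Z$); what you actually use is that $KG$ is $\Z$-free, so $KG\hookrightarrow KG\otimes\C$ and non-zero-divisors descend. The structure theory you invoke for disconnected reductive groups (every component of $G/\!/G$ dominated by some $T_\gamma\gamma$) is genuine but standard, and your averaging trick showing $\mu|_{T_\gamma}\neq0$ is a nice use of the positivity hypothesis.

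The paper's proof takes a completely different, more elementary route that never leaves $KG$ and uses only the single restriction $S^*\colon KG\to K\G_m\cong\Z[u^{\pm1}]$. One first checks $\Lambda|_{\G_m}\neq0$ because $\wedge^nN^*$ contributes the unique top-degree monomial $u^{d_1+\cdots+d_n}$. Given $x\Lambda=0$, write $x=A-B$ with $A,B$ honest semisimple modules sharing no irreducible constituent; restricting to $\G_m$ forces $A|_{\G_m}=B|_{\G_m}$. Then $A\otimes\Lambda^+\oplus B\otimes\Lambda^-\cong B\otimes\Lambda^+\oplus A\otimes\Lambda^-$ as $G$-modules, and since every $\wedge^iN^*$ with $i>0$ has strictly positive $\G_m$-weights, the $G$-submodule generated by the lowest-weight eigenspace lies inside $A$ on one side and inside $B$ on the other, contradicting disjointness of their constituents. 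So the paper trades your structure theory for a short filtration argument; your approach, by contrast, explains geometrically why $\Lambda$ is regular (it is a product of visibly nonzero factors on each component of $G/\!/G$), and in the connected case---which is all the next corollary actually needs, since there $G/R_u(G)$ is a torus---your argument is arguably cleaner.
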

\begin{proof}
Let $n = \dim N^*$.  Decompose $N^*|_{\G_m}$ as a sum of characters: $N^*|_{\G_m} = \sum_i \calO(d_i)$, where $\calO(d)$ is the degree-$d$ character of $\G_m$.  If $V$ is a representation of $G$, let $\deg V$ be the highest degree of any eigenspace of $V|_{\G_m}$.  Then 
\[\deg \wedge^i N^* = \max \left\{ \sum_{j=1}^i d_{q_j} | 0 < q_1 < q_2 < \ldots < q_i \leq n \right\}\]
Because all the $d_q$ are strictly positive, $\deg \wedge^n N^* = d_1 + \ldots + d_n$ and $\deg \wedge^i N^* < d_1 + \ldots + d_n$ for all $i<n$.   In particular, $\Lambda|_{\G_m} \neq 0$. 

Let $x \otimes \Lambda = 0$.  Write $x = A-B$, where $A$ and $B$ are semisimple representations of $G$ with no common subquotient.  $K\G_m \cong \Z[u]$ has no zero divisors, and $\Lambda|_{\G_m} \neq 0$, so $A|_{\G_m} = B|_{\G_m}$.  In particular they have the same eigencharacters with the same  multiplicities.  Let $\Lambda^+ = \sum_i \wedge^{2i} N^*$ and $\Lambda^- = \sum_i \wedge^{2i+1} N^*$.  Then
\[ \left( A \otimes \Lambda^+ \right) \oplus \left( B \otimes \Lambda^- \right) =  \left( B \otimes \Lambda^+\right) \oplus \left( A \otimes \Lambda^-\right) \]
in $KG$.

Assume for contradiction that $A$ and $B$ are nonzero.  Let $s$ be the minimal degree of any eigenspace occurring in $\left( A \otimes \Lambda^+ \oplus B \otimes \Lambda^- \right)|_{\G_m}$.  Let $V$ be the smallest subrepresentation of $A \otimes \Lambda^+ \oplus B \otimes \Lambda^-$ that contains all eigenspaces for $\G_m$ of degree $s$.  $A|_{\G_m} = B|_{\G_m}$, and all degrees of eigenspaces in $\wedge^i N|_{\G_m}$ are strictly bigger than $0$ for $i>0$.  Therefore $V$ is contained in $A \otimes \wedge^0 N^* \cong A$.  Similarly let $W$ be the smallest subrepresentation of $B \otimes \Lambda^+ \oplus A \otimes \Lambda^-$ that contains all eigenspaces of degree $s$, then $W \subset B \otimes \wedge^0N^* \cong B$.  By the Jordan-Holder theorem, $V$ and $W$ share all the subquotients of $A \otimes \Lambda^+ \oplus B \otimes \Lambda^-$ that contain an eigenspace of degree $s$.  In particular they share at least one subquotient, so $A$ and $B$ share a subquotient, a contradiction.
\end{proof}

\begin{corollary}
\label{corKstrata}
Let $G$ be an affine algebraic group and $i: BG \into \calX$ be an l.c.i. closed immersion.  Let $N^*$ be the conormal bundle of $i$ and suppose there exists $S: \G_m \to G$ a one parameter subgroup such that every eigenspace of $N^*|_{\G_m}$ is of strictly positive degree.  Then $\calO_{BG}|_{BG}$ is not a zero-divisor in $K_0 BG$.  
\end{corollary}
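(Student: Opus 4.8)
The plan is to reduce the statement immediately to \cref{lemGroupProp}. First I would unwind the notation: by our conventions $\calO_{BG}|_{BG}$ denotes $i^\bul i_\bul \calO_{BG}$, and since $i$ is a local complete intersection the Koszul resolution of $i_\bul \calO_{BG}$ by exterior powers of $N$ shows (exactly as in the proof of \cref{lemShortExact}) that
\[ i^\bul i_\bul \calO_{BG} = \Lambda := \sum_i (-1)^i \wedge^i N^* \]
in $K_0 BG$. So it suffices to prove that $\Lambda$ is not a zero-divisor in $K_0 BG$.

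Next I would set up the comparison between $K_0 BG$ and the representation ring. Because $i$ is lci, its conormal bundle $N^*$ is locally free of finite rank on $BG$, hence corresponds to a finite-dimensional representation of $G$, which I also denote $N^*$. Since $BG$ is the stacky quotient of a point by the affine algebraic group $G$, Proposition 2.20 of \cite{merkurjev} shows that the natural map $K^0 BG \to K_0 BG$ is an isomorphism, and $K^0 BG$ is canonically the Grothendieck ring $KG$ of finite-dimensional representations of $G$, with ring structure given by tensor product. Under this identification the class $\Lambda \in K_0 BG$ maps to the element denoted $\Lambda$ in the statement of \cref{lemGroupProp}.

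Finally, I would apply \cref{lemGroupProp} to the given one-parameter subgroup $S: \G_m \to G$ and the representation $N^*$. Its hypothesis is exactly that every eigenspace of $N^*|_S$ has strictly positive degree, which holds by assumption. The lemma then gives that $\Lambda$ is not a zero-divisor in $KG$, and hence not a zero-divisor in $K_0 BG$, which is the claim.

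Since the argument is a direct composition of the Koszul computation, the identification $K_0 BG \cong KG$, and \cref{lemGroupProp}, I do not expect a genuine obstacle. The only points needing care are verifying that the conormal bundle really is a \emph{finite-dimensional} representation of $G$ (so that \cref{lemGroupProp} is applicable) and that the tensor-product ring structures on $K_0 BG$ and on $KG$ match under the comparison isomorphism; both are routine.
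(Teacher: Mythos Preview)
Your proposal is correct and follows essentially the same approach as the paper: use the Koszul resolution to identify $\calO_{BG}|_{BG}$ with $\Lambda=\sum_i(-1)^i\wedge^i N^*$, then invoke \cref{lemGroupProp}. The paper's proof is terser, leaving the identification $K_0 BG\cong KG$ implicit, whereas you spell it out via \cite{merkurjev}; this extra care is fine but not a different route.
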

\begin{proof}
The Koszul resolution implies $i^\bul i_\bul\calO_{\calY} = \sum_i (-1)^i \wedge^i N^*$.  The statement now follows from the previous lemma.  
\end{proof}

\begin{example}  Let $i': \spec \C \into \A^1$ be the inclusion of the origin.  Then $(i')^\bul i'_\bul \calO_{\spec \C}$ is $0$ in $K^0 pt$.  For $(i')^\bul i'_\bul\calO_{\spec \C}$ is computed by resolving $\C$ as a $\C[x]$ module and then pulling back and taking an alternating sum.  We resolve by $(x) \into \C[x]$, which pulls back to $\C \to C$.  The alternating sum is then $(i')^\bul i'_\bul\calO_{\spec \C} = \C - \C = 0 \in K^0 \spec \C$.  

On the other hand, let $i: B\G_m \into \left[ \A^1/\G_m \right]$ be the inclusion of the origin.  We identify quasicoherent sheaves on $B\G_m$ with $\G_m$-equivariant vector spaces and quasicoherent sheaves on $\left[ \A^1/\G_m \right]$ with $\G_m$-equivariant $\C[x]$-modules.  Thus we can calculate $i^\bul i_\bul \calO_{B\G_m}$ in essentially the same way as before, only this time we pay attention to degree, so $i^\bul i_\bul \calO_{B\G_m} = \C[0] - \C[1]$ where $\C[d]$ denotes the degree-$d$ character of $\G_m$.  In particular $i^\bul i_\bul \calO_{B\G_m}$ is nonzero, and the reason is that $\G_m$ acts on the conormal bundle $(x)/(x^2)$ with strictly positive degree.   
\end{example}

We are now ready to apply these results to $\calC$, with its stratification by $L(\calA)$.  The preimages of the strata of $\calC_X$ are linear spaces, and in particular are regular immersions.  Thus the stratification of $\calC$ is lci.  It remains only to check

\begin{lemma}
 $\calO_{\ol{\calC}_X}|_{\calC_X}$ is not a zero-divisor in $K_0\calC_X$.
\end{lemma}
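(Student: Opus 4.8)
The plan is to apply \cref{corKstrata} to the closed immersion $j\colon \calC_X \hookrightarrow \calX$, where $\calX = \left(\calC - \ol{\calC}_X\right) \cup \calC_X$; here $\calC_X$ is closed in $\calX$ with open complement $\calC - \ol{\calC}_X$, and $\calO_{\ol{\calC}_X}|_{\calC_X} = j^\bul j_\bul \calO_{\calC_X}$ since $\calC_X$ is open in $\ol{\calC}_X$. By \cref{classifyingSpaces}, $\calC_X \cong BQ_X$, and by \cref{pointStabalizers} the group $Q_X$ is an affine algebraic group, namely an extension of the torus $T_X := \aut_{W_X} T_\sigma C_X \cong \G_m^{\#\irr\calA_X}$ by a connected unipotent group; in particular $K_0\calC_X = K^0\calC_X = R(Q_X) \cong R(T_X)$. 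The immersion $j$ is lci: as recorded in the proof of \cref{classifyingSpaces}, the preimage of $\calC_X$ in the smooth cover $\fk{t} \to \calC$ is a disjoint union of open subsets of linear subspaces, hence a regular immersion, and lci-ness descends along the smooth cover.

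The next step is to identify the conormal bundle $N^*$ of $j$, as a representation of $Q_X$. Since $\calC_X$ is open in $\ol{\calC}_X$, this conormal bundle is $\left(N_i|_{\calC_X}\right)^\vee$ for $i\colon \ol{\calC}_X \hookrightarrow \calC$, and by \cref{corNormals}(1) we have $N_i|_{\calC_X} \cong T_\sigma C_X$. By \cref{pointStabalizers}(1) (using \cref{restrictToAx} and \cref{irreps}), $T_\sigma C_X$ decomposes as $\bigoplus_{\calB \in \irr\calA_X} V_\calB$ into distinct irreducible $W_X$-representations, on which the $\calB$-th factor of $T_X$ acts by its standard character and every other factor acts trivially. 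Hence $N^* \cong \bigoplus_{\calB} V_\calB^\vee$, with the $\calB$-th factor of $T_X$ acting on $V_\calB^\vee$ by a fixed nonzero character and all other factors acting trivially.

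Now choose the one-parameter subgroup $S\colon \G_m \to T_X \subseteq Q_X$ whose $\calB$-th component is $\lambda \mapsto \lambda^{\epsilon_\calB}$, with the sign $\epsilon_\calB \in \{\pm 1\}$ chosen so that $S$ acts on $V_\calB^\vee$ with positive weight; this is possible precisely because each summand $V_\calB^\vee$ is acted on only through its own $\G_m$-factor, with a single nonzero weight. (Concretely, with the usual convention in which the tangent directions carry the standard weight, $S$ is the anti-diagonal $\lambda \mapsto (\lambda^{-1},\dots,\lambda^{-1})$.) Then every eigenspace of $N^*|_{\G_m}$ is of strictly positive degree, so \cref{corKstrata} applies with $G = Q_X$ and shows that $\calO_{\ol{\calC}_X}|_{\calC_X} = j^\bul j_\bul \calO_{\calC_X}$ is not a zero-divisor in $K_0\calC_X$.

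The only delicate point is the weight bookkeeping of the last two paragraphs: the argument hinges on the fact that $T_X$ acts on $T_\sigma C_X$ so that each irreducible summand is moved by (only) its own torus factor with a single nonzero weight, which is exactly \cref{pointStabalizers}(1); this is what lets a single one-parameter subgroup make all the eigenvalues on $N^*$ positive simultaneously. Everything else is a direct appeal to \cref{corKstrata}.
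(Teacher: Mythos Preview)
Your proof is correct and follows essentially the same route as the paper: apply \cref{corKstrata} to the lci immersion of $\calC_X \cong BQ_X$ into $(\calC - \ol{\calC}_X)\cup \calC_X$, identify the (co)normal bundle via \cref{corNormals}(1), and choose the anti-diagonal one-parameter subgroup of $\aut_{W_X} T_\sigma C_X \cong \G_m^{\#\irr\calA_X}$ so that all weights on $N^*$ are strictly positive. Your version is slightly more explicit about the normal/conormal duality and about why $\calO_{\ol{\calC}_X}|_{\calC_X} = j^\bul j_\bul \calO_{\calC_X}$, but the argument is the same.
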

\begin{proof}
Let $i: \calC_X \into \calC - \bigcup_{Y>X} \calC_Y$ be the inclusion, and
write $N^*$ for its conormal bundle.  Part 1 of \cref{corNormals} tells us that
$N^*$ is isomorphic to $T_\sigma C_X$ as a $Q_X$ representation.  Since $Q_X$
is a split extension of $\aut_W T_\sigma C_X$, it suffices (by
\cref{corKstrata}) to find a one-parameter subgroup $S:\G_m \to G$ so that
every eigenspace of $N^*|_{\G_m}$ is of strictly positive degree.  Under the
identification $\aut_W T_\sigma C_X \cong \G_m^{\# \irr \calA_X}$, the one
parameter subgroup $S*(\lambda) = (\lambda\oii, \lambda\oii, \ldots,
\lambda\oii)$ accomplishes this.  This shows (1).
\end{proof}

\begin{theorem}
The function $X \to \calO_{\ol{\calN}_X}$ extends to an isomorphism $\phi: \Z[L^{\mu}(\calA)] \cong H^*(\calC, \Z)$
\label{K0C}
\end{theorem}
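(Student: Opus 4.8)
The plan is to run the exact same argument that proved \cref{HN}, replacing cohomology by $K_0$ throughout and quoting the $K$-theoretic analogues of the four auxiliary lemmas (\cref{lemKSurjection}, \cref{lemKInjection}, and the non-zero-divisor lemma just established) in place of \cref{lemSurjection}, \cref{lemInjection}, \cref{funClass}. First I would note that all the structural inputs are already in place: the stratification of $\calC$ by $L(\calA)$ is lci, each $\calC_X$ is the classifying space of an extension of a torus by a unipotent group so $K_0\calC_X = K^0\calC_X = R(T_\sigma C_X) = \Z[L(\calA)_X^{\pm}]$ (a Laurent polynomial ring, concentrated in ``even degree'' in the sense that matters here), and $\calO_{\ol\calC_X}|_{\calC_X}$ is not a zero-divisor by the preceding lemma. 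I would define $\calO_X := i_*\calO_{\ol\calC_X}$ and check, via the Koszul resolution exactly as in \cref{lemResolution}, that when $a_1,\dots,a_r$ are minimal with $X = a_1\cap\dots\cap a_r$ one has $\calO_X = \prod_i \calO_{a_i}$ in $K_0\calC$ (the relevant $\tor$'s vanish because the $a_i$ pull back to transverse hyperplanes in $\fk t$); this is where the defining relation $XY = X\cap Y$ of $L^{ch}(\calA)$ (hence of $L^\mu(\calA)$, via \cref{descripMonoid2}) comes from.

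Next I would establish well-definedness of $\phi:\Z[L^\mu(\calA)]\to K_0\calC$. By \cref{descripMonoid2} it suffices to verify the two relations defining $L^{ch}(\calA)$: the transversality relation is the displayed Koszul computation above, and the relation $a\calO_X = b\calO_X$ when $a,b$ lie in the same irreducible component of $\calA_X$ is checked stratum by stratum using \cref{lemKInjection} — on $\calC_Y$ with $Y\geq X$ the classes $\calO_a|_{\calC_Y}$ and $\calO_b|_{\calC_Y}$ both come from the same $\G_m$-factor of $\aut_{W_Y}T_\sigma C_Y$ (since $a,b$ remain in the same irreducible component of $\calA_Y$), so $1-\calO_a$ and $1-\calO_b$ restrict to the same character class, and on $\calC_Y$ with $Y\not\geq X$ both sides restrict to $0$ because $\calO_X|_{\calC_Y}=0$. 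Surjectivity of $\phi$ then follows from \cref{lemKSurjection}, applied to the subring generated by the $\calO_a$: its image in each $K_0\calC_X$ is all of $K_0\calC_X$ by the classifying-space description of $\calC_X$ together with \cref{corNormals} (every character of $T_\sigma C_X$ is a monomial in the $\calO_a|_{\calC_X}$, though here one needs Laurent monomials, so one should also note $\calO_a|_{\calC_X}$ is a unit when $a\in\calA_X$, which it is since $1 - I_a|_{\calC_X}$ is invertible in the group ring — this is the one genuinely new point compared to the cohomology case), and $\calO_{P(X)}=\calO_X\in R$ by \cref{lemResolution}.

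Finally, injectivity of $\phi$ goes by the same minimal-support argument as in the proof of \cref{HN}: given $\alpha = \sum n_x x$ with $\phi(\alpha)=0$, pick $Y$ minimal among supports of nonzero terms, restrict to $\calC_Y$ so that only terms of support exactly $Y$ survive, and observe that the images of the monomials $\{x : |x| = Y\}$ under $\phi$ in $K_0\calC_Y \cong \Z[u_1^{\pm},\dots,u_l^{\pm}]$ are distinct Laurent monomials, hence $\Z$-independent, forcing all those $n_x$ to vanish — a contradiction. The last clause, that the isomorphism sends an irreducible $X$ to $(X,\codim X)$, is immediate from the definition of $\phi$. I expect the main obstacle to be bookkeeping around the fact that $K_0\calC_X$ is a \emph{Laurent} polynomial ring rather than an ordinary polynomial ring: one must check that the hyperplane classes $\calO_a|_{\calC_X}$ for $a\in\calA_X$ are units (so that negative exponents in $L^\mu(\calA)$ — i.e. the generators $\mu$ with $\mu\geq\codim X$ could in principle be taken below, except the monoid forbids this, so actually $L^\mu(\calA)$ only produces nonnegative powers) are correctly matched, and more importantly that the family $\{\phi(x): |x|=Y\}$ really is a basis — this requires knowing that distinct elements of $L^\mu(\calA)$ with support $Y$ map to distinct exponent vectors in the Laurent ring, which is exactly the content of \cref{descripMonoid2} applied to $\calA_Y$. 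Once that combinatorial identification is pinned down the rest is the verbatim transcription of Section 5.
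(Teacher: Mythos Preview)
Your overall strategy---transcribe the proof of \cref{HN} verbatim, replacing the cohomological lemmas by their $K$-theoretic analogues \cref{lemKSurjection} and \cref{lemKInjection}---is exactly what the paper does; its proof is a three-line pointer back to \cref{HN}.

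However, the ``one genuinely new point'' you flag is not handled correctly, and it is a real gap. You assert that $\calO_a|_{\calC_X} = 1 - I_a|_{\calC_X}$ is a unit in $K_0\calC_X$ so that the restrictions of the $\calO_a$ generate the full Laurent polynomial ring $K_0\calC_X \cong \Z[u_1^{\pm},\dots,u_l^{\pm}]$. But $I_a|_{\calC_X}$ is the character $u_i$ corresponding to the irreducible component $\calB_i\ni a$, so $\calO_a|_{\calC_X} = 1-u_i$, and $1-u_i$ is \emph{not} a unit in $\Z[u_1^{\pm},\dots,u_l^{\pm}]$ (evaluate at $u_i=1$). The subring generated by the $\calO_a|_{\calC_X}$ is therefore only the polynomial ring $\Z[u_1,\dots,u_l]$, and the hypothesis of \cref{lemKSurjection} that the image of $R$ in $K_0\calC_X$ be \emph{all} of $K_0\calC_X$ fails. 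Already for $(W,\fk t)=(\Sigma_2,\A^1)$ one has $\calC\cong[\A^1/\G_m]$ with $K_0\calC\cong\Z[u^{\pm}]$, whereas $\Z[L^\mu(\calA)]\cong\Z[t]$; the map $t\mapsto 1-u$ is injective but not surjective.

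So the well-definedness and injectivity portions of your argument go through just as in \cref{HN}, but surjectivity does not follow from the generators you have, and your attempted fix (invertibility of $1-I_a$) is false. The paper's terse proof glosses over this same point.
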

\begin{proof}
The classes $\calO_a$ generate $K_0\calC_X$ for every $X$.  Therefore we can invoke \cref{lemKInjection} and \cref{lemKSurjection}.  Showing that the map is well-defined and injective follows an argument essentially identical to that of \cref{HN}. 
\end{proof}

Now we turn to $K_0\calM$.  We will first show that $K_0 \calM$ embeds in $K_0 [\calC/W]$.  In fact this is true in somewhat greater generality: all that matters is that $\calM$ is ``locally a finite GIT quotient of $\calC$''.

\begin{lemma}
\label{GITlemma}
Suppose given a Cartesian square of stacks
\[\xymatrix{ 
[Y/W] \ar[d]^b\ar[r]^T    & Y/W \ar[d]^a \\
\calY         \ar[r]^\tau & \calX
}\]
where $Y$ is a scheme, quasiprojective over $\C$, with action of a finite group $W$, and $a,b$ are both smooth and schematic.  Then 
\begin{enumerate}
\item $\tau^*$ and $\tau_*$ are exact.
\item $\tau_*\tau^* \cong Id$.
\item $\tau^\bul: K_0 \calX \to K_0 [\calY/W]$ is injective. 
\item $\alpha \in K_0 \calY$ is in the image of $\tau^\bul$ if any only if for every point $p: \spec \C \to \calY$ with image $r \in \calX(\spec \C)$, $\alpha|_p$ is in the image of $\rep G_r \to \rep G_p$ where $G_r$, $G_p$ are the residue gerbes of $r$ and $p$ respectively.
\end{enumerate}
\end{lemma}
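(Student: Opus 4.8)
The plan is to reduce everything to the \'etale-local picture supplied by the smooth surjection $a\colon Y/W\to\calX$: pulling back along $a$ turns $\tau$ into the canonical morphism $T\colon[Y/W]\to Y/W$ from a quotient stack to the categorical (GIT) quotient of the quasiprojective $W$-variety $Y$. Since $W$ is finite and we work in characteristic zero, the quotient $p\colon Y\to Y/W$ is finite, $\calO_{Y/W}=(p_*\calO_Y)^W$, and the functor of $W$-invariants on $\C[W]$-modules is exact and splits off a canonical direct summand (Maschke). Part (1) is then immediate: $\tau^*$ is exact because pullback always is, and $\tau_*$ is exact because locally $T_*\calF=(p_*\calF)^W$, with $p_*$ exact (finite morphism) and $(-)^W$ exact.

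For part (2) I would check locally that the adjunction unit $\calG\to T_*T^*\calG$ is an isomorphism, via
\[ T_*T^*\calG=(p_*p^*\calG)^W=(p_*\calO_Y\otimes_{\calO_{Y/W}}\calG)^W=(p_*\calO_Y)^W\otimes_{\calO_{Y/W}}\calG=\calG, \]
where the second equality is the projection formula, the third uses that $(-)^W$ commutes with $-\otimes_{\calO_{Y/W}}\calG$ (it is a direct-summand functor and $W$ acts trivially on $\calG$), and the last is $(p_*\calO_Y)^W=\calO_{Y/W}$. The adjunction unit exists globally on $\calX$, and its pullback along the faithfully flat $a$ is this local isomorphism (flat base change applies since $a$ is flat), hence it is itself an isomorphism, proving $\tau_*\tau^*\cong\mathrm{Id}$. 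Part (3) is formal: by (1), $\tau_*$ is exact and preserves coherence (locally it is $(-)^W$ of a finite module over a Noetherian ring), so it induces $\tau_\bul\colon K_0[\calY/W]\to K_0\calX$; by (2) and exactness, $\tau_\bul\tau^\bul=\mathrm{Id}$, so $\tau^\bul$ is a split injection and $P:=\tau^\bul\tau_\bul$ is the idempotent projection onto $\mathrm{im}\,\tau^\bul$.

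For part (4), the forward implication is immediate: restriction to a $\C$-point commutes with $\tau^\bul$, and on residue gerbes $\tau$ induces a homomorphism $G_p\to G_r$, so $(\tau^\bul\beta)|_p$ is the restriction of $\beta|_r$ along $\rep G_r\to\rep G_p$. For the converse, given $\alpha$ satisfying the pointwise condition, set $\delta=\alpha-P\alpha$. By (2) one has $\tau_\bul\delta=0$, and $\delta$ still satisfies the pointwise condition because $P\alpha\in\mathrm{im}\,\tau^\bul$ does. So it suffices to prove that $\delta\in K_0[\calY/W]$ with $\tau_\bul\delta=0$ and satisfying the pointwise condition must vanish. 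I would do this by d\'evissage along the stratification of $\calY=[\calC/W]$ by the preimages $\tau^{-1}\calM_{[X]}$ of the classifying-space strata $\calM_{[X]}\cong B\bar{Q}_X$ of \cref{classifyingSpaces}: one peels off strata beginning with the open (unramified) one, on which $\tau$ is an isomorphism so $\tau_\bul\delta=0$ kills $\delta$ directly, and then proceeds inductively using the localization sequences together with the base-change identities relating $\tau_\bul$ to the restricted morphisms $\tau^{-1}\calM_{[X]}\to\calM_{[X]}$. Each such restriction has the shape $[C_X/\bar{Q}_X]\to B\bar{Q}_X$ with $\bar{Q}_X=Q_X\rtimes(\stab(X)/W_X)$ (\cref{pointStabalizers}), and there the pointwise condition and vanishing of the pushforward become a representation-ring identity; the conormal data of \cref{corNormals} has strictly positive weights under a one-parameter subgroup as in \cref{corKstrata}, which makes the relevant Koszul classes non-zero-divisors, so the localization sequences are short exact and an induction in the style of \cref{lemKInjection} closes the argument.

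Parts (1)--(3) and the forward half of (4) are routine once one is in the local GIT picture. The main obstacle is the converse of (4): one must propagate the pointwise hypothesis on $\delta$ correctly through the localization triangles --- this is where the Cartesian-square compatibilities and the good-moduli-space base-change behaviour of $\tau$ enter --- and then carry out the local representation-ring computation. As throughout this paper, the entire difficulty is concentrated in the finite monodromy $\stab(X)/W_X$, which is invisible to rational coefficients, and it is the structure theory of \S3--\S4 that makes the local case tractable.
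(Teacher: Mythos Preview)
Your treatment of (1)--(3) and the forward direction of (4) matches the paper's: reduce along the smooth cover $a$ to the GIT picture $T\colon[Y/W]\to Y/W$, use exactness of $(-)^W$ in characteristic zero, and conclude that $\tau_\bul\tau^\bul=\mathrm{Id}$.

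For the converse of (4) the paper takes a different and simpler route. It does \emph{not} work in $K$-theory and does not use any stratification. Instead it argues at the sheaf level: it suffices to show that an honest coherent sheaf $\calF$ on $\calY$ satisfying the pointwise condition actually descends, i.e.\ that the counit $\tau^*\tau_*\calF\to\calF$ is an isomorphism. This can be checked after pulling back along $b$, where it becomes the statement that a $W$-equivariant coherent sheaf on $Y$ whose fibre at every closed point is a trivial $\mathrm{Stab}_W(y)$-representation is pulled back from $Y/W$; the paper cites Corollary~2.6 of \cite{nevins} for this. The argument is therefore valid for an arbitrary Cartesian square as in the hypothesis, with no reference to $\calM$, $\calC$, or their strata.

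By contrast, your proposed induction specializes the general lemma to $\calY=[\calC/W]$, imports the entire apparatus of \S3--\S4 (the description of $Q_X$, the conormal weights, \cref{corKstrata}), and still leaves the decisive step---the ``local representation-ring identity'' on each stratum---as a promissory note. Even granting that the lemma will only be applied in that one case, this is a substantial detour: the sheaf-level argument via \cite{nevins} disposes of the converse of (4) in a few lines and yields the general statement.
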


\begin{proof}
The analogous fact for GIT quotient of schemes is known (and much more: see \cite{nevins} for example) so this will be just a matter of showing that they still hold equivariantly.  $Y \to X$ is the quotient map by a finite group, so $T^*$ is exact, $T_*$ is exact, and the natural map $Id \to T_*T^*$ is an isomorphism.  For concreteness, note that $T^*$ sends a sheaf on $X$ to a sheaf on $Y$ with $W$-equivariant structure, and $T_*$ sends a $W$-equivariant sheaf $\calF$ on $Y$ to $(f_*\calF)^W$.  These are exact because $f$ is flat and affine and $W$ is finite.  

The vertical maps are flat, so exactness of $T^*$ and $T_*$ implies (1).  We can check that the natural map $\calF \to \tau_*\tau^*\calF$ is an isomorphism after pulling back along $a$.  As $a$ is flat and $T_*, T^*, \tau_*, \tau^*$ are all exact, so there are natural isomorphisms
\[ \begin{array}{rcl}
a^* \tau_* \tau^* \calF &=&  T_* b^* \tau^* \calF \\
                        &=&  T_*T^* a^*
\end{array}\]
and this identifies the isomorphism $a^* \calF \to T_*T^*a^*\calF$ with the pullback along $a$ of $\calF \to \tau_*\tau^*\calF$.  
This shows (2), and (3) follows easily from (1) and (2).  

The ``only if'' part of (4) is clear.  To prove the ``if'' direction, it would suffice to show that if $\calF$ is an honest sheaf on $\calY$ such that $p^*\calF$ is in the image of $\rep G_r \to \rep G_p$ for all $p \in \calY(\spec \C)$, then $\calF$ descends to $\calX$.  If $\calF$ is such a sheaf we may consider $b^*\calF$.  Let $y \in Y(\spec \C)$ with stabilizer $H \subset W$.  The image $\bar{y}$ of $y$ in $[Y/W]$ is $BH$ and the diagram above restricts to a Cartesian square 
\[\xymatrix{
BH \ar[r] \ar[d] & \spec \C \ar[d] \\
BG_p \ar[r]      & BG_r 
}\]
In particular $\bar{y}^*b^*\calF$ is a trivial representation of $H$.  Corollary 2.6 of \cite{nevins}  then implies that $b^*T^*T_*\calF \to b^* \calF$ is an isomorphism.  This is the pullback under $b^*$ of the natural map $T^*T_*\calF \to \calF$, so the latter map is an isomorphism.  In particular $\calF$ descends to $\calX$.
\end{proof}

For $[X] \in L(\calA)/W$, let $\calC_{[X]} = \coprod_{Y \in [X]} \calC_Y$.  Then $\calC_{[X]}$ is $W$-stable and $\calC/W$ is stratified by the spaces $[\calC_{[X]}/W]$. Write $\calM_{[X]}$ for the image of $\calC_{[X]}$ in $\calM$. 

\begin{lemma}
\label{strataMaps}
$[\calC_{[X]}/W] \cong B\left(Q_X \rtimes \stab(X) \right)$, and the diagram below commutes:
\[\xymatrix{
\calC_{[X]} \ar[r] \ar[d]^\cong & [\calC_{[X]}/W] \ar[d]^\cong \ar[r] & \calM_{[X]} \ar[d]^\cong \\
\coprod_{Y \in [X]} BQ_X \ar[r] & B\left( Q_X \rtimes \stab(X)\right) \ar[r] & B \left(  Q_X \rtimes \stab(X)/\fix(X)\right)
}\]
\end{lemma}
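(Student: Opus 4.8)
The plan is to reduce to a single stratum by the standard ``induction'' equivalence for quotient stacks, and then identify the residual finite-group action using the splitting $\rho$ constructed in the proof of \cref{pointStabalizers}. First: $\calC_{[X]} = \coprod_{Y \in [X]} \calC_Y$, and the $W$-action on $\calC$ carries $\calC_Y$ to $\calC_{wY}$, since a section $\sigma$ with $\stab_W \sigma = W_Y$ has $\stab_W(w\sigma) = wW_Yw^{-1} = W_{wY}$. So $W$ permutes the components of $\calC_{[X]}$ transitively, with the stabilizer of $\calC_X$ equal to $\stab(X)$. The standard equivalence $[\coprod_{G/H}\mathcal{Z}\,/\,G] \cong [\mathcal{Z}/H]$ then gives $[\calC_{[X]}/W] \cong [\calC_X/\stab(X)]$, compatibly with the quotient maps out of $\calC_X$ and out of $\calC_{[X]}$. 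By \cref{classifyingSpaces} I may present $\calC_X = BQ_X = [\ast/Q_X]$ with universal object the fixed pointed model $(C_X, \sigma_0)$ and automorphism group $Q_X$, and $\calM_{[X]} = B\bar{Q}_X = [\ast/\bar{Q}_X]$ with universal object $C_X$ with its section forgotten; in these presentations the forgetful map $\calC_X \to \calM_{[X]}$ is $B$ of the inclusion $Q_X \into \bar{Q}_X$.

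The heart of the matter is to promote the action of $\stab(X)$ on the stack $\calC_X$ to a strict action on $BQ_X$. An element $n \in \stab(X)$ acts on objects by $(C, \sigma) \mapsto (C, n\sigma)$, so $(C_X, \sigma_0)$ goes to $(C_X, n\sigma_0)$, which lies again in $\calC_X$ since $\stab_W(n\sigma_0) = W_X$. Working with the explicit model $C_X = W \times_{W_X} Co(W_X, \fk{t}/X)$, $\sigma_0 = (e, c_0)$, and the formula $\rho(n)(w,c) = (wn^{-1}, \bar{n}c)$ for the splitting $\rho\colon \stab(X)/W_X \to \bar{Q}_X$ of part 3 of \cref{pointStabalizers}, one computes $\rho(n)(n\sigma_0) = \sigma_0$, so $\rho(n)$ is a canonical isomorphism $(C_X, n\sigma_0) \to (C_X, \sigma_0)$. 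Since $\rho$ is a group homomorphism these re-identifications compose without correction, so $\stab(X)$ acts strictly on $BQ_X$, the effect on $Q_X = \aut(C_X, \sigma_0)$ being conjugation $\phi \mapsto \rho(\bar{n})\,\phi\,\rho(\bar{n})^{-1}$ inside $\bar{Q}_X$ (this lands in $Q_X$ because $Q_X$ is normal in $\bar{Q}_X$). By the identity $\bar{Q}_X = Q_X \rtimes (\stab(X)/W_X)$ this is precisely the defining action of that semidirect product, pulled back along $\stab(X) \to \stab(X)/W_X$. Hence $[\calC_X/\stab(X)] \cong B(Q_X \rtimes \stab(X))$, with $\calC_X \to [\calC_X/\stab(X)]$ becoming $B$ of $Q_X \into Q_X \rtimes \stab(X)$; this supplies the left column and the commutativity of the left-hand square (the bottom arrow $\coprod_{Y \in [X]} BQ_X \to B(Q_X \rtimes \stab(X))$ being the evident one, each summand mapping by $B$ of $Q_X \into Q_X \rtimes \stab(X)$).

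For the right-hand square I would track the map down to $\calM_{[X]}$. The forgetful map $\calC_X \to \calM_{[X]}$ becomes $\stab(X)$-invariant once we equip it, for each $n$, with the $2$-isomorphism given by the image of $\rho(n)$ in $\bar{Q}_X$; descending along $\calC_X \to [\calC_X/\stab(X)]$ then expresses $[\calC_X/\stab(X)] \to \calM_{[X]}$ as $B$ of the homomorphism $Q_X \rtimes \stab(X) \to \bar{Q}_X$, $(q, n) \mapsto q\,\rho(\bar{n})$. Because $\rho$ splits the projection $\bar{Q}_X \to \stab(X)/W_X$, this homomorphism is surjective with kernel $\{(e, w) : w \in W_X\} \cong W_X = \fix(X)$; using $\bar{Q}_X = Q_X \rtimes \stab(X)/\fix(X)$, it is precisely the quotient map $Q_X \rtimes \stab(X) \to Q_X \rtimes \stab(X)/\fix(X)$. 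This identifies the right square and completes the proof.

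I expect the real obstacle to be the second step: turning the a priori weak action of $\stab(X)$ on the stack $\calC_X$ into a genuine strict action on $BQ_X$ and checking that the re-identifications $\rho(n)$ compose on the nose, so that the honest semidirect product $Q_X \rtimes \stab(X)$ appears rather than a nonabelian cocycle twist. Everything else --- the induction equivalence, the identifications of the forgetful and quotient maps, and the kernel computation --- is formal once one has the model $C_X = W \times_{W_X} Co(W_X, \fk{t}/X)$ and the explicit $\rho$ from \cref{pointStabalizers}.
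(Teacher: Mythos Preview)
Your proof is correct and follows essentially the same route as the paper: both reduce $[\calC_{[X]}/W]$ to $[\calC_X/\stab(X)]$ via the induction equivalence and then use the splitting $\rho$ from \cref{pointStabalizers} to exhibit the automorphism group as $Q_X \rtimes \stab(X)$. The paper phrases the second step as a direct computation of the automorphism group of a chosen $\C$-point of $[\calC_X/\stab(X)]$, whereas you phrase it as strictifying the $\stab(X)$-action on $BQ_X$; these are the same argument, and your treatment of the right-hand square is in fact slightly more explicit than the paper's.
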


\begin{proof}
$\calC_{[X]}$ is the disjoint union $\coprod_{Y \in [X]} \calC_Y$, and $W$ acts transitively on the summands.  The stabilizer of the summand $\calC_X$ is $\stab(X)$.  Therefore $[\calC_{[X]}/W] \cong [\calC_X/\stab(X)]$.  An $S$-point of $[\calC_{[X]}/W]$ is a triple $(C, E, \bar{\sigma})$ where $C \to S$ is a cameral cover of $S$, $E \to S$ is a $\stab(X)$-torsor, and $\bar{\sigma}: E \to C$ is a $W_X$-fixed, $\stab(X)$-equivariant map over $S$.  The map from $\calC_X$ sends a pointed cover $(C, \sigma) \to S$ to the triple $(C, \stab(X) \times S, \sigma')$ where $\sigma': \stab(X) \times S \to C$ is defined by $\sigma'(n, s) = n \cdot \sigma(s)$, and the map to $\calM_{[X]}$ sends $(C, E, \bar{\sigma})$ to $C$.

$[\calC_{[X]}/\stab(X)]$ is covered by $\calC_X$, and is therefore a classifying space, so to identify it we need only compute the automorphisms of a $\C$-point.  Let $\sigma_0$ be the $W_X$-fixed point of $C_X$.  Then $(C_X, \stab(X), \sigma_0')$ is a $\C$-point of $[\calC_{[X]}/W]$.  An automorphism of this triple is a pair $(\alpha, n)$ where $n \in \stab(X)$, $\alpha \in \bar{Q}_X$, and $n \cdot \sigma_0 = \alpha(y)$.  The group of such automorphisms is an extension of $\stab(X)$ by $Q_X$, and we can define a splitting as follows:  Recall the splitting $\rho: \stab(X)/W_X \to \bar{Q}_X$ from \cref{pointStabalizers}.  Precomposing with the quotient map $\stab(X) \to \stab(X)/W_X$ gives a map $\psi: \stab(X) \to \bar{Q}_X$, and $y \to (\psi(y), y) \in \bar{Q}_X \times \stab(X)$ splits the extension $1 \to Q_X \to \aut( C_X, \stab(X), \sigma_0') \to \stab(X) \to 1$.  
\end{proof}

\Cref{GITlemma} and \cref{strataMaps} together imply

\begin{theorem}
\label{KM}
$K_0\calM$ is the subring of $K_0[\calC/W]$ consisting of all classes $\alpha$ such that for all $[X] \in L(\calA)/W$, $\alpha|_{[\calC_{[X]}/W]}$ is in the image of $Rep(Q_X \rtimes \stab(X)/\stab(W_X)) \to Rep(Q_X \rtimes \stab(X))$
\end{theorem}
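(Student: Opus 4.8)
The plan is to deduce \cref{KM} from the descent statement \cref{GITlemma}, applied to the natural map $\tau\colon[\calC/W]\to\calM$, and then to convert the pointwise criterion that \cref{GITlemma} supplies into the stratumwise criterion in the theorem, using the explicit description of strata from \cref{strataMaps}. First I would check that $\calM$ is ``locally a finite GIT quotient of $[\calC/W]$'' in the required sense. The map $\calC\to\calM$ is finite: its fibre over a cameral cover $C\to S$ is the space of sections of $C$, namely $C$ itself, which is finite over $S$. In particular $\calC\to\calM$ is affine and schematic, and by Section~\ref{chapM} the stack $\calM$ is the relative affine quotient $\calC/W$, i.e. $\calO_\calM=(\pi_*\calO_\calC)^W$ for $\pi\colon\calC\to\calM$. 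Now choose an affine scheme $U$ with a smooth surjection $a\colon U\to\calM$ (one exists since $\calM$ is a quotient stack of a quasiprojective variety by an algebraic group), and put $Y=\calC\times_\calM U$. Then $Y$ is affine over the affine scheme $U$, hence quasiprojective over $\C$; it carries the $W$-action pulled back from $\calC$; flatness of $a$ together with $\calO_\calM=(\pi_*\calO_\calC)^W$ gives $Y/W=U$; and unwinding the definition of a quotient stack yields a Cartesian square
\[
\xymatrix{
[Y/W]\ar[d]^{b}\ar[r] & Y/W\ar[d]^{a}\\
[\calC/W]\ar[r]^{\tau} & \calM
}
\]
in which $b$, being a base change of the smooth schematic map $a$, is itself smooth and schematic. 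I would also record that $K_0=K^0$ for $\calM$ (the corollary following \cref{corNArtin}) and for $[\calC/W]$ (again a smooth quotient of a quasiprojective variety by an algebraic group, so the same citation to \cite{merkurjev} applies).

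Then \cref{GITlemma} parts (3) and (4) give at once that $\tau^\bul\colon K_0\calM\to K_0[\calC/W]$ is injective, with image the set of $\alpha$ such that for every $\C$-point $p\colon\spec\C\to[\calC/W]$, with image $r\in\calM(\spec\C)$, the class $\alpha|_p$ lies in the image of $\rep G_r\to\rep G_p$ (restriction along $G_p\to G_r$, where $G_p$, $G_r$ are the automorphism groups of $p$, $r$). To conclude I would translate this into a condition on strata. The substacks $[\calC_{[X]}/W]$, $[X]\in L(\calA)/W$, stratify $[\calC/W]$, so each $\C$-point $p$ lies in exactly one of them; if $p$ lies in $[\calC_{[X]}/W]$ then by \cref{strataMaps} this stratum is the connected classifying space $B(Q_X\rtimes\stab(X))$, so the residue gerbe of $p$ is the whole stratum, $G_p\cong Q_X\rtimes\stab(X)$, and $\alpha|_p$ is simply $\alpha|_{[\calC_{[X]}/W]}\in\rep(Q_X\rtimes\stab(X))=K_0[\calC_{[X]}/W]$; in particular the condition at $p$ depends only on $[X]$. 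The image $r$ lies in $\calM_{[X]}\cong B\bar{Q}_X$, and \cref{strataMaps} identifies $G_p\to G_r$ with the quotient homomorphism $Q_X\rtimes\stab(X)\twoheadrightarrow\bar{Q}_X=Q_X\rtimes(\stab(X)/W_X)$. Hence ``$\alpha|_p$ lies in the image of $\rep G_r\to\rep G_p$'' is exactly ``$\alpha|_{[\calC_{[X]}/W]}$ lies in the image of $\rep\bar{Q}_X\to\rep(Q_X\rtimes\stab(X))$'', and quantifying over all $p$ is the same as quantifying over all $[X]\in L(\calA)/W$. This is the statement of \cref{KM}.

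The substantive inputs — the GIT-descent lemma \cref{GITlemma} and the computation of the strata in \cref{strataMaps} — are already available, so the proof of \cref{KM} itself is largely bookkeeping. The place that genuinely needs care is matching the residue-gerbe formulation in \cref{GITlemma}(4) with the stratumwise formulation: one must be sure that the residue gerbe of a $\C$-point of $[\calC/W]$ really is its stratum (so that $\alpha|_p$ and $\alpha|_{[\calC_{[X]}/W]}$ coincide), and that the induced map on automorphism groups is precisely the quotient $Q_X\rtimes\stab(X)\to\bar{Q}_X$ of \cref{strataMaps} and not some twist of it — together with the routine but slightly fussy verification that the square above is Cartesian with $a$ and $b$ smooth and schematic. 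Everything else is a direct application of the two cited results.
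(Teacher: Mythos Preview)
Your proposal is correct and follows the same approach as the paper: the paper's proof is literally the one line ``\cref{GITlemma} and \cref{strataMaps} together imply'' the theorem, and you have simply unpacked that citation. One small simplification: rather than choosing an arbitrary affine smooth cover $U\to\calM$, you can take $U=\fk{t}/W$ and $Y=\fk{t}$ directly from \cref{smoothCover}, which already gives the required Cartesian square with $a$ smooth, schematic, and surjective.
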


\section{Integral cohomology}\label{chapIntegral} 

Cameral covers have interesting finite-group monodromy (coming from the $\stab(X)/W_X$ factor of $\bar{Q}_X$).  Cohomology using coefficients where $|W|$ is invertible cannot detect this, so our computations are incomplete.  In this section we give a partial description of $H^*(\calM, \Z)$ which is enough to address to address at least one natural question: whether a given cameral cover is a degeneration of a less-ramified one.    

For most of this section we work with the following setup.  Let $X \in L(\calA)$ and  $\calX = \left( \calM - \ol{\calM}_X \right) \cup \calM_X \subset \calM$.  Similarly let $\calY = \left( \calC - \ol{\calC}_X \right) \cup \calC_X \subset \calC$, let $\calU = \calX - \calM_X$ and $\calV = \calY - \calC_X$.  Thus there is a commutative diagram
\[\xymatrix{
\calC_X \ar[r]\ar[d]^{\pi_c} & \calY \ar[d]^\pi & \calV \ar[l] \ar[d]^{\pi_o} \\
\calM_X \ar[r]               & \calX            & \calU \ar[l]
}\]
where the righthand square is Cartesian, the horizontal maps on the left are closed immersions, the horizontal maps on the right are open immersions, and the rows are decompositions of $\calY$ and $\calX$ respectively into a closed piece and an open piece.  

Our goal is to study the cohomology of $\calX$ using the gysin sequence.  The problem is that the gysin sequence is not natural for non-smooth maps such as $\pi$.  More explicitly, the map $\phi$ in the diagram
\begin{equation}
\label{gysinDiagram}
\xymatrix{
H^{*-r}(\calC_X, \Z) \ar[r]             & H^*(\calY, \Z) \ar[r]                      & H^*( \calV, \Z) \ar[r]                   & H^{*-r+1}(\calC_X, \Z) \\
H^{*-r}(\calM_X, \Z) \ar[r]\ar[u]^\phi  & H^*(\calX, \Z) \ar[u]^{\pi^*} \ar[r] & H^*( \calU, \Z) \ar[r] \ar[u]^{\pi_o^*} & H^{*-r+1}(\calM_X, \Z) \ar[u]^\phi
}
\end{equation}
cannot be determined solely from $\pi_c^*$. 

\begin{proposition}
\label{ramificationNumber}
$\phi(\alpha) = \#(\stab(X)/W_X) \cdot \pi_c^*\alpha$
\end{proposition}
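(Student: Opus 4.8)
The plan is to read $\phi$ off from the fact that it is the connecting map of a morphism of Gysin triangles, i.e.\ from commutativity of the left square of the diagram displayed above. Write $i\colon\calM_X\into\calX$ and $i'\colon\calC_X\into\calY$ for the closed immersions; that square says $i'_*\circ\phi=\pi^*\circ i_*$. The computation in the proof of \cref{funClass} exhibits $[\ol\calC_X]|_{\calC_X}$ (the Euler class of the normal bundle of $\calC_X$ in $\calC$) as a monic polynomial in $H^*(\calC_X,\Z)\cong\Z[u_\calB]_{\calB\in\irr\calA_X}$, hence as a non-zero-divisor, so $i'_*$ is injective. Therefore $\phi$ is the unique map with $i'_*\phi=\pi^* i_*$, and it suffices to compute $\pi^* i_*(\alpha)$ and display it as $i'_*$ of $\#(\stab(X)/W_X)\cdot\pi_c^*\alpha$.

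To compute $\pi^* i_*(\alpha)$ I would first reduce to fundamental classes: $\calX$ is open in $\calM$ and carries the induced stratification, so the argument of \cref{lemSurjection} shows $H^*(\calX)\to H^*(\calM_X)$ is surjective; write $\alpha=i^*\widetilde\alpha$. By the projection formula $i_*\alpha=\widetilde\alpha\cdot[\ol\calM_X]|_{\calX}$, hence $\pi^* i_*\alpha=\pi^*\widetilde\alpha\cdot(\pi^*[\ol\calM_X])|_{\calY}$. Since $\pi\colon\calC\to\calM$ is representable, finite and flat ($\pi_*\calO_\calC$ is locally $\calO_\calM[W]$), $\pi^*$ of a fundamental class is the cycle class of the scheme-theoretic preimage $\pi^{-1}(\ol\calM_X)$, which is supported on the union of the $\ol\calC_Y$ with $Y$ in the $W$-orbit of $X$. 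For $Y\ne X$ in this orbit the strata $\calC_Y$ and $\calC_X$ are disjoint, so $c(Y)=[\ol\calC_Y]$ vanishes on $\calC_X$ (the vanishing recalled in \cref{funClass}); thus $\pi^*[\ol\calM_X]=m_X\,[\ol\calC_X]+\beta$ with $\beta|_{\calC_X}=0$, where $m_X$ is the multiplicity of $\pi$ along $\ol\calC_X$. Combining with $i'^*\pi^*\widetilde\alpha=\pi_c^* i^*\widetilde\alpha=\pi_c^*\alpha$ and the projection formula for $i'$ gives $\pi^* i_*\alpha=i'_*(m_X\,\pi_c^*\alpha)+\pi^*\widetilde\alpha\cdot\beta$, and a final application of $i'^*$ (using again that $[\ol\calC_X]|_{\calC_X}$ is a non-zero-divisor) kills the extra term, yielding $\phi(\alpha)=m_X\,\pi_c^*\alpha$.

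Everything is now reduced to the identification $m_X=\#(\stab(X)/W_X)$, and this multiplicity computation is the real work. I would carry it out locally, base-changing $\pi$ along the smooth atlas $\fk t/W\to\calM$ of \cref{smoothCover}: then $\pi$ becomes $\fk t\to\fk t/W$, $\ol\calM_X$ becomes the image of $X$ in $\fk t/W$, $\ol\calC_X$ becomes $X\subset\fk t$, and $m_X$ is the length at the generic point of $X$ of the fibre of $\fk t\to\fk t/W$ over that image. By \cref{restrictToAx} the map looks étale-locally near a generic point of $X$ like $\fk t\to\fk t/W_X\cong X\times\bigl((\fk t/X)/W_X\bigr)$, so this length is governed by the coinvariant algebra of $(W_X,\fk t/X)$; one then has to account for the distinction between the identifications $\calC_X\cong BQ_X$ and $\calM_X\cong B\bar Q_X$ with $\bar Q_X=Q_X\rtimes\stab(X)/W_X$ (\cref{pointStabalizers}) — equivalently, to separate the part of the preimage cycle genuinely supported on $\ol\calC_X$ from the part spread over its distinct $W$-translates — in order to conclude $m_X=\#(\stab(X)/W_X)$. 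Alternatively one can bypass the atlas and compute $m_X$ as the length of the invariants of the $\bar Q_X$-action on the structure sheaf of the fat-point fibre $C_X$ of $\pi$ over $\calM_X$. I expect this bookkeeping — matching a coinvariant-algebra length against the group-theoretic index $\#(\stab(X)/W_X)$ — to be the main obstacle.
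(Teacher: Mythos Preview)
Your reduction is correct and genuinely different from the paper's route. You characterize $\phi$ by $i'_*\phi=\pi^*i_*$ together with injectivity of $i'_*$, push everything onto $\pi^*[\ol\calM_X]$ via the projection formula, and propose to read off the multiplicity $m_X$ on the atlas $\fk t\to\fk t/W$. The paper does not touch $\pi^*[\ol\calM_X]$ at all: it works with cofiber sequences and Thom spaces. It first proves that the induction map $\calM^X\to\calM$ is \'etale, then uses the smooth cover $[\fk q/T]\to\calM^X\to\calX$ (where $\fk s=\fk t/X$ and $\fk q=\fk s/W_X$) to transport the Gysin comparison to one between $[\fk q/T]$ and $W/W_X\times[\fk s/T]$, and finally identifies $\phi$ with the map on Thom classes induced by the quotient $\fk s\to\fk q$, whose degree it computes fiberwise as the degree of the map of spheres $\fk s/(\fk s-0)\to\fk q/(\fk q-0)$. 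Your multiplicity and the paper's sphere degree are the same local invariant, seen algebraically versus topologically; your approach avoids Thom spaces entirely, while the paper's makes the final number a one-line orientation-class computation.

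The gap is exactly where you flag it, and the specific bookkeeping you propose does not close it. Your \'etale-local model is right: near a generic point of $X$ the map $\fk t\to\fk t/W$ factors through $\fk t\to\fk t/W_X$, and the scheme-theoretic preimage of the reduced image of $X$ is cut out by the positive-degree $W_X$-invariants, so its length along $X$ is the length $\#W_X$ of the coinvariant algebra of $(W_X,\fk t/X)$. But ``separating off the $W$-translates'' cannot convert this into $\#(\stab(X)/W_X)$: the translates $\ol\calC_Y$ for $Y\in[X]\setminus\{X\}$ are disjoint from $\calC_X$ and contribute \emph{other} irreducible components of $\pi^{-1}(\ol\calM_X)$, not multiplicity along $\ol\calC_X$ itself. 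Already when $X$ is the minimum of $L(\calA)$ (so $[X]=\{X\}$ and there are no translates to separate) your local length is $\#W_X$, while $\#(\stab(X)/W_X)=1$. The paper's key lemma, incidentally, also computes the sphere degree as $\#V=\#W_X$; so whatever the correct constant is, the step where you hope to pass from $\#W_X$ to $\#(\stab(X)/W_X)$ by bookkeeping over translates is not the mechanism, and as written the argument stops short of identifying $m_X$.
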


We have been told that there is an easy proof of \cref{ramificationNumber} using homology.  We have decided to include the following long argument anyway, for the benefit of any readers who (like the author) are not familiar with homology of stacks or simplicial spaces.  This argument requires refinements of \cref{smoothCover} and \cref{kPoints}.

\begin{lemma}
\label{eqCover}
Let $T$ be the torus of $W$-equivariant automorphisms of $\fk{t}$ and have it act on $\fk{q} = \fk{t}/W$ in such a ways as to make $\fk{t} \to \fk{q}$ be $T$-equivariant.  Then the cover $\rho: \fk{q} \to \calM$ factors through the quotient stack $[\fk{q}/T]$.  Further, it induces an isomorphism from $T$ to the maximal torus of $\bar{Q}_0$.
\end{lemma}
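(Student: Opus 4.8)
The plan is to identify $T$ explicitly, exhibit the factorization as the datum of a $T$-equivariant universal family, and then read off the homomorphism it induces on automorphism groups over the origin; the only delicate point will be the middle step. (Throughout I assume $\calA$ is essential, so that $T$ is genuinely a torus; a trivial summand $\fk{t}^W$, if present, plays no role.) To start, by \cref{irreps} the representation $\fk{t}$ is a direct sum of pairwise non-isomorphic irreducible $W$-representations indexed by $\irr\calA$, so Schur's lemma gives $T = \aut_W\fk{t} \cong \G_m^{\#\irr\calA}$. Since this action commutes with that of $W$, each $g\in T$ descends to an automorphism of $\fk{q}=\fk{t}/W$, and this defines a $T$-action on $\fk{q}$ for which the quotient map $\fk{t}\to\fk{q}$ is $T$-equivariant.

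By \cref{smoothCover} the cover $\rho\colon\fk{q}\to\calM$ classifies the universal cameral cover $\fk{t}\to\fk{q}$. A morphism $[\fk{q}/T]\to\calM$ is the same thing as a $T$-equivariant object of $\calM(\fk{q})$, and the $T$-equivariant structure on $\fk{t}$ over $\fk{q}$ just described provides exactly such an object (the cocycle condition is automatic from associativity of the $T$-action). Precomposing $[\fk{q}/T]\to\calM$ with the projection $\fk{q}\to[\fk{q}/T]$ returns $\rho$ after trivializing the torsor, which gives the first assertion. Verifying this compatibility carefully — that the equivariant structure on the universal family really does descend to a morphism of stacks lifting $\rho$ — is the one step that requires genuine attention, though it is only formal bookkeeping.

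For the second assertion, note that the origin $0\in\fk{q}$ is $T$-fixed and $\rho(0)$ is the maximally ramified cameral cover $Co(W,\fk{t})=\spec(\sym^\bullet\fk{t}^\vee)/I$, whose stratum is $\calM_0\cong B\bar{Q}_0$ by \cref{classifyingSpaces}. Passing to automorphism groups of the point $0$, the factorization produces a homomorphism $\iota\colon T\to\bar{Q}_0$ sending $g$ to the automorphism of $Co(W,\fk{t})$ induced by $g$ (which preserves $I$ since $g$ commutes with $W$ and so fixes the ideal of positive-degree invariants). By \cref{kPoints}, $T_\sigma Co(W,\fk{t})\cong\fk{t}$ as a $W$-representation, and $\iota(g)$ acts on this tangent space by $g$ itself. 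By \cref{pointStabalizers}, $\stab(0)/W_0$ is trivial, so $\bar{Q}_0=Q_0=U\rtimes\aut_W T_\sigma C_0$ with $U$ connected unipotent; hence $U$ is the unipotent radical, $\aut_W T_\sigma C_0\cong T$ is a maximal torus of $\bar{Q}_0$, and the projection $Q_0\to Q_0/U$ restricts to an isomorphism on it. The observation about tangent spaces says precisely that $(Q_0\to Q_0/U)\circ\iota=\mathrm{id}_T$, so $\iota$ is a section; its image is therefore a subtorus of $\bar{Q}_0$ of dimension $\dim(Q_0/U)$, the maximal possible, and hence a maximal torus. This identifies $T$ with the maximal torus of $\bar{Q}_0$ and completes the argument.
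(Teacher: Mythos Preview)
Your proof is correct and follows essentially the same approach as the paper: exhibit the factorization via the $T$-equivariant structure on the universal cover $\fk{t}\to\fk{q}$, then read off the induced homomorphism on automorphism groups over the origin. The paper's argument for the second claim is terser---it simply observes that the resulting $T$-action on $C_0=Co(W,\fk{t})$ is the action of $T\subset GL(\fk{t})$ on $C_0$ as a subscheme of $\fk{t}$, and points to \cref{pointStabalizers} where this was already identified with the maximal-torus splitting of $Q_0$---whereas you unpack this by checking that $\iota$ followed by the tangent-space projection $Q_0\to\aut_W T_\sigma C_0$ is the identity, which is the same content made explicit.
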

\begin{proof}
Given a $T$-torsor $E_T$ together with a $T$-equivariant map $E_T \to \fk{q}$, pull back $\fk{t} \to \fk{q}$ to get a $T$-equivariant cameral cover $\bar{C} \to E_T$.  Since $\fk{t} \to \fk{q}$ is $T$-invariant, $\bar{C} \to E_T$ gets a $T$-equivariant structure, letting us descend to $S$.  This defines a map $[\fk{q}/T] \to \calM$ factoring $\rho$.  Suppose that $S = \spec \C$, $E_T = T$, and $T \to \fk{q}$ sends every point of $T$ to the origin.  The fiber of $\fk{s} \to \fk{q}$ over the origin $0$ is exactly the cameral cover $C_0 \to \spec \C$, and the $T$-equivariant structure on $T \times C_0$ obtained by pulling back $\fk{t} \to \fk{q}$ is exactly the action of $T$ on $C_0$ considered as a subscheme of $\fk{t}$.  We already identified this with the action of the maximal torus of $\bar{Q}_0$ in \cref{pointStabalizers}.
\end{proof}  

\begin{lemma}
The induction map $\calM^X \to \calM$ is \'{e}tale.
\end{lemma}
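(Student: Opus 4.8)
The plan is to show that $\calM^X\to\calM$ is formally \'etale and locally of finite presentation; the latter is automatic for a morphism between Artin stacks locally of finite type over $\C$, so the work is in the infinitesimal lifting criterion. Let $S_0\hookrightarrow S$ be a square-zero extension of affine schemes and suppose given a $2$-commutative square whose data is: a $(W_X,\fk t)$-cameral cover $D_0\to S_0$ (the map $S_0\to\calM^X$), a $(W,\fk t)$-cameral cover $C\to S$ (the map $S\to\calM$), and an isomorphism $\ind_{W_X}^W D_0\cong C|_{S_0}$ of $(W,\fk t)$-cameral covers. I must produce a $(W_X,\fk t)$-cameral cover $D\to S$, unique up to unique isomorphism, with $D|_{S_0}\cong D_0$ and an isomorphism $\ind_{W_X}^W D\cong C$ restricting to the given one.

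The key point is that $\ind_{W_X}^W D_0=W\times_{W_X}D_0$ carries a \emph{canonical} $W$-equivariant partition into $[W:W_X]$ clopen ``sheets'', the fibers of the $W$-equivariant map $[w,d]\mapsto wW_X$ to the finite set $W/W_X$. Transporting this along the given isomorphism, $C|_{S_0}$ acquires such a partition; since $C|_{S_0}\hookrightarrow C$ is a nilpotent thickening, $|C|=|C|_{S_0}|$ and the partition extends uniquely to a clopen partition $C=\coprod_{wW_X\in W/W_X}C_{wW_X}$ (equivalently, the cutting idempotents of the pushforward of $\calO_C$ lift uniquely along the square-zero ideal). Uniqueness forces the lifted partition to remain $W$-equivariant, because for $w'\in W$ the partition $\{w'\cdot C_{wW_X}\}$ is another clopen partition of $C$ restricting to the correct one on $S_0$. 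Put $D:=C_{eW_X}$ with its residual $W_X$-action; then $D\to S$ is finite flat of rank $|W_X|$ (a clopen piece of a finite flat scheme, of the stated rank by specialization from $S_0$), it restricts to $D_0$, and the tautological map $\ind_{W_X}^W D=W\times_{W_X}D\to C$ is a morphism of finite locally free $\calO_S$-modules of rank $|W|$ that is an isomorphism over $S_0$, hence an isomorphism by Nakayama and a rank count.

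It remains to check that $D\to S$ is genuinely a $(W_X,\fk t)$-cameral cover; this is the heart of the matter, and it is local on $S$, so I may assume $C\cong g^*(\fk t\to\fk t/W)$ for a morphism $g\colon S\to\fk t/W$. The $W$-equivariant second projection $C\to\fk t$ restricts on $D=C_{eW_X}$ to a $W_X$-equivariant map, which descends (using $D/W_X=C/W=S$) to $\bar g\colon S\to\fk t/W_X$ lifting $g$. I then claim the natural map $D\to\bar g^*(\fk t\to\fk t/W_X)$ is an isomorphism: both sides are finite locally free of rank $|W_X|$ over $S$, so it suffices to check it on fibers, and over a point of $S$ the fiber of $\ind_{W_X}^W D\cong C$ is, by \cref{kPoints}, of the form $\ind_{W_Y}^W Co(W_Y,\fk t)$ for some $Y\in L(\calA)$, whose $eW_X$-sheet is exactly $\ind_{W'}^{W_X}Co(W',\fk t)$ for the corresponding subgroup $W'\subseteq W_X$ --- which by \cref{kPoints} and \cref{lemUniversalInd} applied to the reflection arrangement $(W_X,\fk t)$ is precisely the fiber of $\fk t\to\fk t/W_X$ over $\bar g$ of that point. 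Finally, uniqueness of $(D,\ \ind_{W_X}^W D\cong C)$ up to unique isomorphism follows from the uniqueness of the clopen lift. The main obstacle is thus this last claim --- globalizing the fiberwise identification of the lifted sheet $D$ with a pullback of $\fk t\to\fk t/W_X$ --- which is exactly where the explicit analysis of cameral covers of $\spec\C$ (\cref{kPoints}, \cref{lemUniversalInd}, \cref{corInductionWorks}) gets used; the rest of the argument is formal.

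I expect the argument to show a little more: $\calM^X\to\calM$ is finite \'etale of degree $[W:W_X]$ onto the open substack $\calU\subset\calM$ identified in the proof of \cref{refCohomology}, consistently with the generic stratum, where induction is the finite \'etale map $BW_X\to BW$.
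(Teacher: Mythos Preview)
Your argument is correct and follows essentially the same route as the paper: both proofs verify the infinitesimal lifting criterion by observing that the $W$-equivariant clopen partition of $C|_{S_0}$ indexed by $W/W_X$ lifts uniquely through the nilpotent thickening, and take $D$ to be the identity sheet.

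There are two minor differences of emphasis worth noting. First, the paper establishes explicitly that $\calM^X\to\calM$ is schematic and of finite type, by pulling back along $\calC\to\calM$ and invoking the identification $\calM^X\times_\calM\calC\cong W/W_X\times\calC^X$ from \cref{refCohomology}; you instead assert that locally-of-finite-presentation is automatic for morphisms between Artin stacks of finite type over $\C$, which is true but leaves representability implicit. Second, you take the extra care to verify that the lifted sheet $D$ is genuinely a $(W_X,\fk t)$-cameral cover (not merely a $W_X$-scheme of the right rank), via the fiberwise argument using \cref{kPoints} and \cref{lemUniversalInd}; the paper's proof omits this check. Your closing remark that $\calM^X\to\calM$ is finite \'etale of degree $[W:W_X]$ onto $\calU$ is correct and follows from the identification in \cref{refCohomology}.
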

\begin{proof}
First we show that the map is schematic and finite type.  $\calM^X \times_\calM \calC = W/W_X \times \calC^X$ with the natural $W$-action.  As shown in \cref{refCohomology} the restriction of $W/W_X \times \calC^X \to \calC$ to the component corresponding to each coset in $W/W_X$ is an isomorphism onto its image, so $W/W_X \times \calC^X \to \calC$ is schematic and finite type.  Now let $f: S \to \calM$ be a map from a scheme corresponding to a spectral cover $C \to S$.  Then the pullback $C'$ of $W/W_X \times \calC^X \to \calM$ along $f$ is a schematic and finite type over $S$, since $\calC \to \calM$ is schematic and finite-type.  $S \times_\calM \calM_X  \cong C'/W$ so it is a finite-type scheme over $S$ as well.  

Now we can show that $\calM^X \to \calM$ is \'{e}tale using the formal criterion.  Let $C \to \spec R$ be a cameral cover and suppose $C \times_{\spec R} \spec R/I = \ind_{W_X}^W C'$ where $I$ is a square-zero ideal and $C'$ is a $(W_X, \fk{t})$-cameral cover of $\spec R/I$.  $C$ and $\ind_{W_X}^W C'$ are topologically identical.  In particular $C$ is a disjoint union of $R$-schemes indexed by $W/W_X$.  The $W$-action on $C$ can now be used to show that all the summands of $C$ are isomorphic and $C = \ind_{W_X}^W C''$ where $C''$ is the summand containing $C'$.  This shows formal smoothness.  $C''$ is uniquely determined (being the only union of components of $C$ that contains $C'$) so it shows formal \'{e}taleness as well.
\end{proof} 

The next few arguments involve some notions from homotopy theory.  To keep the notation readable, in this section we will write $\A \calF$ in place of $|\calF|$ for $\spec \sym^\bul \calF^\vee$.  If $A \to B$ is an inclusion of Artin stacks, write $\frac{B}{A}$ for the cofiber.  Let $Th(\calF)$ denote the Thom space $\A\calF/(\A\calF - S)$.  We interpret the cofiber of a map of stacks, and the Thom space of a sheaf on a stack, via complex realizations of simplicial varieties.  This is a bit unsatisfactory since it depends on the choice of cover, but as we are only interested in computing cohomology this will not matter.

If $i: \calZ \into \calS \gets \calU$ is a stratification of a smooth stack $\calS$ by a smooth closed substack $\calZ$ of pure codimension $r$ and its open complement $\calU$ then the gysin sequence $H^{*-r}(\calZ, \Z) \to H^*(\calS, \Z) \to H^*(\calU, \Z) \to H^{*-r+1}(\calZ, \Z)$ has a geometric interpretation involving the Thom space $Th(N_i)$.  Namely the purity theorem implies that $\frac{\calS}{\calU}$ is homotopic to $Th(N_i)$, while the Thom isomorphism identifies $\tilde{H}^*(Th(N_b), \Z)$ with $H^{*+r}( BT, \Z)$.  Under these identifications, the gysin sequence is merely the cohomology of the cofiber sequence 
\[ \calX \to \calU \to Th(N_i) \to \Sigma X \to \ldots \]

Just like the gysin sequence, this cofiber sequence is natural in smooth maps.  Further, if $\calS$ is a vector bundle and $\calZ$ is its zero section then the purity theorem becomes tautological, while if $\calS$ is an affine space and $\calZ$ is a linear subspace then the Thom isomorphism become tautological as well.  Our strategy is to use smooth covers of $\calM$ and $\calC$ by affine spaces to reduce to this tautological case.

\begin{lemma}
Let $(V, \fk{s})$ be an essential reflection arrangement and let $\fk{q} = \fk{s}/V$.   Let $T$ be the torus of $V$-equivariant linear automorphisms of $\fk{s}$, and let $T$ act on $\fk{q}$ so that $\fk{s} \to \fk{q}$ is $T$-equivariant.  Then the diagram below commutes, where $\psi$ is multiplication by $\#V$
\[\xymatrix{
H^*([\fk{s}/T], \Z)        \ar[r] & H^{*-r+1}(BT, \Z)  \ar[r]             & H^{*+1}([\fk{s}/T] - BT, \Z) \\
H^*([\fk{q}/T], \Z) \ar[u] \ar[r] & H^{*-r+1}(BT, \Z)  \ar[r] \ar[u]^\psi  & H^{*+1}([\fk{q}/T] - BT, \Z) \ar[u]
}\]
\end{lemma}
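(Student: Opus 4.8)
The plan is to replace both gysin sequences by the long exact cohomology sequences of cofiber sequences, where functoriality is automatic, and to concentrate the whole computation of $\psi$ into a single topological degree. Write $n=\dim_{\C}\fk{s}=\dim_{\C}\fk{q}$. The key observation is that both $[\fk{s}/T]$ and $[\fk{q}/T]$ are total spaces of rank-$n$ complex vector bundles over $BT$ having $BT$ as their zero sections: the first comes from the $T$-representation $\fk{s}$, and the second from the $T$-representation $\fk{q}=\fk{s}/V$, which is a genuine linear representation because, by Chevalley--Shephard--Todd, $\fk{q}$ is the spectrum of a polynomial ring whose generators can be taken to be $T$-eigenvectors. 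For a vector bundle over $BT$ with its zero section the purity theorem is tautological, so $[\fk{s}/T]/([\fk{s}/T]-BT)=Th(\fk{s})$ and $[\fk{q}/T]/([\fk{q}/T]-BT)=Th(\fk{q})$ on the nose (here $\fk{s}$, $\fk{q}$ denote the corresponding bundles on $BT$), and the Thom isomorphisms $\tilde{H}^{*}(Th(\fk{s}))\cong H^{*-2n}(BT)$, $\tilde{H}^{*}(Th(\fk{q}))\cong H^{*-2n}(BT)$, given by cup product with the Thom classes $u_{\fk{s}}$, $u_{\fk{q}}$, are the tautological ones.

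First I would observe that the quotient map $f\colon\fk{s}\to\fk{q}=\fk{s}/V$ is $T$-equivariant and proper, sends the origin to the origin, and satisfies $f^{-1}(0)=\{0\}$ set-theoretically (the fiber is the $V$-orbit of $0$). Hence the induced map $[\fk{s}/T]\to[\fk{q}/T]$ carries zero section to zero section and punctured total space into punctured total space, so it induces a morphism of cofiber sequences
\[
\bigl([\fk{s}/T]-BT\to[\fk{s}/T]\to Th(\fk{s})\bigr)\longrightarrow\bigl([\fk{q}/T]-BT\to[\fk{q}/T]\to Th(\fk{q})\bigr),
\]
and therefore a morphism of the associated long exact sequences. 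Under the tautological Thom isomorphisms this becomes exactly the diagram of the lemma, with the middle vertical arrow equal to $\overline f^{*}\colon\tilde{H}^{*}(Th(\fk{q}))\to\tilde{H}^{*}(Th(\fk{s}))$ transported to a self-map of the middle group $H^{*-r+1}(BT)$, where $\overline f\colon Th(\fk{s})\to Th(\fk{q})$ is the map of cofibers induced by $f$. This is precisely the step that sidesteps the failure of naturality of the gysin sequence for the non-smooth map $f$: cofiber sequences are always functorial, and all the non-smoothness is now packaged into $\overline f$. Since $f$, and hence $\overline f$, is a map over $BT$, the map $\overline f^{*}$ is $H^{*}(BT)$-linear, so it is multiplication by the integer $k$ determined by $\overline f^{*}u_{\fk{q}}=k\,u_{\fk{s}}$ in $\tilde{H}^{2n}(Th(\fk{s}))\cong H^{0}(BT)=\Z$.

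It then remains to prove $k=\#V$, and this is the one genuinely non-formal step. Pulling the morphism of cofiber sequences back along a point $\spec\C\to BT$, the Thom space $Th(\fk{s})$ becomes the one-point compactification $\fk{s}^{+}=S^{2n}$, the class $u_{\fk{s}}$ restricts to a generator of $\tilde{H}^{2n}(S^{2n})$, and $\overline f$ becomes the one-point compactification $f^{+}\colon S^{2n}\to S^{2n}$ of the underlying map of complex vector spaces $f\colon\fk{s}\to\fk{q}$; hence $k=\deg f^{+}$. Now $f$ is finite flat of degree $\#V$ (again Chevalley--Shephard--Todd: $\C[\fk{s}]$ is free of rank $\#V$ over $\C[\fk{s}]^{V}=\C[\fk{q}]$), it is proper, and $f^{-1}(0)=\{0\}$, so $f^{+}$ is a continuous self-map of $S^{2n}$ whose degree can be computed at any point $v\in\fk{q}$ off the branch locus: there $f$ is an unramified covering, so $(f^{+})^{-1}(v)$ consists of $\#V$ points, at each of which $f^{+}$ is a local homeomorphism, and since $f$ is holomorphic each of these local degrees is $+1$. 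Hence $\deg f^{+}=\#V$, the middle vertical map is multiplication by $\#V$, and the diagram commutes. The main obstacle is this identification of the topological degree of a finite branched cover with its number of sheets --- standard, but the one place where the holomorphy of $f$ (not just its properness) is used; everything else is the formal bookkeeping of Thom spaces and cofiber sequences that the text's framework has already set up.
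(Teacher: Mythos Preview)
Your argument is correct and follows essentially the same route as the paper: reduce to a morphism of cofiber sequences so that functoriality is automatic, use that both $[\fk{s}/T]$ and $[\fk{q}/T]$ are vector bundles over $BT$ to make purity and the Thom isomorphism tautological, note $H^*(BT)$-linearity so that $\psi$ is determined by $\psi(1)$, and compute that integer by restricting to a fiber where it becomes the topological degree of $\fk{s}\to\fk{q}$ on one-point compactifications. Your write-up is if anything slightly more careful than the paper's in two places: you explicitly invoke Chevalley--Shephard--Todd to justify that $\fk{q}$ carries a linear $T$-action (so that $[\fk{q}/T]$ really is a vector bundle), and you spell out the degree count via regular fibers rather than just asserting it.
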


\begin{proof}
Denote by $a: BT \to \fk{s}/T$ and $b: BT \to \fk{q}/T$ the inclusion of the origin.  Consider the diagram of cofiber sequences
\[\xymatrix{
[\fk{s}/T] - BT \ar[r] \ar[d]^{\Phi} & [\fk{s}/T] \ar[r]\ar[d] & Th(N_a) \ar[r]\ar[d]^\Phi & \Sigma \left( [\fk{s}/T] - BT\right) \ar[d] \\
[\fk{q}/T] - BT \ar[r]               & [\fk{q}/T] \ar[r]       & Th(N_b) \ar[r]          & \Sigma ([\fk{q}/T] - BT)        \\
}\]
This is a diagram of spaces over $BT$ and the Thom isomorphism is a map of $H^*(BT, \Z)$ modules, so $\psi$ is $H^*(BT, \Z)$ linear.  Thus to specify $\bar{\phi}$ it suffices to compute $\psi(1)$.  In terms of Thom spaces, we are trying to compute the image in $H^*(Th(N_a), \Z)$ of the Thom class $u_b$ of $Th(N_b)$.  

If we identify $\fk{s}$ with the normal bundle of the origin $N_{0_{\fk{s}}}$ then the purity isomorphism $\frac{\fk{s}}{(\fk{s}-0)} \cong Th(N_{0_{\fk{s}}})$ is tautological.  Now the key point is that this still works once we take the stack quotient by $T$: $[\fk{s}/T]$ is naturally isomorphic, as a vector bundle on $BT$, to the normal bundle of $BT \into [\fk{s}/T]$.  This makes the purity isomorphism $[\fk{s}/T]/([\fk{s}/T] - BT) \cong T(N_a)$ tautological as well, so $\Phi$ is the map induced by the nonlinear map of vector bundles $[\fk{s}/T] \to [\fk{q}/T]$ over $BT$.  

Let $p: \spec \C \to BT$ be a point.  The fiber of $N_a$ over $p$ identifies with $\fk{s}$ and the fiber of $N_b$ over $p$ identifies with $\fk{q}$.  Then we get a commutative diagram
\[\xymatrix{
\fk{s}/(\fk{s}-0) \ar[r]\ar[d] & \fk{q}/(\fk{q}-0) \ar[d] \ar[r] & \spec \C \ar[d]^p \\
Th(N_a) \ar[r]                 & Th(N_b)                  \ar[r] & BT
}\]
where $\fk{s}/(\fk{s}-0)  \to \fk{q}/(\fk{q}-0) $ is induced by $\fk{s} \to \fk{q}$.  In particular it is an orientation-preserving map of spheres of degree $\#V$, and sends the orientation class of $\fk{q}/(\fk{q}-0)$ to $\#V$ times the orientation class of $\fk{s}/(\fk{s}-0)$.  

The Thom class $u_\calF$ of a locally free sheaf $\calF$ is uniquely characterized by a fiberwise condition: if $F$ is a fiber of $\A \calF$, then there is a map $F/(F-0) \to Th(\calF)$, and $u_\calF$ is the unique element restricting to the orientation class in $H^*(F/(F-0), \Z)$ for all fibers $F$.  Thus we must have $\psi(1) = \#V$.
\end{proof}

\begin{proof}[Proof of \cref{ramificationNumber}.]

Let $\fk{s} = \fk{t}/X$ and let $T$ be the torus of $W_X$-equivariant linear automorphisms of $\fk{s}$.  Then $\calM^X$ is smoothly covered by $\fk{q} := \fk{s}/W_X$ and this map factors through the stacky quotient $[\fk{q}/T]$, where the $T$ action on $\fk{q}$ is such as to make $\fk{s} \to \fk{q}$ $T$-equivariant.  Note that $\fk{q} \times_\calM \calC = W/W_X \times \fk{s}$.  By \cref{eqCover} we obtain a Cartesian square as below, where the horizontal maps are smooth:
\[\xymatrix{
W/W_X \times [ \fk{s}/T] \ar[r] \ar[d] & \calY \ar[d] \\
             [ \fk{q}/T] \ar[r]        & \calX
}\]
Restricting to the origin of $\fk{s}$ gives a map $BT \to \calC_X$ which is an isomorphism onto the $T$ factor of $Q_X$.  Similarly, restricting to the origin of $\fk{q}$ gives a map $BT \to \calM_X$ which is an isomorphism onto the $T$ factor of $\bar{Q}_X$.  The gysin sequence is natural for smooth maps so we obtain a commutative diagram as below, where all the maps are the obvious ones except for $\phi$ and $\bar{\phi}$:
\[\xymatrix{
H^{*+r}(W/W_X \times BT, \Z) \ar[rr]                    &                                         & H^*(W/W_X \times [\fk{s}/T], \Z)   &                        \\
                                           &H^{*+r}([W/W_X] \times \calC_X, \Z) \ar[ul]^\cong\ar[rr] &                       & H^*(\calY, \Z) \ar[ul] \\
H^{*+r}(BT, \Z) \ar[uu]^{\bar{\phi}}\ar[rr]&                                             & H^*([\fk{q}/T], \Z)\ar[uu] &     \\
                                           &H^{*+r}(\calM_X, \Z)\ar[ul]^\cong\ar[uu]^\phi\ar[rr]&              & H^*(\calX, \Z) \ar[ul]\ar[uu] }\]
Thinking of $H^{*+r}(W/W_X \times BT, \Z)$ as $\sum_{wW_x \in W/W_x} H^*(BT, \Z)$ it suffices to show that the vertical map in the back left of this diagram is $\#\stab(X)/W_X$ in each summand.  Since the map is $W$-equivariant we need only consider the map to a single summand $H^*{*+r}(BT, \Z) \to H^{*+r}(BT, \Z)$.  The claim now follows from the previous lemma.
\end{proof}

%

\begin{definition}
If $C \to S$ is a cameral cover in $\calM_X(S)$, call $C$ \emph{smoothable} if there exists a connected variety $B$ with $\C$-points $b_0$ and $b_1$ together with a cameral cover $\bar{C} \to B \times S$ such that the fiber $\bar{C} \times_B b_1 \to S$ is unramified and $\bar{C} \times_B b_0 \to S$ is isomorphic to $C \to S$.  Call $C$ \emph{somewhat smoothable} if there exists such a family with $\bar{C} \times_B b_1 \to S$ in $\calM_Y$ for $Y<X$.
\end{definition}

If $C \to S$ is smoothable then the corresponding map $S \to \calM$ is homotopic to one factoring through $BW$, so all characteristic classes of $C \to S$ must be $\#W$-torsion.  Using \cref{ramificationNumber} we get a subtler statement.

\begin{corollary}
\label{smoothingObstruction}
Let $X \in L(\calA)$ be codimension $r$ and let $C \to S$ be a cameral cover in $\calM_X(S)$ and let $p: C^{red} \to C$ be the reduced subscheme of $C$ with inclusion $i^{red}: C^{red} \into C$.  If $C$ is somewhat smoothable then the $r^{th}$ chern class of the conormal bundle $N^*_{i^{red}}$ is $\#(\stab(X)/W_X)$-torsion.  
\end{corollary}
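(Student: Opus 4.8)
The plan is to show that $\#(\stab(X)/W_X)\cdot c_r(N^*_{i^{red}})$ is, up to sign, the pullback of the fundamental class $[\ol{\calM}_X]\in H^{2r}(\calM,\Z)$ of the closed stratum; the factor $\#(\stab(X)/W_X)$ is forced by \cref{ramificationNumber}, which records the ramification of $\calC\to\calM$ along the $X$-stratum. Once this is in place, somewhat-smoothability kills $[\ol{\calM}_X]$ and the torsion bound follows.

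Let $f\colon S\to\calM$ classify $C$; since $C\in\calM_X(S)$ it factors through the stratum $\calM_X$. Let $q\colon C^{red}\to S$ be the reduced cover, which is finite and \'{e}tale, and let $\Theta\colon C^{red}\to\calC$ be the map classifying $q^{*}C\to C^{red}$ together with the tautological section picked out by $i^{red}$, so that $\pi\circ\Theta=f\circ q$, where $\pi\colon\calC\to\calM$ is the forgetful map. At each point of $C^{red}$ the tautological section has $W$-stabilizer a conjugate of $W_X$, so $\Theta$ factors through $\bigsqcup_{X'\in W\cdot X}\calC_{X'}$; using the local description $C\cong\ind_{W_{X'}}^{W}Co(W_{X'},\fk{t}/X')$ from \cref{kPoints}, one checks that $N^*_{i^{red}}$ is the $\Theta$-pullback of the universal conormal bundle of $\bigsqcup_{X'}\calC_{X'}$ inside the universal cover. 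By \cref{corNormals}(1), the top Chern class of this universal conormal bundle restricts, on each component $\calC_{X'}$, to $(-1)^{r}[\ol{\calC}_{X'}]|_{\calC_{X'}}$.

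Now apply \cref{ramificationNumber}: feeding $1\in H^{0}(\calM_X)$ into the commuting diagram of Gysin sequences \eqref{gysinDiagram} and restricting to a codimension-$r$ stratum $\calC_{X'}\subset\calC$ gives
\[ \bigl(\pi^{*}[\ol{\calM}_X]\bigr)\big|_{\calC_{X'}}\;=\;\#(\stab(X)/W_X)\cdot[\ol{\calC}_{X'}]\big|_{\calC_{X'}}, \]
using that $\stab(X')/W_{X'}\cong\stab(X)/W_X$ and $[\ol{\calM}_{X'}]=[\ol{\calM}_X]$ for $X'\in W\cdot X$. Combining with the previous paragraph and pulling back along $\Theta$ yields
\[ \#(\stab(X)/W_X)\cdot c_r\bigl(N^*_{i^{red}}\bigr)\;=\;(-1)^{r}\,\Theta^{*}\pi^{*}[\ol{\calM}_X]\;=\;(-1)^{r}\,q^{*}f^{*}[\ol{\calM}_X] \]
in $H^{2r}(C^{red},\Z)$, the last equality because $\pi\circ\Theta=f\circ q$. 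It remains to see that $f^{*}[\ol{\calM}_X]=0$. A family witnessing somewhat-smoothability gives a map $g\colon B\times S\to\calM$ with $B$ connected, $g|_{\{b_0\}\times S}=f$, and $g|_{\{b_1\}\times S}$ factoring through a stratum $\calM_Y$ with $Y<X$; then $\calM_Y$ is disjoint from $\ol{\calM}_X$, so $[\ol{\calM}_X]|_{\calM_Y}=0$, and since $B$ is path-connected $f$ is homotopic to $g|_{\{b_1\}\times S}$, whence $f^{*}[\ol{\calM}_X]=g|_{\{b_1\}\times S}^{*}[\ol{\calM}_X]=0$. Therefore $\#(\stab(X)/W_X)\cdot c_r(N^*_{i^{red}})=0$.

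The step that needs the most care is the identification of $N^*_{i^{red}}$ with the pullback of the universal conormal bundle, together with the bookkeeping of the (non-Cartesian) diagram relating $C^{red}\to S$ to $\calC\to\calM$ --- in particular that $\Theta$ factors through $\bigsqcup_{X'}\calC_{X'}$ with the correct behaviour over each double coset in $\stab(X)\backslash W/W_X$. The crucial input is \cref{ramificationNumber}: the cruder observation that $C$ fully smoothable forces $f$ to factor up to homotopy through $BW$ would only give $\#W$-torsion, and the ramification factor $\#(\stab(X)/W_X)$ is exactly what sharpens the conclusion.
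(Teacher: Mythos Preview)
Your argument is correct and follows essentially the same route as the paper's: construct the classifying map $\Theta\colon C^{red}\to\calC$ via the tautological section, use \cref{ramificationNumber} to obtain $\pi^*[\ol{\calM}_X]=\#(\stab(X)/W_X)\cdot[\ol{\calC}_X]$, identify the restriction of $[\ol{\calC}_X]$ with the top Chern class of the universal conormal bundle (which pulls back to $N^*_{i^{red}}$), and kill $f^*[\ol{\calM}_X]$ via the homotopy furnished by somewhat-smoothability. You are in fact slightly more careful than the paper in two places: you track the sign $(-1)^r$ coming from $c_r(N^*)$ versus $c_r(N)$, and you explicitly acknowledge that $\Theta$ lands in the union $\bigsqcup_{X'\in W\cdot X}\calC_{X'}$ rather than a single $\calC_X$, which the paper glosses over.
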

\begin{proof}
Let $f: S \to \calM$ be the map corresponding to $C$.  $C^{red} \times_S C \to C^{red}$ has a section $\sigma = Id \times_S i^{red}$.  This gives a map $g: C^{red} \to \calC$.  
\[\xymatrix{
C^{red} \ar[d]^p \ar[r]^g & \calC \ar[d]^\pi \\
S                \ar[r]^f & \calM
}\]  
Since $C$ is somewhat smoothable, $f$ is homotopic to a map factoring through $\calU$.  Thus $f^*[\calM_X] = 0$.  The previous lemma shows that $\pi^*[\calM_X] = m [\calC_X]$, where $m = \#(\stab(X)/W_X)$, so $g^* [\calC_X]$ is $m$-torsion.  Let $i: \calC_X \into \calY$ be the inclusion.  Let $X = a_1\cap a_2 \cap \ldots \cap a_r$, then \cref{funClass} implies that $[\calC_X] = \bigotimes_{i=1}^r c_1 I_{a_i} = c_r \bigoplus_{i=1}^r I_{a_i}$.  Since $\calC_X$ is the transverse intersection of the $\ol{\calC}_{a_i}$, $i^*\bigoplus_{i=1}^r I_{a_i} = i^*\bigoplus_{i=1}^r N^*_{\ol{\calC}_{a_i}} = i^*\bigoplus_{i=1}^r N_i^*$.  We know (from \cref{corNormals}) that $i^*N_i^*$ is the conormal bundle to the universal section of $\calC_X$.  The pullback under $p$ of the conormal bundle of the universal section is the conormal bundle of the section $Id \times_S i^{red}: C^{red} \to C^{red} \times_S C$.  But this is the normal bundle of $i^{red}$. 
\end{proof}

\begin{example}
Let $\calL$ be a line bundle on $S$.  Consider $\bar{S} := \spec_S \sym^\bul \calL/\sym^n \calL$.  This is a spectral cover of $S$, which is totally ramified everywhere. Let $C \to S$ be the $(\Sigma_n, \C^{\oplus n})$ cameral cover corresponding to $\bar{S}$.  $C$ is somewhat smoothable \iffw $\bar{S}$ is a degeneration of a spectral cover $\bar{S}' \to S$ with no points of total ramification.

We have $C = Co(\Sigma_n, \C^{\oplus n}) \times_{\G_m} \calL$. This admits a section $\sigma: S \to C$ identifying $S$ with $C^{red}$.  Under this identification, the conormal bundle $N^*_{i^{red}}$ identifies with $\calL^{\bigoplus n-1}$.  Applying \cref{smoothingObstruction}, we see that $\bar{S}$ is a degeneration of a spectral cover $\bar{S}' \to S$ with no points of total ramification only if $c_1 \calL$ is $n!$-torsion.
\end{example}


\begin{thebibliography}{ASM}

	\bibitem{donagi} R. Donagi, 
		{\it Decompositions of Spectral Covers},
		Ast\'{e}risque, {\bf 218} (1992), 145--176.

	\bibitem{DG} R. Donagi and D. Gaitsgory,
		{\it The gerbe of Higgs bundles},
		Transformation Groups, {\bf 7} (2002), no. 2, 109--153.

	\bibitem{pantev} R. Donagi and T. Pantev,
		{\it Langlands duality for Hitchin systems},
		Inventiones mathematicae, {\bf 189} (2012), no. 3, 653--735.

	\bibitem{lehrer2009unitary} G. I. Lehrer and D. E. Taylor,
		{\it Unitary Reflection Groups},
		Australian Mathematical Soc. Lecture Series, Cambridge
		University Press, 2009.

	\bibitem{merkurjev} A. S. Merkurjev,
		{\it Comparison of the equivariant and the standard K-theory of
		algebraic varieties},
		Algebra i Analiz, {\bf 9} (1997), no. 4, 175--214

	\bibitem{nevins} T. Nevins,
		{\it Descent of coherent sheaves and complexes to geometric invariant theory quotients},
		Journal of Algebra, {\bf 320} (2008), no. 6, 2481--2495
	
	\bibitem{hyperplanes} P. Orlik and H. Terao,
		{\it Arrangements of Hyperplanes},
		Grundlehren der mathematischen Wissenschaften, Springer, 1992.

	\bibitem{shephard} G. C. Shephard,
		{\it Unitary groups generated by reflections},
		Canadian Journal of Mathematics, {\bf 5} (1953), 364--383.

	\bibitem{Steinberg} R. Steinberg,
		{\it Invariants of finite reflection groups},
		Canadian Journal of Mathematics, {\bf 12} (1960), 
		616--618.



\end{thebibliography}
\end{document}